\definecolor{allrefcolors}{rgb}{0,0.2,0.5}
\def\Z{{\mathbb Z}}
\def\R{{\mathbb R}}
\def\C{{\mathbb C}}
\def\K{{\mathbf{k}}}
\def\w{\mathcal{W}(E)}
\def\B{\mathcal{B}}
\def\m{\mathcal{M}}
\def\n{\mathcal{N}}
\def\T{\mathcal{T}}
\def\e{\epsilon}
\def\cc{\mathcal{C}}
\def\dd{\mathcal{D}}
\def\mod{\mathrm{\!-\! mod}}
\def\rmod{\mathrm{mod\!-\!}}
\def\bimod{\mathrm{\!-\! mod\!-\!}}
\def\e{\epsilon}
\def\b{\beta}
\def\r#1{\mathrm{#1}}
\def\mc#1{\mathcal{#1}}
\def\w{\mathcal{W}}
\def\rw{\mathcal{RW}}
\def\d{\delta}
\def\D{\Delta}
\def\ainf{A_\infty}
\def\z2{\Z / 2\Z}
\def\y{\mc{Y}}
\def\id{\mathrm{id}}
\def\p{\partial}
\def\ob{\mathrm{ob\ }}
\def\ch{\mathrm{Ch}}
\def\perf{\mathrm{Perf}}
\def\calk{\mathrm{Calk}}
\def\fun{\mathrm{Fun}}
\def\cone{\mathrm{cone}}
\def\binf{\widehat{\B}_{\infty}}
\def\n{\mathcal{N}}
\def\fk{\mathfrak{f}}
\def\gk{\mathfrak{g}}
\def\ck{\mathfrak{c}}
\def\wcm{\widehat{\w}_{\mathfrak{f}}}
\def\wf{\widehat{\w}_{F}}
\def\ct{\widehat{\cc}_{\mathcal{T}}}
\def\hocolim{\mathrm{hocolim}}
\def\holim{\mathrm{holim}}
\def\re{\mathrm{Re}}
\def\im{\mathrm{Im}}
\def\nbd{\mathcal{N}\mathrm{bd}}
\newtheorem{lem}{Lemma}[section]
\newtheorem{prop}[lem]{Proposition}
\newtheorem{thm}[lem]{Theorem}
\newtheorem{cor}[lem]{Corollary}
\newtheorem{conj}[lem]{Conjecture}
\newtheorem{defn}[lem]{Definition}
\newtheorem{rem}[lem]{Remark}
\def\e{\epsilon}
\theoremstyle{remark}
\newtheorem{example}{Example}[section]
\numberwithin{equation}{section}
\begin{document}
\begin{abstract}
For a stopped Liouville manifold arising from a Liouville sector, 
we construct a symplectic analogue of the formal neighborhood of the stop on the level of Fukaya categories.
This geometric construction is performed via Floer-theoretic methods by allowing wrappings in the negative direction.
On the other hand, inspired by homological mirror symmetry for pairs, 
where the mirror is the formal neighborhood of a divisor in an ambient projective variety,
there is a different approach by taking a `categorical formal completion' introduced by Efimov.
Our main results establishes equivalence of these two approaches,
confirms computability of this new type of Floer theory by categorical and algebraic means,
and indicates contributions from and to computations in homological mirror symmetry.
\end{abstract}

\title{Fukaya $\ainf$-structure near infinity and the categorical formal completion}
\author{Yuan Gao}

\maketitle

\section{Introduction}

The goal of this paper is to construct a Fukaya $\ainf$-categorical structure `at infinity' for a {\it stopped Liouville manifold} $(\bar{X}, F_{0})$ \cite{GPS1}, \cite{GPS2}, \cite{sylvan1}
which arises from a corresponding {\it Liouville sector} $X$.
This structure should be interpreted as a type of Fukaya categorical structure on the {\it `formal neighborhood'} of the Liouville hypersurface $F_{0} \subset \p_{\infty} \bar{X}$ at infinity (where $F$ is the Liouville completion of the domain $F_{0}$), 
thus providing a symplectic analogue of the notion of the formal neighborhood of a divisor in an ambient variety in algebraic geometry.
The starting point of this study is wrapped Floer theory for Liouville sectors,
which is introduced and extensively studied in \cite{GPS1},
and earlier in \cite{sylvan1} for a certain class of stopped Liouville manifolds, or equivalently Liouville pairs.
In \cite{GPS2}, the theory is further extended to all {\it stopped Liouville sectors},
defining the partially wrapped Fukaya category $\w(X, \gk)$ for an arbitrary stopped Liouville sector $(X, \gk)$, where $\gk \subset \p_{\infty} X$ is a closed subset,
with objects being exact cylindrical Lagrangian submanifolds of $\p X$ disjoint from $\gk$ at infinity,
and morphisms being Floer cochains after isotoping Lagrangians such that their boundaries at infinity are positively isotoped in $\p_{\infty}X \setminus \gk$
(such isotopies are called {\it positive wrappings}).

For our study, we shall be mainly interested in the case of a stopped Liouville manifold.
The goal of this paper is to construct a new Fukaya-type $\ainf$-category using a variant of Floer theory,
which takes into account both {\it positive wrappings} not passing the stop,
as well as arbitrary {\it negative wrappings} which necessarily pass through the stop.
This type of Floer theory is to be defined using global Floer data on the target manifold, 
but will capture local information around the stop.
A rough conceptual picture of this construction can be described as follows:
wrapping passing through the stop defines a self-functor of the partially wrapped Fukaya category, 
and we can consider the nilpotents under this functor,
which form an inverse system of categories by the orders of nilpotency,
where the connecting functors are given by applying `wrapping past the stop'.
Eventually, we would hope to obtain the desired category by taking the limit of this inverse system.
However, this naive description is lack of sufficient ingredients for being a rigorous construction, 
because of the following two reasons: first, the limit might not exist or not have nice properties, even if it exists;
second, for an arbitrary stop, it is not always possible to characterize the process of `wrapping past the stop' in a precise quantitative way making the self-functor well-defined.

However, when the geometry of $(\bar{X}, \fk)$ is reasonably nice, 
there is a nice and clean way of implementing the above idea to develop such a Floer theory, 
to be spelt out in \S\ref{section: negatively wrapped Fukaya category}.
This type of Floer theory is well-behaved when the stop $\fk$ admits a {\it ribbon};
that is to say, $\fk$ is itself or the core of a Liouville hypersurface (with boundary) $F_{0} \subset \p_{\infty} \bar{X}$.
Stopped Liouville manifolds of this kind naturally come from  {\it sutured Liouville manifolds} $(\bar{X}, F_{0})$ or {\it Liouville pairs} $(\bar{X}, F)$ where $F$ is the Liouville completion of $F_{0}$,
and each of them corresponds to a unique (up to deformation) Liouville sector of the form $\bar{X} \setminus \nbd F_{0}$ and vice versa, as shown by \cite{GPS1}.
Their partially wrapped Fukaya categories are all quasi-equivalent by \cite{GPS2},
\[
\w(\bar{X} \setminus \nbd F_{0}) \cong \w(\bar{X}, F_{0})  \cong \w(\bar{X}, \fk).
\]
From now on, we will mostly take the stop to be $F_{0}$, 
but sometimes interchangeably use the core $\fk$ whenever the relevant results are easier to state.

To see a picture of what this new type of Floer theory does on objects, 
consider a pair of exact cylindrical Lagrangians $K, L$ whose boundaries at infinity $\p_{\infty}K, \p_{\infty}L$ are Legendrian submanifolds disjoint from $F_{0}$.
The morphism space for this Floer theory is a chain complex $NC^{*}_{F}(K, L)$ which can be thought of as generated by all the characteristics on the contact manifold $\p_{\infty}\bar{X}$ from Legendrians $\p_{\infty}K$ to $\p_{\infty}L$ not passing the `forbidden' $F_{0}$ at infinity,
as well as all the characteristics from $\p_{\infty}L$ to $\p_{\infty}K$ that are allowed to pass through $F$.
In \S\ref{section: negatively wrapped Fukaya category}, 
we equip these chain complexes with $\ainf$-structures, 
which are suitably packaged into an $\ainf$-category which we shall name as the {\it negatively wrapped Fukaya category}
\begin{equation}
\n(\bar{X}, F_{0}).
\end{equation}
Although the definition of this category relies on the existence of the ribbon $F$ not just a stop $\fk$, 
we also reserve the notation $\n(\bar{X}, \fk)$ for the same category, 
or simply $\n(X)$ where $X$ is the Liouville sector corresponding to the sutured Liouville manifold $(\bar{X}, F_{0})$, 
so that
\begin{equation}
\n(\bar{X}, F_{0}) = \n(\bar{X}, \fk) = \n(X).
\end{equation}

In the case of a Lefschetz fibration $\pi: \bar{X} \to \C$, this construction is essentially equivalent to one sketched in \cite{seidel6}.
A schematic geometric picture to interpret this point is as follows.
Without loss of generality, we may assume that the critical values are contained in the unit disk.
This category $\n(\bar{X}, F_{0})$ is constructed by perturbing Lagrangians by a Hamiltonian vector field in a way such that their projections to $\C$ are perturbed slightly in the counterclockwise direction in a large disk containing the unit disk,
where the projections are already linear in a collar neighborhood of the boundary of the disk,
and perturbed in arbitrary amounts in the clockwise direction outside that disk.

By construction, the category $\n(\bar{X}, F_{0})$ comes with a natural functor from the partially wrapped Fukaya category $\w(\bar{X}, F_{0})$,
but this functor does not shed much light on the structures of $\n(\bar{X}, F_{0})$.
Generally, some information is lost when passing from $\w(\bar{X}, F_{0})$ to $\n(\bar{X}, F_{0})$ via this functor.
However, one of the main goals of the paper is to show that $\n(\bar{X}, F_{0})$ is recoverable from $\w(\bar{X}, F_{0})$ in some nice situations.
A first necessary condition for the situation to be considered as `nice' is that there should be a good sense of objects in $\bar{X}$ that are {\it `supported on $F$'}.
A sufficient condition is that the core $\fk = \ck_{F}$ is {\it mostly Legendrian} in the sense of \cite{GPS2},
in which case the objects in $\bar{X}$ `supported on $F$' can be taken to be {\it Lagrangian linking disks}, 
bounded by small Legendrian linking spheres at smooth Legendrian points of $\fk$.
In general when $\fk$ is not necessarily mostly Legendrian, 
the subcategory of linking disks should be replaced by the full subcategory $\tilde{\dd}(F)$ of $\w(\bar{X}, F_{0})$ whose objects are those in the image of the cup functor
\begin{equation}\label{cup functor}
i_{F}: \w(F) \to \w(F \times T^{*}[0, 1]) \to \w(\bar{X}, F_{0}),
\end{equation}
which is defined to be the composition of the K\"{u}nneth stabilization functor and the pushforward functor induced by the canonical inclusion $T^{*}[0, 1] \xhookrightarrow{} (\bar{X}, F_{0})$.
In this case, we also need a {stop removal condition}.
We say the Liouville manifold $F$ {\it satisfies stop removal for the Liouville pair $(\bar{X}, F)$} (with the corresponding sutured Liouville manifold $(\bar{X}, F_{0})$), 
if the functor
\begin{equation}\label{stop removal functor}
\iota_{*}: \w(\bar{X}, F_{0})/\tilde{\dd}(F) \to \w(\bar{X})
\end{equation}
induced by pushforward $\w(\bar{X}, F_{0}) \to \w(\bar{X})$ is a quasi-equivalence.
We say the Liouville manifold $F$ {\it satisfies stop removal},
if it satisfies stop removal for $(\bar{X}, F)$ for any $\bar{X}$.
Changing from the assumption that $\fk$ is mostly Legendrian to the one that $F$ satisfies stop removal is a very mild generalization. 
In addition, a properness condition seems natural as well:
we say that the stop $F_{0}$ (or $\fk$) is a {\it full stop} if $\w(\bar{X}, F_{0}) = \w(\bar{X}, \fk)$ is proper.

The conditions stated above ensure good control over the behavior of Floer theory, 
allowing us to compute the new category $\n(\bar{X}, F_{0})$ from the partially wrapped Fukaya category $\w(\bar{X}, F_{0})$ using a certain purely algebraic machinery.
The relevant algebraic structure is that of the {\it categorical formal completion of a category along a subcategory}, which was introduced in \cite{efimov1} in the case of dg categories. 
The definition receives a straightforward generalization,
giving rise to the notion of the {\it formal completion} $\ct$ of an $\ainf$-category $\cc$ along a subcategory $\T$ of $\cc$ (or of $\cc \mod$, the category of left modules over $\cc$, or its derived category $\tilde{D}(\cc)$);
for the precise definition, see Definition \ref{defn: formal completion} in \S\ref{section: formal completion}.
Notably, the categorical formal completion is a categorical analogue of the more familiar algebro-geometric concept of the formal neighborhood $\hat{Y}_{D}$ of a closed subscheme $D$ in a given ambient (Noetherian, seperated) scheme $Y$.
To present a shortcut definition here, consider the case where $\T$ is a full subcategory of $\cc$.
Let 
\begin{equation}
Z_{\T} = im(y^{l} \circ i: \T^{op} \to \cc \mod)
\end{equation}
be the image of $\T^{op}$ under the Yoneda embedding $\T^{op} \xhookrightarrow{} \cc^{op} \to \cc \mod$.
Tautologically, this defines an $\ainf$-bimodule over $(Z_{\T}, \cc^{op})$, by
\begin{equation}
(\hat{i}, y^{l})^{*}(\cc \mod)_{\D},
\end{equation}
where $\hat{i}: Z_{\T} \to \cc \mod$ is the inclusion of the subcategory and $y^{l}: \cc^{op} \to \cc \mod$ is the Yoneda embedding.
Such a bimodule gives rise to a module-valued functor 
\begin{equation}
F_{\T}: \cc \to Z_{\T} \mod = \rmod Z_{\T}^{op}
\end{equation}
and we define $\ct$ to be the essential image of $F_{\T}$.
For $\T$ a subcategory of $\cc \mod$, define $Z_{\T} = \T$ and perform a similar construction.
The formal completion is independent of the subcategory up to split-generation (when embedded in $\cc \mod$), 
so is always quasi-equivalent to $\widehat{\cc}_{\perf \T}$ (Corollary \ref{cor: completion along perfect complexes}).
By construction, the formal completion comes with a natural functor $\cc \to \ct$,
such that if $\T$ is a full subcategory of $\cc$, 
the restriction of the functor to $\T$ gives a fully faithful embedding $\T \xhookrightarrow{} \ct$ (Lemma \ref{property of k functor}).
In this way, $\ct$ can be interpreted as a formal neighborhood of $\T$ in $\cc$.
See  \S \ref{section: algebra} for more details about the definitions and properties of $\ct$.

Having introduced the necessary ingredients introduced above, we can now state the first main result about the category $\n(\bar{X}, F_{0})$ as follows.

\begin{thm}\label{thm:main}
For any Liouville pair $(\bar{X}, F)$,
let $\tilde{\mathcal{D}}(F)$ be the subcategory of $\w(\bar{X}, F_{0})$ whose objects are images under the cup functor \eqref{cup functor},
and let 
\begin{equation}\label{formal completion of partially wrapped}
\wf := \widehat{\w(\bar{X}, F_{0})}_{\tilde{\dd}(F)}
\end{equation}
denote the categorical formal completion of $\w(\bar{X}, F_{0})$ along the subcategory $\tilde{\dd}(F)$.

\begin{enumerate}[label=(\roman*)]

\item There is a canonical $\ainf$-functor
\begin{equation}\label{functor psi}
\Psi: \n(\bar{X}, F_{0}) \to \wf.
\end{equation}

\item Suppose $\bar{X}$ and $F$ are both non-degenerate,
and $F$ satisfies stop removal and is a full stop.
Then the functor $\Psi$ \eqref{functor psi} is a quasi-equivalence.
\end{enumerate}

\end{thm}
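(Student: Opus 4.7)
The plan is to construct $\Psi$ via a Floer-theoretic bimodule that restricts correctly on $\tilde{\dd}(F)$, and then verify quasi-equivalence by combining a generation argument for essential surjectivity with a bar-resolution comparison for full faithfulness.

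For part (i), I would define an $\ainf$-bimodule
\[
\mathcal{P}: \tilde{\dd}(F) \otimes \n(\bar{X}, F_0)^{op} \to \ch, \qquad \mathcal{P}(D, L) := NC^*_F(D, L),
\]
using moduli of pseudoholomorphic strips with boundary labels in $\tilde{\dd}(F)$ and $\n(\bar{X}, F_0)$, and with Floer data interpolating between the partially wrapped setup on the left side and the negatively wrapped setup on the right side. The key preliminary observation is that whenever the left entry $D$ lies in $\tilde{\dd}(F)$---so $\p_\infty D$ can be $C^0$-pushed into an arbitrarily small neighborhood of the core $\fk$---negative wrappings of $L$ past $F$ produce no new chords landing on $\p_\infty D$, giving a canonical quasi-isomorphism $\mathcal{P}(D, L) \simeq CW^*_{\w(\bar{X}, F_0)}(D, L)$. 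Thus $\mathcal{P}$ agrees on $\tilde{\dd}(F)$ with the diagonal bimodule of $\w(\bar{X}, F_0)$; pulling back along the Yoneda embedding produces a bimodule over $(Z_{\tilde{\dd}(F)}, \n(\bar{X}, F_0)^{op})$ whose associated module-valued functor on $\n(\bar{X}, F_0)$ lands in $\wf$ by construction, yielding $\Psi$ as an $\ainf$-functor via the standard bimodule-to-functor mechanism.

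For part (ii), essential surjectivity should be a mild consequence of non-degeneracy: a split-generating collection of $\w(\bar{X}, F_0)$ consists of Lagrangians that are simultaneously objects of $\n(\bar{X}, F_0)$, and by construction the composition $\w(\bar{X}, F_0) \to \n(\bar{X}, F_0) \xrightarrow{\Psi} \wf$ agrees with the canonical functor $\w(\bar{X}, F_0) \to \wf$, so split-generation of $\wf$ is inherited. Full faithfulness demands
\[
NC^*_F(K, L) \simeq \Hom_{\wf}(\Psi(K), \Psi(L))
\]
for all $K, L$. Unwinding the definition of the formal completion, the right-hand side is computable as a homotopy limit of pairings of $Z_{\tilde{\dd}(F)}$-modules, and via the bar resolution of the $Z_{\tilde{\dd}(F)}$-diagonal bimodule it becomes iterated compositions through objects of $\tilde{\dd}(F)$. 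I would match this geometrically with a decomposition of $NC^*_F(K, L)$ arising from a neck-stretching degeneration along a contact hypersurface around $F$: unbroken strips contribute the ``positive'' part $CW^*_{\w(\bar{X}, F_0)}(K, L)$, while strips breaking along $F$ a total of $r$ times contribute the $r$-fold bar term. The stop removal hypothesis is what ensures $\tilde{\dd}(F)$ is large enough to resolve these broken configurations, while full-stop and non-degeneracy guarantee that the resulting filtration by number of $F$-crossings is convergent.

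The principal obstacle is this last identification: showing that the $r$-broken moduli spaces near $F$, on the nose after gluing and compactification, correspond to the $r$-th term of the bar resolution of the $Z_{\tilde{\dd}(F)}$-diagonal bimodule. I expect this to require an SFT/neck-stretching argument on the negatively wrapped Floer data, tracking which punctures of a broken configuration land on which $\tilde{\dd}(F)$-labels, together with a careful gluing theorem identifying the resulting chain model with the bar complex. Verifying compatibility at the level of all higher $\ainf$-operations of $\n(\bar{X}, F_0)$ and $\wf$, not just $\mu^1$, is where the technical bookkeeping will concentrate; the non-degeneracy and full-stop assumptions enter crucially here by ensuring that the relevant Hom complexes are of finite type, so that naive direct limits on the geometric side match the homotopy limits on the algebraic side.
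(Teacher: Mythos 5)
Your part (i) is close in spirit to what the paper does, but your part (ii) takes a genuinely different route that contains a real gap.

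For (i), the paper also constructs $\Psi$ from a Floer-theoretic bimodule mediating between $\tilde{\dd}(F)$ and $\n(\bar{X}, F_0)$, but it works directly with $CW^*_F(D,L)$ as the bimodule value rather than $NC^*_F(D,L)$: the higher $\ainf$-bimodule structure maps (your ``interpolating Floer data'') are exactly the operations $\Theta^{k,l}$ defined in \S\ref{section: moduli spaces of functor disks}--\S\ref{section: construction of psi} by counting popsicles with $k$ punctures carrying $\n$-type weights and $l$ punctures carrying $\tilde{\dd}(F)$-testing labels, with the intersection-number constraints \eqref{intersection xj}--\eqref{intersection out} ensuring the output lies in the partially wrapped complex. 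Your observation that $NC^*_F(D,L) \simeq CW^*_F(D,L)$ for $D \in \tilde{\dd}(F)$ is essentially Proposition \ref{quasi-iso on nonessential Lagrangians}, but since bimodules must be defined at the chain level, using $CW^*_F$ directly (as the paper does) avoids having to choose chain-level inverses to that quasi-isomorphism. So your plan for (i) is sound modulo writing down the same moduli spaces.

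For (ii) the gap is concrete. You propose to match $\Hom_{\wf}(\Psi(K),\Psi(L))$ with $NC^*_F(K,L)$ by a neck-stretching along a hypersurface around $F$, identifying $r$-broken strips with the $r$-th bar term. There is no argument given (or available in the paper) that this identification holds at the chain level, and it is not expected to: the asymptotics produced by SFT-style stretching are Reeb chords in the neck region, not composable morphisms through Lagrangian objects of $\tilde{\dd}(F)$, and the $l=0$ piece of the bar model of $\hom_{\rmod\tilde{\dd}(F)}(i^*Y^r_K, i^*Y^r_L)$ is $\prod_D \hom_{\K}(CW^*_F(D,K),CW^*_F(D,L))$, not $CW^*_F(K,L)$ as your ``unbroken'' contribution would need. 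Establishing the gluing and convergence statements you flag would be a substantial new analytic theorem, not bookkeeping. The paper avoids all of this. Part (ii) is proved (as Theorem \ref{thm:equivalence}) by combining the GGV equivalence $\Phi: \rw(\bar{X})\xrightarrow{\sim}\widehat{\w(\bar{X})}_\infty$ (Theorem \ref{rw=winf}) with two short exact sequences: the algebraic one $\tilde{\dd}(F)\hookrightarrow\wf\to\widehat{\w(\bar{X})}_\infty$ from Corollary \ref{unique fit into ses}, valid since smoothness and properness give $\r{Prop}=\perf$, and the geometric one $\tilde{\dd}(F)\hookrightarrow\n(\bar{X},F_0)\to\rw(\bar{X})$, which relies on nonessentiality of $\tilde{\dd}(F)$-objects and stop removal. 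A commutative diagram (Proposition \ref{commutative}) between these sequences, with the identity on $\tilde{\dd}(F)$ and $\Phi$ on the third column, forces $\Psi$ to be an equivalence --- a purely algebraic five-lemma-style deduction. (There is also a second proof in \S\ref{section: n as limit}, Theorem \ref{thm:main2}, using the $\n_{>m}$ tower and a Mittag-Leffler argument, which does use the $F$-crossing filtration but packages it through Lemma \ref{formal completion as limits} and Theorem \ref{representable of nm module by d}, with no neck-stretching.) Your non-degeneracy and full-stop hypotheses enter the paper's argument precisely to guarantee smoothness and properness of $\w(\bar{X},F_0)$ so that Theorem \ref{quotient of formal completion} applies --- not to control a spectral sequence as you suggest.
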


It is straightforward to check the conditions are satisfied for a symplectic Landau-Ginzburg model, 
yielding the following consequence.

\begin{cor}
Suppose $\pi: \bar{X} \to \C$ is a symplectic Landau-Ginzburg model on a non-degenerate Liouville manifold $\bar{X}$ with compact critical locus and Weinstein fiber $F$.
Then there is a canonical quasi-equivalence
\[
\Psi: \n(\bar{X}, F_{0}) \stackrel{\sim}\to \wf.
\]
\end{cor}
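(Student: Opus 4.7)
The plan is to reduce directly to Theorem \ref{thm:main} by verifying each of its four hypotheses in the symplectic Landau--Ginzburg setting: non-degeneracy of $\bar{X}$, non-degeneracy of $F$, the stop removal property for $F$, and the full stop condition for $F_0 \subset \p_\infty \bar{X}$. Since the functor $\Psi$ is already constructed canonically in part (i) of Theorem \ref{thm:main}, no new map needs to be produced; the content is entirely in checking these geometric hypotheses.

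Non-degeneracy of $\bar{X}$ is assumed outright. For $F$, non-degeneracy follows from its being Weinstein: the standard result of Ganatra--Pardon--Shende (building on Cieliebak--Eliashberg, Bourgeois--Ekholm--Eliashberg) produces enough Lagrangians (cocores of the critical handles) to split-generate $\w(F)$ and to give a homological perturbation description of symplectic cohomology, in particular implying non-degeneracy. The stop removal property for $F$ follows from the Weinstein hypothesis as well: the core $\ck_F$ is a (stratified) isotropic subset, hence mostly Legendrian in the sense of \cite{GPS2}, so that the stop removal quasi-equivalence \eqref{stop removal functor} is supplied directly by the mostly-Legendrian stop removal theorem proved there.

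The remaining and most substantive step is to verify that $F_0$ is a full stop, i.e.\ that $\w(\bar{X}, F_0)$ is proper. Here I would use the Landau--Ginzburg structure: the partially wrapped Fukaya category of $(\bar{X}, F_0)$ associated to a Lefschetz fibration with compact critical locus is split-generated by the collection of Lefschetz thimbles associated to a system of vanishing paths. I would then show that morphism spaces between these generators have finite-dimensional cohomology, which reduces to the standard fact that Hamiltonian wrapping disjoint from the fiber stop corresponds to rotating the thimbles in the base by some fraction of a turn, after which two thimbles intersect in only finitely many points (corresponding to intersections of the associated vanishing cycles in $F$, which are finite because $F$ is Weinstein and the vanishing cycles are exact compact Lagrangians). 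Propagating finite-dimensionality from generators to the split-closure of the category gives properness, whence $F_0$ is a full stop.

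With all four hypotheses in hand, part (ii) of Theorem \ref{thm:main} applies and yields the claimed quasi-equivalence. The main technical obstacle, as indicated, is the full stop verification; the delicate point is ensuring that partial wrapping in the total space really does reduce to a finite combinatorial picture in the base once one works within a disk containing all critical values, and that the cohomological finiteness survives passage to the split-closure and to images under the cup functor \eqref{cup functor}. The other three conditions are essentially immediate citations.
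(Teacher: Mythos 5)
Your proposal is correct and follows essentially the same route as the paper: reduce to Theorem~\ref{thm:main} and verify its hypotheses. The one place you diverge is the full stop (properness) condition: the paper disposes of this by citing Example~1.19 of~\cite{GPS2} directly, whereas you sketch a hands-on argument via rotating Lefschetz thimbles and counting intersections of vanishing cycles in the fiber. Your sketch is morally the same content but is phrased for a Lefschetz fibration specifically, while the corollary's hypothesis is the somewhat broader ``symplectic Landau--Ginzburg model with compact critical locus''; the citation handles that generality cleanly, so in writing it up you would be better off citing the GPS2 example (or restricting attention to the Lefschetz case) rather than relying on the vanishing-path picture, which is not available verbatim outside the Lefschetz setting.
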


In fact, the quasi-equivalence between $\n(\bar{X}, F_{0})$ and $\wf$ does not rely on the seemingly artificial non-degeneracy condition.
This is odd yet interesting because it suggests that the relation between these categories has its own intrinsic meaning independent of generation of wrapped Fukaya categories.
However, if we want to drop the non-degeneracy condition, we shall need a stronger version of properness assumption than $F$ being a full stop.

\begin{defn}\label{def: bounded Reeb dynamics}
We say the contact manifold $\p_{\infty} \bar{X}$ has bounded Reeb dynamics relative to a Liouville hypersurface $F$, if for any two Legendrians $\Lambda_{0}, \Lambda_{1}$, and any $m \in \Z_{\ge 0}$, 
the set of Reeb chords from $\Lambda_{0}$ to $\Lambda_{1}$ which intersect $F_{0}$ transversely at most $m$ times is finite.
\end{defn}

This condition can be thought of as an open-string version of a common condition on Reeb orbits.
A standard consequence of the above bounded Reeb dynamics assumption is that the wrapped Floer cohomology between any pair of Lagrangians is degree-wise finite dimensional.
For the category $\n(\bar{X}, F_{0})$, it implies the following:

\begin{thm}\label{thm:main2}
Suppose $F$ satisfies stop removal for the pair $(\bar{X}, F)$,
and that the contact boundary-at-infinity $\p_{\infty} \bar{X}$ has bounded Reeb dynamics relative to $F$, and that $\w(\bar{X}, F_{0})$ is (homologically) smooth.
Then there is a quasi-equivalence
\begin{equation}
\n(\bar{X}, F_{0}) \stackrel{\sim}\to \wf.
\end{equation}
\end{thm}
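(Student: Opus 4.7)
The plan is to adapt the proof of Theorem \ref{thm:main}(ii), substituting homological smoothness of $\w(\bar X, F_0)$ and bounded Reeb dynamics for the non-degeneracy input. The functor $\Psi$ from Theorem \ref{thm:main}(i) is already constructed unconditionally, and both $\n(\bar X, F_0)$ and $\wf$ share the same object set as $\w(\bar X, F_0)$, so essential surjectivity is automatic and only fully faithfulness remains. Concretely, the content is that the map of cochain complexes
$$
\Psi_{K,L} : NC^*_F(K, L) \longrightarrow \Hom_{\wf}(K, L)
$$
is a quasi-isomorphism for every pair of exact cylindrical Lagrangians $K, L$ disjoint from $F_0$ at infinity.

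I would prove this by introducing compatible filtrations $F_\bullet$ on both sides, indexed by $m \in \Z_{\ge 0}$, and matching associated gradeds. On the Floer side, $F_m NC^*_F(K, L)$ is spanned by those generators whose underlying Hamiltonian chord crosses $F_0$ at most $m$ times; bounded Reeb dynamics (Definition \ref{def: bounded Reeb dynamics}) guarantees that each stratum $F_m/F_{m-1}$ is degree-wise finite dimensional and that the filtration is exhaustive. On the algebraic side, $\Hom_{\wf}(K, L)$ is a derived $\Hom$ of modules over the subcategory $\tilde{\dd}(F)$ by the definition of the formal completion, and as such carries a natural bar-length filtration by the number of inserted $\tilde{\dd}(F)$-factors. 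By construction, $\Psi$—which arises from reading the negative wrappings as compositions through objects supported on $F$—preserves these filtrations.

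The key geometric input is then a neck-stretching computation of the associated gradeds. Stretching a Floer strip contributing to $F_m/F_{m-1}$ along a collar $F \times T^*[0,1] \hookrightarrow \bar X$ of the hypersurface $F_0$ breaks it into an outer piece in $\bar X \setminus F$ glued to $m$ inner pieces landing on Lagrangians in the essential image of the cup functor $i_F$. Using the stop removal hypothesis \eqref{stop removal functor} to identify the outer contribution with $\Hom_{\w(\bar X)}$, this exhibits $\mathrm{gr}_m NC^*_F(K, L)$ as an $m$-fold tensor over $\tilde{\dd}(F)$ with outer factors $\Hom_{\w(\bar X, F_0)}(K, -)$ and $\Hom_{\w(\bar X, F_0)}(-, L)$, which is precisely the $m$-th term in the bar resolution computing $\Hom_{\wf}(K, L)$. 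This part of the argument is local on the filtration and uses no generation statement, so it extends essentially verbatim from the corresponding step in the proof of Theorem \ref{thm:main}(ii).

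The main obstacle, and where smoothness intervenes, is promoting the isomorphism on gradeds to an isomorphism of total complexes. In the non-degenerate case a finite collection of cocore generators collapses the tower defining $\wf$ after finitely many steps on any given morphism space, but without non-degeneracy this collapse must be established intrinsically. Homological smoothness of $\w(\bar X, F_0)$ provides a perfect resolution of the diagonal bimodule, which implies that in each fixed cohomological degree only finitely many bar terms contribute to $\Hom_{\wf}(K, L)$. Combined with the degree-wise finite dimensionality from bounded Reeb dynamics, this makes the filtrations on both sides complete and Hausdorff, and a standard filtration comparison then upgrades $\mathrm{gr}\,\Psi$ to the desired quasi-isomorphism. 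The delicate point will be arranging the perfect resolution of the diagonal to be geometrically compatible with the neck-stretching filtration, so that the bar terms and the Floer graded pieces can be matched cohomologically degree by degree; this is the technical heart of the argument and where the smoothness hypothesis is genuinely used rather than just invoked.
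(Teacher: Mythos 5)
Your plan diverges substantially from the paper's actual argument, and the two steps you yourself flag as the heart of the matter are genuine gaps rather than routine adaptations. First, the ``neck-stretching computation of associated gradeds'' — breaking a strip crossing $F_{0}$ exactly $m$ times into an outer piece in $\bar{X}\setminus F$ and $m$ inner pieces on objects of $\tilde{\dd}(F)$, and identifying $\mathrm{gr}_m NC^{*}_{F}(K,L)$ with the $m$-th bar term over $\tilde{\dd}(F)$ — is a major unproven geometric theorem. Nothing of this kind appears in the paper: the proof of Theorem \ref{thm:main}(ii) (i.e.\ Theorem \ref{thm:equivalence}) proceeds by comparing the two short exact sequences \eqref{algebraic ses} and \eqref{geometric ses} through the Rabinowitz category and Efimov's formal punctured neighborhood, so there is no ``corresponding step'' that your graded comparison could extend ``essentially verbatim.'' Second, your use of smoothness is not justified: homological smoothness of $\w(\bar{X},F_{0})$ concerns perfectness of its diagonal bimodule and does not bound, degree by degree, the number of contributing terms in the length filtration of $\hom_{\wf}(K,L)\simeq \r{CC}^{*}(\tilde{\dd}(F)^{op},\hom_{\K}(i^{*}Y^{r}_{K}, i^{*}Y^{r}_{L}))$. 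Without such a bound the comparison of an exhaustive increasing filtration (crossing number) with a complete decreasing one (bar length) has a real convergence problem that your ``standard filtration comparison'' does not address. In the paper smoothness enters in a quite different way: together with properness (which is what bounded Reeb dynamics actually buys) it gives $\r{Prop}\,\w(\bar{X},F_{0})=\perf\w(\bar{X},F_{0})$, so that the pullback Yoneda modules $j_{m}^{*}Y^{r}_{j_{m}(L)}$ are perfect and, via stop removal and the key observation that sufficiently negative wrapping kills the continuation element in $\n_{>m}$, representable in $\perf\tilde{\dd}(F)$ (Theorem \ref{representable of nm module by d}).

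The paper's route is: form the inverse system $\{\n_{>m}(\bar{X},F_{0})\}_{m\in\Z_{\le -1}}$, use bounded Reeb dynamics and a Mittag--Leffler argument to show $\Pi:\n(\bar{X},F_{0})\to\holim_{m}\n_{>m}(\bar{X},F_{0})$ is fully faithful (Proposition \ref{n as homotopy limit of nm}), verify the hypotheses of the purely algebraic Lemma \ref{formal completion as limits} (representability just described, plus the colimit statement of Proposition \ref{prop: map from colimit of Floer complexes}), and conclude via Corollary \ref{cor: holim is formal completion} that the constant-section subcategory of the homotopy limit is $\wf$, yielding $\tilde{\Psi}=\hat{\pi}^{*}\circ\Pi$. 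Note also that you set out to prove that the functor $\Psi$ of \eqref{functor psi} is itself an equivalence; the paper's Theorem \ref{thm:main2} only produces the equivalence $\tilde{\Psi}$ by other means and explicitly leaves the comparison $\tilde{\Psi}\simeq\Psi$ as a conjecture, so your proposal is aiming at a strictly stronger statement than what is actually established, on top of the unsubstantiated steps above.
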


Some comments about Theorem \ref{thm:main2} are in order.
Note that Theorem \ref{thm:main2} does not directly establish \eqref{functor psi} to be a quasi-equivalence as in the statement of Theorem \ref{thm:main}.
However, there is indeed a canonical functor defined by other means, which under some further technical argument can be compared to the functor \eqref{functor psi}.
Regarding the hypotheses of these two theorems, the implication of Theorem \ref{thm:main2} is slightly stronger than Theorem \ref{thm:main} as we do not assume $\bar{X}$ is non-degenerate. 
Of course, when $F$ is non-degenerate, and if we further assume that $\w(\bar{X}, F_{0})$ admits a finite collection of split-generators,
then the assumption that $F$ satisfies stop removal will imply $\bar{X}$ is non-degenerate.
In \cite{sylvan1}, it is shown that under some assumptions, including Abouzaid's generation criterion \cite{abouzaid_gc}, such a non-degenerate $F$ satisfies stop removal. 
It is expected that any non-degenerate $F$ satisfies stop removal, but the answer to the converse remains unclear.
As for Theorem \ref{thm:main2}, the assumption that $F$ satisfies stop removal is sufficient for our conclusion to be drawn.
In fact, Theorem \ref{thm:main2} only requires $F$ to satisfy stop removal for the particular pair $(\bar{X}, F)$.
Lastly, smoothness of $\w(\bar{X}, F_{0})$ is also an essential condition. 
There are geometric conditions leading to smoothness of $\w(\bar{X}, F_{0})$: for example by \cite{GPS2} Corollary 1.19 if $(\bar{X}, F)$ is {\it weakly Weinstein} then $\w(\bar{X}, F_{0})$ is smooth (since we already assume that $\fk$ has a ribbon $F$).
However, that assumption is unnecessarily strong for Theorem \ref{thm:main2} to hold.

Theorem \ref{thm:main} is closely related to the main result of \cite{GGV},
which concerns about the {\it Rabinowitz Fukaya category} $\rw(\bar{X})$ of a Liouville manifold.
When the stop $F$ is empty, the category $\n(\bar{X}, \varnothing)$ recovers the Rabinowitz Fukaya category
\[
\n(\bar{X}, \varnothing) = \rw(\bar{X}).
\]
The corresponding closed-string theory is more familiarly known as the {\it Rabinowitz Floer homology} of (the boundary of the domain completing to $\bar{X}$ in) $\bar{X}$, $RFH^{*}(\bar{X})$, introduced in \cite{CF}, 
whose algebra structures have received recent attention \cites{COcone, CHOduality}.
For a general stop $\fk$ that admits a ribbon $F$, 
while it is natural to expect a closed-string theory for the category $\n(\bar{X}, F_{0})$ (which nonetheless does not carry a ring structure) should exist, it is not the purpose of this paper to describe that construction.
However in the open-string case, $\n(\bar{X}, F_{0})$ is a version of Rabinowitz theory for Liouville sectors,
which is much less functorial compared to the partially wrapped Fukaya category.
In particular, there is no pushforward functor associated to an arbitrary inclusion of stopped Liouville {\it sectors}.
However, it is still possible to find a certain restricted type of inclusions of stopped Liouville {\it manifolds} of the form $(\bar{X}, F_{0}) \xhookrightarrow{} (\bar{X}, F'_{0})$ for an inclusion of a Liouville subdomain $F'_{0} \subset F_{0}$.

\begin{thm}\label{thm:pushforward is localization}
Suppose $F'_{0} \subset F_{0}$ is a Liouville subdomain. 
Then there is a pushforward functor
\begin{equation}
\iota_{\sharp}: \n(\bar{X}, F_{0}) \to \n(\bar{X}, F'_{0})
\end{equation}
which is identity on objects, and on morphism spaces is given by the inclusion of subcomplexes.
If $\iota: (\bar{X}, F_{0}) \to (\bar{X}, F'_{0})$ is a stop removal inclusion,
the pushforward functor is a localization.
\end{thm}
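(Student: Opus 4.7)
The theorem has two parts: constructing the pushforward $A_\infty$-functor $\iota_\sharp$, and showing it is a localization when $\iota$ is a stop removal inclusion. I would argue each through a combination of Floer-theoretic and categorical tools; the second part leans on Theorem \ref{thm:main} (or \ref{thm:main2}) to translate into the formal-completion picture.

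\emph{Construction of $\iota_\sharp$.} Because $F'_{0} \subset F_{0}$ is a Liouville subdomain, I would first make compatible choices of Floer data: an admissible Hamiltonian $H'$ defining $\n(\bar{X}, F'_{0})$ can be arranged to restrict, near the Liouville completion of $F_{0}$, to an admissible Hamiltonian $H$ defining $\n(\bar{X}, F_{0})$. With this choice, the generators of $NC^{*}_{F}(K, L)$ appear as a literal subset of the generators of $NC^{*}_{F'}(K, L)$: positively wrapped characteristics avoiding $F_{0}$ automatically avoid $F'_{0}$, while the additional negatively wrapped generators permitted by the smaller stop account for the extra chords in the target. The $A_\infty$-operations restrict accordingly, because the moduli spaces of pseudo-holomorphic disks defining $\mu^{d}$ on $\n(\bar{X}, F_{0})$ embed as those for $\n(\bar{X}, F'_{0})$ whose inputs and outputs are generators of the smaller complex. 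The standard Gromov compactness and transversality arguments from \S\ref{section: negatively wrapped Fukaya category} ensure compatibility with boundary strata, so the inclusion of chain complexes upgrades to an $A_\infty$-functor. Invariance under the usual acceleration/continuation arguments shows $\iota_\sharp$ is canonical up to quasi-isomorphism.

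\emph{Localization.} For the second assertion I would work in the formal-completion picture. Applying Theorem \ref{thm:main} or \ref{thm:main2} identifies $\n(\bar{X}, F_{0}) \simeq \widehat{\w(\bar{X}, F_{0})}_{\tilde{\dd}(F)}$ and $\n(\bar{X}, F'_{0}) \simeq \widehat{\w(\bar{X}, F'_{0})}_{\tilde{\dd}(F')}$, transporting $\iota_\sharp$ to a functor between formal completions. By the stop-removal hypothesis the partially wrapped pushforward $\w(\bar{X}, F_{0}) \to \w(\bar{X}, F'_{0})$ is a Verdier localization along the full subcategory $\tilde{\dd}(F \setminus F')$ of images of linking disks of $F_{0} \setminus F'_{0}$. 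The claim thus reduces to showing that taking formal completions sends this localization to a localization of the completions, at the image of $\tilde{\dd}(F \setminus F')$, which in $\n(\bar{X}, F_{0})$ is precisely the subcategory one must invert. Most of the remaining work is algebraic and takes place in the framework developed in \S\ref{section: algebra}.

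\emph{Main obstacle.} The principal difficulty is controlling how formal completion interacts with Verdier localization. Concretely one needs a compatibility of the form
\[
\bigl(\widehat{\w(\bar{X}, F_{0})}_{\tilde{\dd}(F)}\bigr) / \tilde{\dd}(F \setminus F') \;\simeq\; \widehat{\w(\bar{X}, F'_{0})}_{\tilde{\dd}(F')},
\]
which is not automatic for arbitrary completions and subcategories. I expect this to follow from a statement about completions along subcategories that split-generate quotients, of the kind treated in \S\ref{section: formal completion}. Failing a ready-made algebraic lemma, I would fall back on a direct geometric analysis: decompose the cokernel of $\iota_\sharp$ on morphism complexes into contributions from chords that are forced to cross $F_{0} \setminus F'_{0}$, and verify that these contributions become acyclic precisely after localizing at $\tilde{\dd}(F \setminus F')$. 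Combined with the fact that $\iota_\sharp$ is the identity on objects, this identifies the localization with the target category.
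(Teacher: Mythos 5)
Your construction of $\iota_\sharp$ is correct in spirit, but over-engineered: since $F'_0 \subset F_0$, a quadratic Hamiltonian compatible with the stop $F_0$ (Definition \ref{H compatible with stop}) is \emph{already} compatible with $F'_0$, so one can use the same $H$ for both categories, and then $NC^*_F \subset NC^*_{F'}$ is an inclusion of subcomplexes on the nose, with $\iota_\sharp^1$ the inclusion and all higher terms zero (this is exactly how Proposition \ref{prop: pushforward on n} proceeds). Your matching of Hamiltonians ``near the Liouville completion of $F_0$'' is an unnecessary complication and risks disrupting the conformal consistency of the universal Floer data.

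The real gap is in the localization half. Routing through Theorem \ref{thm:main} or Theorem \ref{thm:main2} to replace $\n(\bar{X}, F_0)$ and $\n(\bar{X}, F'_0)$ by formal completions imports hypotheses that Theorem \ref{thm:pushforward is localization} does not have: Theorem \ref{thm:main} needs $\bar{X}$ and $F$ non-degenerate plus $F$ a full stop, and Theorem \ref{thm:main2} needs bounded Reeb dynamics and smoothness of $\w(\bar{X}, F_0)$. Similarly, the algebraic compatibility you need --- that passing to the formal completion along $\tilde{\dd}(F)$ and then quotienting by $\tilde{\dd}(F \setminus F')$ recovers the completion along $\tilde{\dd}(F')$ --- is not established anywhere in \S\ref{section: algebra}; the closest result, Corollary \ref{unique fit into ses}, again requires $\cc$ smooth \emph{and} proper and only treats the quotient by the \emph{entire} completing subcategory. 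The paper's argument is instead direct and geometric: it works entirely inside the $\n$ categories, using the observation (Proposition \ref{faithful on nonessential Lagrangians}) that $j$ is fully faithful on nonessential Lagrangians to show $\tilde{\dd}(F \setminus F')$ embeds fully faithfully and goes to zero under $\iota_\sharp$, and then verifying that any object killed by $\iota_\sharp$ lies in $\r{Tw}\tilde{\dd}(F \setminus F')$ by analyzing proper modules over $\w(\bar{X})$ and exploiting that closed exact Lagrangians are zero objects in $\n$ because their continuation map $CW^*(K,K;-H) \to CW^*(K,K;H)$ is a quasi-isomorphism. Your ``fallback'' sketch gestures in this direction, but in its present form it does not supply the key step --- identifying the kernel of $\iota_\sharp$ with $\r{Tw}\tilde{\dd}(F \setminus F')$ --- and so does not close the gap on its own.
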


On the other hand, Theorem \ref{thm:main} provides an effective tool for computing the category $\n(\bar{X}, F_{0})$ directly algebraically from $\w(\bar{X}, F_{0})$ and its subcategory $\tilde{\dd}(F)$,
where in many instances the latter categories can be explicitly described using geometry, as addressed in \cite{GPS2}.

\subsection{Functoriality and variants}

We have seen that the category $\n(\bar{X}, F_{0})$ is related to the partially wrapped Fukaya category $\w(\bar{X}, F_{0})$ in two ways: 
first, there is a natural $\ainf$-functor 
\begin{equation}
j: \w(\bar{X}, F_{0}) \to \n(\bar{X}, F_{0})
\end{equation}
which is realized by chain-level inclusions (not necessarily injective on cohomology though);
second, the formal completion of $\w(\bar{X}, F_{0})$ along a subcategory is quasi-equivalent to $\n(\bar{X}, F_{0})$.
There is yet one more interesting relation, which can be established by introducing certain `intermediate' variants of the category $\n(\bar{X}, F_{0})$.
The first variant among these is an $\ainf$-category 
\begin{equation}
\n_{>-2}(\bar{X}, F_{0})
\end{equation}
defined in a way similar to $\n(\bar{X}, F_{0})$,
but by only taking negative wrappings of Lagrangians passing through the stop at most once.
This category plays a special role as it is expected to capture information on the stop.
In more detailed terms, the morphism spaces in $\n_{>-2}(\bar{X}, F_{0})$ are quotient spaces of the morphism spaces in $\n(\bar{X}, F_{0})$ by certain subspaces
(see \S\ref{sec: variants} for more details).
In \cite{seidel6}, Seidel gave a different yet equivalent construction of $\n_{>-2}(\bar{X}, F_{0})$ in the case of a Lefschetz fibration $\pi: \bar{X} \to \C$, in which case the morphism spaces in that category are cones of continuation maps, 
and proved that it is the fiber at $\infty$ of a noncommutative pencil, 
and is in turn quasi-equivalent to the Fukaya category of the fiber $F$.
In particular, there is a natural $\ainf$-functor
\begin{equation}
\pi: \n(\bar{X}, F_{0}) \to \n_{>-2}(\bar{X}, F_{0})
\end{equation}
which can be interpreted as the `restriction-to-the-fiber' functor.

More generally, in \S\ref{sec: variants}  we construct for each negative integer $m \le -1$ an $\ainf$-category 
\begin{equation}
\n_{>m}(\bar{X}, F_{0})
\end{equation}
 whose objects the same as those in $\w(\bar{X}, F_{0})$, 
and whose morphisms are Floer cochains for Lagrangian isotopies in the negative direction passing through the stop $\fk$ by at most $-m-1$ times.
These can be thought of as `infinitesimal thickenings' of the category $\n_{>-2}(\bar{X}, F_{0})$.
These fit into a diagram of $\ainf$-categories (see \S\ref{sec: diagram of Fukaya categories}) related by $\ainf$-functors
\begin{equation}
\pi_{m, m'}: \n_{>m}(\bar{X}, F_{0}) \to \n_{>m'}(\bar{X}, F_{0})
\end{equation}
for any $m, m' \in \Z_{\le -1}$ with $m \le m'$,
which are roughly speaking `restrictions' from negative wrappings which pass through the stop at most $-m-1$ times to those which pass through the stop at most $-m'-1$ times.
This diagram of $\ainf$-categories will be a key ingredient in the proof of Theorem \ref{thm:main2}.

These categories $\n_{>m}(\bar{X}, F_{0})$ come in company with natural $\ainf$-functors
\begin{equation}
j_{m}: \w(\bar{X}, F_{0}) \to \n_{>m}(\bar{X}, F_{0})
\end{equation}
and
\begin{equation}
\pi_{m}: \n(\bar{X}, F_{0}) \to \n_{>m}(\bar{X}, F_{0}),
\end{equation}
which are analogues of $j$ and $\pi$.
Intuitively, $j_{m}$ is an `inclusion', by allowing Lagrangians to be negatively wrapped past the stop at most $-m-1$ times,
and $\pi_{m}$ is a `restriction', by restricting to negative wrappings which pass through the stop at most $-m-1$ times.
By consistently choosing Floer data defining the various Fukaya categories, 
we may construct the functors so that 
\[
\{j_{m}: \w(\bar{X}, F_{0}) \to \n_{>m}(\bar{X}, F_{0})\}_{m \in \Z_{\le -1}}
\]
 and 
 \[
 \{\pi_{m}: \n(\bar{X}, F_{0}) \to \n_{>m}(\bar{X}, F_{0})\}_{m \in \Z_{\le -1}}
 \]
  form strictly compatible diagrams of functors.

\subsection{Relation with homological mirror symmetry}\label{intro: hms}

The idea of the construction of the category $\n(X, F_{0})$ is inspired by homological mirror symmetry for pairs,
as pointed out in \cite{seidel6}.
Because of the connection with the results of \cite{GGV}, 
it is also a warning to the reader that some expository parts in this subsection share a substantial amount of overlaps with those,
but are explored with a different focus.

Suppose the Liouville pair $(\bar{X}, F)$ is mirror to a pair $(Y, D)$,
where $Y$ is a smooth projective variety and $D$ is an anti-canonical ample divisor.
Homological mirror symmetry predicts a Morita equivalence between $\w(\bar{X}, F_{0})$ and $Coh(Y)$,
and one between $\w(\bar{X})$ and $Coh(Y \setminus D)$,
such that the two equivalences are compatible with the natural functors. 
That is,
\begin{equation}\label{hms for pairs}
\begin{tikzcd}
\perf \w(\bar{X}, F_{0}) \arrow[d, "\sim"] \arrow[r] & \perf \w(\bar{X}) \arrow[d, "\sim"] \\
Coh(Y) \arrow[r] & Coh(Y \setminus D).
\end{tikzcd}
\end{equation}
The full mirror symmetry picture should also describe $D$ as the mirror of the fiber $F$,
together with a Morita equivalence between $D^{b}Coh(D)$ and the Fukaya category of $F$,
compatible with other two equivalences in the diagram \eqref{hms for pairs}.
From this point of view, $F$ can be interpreted as a {\it divisor} of a `compactification' of $X$ when the relevant Fukaya categories are concerned.

In algebraic geometry, one can take the formal neighborhood of $D$ in $Y$, $\hat{Y}_{D}$.
whose category of (algebraizable) perfect complexes  $\perf_{alg}(\hat{Y}_{D})$ is defined by the obvious homotopy limit.
In \cite{efimov1}, Efimov proved that $\perf_{alg}(\hat{Y}_{D})$ can be constructed from $\perf(Y)$ and its subcategory $\perf_{D}(Y)$ of complexes supported on $D$,
via the categorical formal completion.
Going to the symplectic side, one can perform a similar construction to the partially wrapped Fukaya category, which yields \eqref{formal completion of partially wrapped}.

By removing the divisor $D$, one can also take the formal punctured neighborhood of $D$ in $Y$, $\hat{Y}_{D} \setminus D$,
which is not well-defined classically but has a well-defined category of perfect complexes due to \cite{efimov2}.
It turns out that the formal punctured neighborhood only depends on the divisor complement $Y \setminus D$,
and its category of (algebraizable) perfect complexes can be constructed purely algebraically from $Coh(Y \setminus D)$ (assuming $Y \setminus D$ is smooth).
Moreover, the construction of the formal punctured neighborhood of infinity can be generalized in a purely categorical way (also due to \cite{efimov2}), 
allowing us to consider on the symplectic side the formal punctured neighborhood of infinity of the wrapped Fukaya category,
denoted by $\widehat{\w(\bar{X})}_{\infty}$, 
which is shown in \cite{GGV} to be equivalent to the Rabinowitz Fukaya category $\rw(\bar{X})$ whenever $\bar{X}$ is non-degenerate in the sense of \cite{ganatra}.

The above symplecto-geometric interpretation for the formal punctured neighborhood of infinity also suggests a natural candidate for the symplectic mirror of the formal neighborhood of $D$ in $Y$.
By Theorem \ref{thm:main} or Theorem \ref{thm:main2}, it turns out that our category $\n(\bar{X}, F_{0})$ precisely matches with the categorical formal completion up to quasi-equivalence.
Under HMS, this provides a purely algebraic formula for the category $\n(\bar{X}, F_{0})$.

\begin{cor}\label{cor: coh of completion}
Let $\pi: \bar{X} \to \C$ be a symplectic Landau-Ginzburg model with compact critical locus and Weinstein fibers $F$.
Suppose $(\bar{X}, \pi)$ is mirror to a pair $(Y, D)$ where $Y$ is smooth projective and $D$ is an anti-canonical ample divisor,
such that HMS gives Morita equivalences $\mathcal{FS}(\bar{X}, \pi) \cong Coh(Y)$ and $\w(\bar{X}) \cong Coh(Y \setminus D)$.
Then the category $\n(\bar{X}, F_{0})$ is Morita equivalent to $\perf_{alg}(\hat{Y}_{D})$, 
the category of algebraizable perfect complexes on the formal scheme $\hat{Y}_{D}$.
\end{cor}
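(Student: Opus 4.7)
The plan is to assemble the statement from two essentially independent ingredients: Theorem \ref{thm:main} on the symplectic side, and Efimov's theorem on the algebro-geometric side, then bridge them by transporting the subcategory along which one completes across homological mirror symmetry. First I would invoke the Landau--Ginzburg corollary of Theorem \ref{thm:main} to get the quasi-equivalence $\n(\bar{X}, F_{0}) \simeq \wf = \widehat{\w(\bar{X}, F_{0})}_{\tilde{\dd}(F)}$, noting that the hypotheses (non-degeneracy of $\bar X$ and $F$, $F$ a full stop satisfying stop removal, smoothness) all hold for a symplectic Landau--Ginzburg model with compact critical locus and Weinstein fiber.

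On the mirror side, Efimov's main theorem in \cite{efimov1} identifies $\perf_{alg}(\hat{Y}_{D})$ with the categorical formal completion $\widehat{\perf(Y)}_{\perf_{D}(Y)}$ of $\perf(Y)$ along the subcategory of perfect complexes set-theoretically supported on $D$. So the goal reduces to showing that the HMS Morita equivalence $\perf \w(\bar{X}, F_{0}) \simeq \perf Coh(Y)$ in the left vertical arrow of the diagram \eqref{hms for pairs} identifies $\tilde{\dd}(F)$ with $\perf_{D}(Y)$, up to split-generation inside the ambient category.

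To see this, I would use the compatibility of HMS with the horizontal arrows in \eqref{hms for pairs}: on the algebraic side, $\perf_{D}(Y)$ is characterized (up to split-generation) as the kernel of the restriction $\perf(Y) \to \perf(Y \setminus D)$; on the symplectic side, the stop removal hypothesis (which we get for free in the Landau--Ginzburg case by the standard arguments, via \cite{sylvan1} and Abouzaid's generation criterion) says that $\tilde{\dd}(F)$ is, up to split-generation, the kernel of the pushforward $\w(\bar{X}, F_{0}) \to \w(\bar{X})$. Since the HMS diagram commutes and both horizontal arrows are localizations, these two kernels match under the vertical equivalence. Then Corollary \ref{cor: completion along perfect complexes} — that the formal completion depends only on the subcategory up to split-generation — lets me pass to Morita-equivalent completions on both sides, giving $\wf \simeq \widehat{\perf \w(\bar{X}, F_0)}_{\perf \tilde{\dd}(F)} \simeq \widehat{\perf(Y)}_{\perf_{D}(Y)} \simeq \perf_{alg}(\hat{Y}_{D})$.

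The main obstacle is the middle step: verifying cleanly that the HMS equivalence intertwines $\tilde{\dd}(F)$ with $\perf_D(Y)$ rather than just abstractly matching kernels of localization functors. What is needed is that the cup functor $i_F$ in \eqref{cup functor}, which builds objects from the fiber Fukaya category $\w(F)$, corresponds under the expected equivalence $\w(F) \simeq D^b Coh(D)$ to the natural pushforward $D^b Coh(D) \to \perf_D(Y)$. This is essentially an independent piece of mirror symmetry input (matching the third vertex of the full HMS diagram), and in practice one has to be careful about whether HMS for $(\bar X, \pi)$ is known with enough functoriality in the fiber direction to ensure this compatibility — in the stated corollary this is implicitly folded into the HMS hypothesis, so I would spell out precisely what functoriality is needed and cite it as the additional assumption actually in force.
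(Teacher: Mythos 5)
Your proposal is correct and follows essentially the same route the paper intends for this corollary: the Landau--Ginzburg case of Theorem \ref{thm:main} (or Theorem \ref{thm:main2}) gives $\n(\bar{X}, F_{0}) \simeq \wf$, Efimov's theorem identifies $\perf_{alg}(\hat{Y}_{D})$ with $\widehat{\perf(Y)}_{\perf_{D}(Y)}$, and the two completions are matched by transporting the subcategory across the HMS equivalence of pairs.

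One correction to your final paragraph: the extra fiber-level mirror input you propose to add as a hypothesis (that $i_{F}$ corresponds to the pushforward $D^{b}Coh(D) \to \perf_{D}(Y)$ under an equivalence $\w(F) \simeq D^{b}Coh(D)$) is not needed, and the ``abstract matching of kernels'' you were worried about is in fact sufficient. The reason is Lemma \ref{invariance of formal completion} together with Corollary \ref{cor: completion along perfect complexes}: the categorical formal completion depends on the subcategory only up to split-generation inside modules. Stop removal says $\w(\bar{X}, F_{0})/\tilde{\dd}(F) \to \w(\bar{X})$ is an equivalence, so $\perf\tilde{\dd}(F)$ is exactly the kernel of $\perf\w(\bar{X}, F_{0}) \to \perf\w(\bar{X})$, while $\perf_{D}(Y)$ is the kernel of $\perf(Y) \to \perf(Y \setminus D)$ by Thomason--Trobaugh. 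Since the HMS hypothesis is to be read in the sense of the diagram \eqref{hms for pairs}, i.e.\ the two Morita equivalences intertwine the acceleration functor with restriction to $Y \setminus D$, the two kernels correspond under the vertical equivalence, and split-generation invariance then identifies $\wf$ with $\widehat{\perf(Y)}_{\perf_{D}(Y)} \simeq \perf_{alg}(\hat{Y}_{D})$ without any appeal to HMS for the fiber. The only compatibility you genuinely need to make explicit is the commutativity of \eqref{hms for pairs}, which is what the corollary's HMS hypothesis is meant to encode, not an additional assumption on $\w(F)$.
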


In nice situations, $\perf_{alg}(\hat{Y}_{D})$ can be computed directly explicitly as a homotopy limit.
Thus Corollary \ref{cor: coh of completion} provides an effective computation for the category $\n(\bar{X}, F_{0})$.

\subsection*{Overview of paper}
In \S\ref{section: algebra}, we recall the definition of the categorical formal completion due to Efimov,
and study its relation to the categorical formal punctured neighborhood.
In \S\ref{section: Fukaya categories}, we study various versions of Fukaya categories of a stopped Liouville manifold $(\bar{X}, F_{0})$,
define the new category $\n(\bar{X}, F_{0})$,
and establish relations between these Fukaya categories.
In \S\ref{section: equivalence}, we prove Theorem \ref{thm:main} (a consequence of Theorem \ref{thm:equivalence}) and the more general version Theorem \ref{thm:main2}.
In Appendix \ref{app: homotopy limits}, we provide a minimal package for a model for the homotopy limit needed for the relevant discussions in the paper.

\subsection*{Acknowledgements}
The author thanks Sheel Ganatra for inspiring this question and helpful conversations on details in partially wrapped Floer theory.
He also thanks Mohammed Abouzaid for pointing out additional potential applications of this theory.

\section{The categorical formal completion}\label{section: algebra}

\subsection{Algebraic preliminaries}

Fix a ground commutative ring $\K$.
Let $\cc$ be a (graded) $\ainf$-category over $\K$.
For objects $X_{0}, \ldots, X_{k} \in \ob \cc$, we write 
\[
\cc(X_{0}, X_{1}, \ldots, X_{k}) = \cc(X_{k-1}, X_{k}) \otimes_{\K} \cdots \otimes_{\K} \cc(X_{0}, X_{1})
\]
for the $k$-th fold tensor product of composable morphism spaces.
The dg category of left (resp. right) modules over $\cc$ is defined to be the dg category of $\ainf$-functors $\cc \mod = \fun(\cc, \ch_{\K})$ (resp. $\rmod \cc = \fun(\cc^{op}, \ch_{\K})$).
The Yoneda functors
\[
y^{l}: \cc \to \cc \mod, y^{r}: \cc \to \rmod \cc
\]
are fully faithful, where by a fully faithful $\ainf$-functor, we always mean it is cohomologically fully faithful.
Given two $\ainf$-categories $\cc$ and $\dd$, the dg category of $\ainf$-bimodules over $(\cc, \dd)$ (or $\cc-\dd$ bimodule) is the dg category of $\ainf$-bilinear functors $\cc \bimod \dd = \fun^{bi}(\cc^{op} \times \dd, \ch_{\K})$.

Given a left (resp. right) $\cc$-module $M$, its {\it linear dual} $M^{\vee} = \hom_{\K}(M, \K)$ is a right (resp. left) $\cc$-module, with grading specified by
\begin{equation}\label{grading on linear dual}
(M^{\vee})^{*}(X) = \hom_{\K}(M^{-*}(X), \K)
\end{equation}
for every $X \in \ob \cc$.

The convenience of working with $\ainf$-modules and bimodules is based on the following two standard facts (good references include \cite{seidel_book} \cite{seidel_ainf} \cite{ganatra}).

\begin{lem}\label{composition with quasi-iso}
Composition with any quasi-isomorphism of $\cc$-modules (or $\cc-\dd$ bimodules) induces quasi-isomorphism of morphism complexes in the category $\rmod \cc$ (or $\cc \bimod \dd$).
\end{lem}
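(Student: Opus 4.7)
The plan is to reduce the statement to a vanishing fact about acyclic modules and then prove that vanishing via a length filtration on the Hom complex. Given a quasi-isomorphism $f \colon N \to N'$ of right $\cc$-modules, the mapping cone $\cone(f) \in \rmod \cc$ is acyclic, and since $\Hom_{\rmod \cc}(M,-)$ is a dg functor it preserves mapping cones, so the cone of the post-composition chain map $f_\ast \colon \Hom(M,N) \to \Hom(M,N')$ is $\Hom(M, \cone(f))$. It therefore suffices to show that $\Hom_{\rmod \cc}(M, Q)$ is acyclic whenever $Q$ is an object-wise acyclic right $\cc$-module, together with the symmetric statement $\Hom_{\rmod \cc}(Q, P) \simeq 0$ for pre-composition.

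For this vanishing, filter $\Hom_{\rmod \cc}(M, N)$ by length of pre-module homomorphism: let $F^p$ be the subcomplex of $\phi = (\phi^k)_{k \ge 0}$ with $\phi^k = 0$ for all $k < p$. This is a decreasing filtration preserved by the differential, bounded below (since $F^0$ is the entire complex), and complete Hausdorff (automatic because the morphism complex is an infinite product indexed by $k$). On the associated graded, every $\mu^r$ with $r \ge 2$ strictly raises the filtration degree, so the induced differential on $\mathrm{gr}^p$ reduces to the internal $\mu^1$-differentials of $\cc$, $M$, and $N$, giving
\[
\mathrm{gr}^p \;\cong\; \prod_{X_0, \ldots, X_p} \Hom_\K\bigl( \cc[1](X_0, X_1) \otimes \cdots \otimes \cc[1](X_{p-1}, X_p) \otimes M(X_0),\; N(X_p) \bigr)
\]
with this $\mu^1$-differential alone. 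When $N = Q$ has acyclic values, each factor is acyclic (using that the tensor factors are $\K$-flat, a standard hypothesis in the cited references; over a field this is automatic), and a product of acyclic complexes is acyclic; the symmetric case with $Q$ in the source variable is entirely parallel.

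Since the filtration is bounded below, complete Hausdorff, and has acyclic associated graded, a standard inductive construction produces a contracting homotopy: any cycle in $\Hom(M, Q)$ can be cancelled up to $F^{p+1}$ by a boundary in $F^p$, and iteration assembles these partial null-homotopies into a genuine one, with convergence guaranteed by the product structure of the Hom complex in the length variable. The bimodule case is completely analogous, using a bi-indexed length filtration that records both the number of $\cc$-factors and the number of $\dd$-factors in the bar. The only delicate point is the convergence of this inductive procedure, which succeeds precisely because the morphism complex is an infinite product rather than a direct sum in the length index; the acyclicity of each associated graded piece is essentially formal once one tracks which $\mu^k$-terms preserve the filtration.
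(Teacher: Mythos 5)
The paper does not actually prove this lemma---it is stated as a standard fact with a pointer to Seidel's and Ganatra's references---and your argument is exactly the standard proof found there: reduce, via the cone of $f$ and the fact that $\Hom_{\rmod \cc}(M,-)$ preserves cones, to showing $\Hom_{\rmod \cc}(M,Q)$ is acyclic for an objectwise acyclic $Q$, then run the complete, exhaustive length filtration whose associated graded carries only the internal ($\mu^1$) differentials, and conclude by the inductive construction of a null-homotopy, whose convergence is guaranteed precisely because the morphism complex is a direct product over lengths. The one imprecision is your parenthetical hypothesis: $\K$-flatness of the bar factors is the condition relevant to the tensor direction, whereas for acyclicity of $\Hom_{\K}(V,Q(X))$ with $Q(X)$ acyclic one needs $\K$ to be a field (the setting of the cited references, where your argument is complete as written) or a homotopical projectivity assumption; over the general commutative ring $\K$ the paper fixes, this caveat is silently suppressed by the paper as well.
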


\begin{lem}\label{quasi-iso invertible}
Any quasi-isomprhism of $\cc$-modules (or $\cc-\dd$ bimodules) has a homotopy inverse.
\end{lem}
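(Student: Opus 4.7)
The plan is to deduce homotopy invertibility directly from Lemma~\ref{composition with quasi-iso} by the standard trick of lifting identity morphisms. Given a quasi-isomorphism $f \co M \to N$ in $\cc\mod$, I will use pre- and post-composition with $f$ on appropriate morphism complexes to produce, then verify, a homotopy inverse.

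First I would apply Lemma~\ref{composition with quasi-iso} to the post-composition map
\[
f_{*} \co \hom_{\cc\mod}(N, M) \to \hom_{\cc\mod}(N, N), \quad \alpha \mapsto f \circ \alpha,
\]
which is a quasi-isomorphism of chain complexes since $f$ is a quasi-isomorphism. Consequently the class $[\id_{N}] \in H^{0}\hom_{\cc\mod}(N, N)$ admits a preimage, yielding a closed degree-zero module morphism $g \co N \to M$ together with a homotopy $h$ such that $f \circ g = \id_{N} + \partial h$; in particular $f \circ g \simeq \id_{N}$.

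Next I would verify that the same $g$ is also a left homotopy inverse. Apply Lemma~\ref{composition with quasi-iso} again, this time to
\[
f_{*} \co \hom_{\cc\mod}(M, M) \to \hom_{\cc\mod}(M, N), \quad \beta \mapsto f \circ \beta,
\]
which is likewise a quasi-isomorphism. Using the already established homotopy $f \circ g \simeq \id_{N}$, pre-composing with $f$ gives
\[
f_{*}(g \circ f) = f \circ g \circ f \simeq \id_{N} \circ f = f = f_{*}(\id_{M}),
\]
so the classes $[g \circ f]$ and $[\id_{M}]$ have the same image in $H^{0}\hom_{\cc\mod}(M, N)$. Since $f_{*}$ is injective on cohomology, $g \circ f \simeq \id_{M}$, completing the proof in the module case.

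For the bimodule case the argument is formally identical: Lemma~\ref{composition with quasi-iso} is stated uniformly for $\cc\mod$ and $\cc\bimod\dd$, so post-composition with a bimodule quasi-isomorphism $f$ induces a quasi-isomorphism on bimodule morphism complexes, and the same two-step lifting procedure produces a closed degree-zero bimodule morphism $g$ with $f \circ g \simeq \id_{N}$ and $g \circ f \simeq \id_{M}$. I do not anticipate a substantive obstacle here; the only point requiring mild care is to distinguish the pointwise notion of quasi-isomorphism (that the linear term $f^{1}$ induces isomorphisms on cohomology objectwise) from the dg-categorical notion, but these agree under the standing conventions, and Lemma~\ref{composition with quasi-iso} is precisely the input that bridges them.
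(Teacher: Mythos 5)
Your argument is correct, and it is the standard categorical derivation: post-composition with $f$ gives a quasi-isomorphism of morphism complexes (Lemma~\ref{composition with quasi-iso}), so lifting $[\id_N]$ produces a right homotopy inverse $g$, and injectivity of $f_*$ on cohomology then upgrades $g$ to a two-sided homotopy inverse; the dg-category structure on $\cc\mod$ makes composition strictly associative, so the rewriting $f\circ(g\circ f)=(f\circ g)\circ f$ is legitimate. The paper itself offers no proof of this lemma --- it is stated as a standard fact with references to \cite{seidel_book}, \cite{seidel_ainf}, \cite{ganatra} --- so there is nothing in the text to compare against, but your derivation from Lemma~\ref{composition with quasi-iso} is exactly the kind of argument those references supply.
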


For an $\ainf$-category $\cc$, its {\it derived category} of right $\ainf$-modules $D(\cc)$ is simply the homotopy category of right modules:
\[
D(\cc) = H^{0}(\rmod \cc),
\]
as any quasi-isomorphisms of $\ainf$-modules automatically has a homotopy inverse.
Similarly, the derived category of left $\ainf$-modules is denoted by
\[
\tilde{D}(\cc) = H^{0}(\cc \mod).
\]
In practice, one could be more interested in a smaller derived category, for example the bounded derived category, which is defined to be the homotopy category of twisted complexes
\begin{equation}\label{dbcat}
D^{b}(\cc) = H^{0}(\mathrm{Tw}\cc),
\end{equation}
as well as its Karoubi completion (or split closure)
\begin{equation}\label{dpicat}
D^{\pi}(\cc) = H^{0}(\perf \cc).
\end{equation}
All these derived categories are triangulated, induced from the natural pre-triangulated structures on $\rmod \cc, \mathrm{Tw}\cc$ and $\perf \cc$.
We refer the reader to \cite{seidel_book} for their constructions and basic properties.

Given a $(\cc, \dd)$-bimodule $P$ and a $(\dd, \mathcal{E})$-bimodule $Q$, their (convolution) {\it tensor product} $P \otimes_{\dd} Q$ is defined to be the $(\cc, \mathcal{E})$-bimodule with underlying cochain spaces
\[
P \otimes_{\dd} Q(X, Y) = \bigoplus_{Z_{0}, \ldots, Z_{s}} P(X, Z_{0}) \otimes \dd(Z_{0}, \ldots, Z_{s}) \otimes Q(Z_{s}, Y).
\]
For a right $\cc$-module $M$, and a $(\cc, \dd)$-bimodule $P$, their (convolution) tensor product $M \otimes_{\cc} P$ is a $\dd$-module with
\[
M \otimes_{\cc} P(X, Y) = \bigoplus_{Z_{0}, \ldots, Z_{s}} M(X, Z_{0}) \otimes \dd(Z_{0}, \ldots, Z_{s}) \otimes P(Z_{s}, Y).
\]

For a $\cc-\cc$ bimodule $P$, the {\it Hochschild cohomology cochain complex} of $\cc$ with coefficients in $P$ is defined to be
\begin{equation}
\r{CC}^*(\cc, P) = \prod_{X_{0}, \ldots, X_{d} \in \cc} \hom_{\K}(\cc(X_{0}, \ldots, X_{d}), P(X_{0}, X_{d})),
\end{equation}
with the differential 
\begin{equation}
\d(F)^{k}(x_{k}, \ldots, x_{1}) = \sum \mu^{r, s}_{P}(x_{k}, \ldots, F^{k-r-s}(\cdots), x_{s}, \ldots, x_{1}) - \sum F^{k-j+1}(x_{k}, \ldots, \mu^{j}_{\cc}(\cdots), x_{i}, \ldots, x_{1}).
\end{equation}
The morphism space in the dg category of $\ainf$-functors $\fun(\cc, \dd)$ can be interpreted as a Hochschild cohomology cochain complex of $\cc$ with coefficients in the pullback diagonal bimodule $(F, G)^{*} \D_{\dd}$
\begin{equation}\label{fun as cc}
        \fun(\cc, \dd)(F,G) = \r{CC}^*(\cc, (F, G)^{*} \D_{\dd}).
\end{equation}

Given a left $\cc$-module $M$ and a right $\dd$-module $N$, their {\it linear tensor product}
\[
M \otimes_{\K} N
\]
defines a $(\cc, \dd)$-bimodule, 
with underlying chain complex
\begin{equation}
(M \otimes_{\K} N)(X, Y) = M(X) \otimes_{\K} N(Y),
\end{equation}
the differential given by the tensor product differential,
and bimodule structure maps induced from the the module structure maps $\mu^{k}_{M}$ and $\mu^{k}_{N}$.
For a right $\cc$-module $M$ and a right $\dd$-module $N$, the {\it space of linear homomorphisms} from $M$ to $N$
\[
\hom_{\K}(M, N)
\]
defines a $(\cc, \dd)$-bimodule,
with underlying chain complex
\begin{equation}
\hom_{\K}(M, N)(X, Y) = \hom_{\K}(M(X), N(Y)), X \in \ob \cc, Y \in \ob \dd,
\end{equation}
the differential given by the $\hom_{\K}$-differential, 
and bimodule structure maps induced from the module structure maps for $M$ and $N$.

\subsection{The categorical formal completion}\label{section: formal completion}

The categorical formal completion of a compactly generated (by a set of objects) enhanced triangulated category
along a full thick essentially small triangulated subcategory is introduced in \cite{efimov1} via the notion of the `derived double centralizer',
which in the case where the relevant categories come from algebraic geometry recovers the ordinary formal completion of a scheme along a closed subscheme in a purely categorical way without (but related to) limits.
The definitions carry over to the $\ainf$-setting in an almost straightforward way, and in fact can be greatly simplified thanks to Lemmas \ref{composition with quasi-iso} and \ref{quasi-iso invertible}.

Let $\cc$ be a small $\ainf$-category. 
Consider a small full subcategory $\T$ of the derived category of right $\cc$-modules $D(\cc)$, not necessarily triangulated.
We wish to consider the notion of the {\it categorical formal completion} of $\cc$ along $\T$.
Most of the definitions and basic properties in this subsection are essentially due to \cite{efimov1},
but we also include some new discussions in the favorable case where $\T$ is an actual subcategory of $\cc$,
including Lemma \ref{lem: quasi-iso of pt with qt}, Corollary \ref{pullback of P is right yoneda}, Proposition \ref{prop: morphism in t = in zt} which will be useful for computing the morphism spaces in the categorical formal completion.
Also, we should point out that we are performing the categorical completions along {\it left} modules while \cite{efimov1} performs the construction along {\it right} modules.
This is a technical point, but will be of importance to our construction in Fukaya categories.

First, following \cite{efimov1} (noting the difference in left and right modules), we make the following definition.

\begin{defn}
Let $\cc$ be a small $\ainf$-category over $\K$,
and $\T$ a full small subcategory of $\tilde{D}(\cc)$.
Define $Z_{\T}$ to be the $\ainf$-category with $\ob Z_{\T} = \ob \T$,
and with morphisms
\begin{equation}
Z_{\T}(M, N) = \hom_{\cc \mod}(M, N).
\end{equation}
That is, $Z_{\T}$ is the full subcategory of $\cc \mod$ with objects in $\T$.

If $\T$ itself is a subcategory of $\cc \mod$, we define $Z_{\T} = \T$.
\end{defn}

The subcategory $\T$ naturally gives rise to a $(Z_{\T}, \cc^{op})$-bimodule $P_{\T}$ as follows.
For any $(M, X) \in \ob Z_{\T} \times \ob \cc^{op}$, put
\begin{equation}\label{bimodule p}
P_{\T}(M, X) = M(X).
\end{equation}
The $\ainf$-bimodule structure comes from the dg-structure on $Z_{\T}$ (as a full subcategory of $\cc \mod$) and the $\ainf$-structure on $\cc$, compatible with Yoneda embedding.
More concretely, this bimodule structure is related to the diagonal bimodule of the category $\cc \mod$ in the following way.
Let 
\[
(\id_{\cc \mod}, y^{r})^{*} (\cc \mod)_{\D} = (\cc \mod)_{\D}(-, y^{r}(-)) = \hom_{\cc \mod}(y^{r}(-), -)
\]
 be the graph $(\cc \mod, \cc^{op})$ bimodule of the right Yoneda embedding $y^{l}: \cc \to \cc \mod$.
If $\hat{i}: Z_{\T} \xhookrightarrow{} \cc \mod$ denotes the inclusion of the subcategory $Z_{\T}$ of $\cc \mod$,
then there is a canonical map of $(Z_{\T}, \cc^{op})$-bimodules 
\begin{equation}\label{equivalence of P to pullback diagonal}
P_{\T} \to (\hat{i}, \id_{\cc^{op}})^{*} (id_{\cc \mod}, y^{l})^{*} (\cc \mod)_{\D} = (\hat{i}, y^{l})^{*} (\cc \mod)_{\D},
\end{equation}
which on chain complexes consists of the canonical maps
\begin{equation}\label{generalized Yoneda map}
M(X) \to \hom_{\cc \mod}(Y^{l}_{X}, M),
\end{equation}
for $M \in \ob Z_{\T} \subset \ob \cc \mod, X \in \cc$.
Fixing $X$, $P_{\T}(-, X)$ defines a left $Z_{\T}$-module, or equivalently a right $Z_{\T}^{op}$-module.
Phrased in a more functorial way, we may say that the $(Z_{\T}, \cc^{op})$-bimodule $P_{\T}$ defines a module-valued functor
\begin{equation}\label{module-valued functor}
F_{\T} = F(P_{\T}): (\cc^{op})^{op} = \cc \to \rmod Z_{\T}^{op}.
\end{equation}

\begin{defn}\label{defn: formal completion}
The formal completion $\ct$ of $\cc$ along $\T$ is defined to be the $\ainf$-category with objects $\ob \ct = \ob \cc$,
and morphisms
\begin{equation}\label{morphism in completion}
\ct(X, Y) = \hom_{\rmod Z_{\T}^{op}}(P_{\T}(-, X), P_{\T}(-, Y)).
\end{equation}
That is, $\ct$ is the essential image of $\cc$ in $\rmod Z_{\T}^{op}$ under the module-valued functor $F_{\T}$ \eqref{module-valued functor}.
\end{defn}

Immediately by definition, there is a natural pushforward functor
\begin{equation}\label{pushforward to formal completion}
\iota_{\T}: \cc \to \ct
\end{equation}
which is identity on objects, and is essentially the functor $F_{\T}$.
On the level of morphism spaces, it may be written as
\[
\cc(X, Y) \stackrel{F_{\T}}\to \hom_{\rmod Z_{\T}^{op}}(P_{\T}(-, X), P_{\T}(-, Y)) = \ct(X, Y),
\]
which sends a morphism $c \in \cc(X, Y) = \cc^{op}(Y, X)$ to the canonical pre-module homomorphism $P_{\T}(-,X) \to P_{\T}(-,Y)$ (as right modules over $\cc^{op}$) by post-composing with $c$ using the right module action by $\cc^{op}$ via the bimodule maps $\mu^{k, 1}_{P_{\T}}$.
Higher order maps are defined in a similar way, giving rise to pre-module homomorphisms by post-composing with composable chains of morphisms in $\cc$ using the higher order bimodule structure maps.

\begin{lem}\label{invariance of formal completion}
If $\T_{1}, \T_{2}$ are two subcategories of $\tilde{D}(\cc)$ which split-generate each other, 
then the two formal completions $\widehat{\cc}_{\T_{1}}, \widehat{\cc}_{\T_{2}}$ are naturally quasi-equivalent.
\end{lem}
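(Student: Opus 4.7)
The plan is to construct a natural comparison $\ainf$-functor between the two completions and prove it is a quasi-equivalence via split-generation, paralleling Efimov's invariance result for the dg construction \cite{efimov1}.

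A standard reduction lets us focus on the case of a full inclusion $\T \subseteq \T'$ of subcategories of $\tilde{D}(\cc)$ with $\T'$ split-generated by $\T$. Indeed, setting $\T_{3} \subseteq \tilde{D}(\cc)$ to be the full subcategory with $\ob \T_{3} = \ob \T_{1} \cup \ob \T_{2}$, the mutual split-generation hypothesis gives inclusions $\T_{i} \subseteq \T_{3}$ with $\T_{3}$ split-generated by each $\T_{i}$, so the desired equivalence is obtained as the composition $\widehat{\cc}_{\T_{1}} \simeq \widehat{\cc}_{\T_{3}} \simeq \widehat{\cc}_{\T_{2}}$ of two instances of the reduced statement.

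For the reduced case, the inclusion $\T \hookrightarrow \T'$ induces a fully faithful $\ainf$-inclusion $\iota: Z_{\T} \hookrightarrow Z_{\T'}$ of full dg subcategories of $\cc \mod$, and restriction along $\iota$ yields a dg functor $R: \rmod Z_{\T'}^{op} \to \rmod Z_{\T}^{op}$ that sends $P_{\T'}(-, X)$ to $P_{\T}(-, X)$, since by \eqref{bimodule p} both modules are given by $M \mapsto M(X)$. This assembles into an $\ainf$-functor
\[
\Phi: \widehat{\cc}_{\T'} \to \widehat{\cc}_{\T}
\]
which is the identity on objects and, on morphism complexes, is the restriction
\[
\hom_{\rmod Z_{\T'}^{op}}(P_{\T'}(-, X), P_{\T'}(-, Y)) \to \hom_{\rmod Z_{\T}^{op}}(P_{\T}(-, X), P_{\T}(-, Y)).
\]
Since $\Phi$ is the identity on objects, the only content is that this restriction is a quasi-isomorphism.

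The main obstacle is this last claim, which we verify using the reformulation \eqref{equivalence of P to pullback diagonal} identifying each $P_{\T_{*}}(-, X)$ with the restriction of the representable $\cc \mod$-module $\hom_{\cc \mod}(y^{l}(X), -)$ along $\hat{i}_{*}: Z_{\T_{*}} \hookrightarrow \cc \mod$. The hypothesis that $\T$ split-generates $\T'$ translates to the statement that every $M \in \ob Z_{\T'}$ is a homotopy retract of a twisted complex $T(M)$ built from objects of $Z_{\T}$ inside $\cc \mod$. Using this, we construct an explicit homotopy inverse to the restriction map: starting from a pre-module homomorphism $\alpha$ on $Z_{\T}$, we define its extension to $Z_{\T'}$ by, for each $M \in \ob Z_{\T'}$, transporting the values of $\alpha$ on the components of $T(M)$ through the retraction $M \rightleftarrows T(M)$. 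This is well defined up to homotopy because the evaluation functor $N \mapsto N(X)$ on $\cc \mod$ is represented by $y^{l}(X)$, hence commutes with twisted complexes and retractions when passed to $\ch_{\K}$, and Lemmas \ref{composition with quasi-iso} and \ref{quasi-iso invertible} allow us to strictly invert the resulting chain-level quasi-isomorphisms. Promoting this construction from the cohomological level to the full $\ainf$ level is a standard inductive bookkeeping over the split-generation filtration expressing each object of $Z_{\T'}$ in terms of $Z_{\T}$, which yields the desired quasi-inverse and completes the proof.
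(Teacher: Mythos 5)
Your proof is correct but takes a genuinely different route from the paper's. The paper works directly with $\T_1$ and $\T_2$: it constructs a single $(Z_{\T_2},Z_{\T_1})$-bimodule $S$ with $S(M_2,M_1)=\hom_{\cc\mod}(M_1,M_2)$, uses convolution tensor product $S\otimes_{Z_{\T_1}}-$ to get the equivalence $\rmod Z_{\T_1}^{op}\to\rmod Z_{\T_2}^{op}$, and then must prove the nontrivial evaluation morphism $\tilde{ev}\colon S\otimes_{Z_{\T_1}}P_{\T_1}(-,X)\to P_{\T_2}(-,X)$ is a quasi-isomorphism (this is where split-generation enters). You instead reduce to a full inclusion $\T\subseteq\T'$ via the intermediate $\T_3$ and use restriction along $Z_\T^{op}\hookrightarrow Z_{\T'}^{op}$, which buys you the pleasant fact that $P_{\T'}(-,X)$ restricts \emph{on the nose} to $P_\T(-,X)$ with no quasi-isomorphism to verify; the entire content is shifted to showing the restriction on pre-module $\hom$-complexes is a quasi-isomorphism, which is Morita invariance of $\rmod$ along a split-generating full inclusion. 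So the paper pays at the level of the $P$-bimodules and you pay at the level of the $\hom$-complexes; both are legitimate places to invest the split-generation hypothesis. Two small remarks: your intermediate reduction is fine as stated, since $\T_1$ and $\T_2$ each split-generate $\T_3 = \T_1\cup\T_2$; and your final paragraph is doing real but routine work that would be cleaner to dispatch by simply citing Morita invariance of module categories for the $\ainf$-inclusion $Z_\T\hookrightarrow Z_{\T'}$ rather than re-deriving the retraction-and-twisted-complex bookkeeping by hand.
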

\begin{proof}
Define a $(Z_{\T_{2}}, Z_{\T_{1}})$-bimodule $S$, or equivalently a bimodule over $(Z_{\T_{1}}^{op}, Z_{\T_{2}}^{op})$ as follows.
Let $P_{\T_{1}}, P_{\T_{2}}$ be the bimodules defined as \eqref{bimodule p} for the two subcategories $\T_{1}$ and $\T_{2}$ respectively.
On the level of underlying cochain spaces, the bimodule $S$ takes values
\begin{equation}\label{bimodule S}
S(M_{2}, M_{1}) = \hom_{\cc \mod}(P_{\T_{1}}(M_{1}, -), P_{\T_{2}}(M_{2}, -)), M_{1} \in \ob Z_{\T_{1}}, M_{2} \in \ob Z_{\T_{2}},
\end{equation}
which carries the natural bimodule structure via the inclusions of full subcategories $Z_{\T_{i}} \xhookrightarrow{} \cc \mod$.
The convolution tensor product with $S$ induces an $\ainf$-functor 
\[
Z_{\T_{1}} \mod = \rmod Z_{\T_{1}}^{op} \to Z_{\T_{2}} \mod = \rmod Z_{\T_{2}}^{op}.
\]
Since $\T_{1}, \T_{2}$ split-generate each other, we have that $\perf Z_{\T_{1}}^{op} = \perf Z_{\T_{2}}^{op}$, 
which implies that $\rmod Z_{\T_{1}}^{op} \cong \rmod \perf Z_{\T_{1}}^{op}$ is quasi-equivalent to $\rmod Z_{\T_{2}}^{op} \cong \rmod \perf Z_{\T_{2}}^{op}$.
In fact, such an equivalence is induced by the convolution tensor product with $S$.

Now let us consider the convolution tensor product 
$S \otimes_{Z_{\T_{1}}} P_{\T_{1}} $
of the $(Z_{\T_{1}}, \cc)$-bimodule $P_{\T_{1}}$ with $S$.
By the definition of $S$ in \eqref{bimodule S}, there is a natural morphism of $(Z_{\T_{2}}, \cc)$-bimodules
\begin{equation}\label{ev bimodule}
ev: S \otimes_{Z_{\T_{1}}} P_{\T_{1}}  \to P_{\T_{2}},
\end{equation}
which for each $X \in \ob \cc$ specializes to a morphism of $Z_{\T_{2}}^{op}$-modules,
\[
(S \otimes_{Z_{\T_{1}}} P_{\T_{1}})(-, X) \to P_{\T_{2}}(-, X).
\]
By definition of convolution tensor product, there is a natural inclusion map
\[
S \otimes_{Z_{\T_{1}}} P_{\T_{1}}(-,X) \to (S \otimes_{Z_{\T_{1}}} P_{\T_{1}})(-, X),
\]
and therefore the composition of the above two morphisms
\begin{equation}\label{ev module}
\tilde{ev}: S \otimes_{Z_{\T_{1}}} P_{\T_{1}}(-,X) \to P_{\T_{2}}(-, X).
\end{equation}
It is not hard to see that \eqref{ev module} is a quasi-isomorphism,
because for every $M_{2} \in \ob Z_{\T_{2}}$, the $\cc$-module $P_{\T_{2}}(M_{2}, -) = M_{2}(-)$ is in the split closure of $\T_{1}$ by the assumption that $\T_{1}$ split-generates $\T_{2}$.
\end{proof}

The categorical formal completion can also be defined when $\T$ is a subcategory of $\cc$, or a subcategory of $\mathrm{Tw}\cc$, or $\perf \cc$.
We will be mostly interested in such cases in applications.

\begin{defn}
If $\T$ is a subcategory of $\cc$ (or $\r{Tw}\cc$ or $\perf \cc$), 
we define $Z_{\T}$ to be the image of $\T^{op}$ in $\cc \mod$ under the Yoneda embedding $\T^{op} \xhookrightarrow{} \cc^{op} \to \cc \mod$ (naturally extended over $\mathrm{Tw}\cc^{op}$ or $\perf \cc^{op}$).
That is, the objects of $Z_{\T}$ are left Yoneda modules $Y^{l}_{K}$ for every $K \in \ob \T^{op}$, 
and morphisms are
\[
Z_{\T}(Y^{l}_{K}, Y^{l}_{L}) = \hom_{\cc \mod}(Y^{l}_{K}, Y^{l}_{L})
\]
The bimodule $P_{\T}$ has the following underlying cochain space
\begin{equation}\label{bimodule p for t in c}
P_{\T}(Y^{l}_{K}, X) = Y^{l}_{K}(X) = \hom_{\cc}(K, X) = \hom_{\cc^{op}}(X, K),
\end{equation}
but the bimodule structure maps are different from those for the diagonal bimodule $\cc^{op}_{\D}$ of $\cc^{op}$,
and are induced by $\ainf$-structure maps $\mu^{k}_{\cc \mod}$ and $\mu^{k}_{\cc^{op}}$ of the categories $\cc \mod$ and $\cc^{op}$.

Given such $Z_{\T}$ and bimodule $P_{\T}$, define the formal completion $\ct$ in the same way as Definition \ref{defn: formal completion}.
\end{defn}

With this definition, Lemma \ref{invariance of formal completion} immediately implies:

\begin{cor}\label{cor: completion along perfect complexes}
Suppose $\T$ is a full subcategory of $\cc$ (or $\r{Tw}\cc$ or $\perf \cc$). 
Then there is a quasi-equivalence
\[
\ct \cong \widehat{\cc}_{\perf \T}.
\]
\end{cor}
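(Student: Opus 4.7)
The plan is to deduce the corollary from Lemma \ref{invariance of formal completion} by comparing the two associated auxiliary subcategories of $\cc\mod$. Recall that the formal completion $\ct$ is defined via the subcategory $Z_{\T} \subset \cc\mod$ obtained as the essential image of $\T^{op}$ under the left Yoneda embedding $y^{l}: \cc^{op} \to \cc\mod$, and $\widehat{\cc}_{\perf \T}$ is defined analogously via $Z_{\perf \T}$, the essential image of $\perf\T^{op}$ under the natural extension $\perf\cc^{op} \to \cc\mod$. Both $Z_{\T}$ and $Z_{\perf \T}$ are full subcategories of $\cc\mod$, so (passing to $H^{0}$) they also define full subcategories of $\tilde{D}(\cc)$, to which Lemma \ref{invariance of formal completion} will apply.

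Next I would verify the split-generation hypothesis. The inclusion $\T \subset \perf \T$ yields a tautological inclusion $Z_{\T} \subset Z_{\perf \T}$ in $\cc\mod$, so $Z_{\perf \T}$ split-generates $Z_{\T}$ in $\tilde{D}(\cc)$. For the reverse direction, I would invoke the fact that $y^{l}$ is cohomologically fully faithful and, once extended over $\perf \cc^{op}$, embeds perfect twisted complexes as genuine perfect modules in the pre-triangulated dg category $\cc\mod$. In particular each object of $Z_{\perf \T}$ is built by iterated cones and direct summands, up to quasi-isomorphism, from Yoneda modules associated to objects of $\T$; hence every object of $Z_{\perf \T}$ lies in the split-closed triangulated hull of $Z_{\T}$ inside $\tilde{D}(\cc)$, so $Z_{\T}$ also split-generates $Z_{\perf \T}$.

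Applying Lemma \ref{invariance of formal completion} with $\T_{1} = Z_{\T}$ and $\T_{2} = Z_{\perf \T}$ then produces a natural quasi-equivalence $\widehat{\cc}_{Z_{\T}} \cong \widehat{\cc}_{Z_{\perf \T}}$, which upon unwinding the two definitions is exactly the desired equivalence $\ct \cong \widehat{\cc}_{\perf \T}$. The only mildly subtle point, and the place where care is needed, is bookkeeping of variance: the left-versus-right module conventions must match the $\mathrm{op}$'s placed on $\T$ and $\perf\T$ so that the bimodules $P_{\T}$ and $P_{\perf \T}$ coincide with the bimodules $P_{Z_{\T}}$ and $P_{Z_{\perf \T}}$ produced by the general construction of Definition \ref{defn: formal completion}. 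Once that identification is checked, the quasi-isomorphism $\tilde{ev}$ from the proof of Lemma \ref{invariance of formal completion} applies verbatim and the remainder of the argument is formal.
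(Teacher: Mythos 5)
Your proof is correct and follows the same route as the paper: both reduce to Lemma \ref{invariance of formal completion} by showing that $Z_{\T}$ and $Z_{\perf\T}$ split-generate each other in $\cc\mod$, using that $\T$ is full in $\cc$ and that perfect complexes are built from $\T^{op}$ by cones and retracts under the (extended) Yoneda embedding. The paper's proof is just a terse version of the same argument (stating that the split-closure of $\T^{op}$ in $\cc\mod$ is $\perf\T^{op}$ and invoking $(\perf\T)^{op}\cong\perf\T^{op}$); your spelled-out verification of mutual split-generation is the unwinding of that remark.
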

\begin{proof}
Since $\T$ is a full subcategory of $\cc$, the split-closure of $\T^{op}$ in $\cc \mod$ under the Yoneda embedding $\T^{op} \xhookrightarrow{} \cc^{op} \to \cc \mod$ is $\perf \T^{op}$.
But for perfect complexes we have $(\perf \T)^{op} \cong \perf \T^{op}$.
\end{proof}

Even if $\T$ is a subcategory of $\cc$ (or $\r{Tw}\cc$ or $\perf \cc$), the bimodule $P_{\T}$ is different from but indeed related to the diagonal bimodule of $\cc^{op}$.
If $\T$ is a subcategory of $\cc$ (or $\r{Tw}\cc$ or $\perf \cc$) with inclusion $i: \T \xhookrightarrow{} \cc$ (or $\r{Tw}\cc$ or $\perf \cc$), 
then by definition the composition of the natural functor $j: \T^{op} \to Z_{\T}$ with the inclusion $\hat{i}: Z_{\T} \xhookrightarrow{} \rmod \cc$ is just the Yoneda embedding restricted to $\T^{op}$, 
\[
\hat{i} \circ j = y^{l}|_{\T^{op}}: \T^{op} \xhookrightarrow{} \cc^{op} (\text{or } \r{Tw}\cc^{op} \text{ or } \perf \cc^{op}) \to \cc \mod,
\]
which factors through $i^{op}$.
There is a graph $(\T^{op}, \cc^{op})$-bimodule for $i^{op}: \T^{op} \to \cc^{op}$ defined by
\begin{equation}\label{bimodule qt}
Q_{\T} = (i^{op}, \id_{\cc^{op}})^{*} \cc^{op}_{\D},
\end{equation}

The relation between the bimodule $P_{\T}$ and the diagonal bimodule $(\cc \mod)_{\D}$ is much simpler when $\T$ is a full subcategory of $\cc$.

\begin{lem}
Suppose $\T$ is a full subcategory of $\cc$.
Then the bimodule map $P_{\T} \to (\hat{i}, y^{l})^{*} (\cc \mod)_{\D})$ \eqref{equivalence of P to pullback diagonal} is a quasi-isomorphism.
\end{lem}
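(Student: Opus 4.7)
The plan is to recognize the bimodule map \eqref{equivalence of P to pullback diagonal} as the generalized $\ainf$-Yoneda map for left $\cc$-modules applied fiberwise, and then to invoke the $\ainf$-Yoneda lemma to conclude that it is a pointwise quasi-isomorphism of chain complexes. Since a morphism of $\ainf$-bimodules is a quasi-isomorphism precisely when it is a quasi-isomorphism at every pair of objects, this will give the desired conclusion.

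First I would unpack both sides at a fixed pair $(Y^l_K, X)$ with $K \in \ob \T$ and $X \in \ob \cc$. By \eqref{bimodule p for t in c} we have
\[
P_{\T}(Y^l_K, X) = Y^l_K(X) = \hom_{\cc}(K, X),
\]
while the target evaluates to $\hom_{\cc \mod}(Y^l_X, Y^l_K)$. The map in question is precisely \eqref{generalized Yoneda map}: it sends $a \in Y^l_K(X)$ to the pre-module homomorphism $Y^l_X \to Y^l_K$ obtained by post-composing with $a$ through the higher module structure maps $\mu^{1,k}_{Y^l_K}$. Thus, fiberwise over each $Y^l_K$, the map is the standard left Yoneda map for the module $M = Y^l_K$ at $X$.

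Next I would apply the $\ainf$-Yoneda lemma for left $\cc$-modules, which asserts that for any $M \in \cc \mod$ and $X \in \ob \cc$, the canonical map $M(X) \to \hom_{\cc \mod}(Y^l_X, M)$ is a quasi-isomorphism of chain complexes (see e.g.\ \cite{seidel_book}). Specializing to $M = Y^l_K$ for each $K \in \ob \T$ shows that \eqref{equivalence of P to pullback diagonal} is a quasi-isomorphism at every $(Y^l_K, X)$, which already yields the conclusion once we know the map is a morphism of $(Z_{\T}, \cc^{op})$-bimodules.

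Finally I would verify the bimodule compatibility. The right $\cc^{op}$-action on both sides is given by post-composition using the $\ainf$-operations $\mu^{k}_{\cc}$, while the left $Z_{\T}$-action is given by pre-composition with morphisms in $Z_{\T} \subset \cc \mod$; both actions are Yoneda-type compositions, and compatibility of \eqref{generalized Yoneda map} with all higher structure maps $\mu^{r,s}$ is a standard consequence of the naturality of the $\ainf$-Yoneda lemma. The main obstacle is exactly this bookkeeping in all orders $(r, s)$, but it reduces to the same functorial argument as in the chain-level Yoneda lemma. The fullness hypothesis on $\T$ is used to identify $Z_{\T}$ naturally with the Yoneda image of $\T^{op}$, so that the two bimodule structures match strictly on the nose rather than up to an intermediate quasi-isomorphism; this streamlines the compatibility check on the $Z_{\T}$-side.
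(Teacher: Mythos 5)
Your proof is correct and takes essentially the same route as the paper's one-line argument: identify the fiberwise map \eqref{generalized Yoneda map} with the (generalized) Yoneda chain map and invoke the $\ainf$-Yoneda lemma to get a pointwise quasi-isomorphism, hence a quasi-isomorphism of bimodules. The paper phrases it as recognizing the map as the chain map of the Yoneda functor $y^l$ once $M$ is a Yoneda module $Y^l_K$, while you invoke the Yoneda lemma for arbitrary $M$ and then specialize; the extra remarks you add on bimodule compatibility and the role of fullness are sound and just make explicit what the paper leaves implicit.
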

\begin{proof}
The canonical map $M(X) \to \hom_{\cc \mod}(Y^{l}_{X}, M)$ \eqref{generalized Yoneda map} is a precisely the chain map for the Yoneda functor if $M$ is a left Yoneda module.
\end{proof}

\begin{lem}\label{lem: quasi-iso of pt with qt}
Suppose $\T$ is a full subcategory of $\cc$.
There is a quasi-isomorphism of $(\T^{op}, \cc^{op})$-bimodules
\begin{equation}
(j, \id_{\cc})^{*} P_{\T} \stackrel{\sim}\to Q_{\T}.
\end{equation}
\end{lem}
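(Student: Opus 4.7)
The plan is to construct the quasi-isomorphism directly from the Yoneda lemma for left modules, in the same spirit as the preceding lemma. On underlying chain complexes, for each $(K, X) \in \ob \T^{op} \times \ob \cc^{op}$,
\[
(j, \id_{\cc})^{*} P_{\T}(K, X) \,=\, P_{\T}(Y^{l}_{K}, X) \,=\, Y^{l}_{K}(X),
\]
while
\[
Q_{\T}(K, X) \,=\, (\cc^{op})_{\D}(i^{op}(K), X) \,=\, \cc^{op}(K, X).
\]
The natural chain-level comparison $Y^{l}_{K}(X) \to \cc^{op}(K, X)$ is the defining Yoneda map, equivalently the composition of the canonical map \eqref{generalized Yoneda map} evaluated at $M = Y^{l}_{K}$ with the quasi-isomorphism $\hom_{\cc \mod}(Y^{l}_{X}, Y^{l}_{K}) \simeq \cc^{op}(K, X)$ inherited from full faithfulness of $y^{l}$. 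Either presentation shows that this is a chain-level quasi-isomorphism (in fact an isomorphism).

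What remains, and constitutes the real work, is to verify that this identification is compatible with the $\ainf$-bimodule structure maps on both sides. The right $\cc^{op}$-action on $(j, \id_{\cc})^{*} P_{\T}$ is, by the construction \eqref{bimodule p for t in c}, the $\ainf$-module action on the left Yoneda module $Y^{l}_{K}$, which is precisely composition in $\cc^{op}$; the right $\cc^{op}$-action on $Q_{\T}$ is composition in $\cc^{op}$ pulled back by the identity, so the two coincide under the chain-level identification. The left $\T^{op}$-action on $(j, \id_{\cc})^{*} P_{\T}$ comes from the pre-module homomorphism composition in $\cc \mod$ restricted along $\hat{i} \circ j = y^{l}|_{\T^{op}}$; since $y^{l}$ is a strict fully faithful $\ainf$-functor, this corresponds exactly to the pullback of $\ainf$-composition in $\cc^{op}$ via $i^{op}$, which is the left $\T^{op}$-action on $Q_{\T}$. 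The same reasoning propagates to all higher bimodule structure maps $\mu^{r, s}_{P_{\T}}$ and $\mu^{r, s}_{Q_{\T}}$, so that the map is a genuine bimodule morphism.

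The main obstacle is precisely this bookkeeping-style verification that the Yoneda identification intertwines each $\mu^{r, s}_{P_{\T}}$ with $\mu^{r, s}_{Q_{\T}}$; it is routine but somewhat intricate because of the two-sided nature of the structure maps and their formulation in terms of pre-module homomorphisms. Once established, quasi-isomorphy of $(j, \id_{\cc})^{*} P_{\T} \to Q_{\T}$ follows immediately from the chain-level Yoneda identification described above.
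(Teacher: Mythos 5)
Your chain-level identification of the underlying complexes is fine (modulo a conventions slip: with the paper's conventions $Q_{\T}(K, X) = \cc^{op}_{\D}(i^{op}(K), X) = \cc^{op}(X, K) = \cc(K, X)$, not $\cc^{op}(K, X)$), but the step you yourself flag as ``the real work'' is where the argument breaks. You justify the strict compatibility of the identification with all structure maps by asserting that $y^{l}$ is a \emph{strict} fully faithful $\ainf$-functor. This is false: the Yoneda embedding of an $\ainf$-category has nonvanishing higher-order components (they are built from the higher products $\mu^{k}_{\cc}$), and consequently the pullback $(j, \id_{\cc})^{*} P_{\T}$ carries structure maps involving those higher components of $j$, which do \emph{not} coincide term-by-term with the structure maps of $Q_{\T} = (i^{op}, \id_{\cc^{op}})^{*}\cc^{op}_{\D}$. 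The paper even stresses this point right after \eqref{bimodule p for t in c}: the bimodule structure maps of $P_{\T}$ are different from those of the diagonal bimodule $\cc^{op}_{\D}$. So the map in the lemma is not the identity-like strict isomorphism you describe; it is a bimodule morphism with genuinely nontrivial higher-order terms, and a direct ``bookkeeping'' verification along the lines you sketch would not close.

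The mechanism that actually carries the higher-order compatibility is the Yoneda lemma at the level of bimodules, and this is how the paper argues: pull back the already-established quasi-isomorphism \eqref{equivalence of P to pullback diagonal} along $(j, \id_{\cc})$ to get $(j, \id_{\cc})^{*}P_{\T} \stackrel{\sim}\to (y^{l}|_{\T^{op}}, y^{l})^{*}(\cc\mod)_{\D}$, then use the bimodule Yoneda quasi-isomorphism $\cc^{op}_{\D} \stackrel{\sim}\to (y^{l}, y^{l})^{*}(\cc\mod)_{\D}$ and compose with a homotopy inverse of it (this exists by Lemma \ref{quasi-iso invertible}; note also that the natural Yoneda comparison points in the opposite direction to the map you want, so taking a homotopy inverse is unavoidable, a point your appeal to ``full faithfulness'' glosses over). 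Restricting that homotopy inverse along $i^{op}$ and recalling the definition \eqref{bimodule qt} of $Q_{\T}$ gives the desired quasi-isomorphism. If you want to keep your more hands-on approach, you would in effect be reproving the bimodule Yoneda lemma, including writing down the higher components of the comparison morphism explicitly; the claim that no such higher components are needed is the gap.
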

\begin{proof}
By the quasi-isomorphism \eqref{equivalence of P to pullback diagonal}, there is a quasi-isomorphism of $(\T, \cc)$-bimodule
\[
(j, \id_{\cc})^{*} P_{\T} \stackrel{\sim}\to (y^{l}|_{\T^{op}}, y^{l})^{*}(\cc \mod)_{\D}.
\]
Yoneda lemma asserts that there is a quasi-isomorphism of $(\cc^{op}, \cc^{op})$-bimodules from $\cc^{op}_{\D}$ to the pullback
\[
(y^{l}, y^{l})^{*}(\cc \mod)_{\D}.
\]
By the definition \eqref{bimodule qt}, the lemma follows by choosing a homotopy inverse of $\cc^{op}_{\D} \stackrel{\sim}\to (y^{l}, y^{l})^{*}(\cc \mod)_{\D}$.
\end{proof}

%need correction
\begin{cor}\label{pullback of P is right yoneda}
Suppose $\T$ is a full subcategory of $\cc$.
For each $X \in \ob \cc$, there is a quasi-isomorphism of left $\T^{op}$-modules, or equivalently right $\T$-modules
\begin{equation}\label{quasi-iso of t modules}
 j^{*} P_{\T}(-, X) \stackrel{\sim}\to i^{*}Y^{r}_{X},
\end{equation}
where $i: \T \to \cc$ is the inclusion, $j: \T^{op} \to Z_{\T}$ is the image of the left Yoneda functor $\T^{op} \to \cc^{op} \to \cc \mod$.
\end{cor}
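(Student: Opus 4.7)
The plan is to deduce this as an immediate specialization of the bimodule quasi-isomorphism from Lemma \ref{lem: quasi-iso of pt with qt}. Since that lemma is already established in the excerpt, the work here reduces to unpacking definitions and observing that the construction is natural in the second variable.

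Concretely, I would proceed as follows. First, I would apply Lemma \ref{lem: quasi-iso of pt with qt} to obtain the quasi-isomorphism of $(\T^{op}, \cc^{op})$-bimodules
\[
(j, \id_{\cc})^* P_{\T} \stackrel{\sim}\to Q_{\T}.
\]
Next, I would restrict both sides to the second argument $X \in \ob \cc$, obtaining a map of left $\T^{op}$-modules (equivalently right $\T$-modules)
\[
j^{*} P_{\T}(-, X) \to Q_{\T}(-, X).
\]
The fact that this restricted map is still a quasi-isomorphism is automatic: a bimodule quasi-isomorphism is precisely a collection of chain quasi-isomorphisms on each pair of objects, compatible with the bimodule action, and hence restriction at any fixed object yields a module quasi-isomorphism.

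The last step is to identify the right-hand side with $i^{*} Y^{r}_{X}$. By the definition \eqref{bimodule qt}, $Q_{\T}(K, X) = \cc^{op}(i^{op}(K), X)$ with $\ainf$-bimodule structure pulled back from the diagonal bimodule of $\cc^{op}$. For $K \in \ob \T^{op}$ this equals $\cc(X, K) = Y^{r}_{X}(K) = i^{*} Y^{r}_{X}(K)$, and the left $\T^{op}$-module structure (equivalently right $\T$-module structure) induced by the diagonal bimodule of $\cc^{op}$ is by construction the one coming from the right Yoneda module pulled back along $i$. Composing these two identifications yields the desired quasi-isomorphism \eqref{quasi-iso of t modules}.

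There is no substantial obstacle here; the only subtlety to verify carefully is that the various side-conventions for left versus right modules match up consistently: $P_{\T}$ is built as a $(Z_{\T}, \cc^{op})$-bimodule, so after pullback along $j$ on the left its first-variable structure is a left $\T^{op}$-module structure, matching that of $i^{*} Y^{r}_{X}$. This bookkeeping is the only place where one must be attentive, but it causes no real difficulty once Lemma \ref{lem: quasi-iso of pt with qt} is in hand.
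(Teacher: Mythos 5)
Your proof is correct and takes essentially the same route as the paper's: apply Lemma~\ref{lem: quasi-iso of pt with qt} to get the bimodule quasi-isomorphism $(j, \id_{\cc})^{*} P_{\T} \to Q_{\T}$, fix the second variable $X$, and identify $Q_{\T}(-, X)$ with $i^{*}Y^{r}_{X}$. The paper's one-line proof writes out only the last step (the computation $Q_{\T}(-, X) = i^{*}Y^{r}_{X}$) and leaves the application of Lemma~\ref{lem: quasi-iso of pt with qt} and the restriction to a fixed second variable implicit, so you are slightly more explicit but not doing anything different.

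One small bookkeeping slip in the middle of your computation, which happens to cancel out: from \eqref{bimodule qt} one gets $Q_{\T}(K, X) = \cc^{op}_{\D}(i^{op}(K), X)$, and with the paper's convention for the diagonal bimodule this unwinds as $\cc^{op}(X, i^{op}(K)) = \cc(i(K), X) = \cc(K, X) = Y^{r}_{X}(K)$. You instead wrote the intermediate object as $\cc^{op}(i^{op}(K), X) = \cc(X, K)$ and then identified this with $Y^{r}_{X}(K)$; that last identification is backwards, since $Y^{r}_{X}(K) = \cc(K, X)$, not $\cc(X, K)$. The two transpositions cancel, so you land on the right module in the end, but as written the chain of equalities does not hold term by term.
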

\begin{proof}
A direct computation shows 
\[
Q_{\T}(-, X) = (i^{op}, \id_{\cc^{op}})^{*}\cc^{op}_{\D}(-, X) = \cc^{op}_{\D}(i^{op}(-), X) = \cc^{op}(X, i^{op}(-)) = \cc(i(-), X) = i^{*}Y^{r}_{X}.
\]
\end{proof}

This has the following consequence on computing morphism spaces in $\ct$:

\begin{prop}\label{prop: morphism in t = in zt}
Suppose $\T$ is a full subcategory of $\cc$.
Then for every $X, Y \in \ob \cc = \ob \ct$, there is a quasi-isomorphism of chain complexes
\begin{equation}
\hom_{\rmod \T}(i^{*}Y^{r}_{X}, i^{*}Y^{r}_{Y}) \to \hom_{\rmod Z_{\T}^{op}}(P_{\T}(-, X), P_{\T}(-, Y)).
\end{equation}
Moreover, this quasi-isomorphism is compatible with the product structure on the dg categories $\rmod \T$ and $\rmod Z_{\T}^{op}$.
By the interpretation of $\hom$ in the functor category as Hochschild cohomology cochain complexes given in \eqref{fun as cc}, this is a quasi-isomorphism
\begin{equation}\label{pullback of cc}
\r{CC}^{*}(\T^{op}, \hom_{\K}(i^{*}Y^{r}_{X}, i^{*}Y^{r}_{Y})) \stackrel{\sim}\to \r{CC}^{*}(Z_{\T}, \hom_{\K}(P_{\T}(-, X), P_{\T}(-, Y))),
\end{equation}
which is compatible with the Yoneda products on Hochschild cohomology cochain complexes.
In particular, this provides a (partially-defined) quasi-equivalence from a subcategory of $\rmod \T$ having objects being pullbacks of Yoneda modules associated to objects of $\cc$ to $\rmod Z_{\T}^{op}$ with vanishing higher order terms.
\end{prop}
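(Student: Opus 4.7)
The heart of the argument is the observation that the tautological functor $j \co \T^{op} \to Z_\T$, sending an object $K$ of $\T^{op}$ to the left Yoneda module $Y^l_K \in Z_\T \subset \cc\mod$, is an $\ainf$ quasi-equivalence. It is bijective on objects by the very definition of $Z_\T$ (whose objects are exactly the Yoneda modules $Y^l_K$ for $K \in \ob \T$), and it is cohomologically fully faithful by the Yoneda lemma applied to the full subcategory $\T^{op} \subset \cc^{op}$: the chain map
\[
\T^{op}(K,L) = \cc(L,K) \longrightarrow \hom_{\cc\mod}(Y^l_K, Y^l_L) = Z_\T(j(K), j(L))
\]
is the restriction of the left Yoneda embedding $y^l \co \cc^{op} \to \cc\mod$, hence a quasi-isomorphism.

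Granting this, pullback along the opposite functor $j^{op} \co \T \to Z_\T^{op}$ yields a quasi-equivalence of dg module categories $(j^{op})^* \co \rmod Z_\T^{op} \stackrel{\sim}\to \rmod \T$, by the standard properties of $\ainf$-modules recorded in Lemmas \ref{composition with quasi-iso} and \ref{quasi-iso invertible}. In particular, for any pair of right $Z_\T^{op}$-modules $M, N$, pullback induces a quasi-isomorphism $\hom_{\rmod Z_\T^{op}}(M, N) \stackrel{\sim}\to \hom_{\rmod \T}((j^{op})^*M, (j^{op})^*N)$. Specializing $M = P_\T(-, X)$ and $N = P_\T(-, Y)$, and combining with the quasi-isomorphisms $(j^{op})^* P_\T(-, Z) \stackrel{\sim}\to i^* Y^r_Z$ of Corollary \ref{pullback of P is right yoneda} (which are invertible up to homotopy by Lemma \ref{quasi-iso invertible}), postcomposition with the chosen homotopy inverses produces the required quasi-isomorphism in the stated direction.

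Compatibility with the composition (and hence Yoneda product) structure is immediate: $(j^{op})^*$ is a strict dg functor, and Lemma \ref{composition with quasi-iso} ensures that composing with the chosen homotopy inverses of $(j^{op})^* P_\T(-, Z) \stackrel{\sim}\to i^* Y^r_Z$ remains a quasi-isomorphism, so all product structures are preserved up to quasi-isomorphism (which is all that can be expected since a dg-quasi-inverse is generally only defined up to coherent homotopy). The Hochschild cochain reformulation \eqref{pullback of cc} is then formal: by \eqref{fun as cc} applied to the functor categories $\rmod \T = \fun(\T^{op}, \ch_\K)$ and $\rmod Z_\T^{op} = \fun(Z_\T, \ch_\K)$, both morphism complexes are identified with Hochschild cochain complexes with coefficients in the pullback of the diagonal bimodule of $\ch_\K$, which evaluates at a pair of modules to their linear hom. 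The only real obstacle is bookkeeping: tracking the variances and the several $op$'s carefully enough that the Yoneda-lemma quasi-isomorphism lands in the precise module category in which Corollary \ref{pullback of P is right yoneda} is phrased, and organizing the homotopy inverse so that the final map in the stated direction is canonical up to contractible choice, accounting also for the last sentence's remark that the construction provides only a partial quasi-equivalence with vanishing higher-order terms on the subcategory of modules pulled back from Yoneda modules.
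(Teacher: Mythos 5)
Your proposal is correct and follows essentially the same route as the paper: both proofs rest on the observation that $j\colon\T^{op}\to Z_\T$ is a quasi-equivalence (by the Yoneda lemma for the full subcategory $\T\subset\cc$), then invoke Corollary \ref{pullback of P is right yoneda} to match $j^*P_\T(-,Z)$ with $i^*Y^r_Z$, and conclude by passing to Hochschild cochain complexes (equivalently, to $\hom$ in the module categories via \eqref{fun as cc}), inverting one leg of the resulting zigzag via Lemma \ref{quasi-iso invertible}. The only cosmetic difference is that you phrase the key step in terms of $(j^{op})^*$ being a quasi-equivalence of module dg categories, whereas the paper phrases it directly as a pullback on Hochschild complexes along the quasi-equivalence $j$ together with a quasi-isomorphism of coefficient bimodules; these are the same statement read through \eqref{fun as cc}, and both arguments handle the direction of the map and the compatibility with products (via strictness of the pullback dg functor) in the same way.
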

\begin{proof}
Since $\T$ is a full subcategory of $\cc$, the functor $j: \T^{op} \to Z_{\T}$ is a quasi-equivalence.
By Corollary \ref{pullback of P is right yoneda}, there is a quasi-isomorphism of $(\T, \T)$-bimodules
\begin{equation}
\hom_{\K}(i^{*}Y^{r}_{X}, i^{*}Y^{r}_{Y}) \stackrel{\sim}\to \hom_{\K}(j^{*} P_{\T}(-, X), j^{*} P_{\T}(-, Y)) = (j, j)^{*} \hom_{\K}(P_{\T}(-, X), P_{\T}(-, Y)).
\end{equation}
Thus, the quasi-isomorphism \eqref{pullback of cc} follows from the general property of Hochschild cohomology,
and it respects the Yoneda products.
\end{proof}

If $\T$ is a subcategory of $\cc$, there is also a functor
\begin{equation}\label{k functor}
\kappa: \T \to \ct
\end{equation}
which is the restriction of $\iota_{\T}$ \eqref{pushforward to formal completion} to $\T$.
In this case, the following lemma is a direct consequence of a computation of morphism spaces, with the help of Yoneda lemma.

\begin{lem}\label{property of k functor}
Suppose $\T$ is a full subcategory of $\cc$.
Then the functor $\kappa: \T \to \ct$ is fully faithful.
\end{lem}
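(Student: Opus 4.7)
The plan is to unwind the morphism spaces in $\ct$ for objects of $\T$ and reduce to the Yoneda lemma applied to $\T$ itself. By definition, for $K, L \in \ob \T \subset \ob \cc$, the functor $\kappa$ on morphisms is the map
\[
\T(K, L) = \cc(K, L) \xrightarrow{F_{\T}} \hom_{\rmod Z_{\T}^{op}}(P_{\T}(-, K), P_{\T}(-, L)) = \ct(K, L),
\]
where the right-hand side is obtained by post-composition with the bimodule action of $\cc^{op}$ on $P_{\T}$. I want to identify this map, up to quasi-isomorphism, with the Yoneda map $\T(K, L) \to \hom_{\rmod \T}(Y^{r}_{K}, Y^{r}_{L})$, which is a quasi-isomorphism by the ordinary Yoneda lemma for $\T$.

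First, I will invoke Proposition \ref{prop: morphism in t = in zt} to get a quasi-isomorphism
\[
\hom_{\rmod \T}(i^{*}Y^{r}_{K}, i^{*}Y^{r}_{L}) \stackrel{\sim}\to \hom_{\rmod Z_{\T}^{op}}(P_{\T}(-, K), P_{\T}(-, L)) = \ct(K, L).
\]
Next, since $\T$ is a \emph{full} subcategory of $\cc$, for any $K \in \ob \T$ the pullback $i^{*}Y^{r}_{K}$ is literally the Yoneda $\T$-module $Y^{r}_{K}$ of $K$ in $\rmod \T$, because $i^{*}Y^{r}_{K}(K') = \cc(K', K) = \T(K', K)$ for all $K' \in \ob \T$, with module structure inherited from $\T \subset \cc$. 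Thus by the Yoneda lemma applied to $\T$, there is a quasi-isomorphism
\[
\T(K, L) \stackrel{\sim}\to \hom_{\rmod \T}(Y^{r}_{K}, Y^{r}_{L}) = \hom_{\rmod \T}(i^{*}Y^{r}_{K}, i^{*}Y^{r}_{L}).
\]
Composing these two quasi-isomorphisms produces a quasi-isomorphism $\T(K, L) \to \ct(K, L)$.

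It remains to verify that this composition agrees (up to homotopy) with $\kappa$. This is a matter of tracing through the explicit form of the maps in Corollary \ref{pullback of P is right yoneda} and the construction in Proposition \ref{prop: morphism in t = in zt}: both the Yoneda map into $\hom_{\rmod \T}(i^*Y^r_K, i^*Y^r_L)$ and the map $F_{\T}$ are defined by post-composition with the relevant module-action structure maps, and the bimodule quasi-isomorphism $j^{*}P_{\T}(-, X) \stackrel{\sim}\to i^{*}Y^{r}_{X}$ is compatible with these actions by construction. The main (and only real) obstacle is bookkeeping the higher-order $\ainf$-terms in $F_{\T}$ against those in the Yoneda embedding; this is handled by the last assertion of Proposition \ref{prop: morphism in t = in zt}, which precisely states that the comparison is compatible with the Yoneda products on the respective Hochschild complexes, with vanishing higher-order terms on the $\rmod \T$ side. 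Combining these identifications, $\kappa$ on $\T$ is a cohomological isomorphism on morphism spaces, which is the definition of fully faithful used throughout the paper.
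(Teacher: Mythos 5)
Your proof is essentially the same as the paper's: both reduce to the Yoneda lemma for $\T$ via Proposition~\ref{prop: morphism in t = in zt} and Corollary~\ref{pullback of P is right yoneda}, using fullness of $\T$ in $\cc$ to identify $i^{*}Y^{r}_{K}$ with $Y^{r}_{K}$. The only difference is that you add an explicit (and welcome) verification that the composed quasi-isomorphism actually agrees with $\kappa$, a point the paper leaves implicit in its closing sentence.
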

\begin{proof}
When $\T$ is a subcategory of $\cc$, the category $Z_{\T}$ is the image of $\T^{op} \xhookrightarrow{} \cc^{op} \to \cc \mod$,
and the natural functor $j: \T^{op} \to Z_{\T}$ is a quasi-equivalence.
By Corollary \ref{pullback of P is right yoneda}, the pullback $j^{*}P_{\T}(-,K)$ is quasi-isomorphic to $i^{*}Y^{r}_{K} = Y^{r}_{K}$ for $K \in \ob \T \subset \ob \cc$.
Thus, there are canonical quasi-isomorphisms
\begin{equation}
\cc(K, L) = \T(K, L) \stackrel{\sim}\to \hom_{\rmod \T}(Y^{r}_{K}, Y^{r}_{L}) \stackrel{\sim}\to \hom_{\rmod Z_{\T}^{op}}(P_{\T}(-, K), P_{\T}(-, L))
\end{equation}
for every $K, L \in \ob \T$,
where the first quasi-isomorphism is by Yoneda lemma for $\T$, and the second quasi-isomorphism is given Proposition \ref{prop: morphism in t = in zt}.
This implies that $\kappa$ is fully faithful.
\end{proof}

Let us give two concrete examples to illustrate the above definitions, as a conclusion to this subsection.

\begin{example}
Consider an $\ainf$-category with a single object $X$ whose endomorphism space is an $\ainf$-algebra $\mathcal{A}$. 
Let $\mathcal{M}$ be a left $\ainf$-module over $\mathcal{A}$, which gives rise to a subcategory $\T$ of the dg category of right modules over the given $\ainf$-category, or equivalently the dg category of left modules over $\mathcal{A}$, 
which has a single object $\mathcal{M}$.
In this case,
\begin{equation}
Z_{\T} = Z_{\mathcal{M}} := \hom_{\mathcal{A} \mod}(\mathcal{M}, \mathcal{M}) = \prod_{k \ge 0} \hom_{\K}(\mathcal{A}^{\otimes k} \otimes  \mathcal{M},  \mathcal{M})[-k].
\end{equation}
This is a dg algebra, and there is an obvious projection morphism $Z_{\mathcal{M}} \to \hom_{\K}(\mathcal{M}, \mathcal{M})$ of dg algebras,
making $\mathcal{M}$ naturally a right module over $Z_{\mathcal{M}}^{op}$.
The formal completion is given by the following formula
\begin{equation}
\widehat{\mathcal{A}}_{\T} = \widehat{\mathcal{A}}_{\mathcal{M}} := \hom_{\rmod Z_{\mathcal{M}}^{op}}(\mathcal{M}, \mathcal{M}).
\end{equation}
\end{example}

\begin{example}[A corollary of Theorem 1.1 of \cite{efimov1}]
Let $\cc = \perf(X)$ be the dg category of perfect complexes over a smooth and proper scheme $X$ over a field $\K$,
and $\T = \perf_{Y}(X)$ be the subcategory of perfect complexes with cohomology supported on a closed subscheme $Y \subset X$.
Define the formal neighborhood $\widehat{X}_{Y}$ of $Y$ in $X$, as well as the dg category of algebraizable perfect complexes $\perf_{alg}(\widehat{X}_{Y})$ by taking the homotopy limit over the $n$-th infinitesimal neighborhood $Y_{n}$ of $Y$ with respect to the obvious inclusions $i_{n, n+1}: Y_{n} \to Y_{n+1}$ and $i_{n}: Y_{n} \to X$.
In this case, there is a natural quasi-equivalence
\begin{equation}
 \perf_{alg}(\widehat{X}_{Y}) \stackrel{\sim}\to \ct = \widehat{\perf(X)}_{\perf_{Y}(X)}.
\end{equation}
\end{example}

\subsection{Relation to the categorical formal punctured neighborhood}\label{section: relation to formal punctured neighborhood}

There is another categorical construction, called the {\it categorical formal punctured neighborhood} of an $\ainf$-category,
also due to Efimov \cite{efimov2}.
To give the definition, we first recall that the dg-category of Calkin complexes over $\K$, $\calk_{\K}$,
is defined to be the dg-quotient of all chain complexes over $\K$ by perfect chain complexes:
\begin{equation}
\calk_{\K} = \ch_{\K} / \perf \K.
\end{equation}
It comes with a natural projection $\ch_{\K} \to \calk_{\K}$,
so that for any $\ainf$-category $\B$, 
the Yoneda functor $y: \B \to \fun(\B^{op}, \ch_{\K})$ further gives rise to a functor $\bar{y}: \B \to \fun(\B^{op}, \calk_{\K})$.
More generally, we can define
\begin{equation}
\calk_{\B} = \rmod \B / y(\B)
\end{equation}
the quotient of $\rmod \B$ by the subcategory of representable modules, i.e. the image of the Yoneda functor $y: \B \to \rmod \B$.
Its idempotent split-closure is denoted by $\overline{\calk}_{\B}$.

\begin{defn}
The categorical formal punctured neighborhood of a small $\ainf$-category $\B$, denoted by $\binf$,
is the essential image of the functor $\bar{y}: \B \to \fun(\B^{op}, \calk_{\K})$.
\end{defn}

For a smooth $\B$, this can be refined as follows. 
We first define a functor $F_{\B}: \fun(\B^{op}, \calk_{\K}) \to \overline{\calk}_{\B}$ to be the composition
\begin{equation}\label{functor to calkin modules}
F_{\B}: \fun(\B^{op}, \calk_{\K}) \to \perf(\B \otimes \calk_{\K}) \to \overline{\calk}_{\B},
\end{equation}
where smoothness ensures that the first functor $\fun(\B^{op}, \calk_{\K}) \to \perf(\B \otimes \calk_{\K})$ is a fully faithful embedding.
By definition the induced Yoneda functor $\bar{y}: \B \to \fun(\B^{op}, \calk_{\K})$ has image in $\ker(F_{\B})$.
Then we define $\binf$ to be the essential image of $\bar{y}: \B \to \ker(F_{\B})$.

In the following situation, the notion of the formal completion along a subcategory is related to the formal punctured neighborhood of infinity.
Consider a short exact sequence of small $\ainf$-categories
\begin{equation}\label{starting ses}
\T \xhookrightarrow{} \cc \to \B.
\end{equation}
To describe the relation between $\ct$ and $\binf$, we first give a more explicit description of $\binf$ in the presence of the short exact sequence \eqref{starting ses}.
The pullback functor on the categories of right modules $\rmod \cc \to \rmod \T$ preserves proper modules, thus induces a functor
\begin{equation}\label{prop c to prop t}
r_{\T}: \r{Prop} \cc = \perf \cc \to \r{Prop} \T,
\end{equation}
where $\r{Prop} \cc = \perf \cc$ because $\cc$ is smooth and proper. 
In particular, by restricting to $\cc$ we get a functor $\cc \to \perf \cc \to \r{Prop} \T$.
Since $\T$ is proper, every perfect complex defines a proper module, i.e. $\perf \T$ embeds in $\r{Prop} \T$ under the Yoneda embedding, so we have a fully faithful embedding $\T \xhookrightarrow{} \perf \T \xhookrightarrow{} \r{Prop} \T$.
Then we take the quotient by $\T$ to get a functor
\begin{equation}
\bar{r}_{\T}: \cc / \T \cong \B \to \r{Prop} \T / y(\T)
\end{equation}
where the first quasi-equivalence comes from the short exact sequence \eqref{starting ses}.
Since $y(\T)$ split-generates $\perf \T$, the projection $\r{Prop} \T / y(\T) \to \r{Prop} \T / \perf \T$ is a quasi-equivalence,
so that the above functor can be replaced by the following one
\begin{equation}
\bar{r}_{\T}: \B \to \r{Prop} \T / \perf \T.
\end{equation}

\begin{lem}\label{lem: binf as prop t}
There is a quasi-equivalence 
\begin{equation}\label{binf as prop t}
\binf \stackrel{\sim}\to ess-im(\bar{r}_{\T}: \B \to \r{Prop} \T / \perf \T).
\end{equation}
\end{lem}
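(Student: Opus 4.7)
The plan is to construct a canonical $\ainf$-functor from $\binf$ to the essential image of $\bar{r}_\T$, and to verify it is a quasi-equivalence by checking fully faithfulness on morphism complexes; essential surjectivity is built in, since both categories are by definition essential images of $\B$. At the level of objects, for $X \in \B$ I would choose a lift $\tilde X \in \cc$ under the quotient $\cc \to \B$; its right Yoneda module $Y^r_{\tilde X}$ restricts along $i : \T \hookrightarrow \cc$ to a proper $\T$-module $i^* Y^r_{\tilde X}$. Since any two lifts of $X$ differ by an object of $\T$ whose restricted Yoneda module lies in $y(\T) \subset \perf\T$, the class $[i^* Y^r_{\tilde X}] \in \r{Prop}\T/\perf\T$ is independent of the lift and agrees with $\bar r_\T(X)$; the canonical map from $\bar y(X) \in \fun(\B^{op}, \calk_\K)$ to this class is induced by pulling back $\B$-presheaves to $\T$-presheaves and collapsing the Calkin-quotient.

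The core of the argument is the identification of morphism complexes. For $X, Y \in \B$ with chosen lifts $\tilde X, \tilde Y \in \cc$, the Drinfeld-quotient presentation of $\B \simeq \cc/\T$ expresses $\B(X, Y)$ via a bar complex fitting in a natural triangle with $\cc(\tilde X, \tilde Y)$ and $\hom_{\rmod \T}(i^* Y^r_{\tilde X}, i^* Y^r_{\tilde Y})$. On the $\binf$-side the morphism complex is $\B(X, Y)$ pushed into $\calk_\K$, which by smoothness of $\B$ factors through the kernel of $F_\B$ and collapses the $\K$-perfect contributions; on the $\r{Prop}\T/\perf\T$-side the morphism complex is $\hom_{\rmod\T}(i^* Y^r_{\tilde X}, i^* Y^r_{\tilde Y})$ modulo maps factoring through $\perf\T$, which, via the $\T$-module Yoneda lemma, corresponds precisely to contributions absorbed from $\cc(\tilde X, \tilde Y)$. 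Matching these two quotients yields the required quasi-isomorphism.

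The main obstacle is carrying this identification out $\ainf$-coherently, respecting higher compositions and not just cohomology. The cleanest way to organize this is to repackage both sides as essential images of module-valued functors built from the short exact sequence $\T \hookrightarrow \cc \twoheadrightarrow \B$, using the $\ainf$-bimodule formalism of Section \ref{section: formal completion}, and then to invoke a variant of Proposition \ref{prop: morphism in t = in zt} adapted from the completion to the quotient setting to extract the quasi-isomorphism on morphism complexes together with compatibility with compositions in a single step. A secondary subtlety is checking that the refined definition $\binf = \text{ess-im}(\bar y : \B \to \ker F_\B)$, which relies on smoothness, is quasi-equivalent to the unrefined $\fun(\B^{op}, \calk_\K)$-version when restricted to the relevant essential image; this is by construction of the refinement.
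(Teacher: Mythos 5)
The paper's own proof is short and essentially a citation: it applies $\overline{\calk}$ and $\fun(-, \calk_{\K})$ to the short exact sequence $\T \hookrightarrow \cc \to \B$, traces the resulting commutative diagram, and invokes Theorem 3.2 of Efimov's formal-punctured-neighborhood paper, which directly asserts the quasi-equivalence $\ker(F_\B) \stackrel{\sim}{\to} \overline{\r{Prop}\T/\perf\T}$; the claimed equivalence then follows by compatibility with the two natural functors out of $\B$. You instead set out to reprove this identification from scratch, which is a genuinely different and much more ambitious route.

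The main concrete gap is in your description of the $\binf$ morphism complex. You write that ``the morphism complex is $\B(X,Y)$ pushed into $\calk_\K$,'' but this is not what the definition gives. By \eqref{fun as cc}, morphism spaces in $\fun(\B^{op}, \calk_\K)$ are Hochschild-type cochain complexes $\r{CC}^*(\B^{op}, \hom_{\calk_\K}(\bar y(X), \bar y(Y)))$, i.e.\ infinite products over tuples of objects of $\B$, not the image of the single complex $\B(X,Y)$ under $\ch_\K \to \calk_\K$. Reducing this Hochschild complex to a small cone involving the inverse dualizing bimodule is exactly what smoothness buys you, but establishing that reduction is itself most of the content of the Efimov theorem the paper cites; your sketch assumes it rather than proving it. Similarly, the ``triangle'' relating $\B(X,Y)$, $\cc(\tilde X, \tilde Y)$, and $\hom_{\rmod\T}(i^*Y^r_{\tilde X}, i^*Y^r_{\tilde Y})$ is plausible but is not pinned down precisely (the Drinfeld-quotient morphism complex is not literally a two-term cone), and the ``matching of quotients'' and the $\ainf$-coherence are explicitly deferred to a hypothetical variant of Proposition \ref{prop: morphism in t = in zt}, which the paper does not supply in the quotient setting. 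As written, then, the proposal is an outline of what a direct proof would need to do, but it does not constitute a proof, and the one concrete identification it does assert is incorrect; the shorter route taken in the paper, citing \cite{efimov2} and tracing the diagram obtained by Calkinizing the short exact sequence, avoids all of these issues.
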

\begin{proof}
By applying $\overline{\calk}$ and $\fun(-, \calk_{\K})$ to the categories in the short exact sequence, we obtain a commutative diagram.
By tracing the commutative diagram, Theorem 3.2 of \cite{efimov2} asserts that there a quasi-equivalence 
\[
 \ker(F_{\B})  \stackrel{\sim}\to \overline{\r{Prop} \T / \perf \T},
\]
where $F_{B}$ is as in \eqref{functor to calkin modules}, and $\overline{\r{Prop} \T / \perf \T}$ is the idempotent split closure of $\r{Prop} \T / \perf \T$.
This is compatible with the natural functors $\B \to \ker(F_{\B})$ and $\B \stackrel{\bar{r}_{\T}}\to \r{Prop} \T / \perf \T$,
so we conclude that there is a quasi-equivalence
\[
 \binf \stackrel{\sim}\to ess-im(\bar{r}_{\T}: \B \to \r{Prop} \T / \perf \T).
\]
\end{proof}

\begin{thm}\label{quotient of formal completion}
Suppose we have a short exact sequence \eqref{starting ses} in which $\cc$ is both smooth and proper, 
so that $\T$ is proper and $\B$ is smooth.
Then there is a canonical functor
\begin{equation}\label{functor from completion to punctured}
\hat{\pi}: \ct \to \binf
\end{equation}
inducing a quasi-equivalence
\begin{equation}\label{equivalence from quotient of completion to punctured}
\bar{\hat{\pi}}: \ct / \kappa(\T) \to \binf,
\end{equation}
where $\kappa$ is the functor \eqref{k functor}.
\end{thm}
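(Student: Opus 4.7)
The plan is to use Lemma \ref{lem: binf as prop t} to identify $\binf$ with the essential image of $\bar{r}_{\T}: \B \to \r{Prop}\T / \perf \T$, and then realize $\hat{\pi}$ via pullback of Yoneda modules along the inclusion $i: \T \hookrightarrow \cc$. Concretely, I would first construct a canonical functor
\[
\rho: \ct \longrightarrow \rmod \T, \qquad X \longmapsto i^{*} Y^{r}_{X},
\]
using Proposition \ref{prop: morphism in t = in zt}, which for each pair $X, Y \in \ob \ct = \ob \cc$ supplies a quasi-isomorphism
\[
\hom_{\ct}(X, Y) = \hom_{\rmod Z_{\T}^{op}}(P_{\T}(-,X), P_{\T}(-,Y)) \simeq \hom_{\rmod \T}(i^{*}Y^{r}_{X}, i^{*}Y^{r}_{Y})
\]
compatible with products. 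Because $\cc$ is proper, each module $i^{*}Y^{r}_{X}$ is a proper $\T$-module, so $\rho$ factors through $\r{Prop}\T$, and composing with the quotient projection $q: \r{Prop}\T \to \r{Prop}\T / \perf \T$ yields the candidate $\hat{\pi} = q \circ \rho$. By construction the composition $\cc \to \ct \xrightarrow{\hat{\pi}} \r{Prop}\T / \perf \T$ factors through $\B \simeq \cc / \T$ and agrees there with $\bar{r}_{\T}$, so its essential image is precisely $\binf$.

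Next I would verify that $\hat{\pi}$ kills $\kappa(\T)$ and hence descends to a functor $\bar{\hat{\pi}}: \ct / \kappa(\T) \to \binf$. Since $\T$ is a full subcategory of $\cc$, for any $K \in \ob \T$ the pullback $i^{*}Y^{r}_{K}$ is the representable module $Y^{r}_{K}|_{\T} \in y(\T) \subset \perf \T$, which vanishes in $\r{Prop}\T / \perf \T$. Essential surjectivity of $\bar{\hat{\pi}}$ onto $\binf$ is then tautological. For full-faithfulness I would compute morphism complexes in $\ct / \kappa(\T)$ via the Drinfeld quotient construction, transport the computation into $\r{Prop}\T$ using $\rho$, and observe that $\kappa(\T)$ corresponds there to $y(\T)$; Drinfeld localization then identifies the result with the relevant subcategory of $\r{Prop}\T / y(\T)$. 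Properness of $\T$ forces $y(\T)$ to split-generate $\perf \T$, so the further projection $\r{Prop}\T / y(\T) \to \r{Prop}\T / \perf \T$ is a quasi-equivalence on the essential image, yielding full-faithfulness of $\bar{\hat{\pi}}$.

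The main obstacle is making this full-faithfulness argument rigorous at the $\ainf$ level. Proposition \ref{prop: morphism in t = in zt} only supplies a partially-defined quasi-equivalence, with vanishing higher order terms, between the image of $\cc$ in $\r{Prop}\T$ under $\rho$ and its image in $\rmod Z_{\T}^{op}$; so to compare Drinfeld quotients one must promote the chain-level data to a genuine $\ainf$-functorial comparison. The cleanest route is probably to replace $\ct$ from the outset by its quasi-equivalent model as the full $\ainf$-subcategory of $\r{Prop}\T$ spanned by the objects $\{i^{*}Y^{r}_{X} : X \in \ob \cc\}$, and perform the entire Drinfeld quotient computation there. Once this identification is in place, smoothness of $\cc$ (controlling the tensor products entering the Drinfeld construction) combined with properness of $\T$ combine routinely to produce the desired quasi-equivalence $\bar{\hat{\pi}}: \ct / \kappa(\T) \stackrel{\sim}{\to} \binf$.
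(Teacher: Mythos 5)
Your proof takes essentially the same route as the paper's: both construct the functor $\tilde\pi$ (your $\rho$) by pullback of right Yoneda modules along $i: \T \hookrightarrow \cc$, invoke Proposition \ref{prop: morphism in t = in zt} for fully-faithfulness, identify $\binf$ with the essential image of $\bar{r}_{\T}$ in $\r{Prop}\T/\perf\T$ via Lemma \ref{lem: binf as prop t}, observe that $\kappa(\T)$ is carried into $y(\T)$, and conclude using the quasi-equivalence $\r{Prop}\T/y(\T) \to \r{Prop}\T/\perf\T$. The paper sidesteps the $\ainf$-level subtlety you flag by defining $\tilde\pi$ directly as a functor whose higher-order terms vanish (since $j: Z_{\T} \hookrightarrow \cc \mod$ is a strict inclusion and both $\ct$ and $\r{Prop}\T$ are dg categories), which is equivalent to your suggestion of modelling $\ct$ inside $\r{Prop}\T$. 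One small correction: $y(\T)$ split-generates $\perf\T$ for free; properness of $\T$ is used instead to guarantee $\perf\T \subset \r{Prop}\T$, so that the quotient $\r{Prop}\T/\perf\T$ is sensible and the projection $\r{Prop}\T/y(\T) \to \r{Prop}\T/\perf\T$ is a quasi-equivalence.
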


\begin{proof}
To construct the desired functor $\ct \to \binf$, it suffices to construct a functor $\ct \to \r{Prop} \T$,
and show that modulo the image of the fully faithful embedding $\iota_{\T}: \T \to \ct$ Lemma \ref{property of k functor},
the image of the quotient functor lands in the essential image of $\B \to \r{Prop} \T / \perf \T$.

Since $\T$ is proper, and for each $X \in \ob \cc$ the right Yoneda module $Y^{r}_{X}$ is perfect and hence proper,
it follows that the right $\T$-module $i^{*} Y^{r}_{X}$ is proper. 
This allows us to define a functor 
\begin{equation}\label{completion to prop t}
\tilde{\pi}: \ct \to \r{Prop} \T,
\end{equation}
which on the level of objects sends each $X \in \ob \ct = \ob \cc$, presented as the $Z_{\T}^{op}$-module $P_{\T}(-, X)$, 
to $i^{*}Y^{r}_{X} \in \ob \rmod \T$,
and on the level of morphism spaces is given by the following composition of maps
\begin{equation}\label{chain map of pi functor}
\begin{split}
\tilde{\pi}^{1}: \ct(X, Y) = & \hom_{\rmod Z_{\T}^{op}}(P_{\T}(-, X), P_{\T}(-, Y)) \stackrel{j^{*}}\to \hom_{\rmod \T}(j^{*} P_{\T}(-, X), j^{*}P_{\T}(-, Y))\\
& \stackrel{\sim}\to \hom_{\rmod \T}(i^{*}Y^{r}_{X}, i^{*} Y^{r}_{Y}),
\end{split}
\end{equation}
where the second quasi-isomorphism is \eqref{quasi-iso of t modules}.
By the definition of the pullback of modules, the second order map of $\tilde{\pi}$ is given by the second order map of $j^{*}$,
which in this case is zero as $j: Z_{\T} \xhookrightarrow{} \cc \mod$ is a strict inclusion of subcategories.
Since both $\ct$ and $\r{Prop} \T$ are dg categories, there are no higher order terms,
so the construction of the functor $\tilde{\pi}: \ct \to \r{Prop} \T$ is complete.
By further projecting to the quotient, it induces a functor 
\begin{equation}\label{completion to prop t mod perf t}
\bar{\tilde{\pi}}: \ct \to \r{Prop} \T / \perf \T.
\end{equation}

Next, recall that the functor $r_{\T}: \r{Prop} \cc \to \r{Prop} \T$ \eqref{prop c to prop t} is the pullback functor for right modules,
so the induced functor $\bar{r}_{\T}: \B \cong \cc / \T \to \r{Prop} \T / \perf \T$ sends each $X \in \ob \B$, regarded as an object of $\cc$ via localization, 
to the pullback of the right Yoneda module $i^{*} Y^{r}_{X}$ over $\T$.
It follows that the image of $\bar{\tilde{\pi}}: \ct \to \r{Prop} \T / \perf \T$ lands in the essential image of $\bar{r}_{\T}: \B \to \r{Prop} \T / \perf \T$.
Using the identification of $\binf$ with the essential image of $\bar{r}_{\T}: \B \to \r{Prop} \T/\perf \T$ by Lemma \ref{lem: binf as prop t}, we obtain the desired functor $\hat{\pi}: \ct \to \binf$ \eqref{functor from completion to punctured}.
Our construction immediately implies that the image of $\T$ in $\ct$ under $\kappa$ is mapped to zero in $\binf$ by $\hat{\pi}$.
Hence, the functor $\hat{\pi}$ induces $\bar{\hat{\pi}}: \ct / \T \to \binf$ \eqref{equivalence from quotient of completion to punctured}.

It remains to show that $\bar{\hat{\pi}}: \ct / \T \to \binf$ is a quasi-equivalence.
Clearly, it is essentially surjective by the construction above, based on Lemma \ref{lem: binf as prop t}.
To show fully-faithfulness, observe that the functor $\tilde{\pi}: \ct \to \r{Prop} \T$ \eqref{completion to prop t} is fully faithful,
since its linear terms \eqref{chain map of pi functor} are compositions of two quasi-isomorphisms by Proposition \ref{prop: morphism in t = in zt}.
Since $\kappa: \T \to \ct$ is fully faithful by Lemma \ref{property of k functor}, and its image under $\tilde{\pi}: \ct \to \r{Prop} \T$ is mapped to the subcategory $y(\T)$ consisting of Yoneda modules, 
the induced quotient functor \eqref{completion to prop t mod perf t} is well-defined on the quotient $\ct/\kappa(\T)$ and thus gives rise to a functor
\begin{equation}
\bar{\tilde{\pi}}: \ct/\kappa(\T) \to \r{Prop} \T / \perf \T,
\end{equation}
which is fully faithful because it factors as $\ct/\kappa(\T) \to \r{Prop} \T / y(\T) \to \r{Prop} \T / \perf \T$.
This implies that $\bar{\hat{\pi}}: \ct / \kappa(\T) \to \binf$ \eqref{equivalence from quotient of completion to punctured} is fully faithful.
\end{proof}

Stating the last conclusion in Theorem \ref{quotient of formal completion} in a slightly different way, we have:

\begin{cor}\label{unique fit into ses}
If $\cc$ is both smooth and proper, the short exact sequence \eqref{starting ses} induces a short exact sequence
\begin{equation}
\T \xhookrightarrow{} \ct \to \binf.
\end{equation}
\end{cor}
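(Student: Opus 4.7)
The plan is essentially to repackage the preceding results. A short exact sequence of small $\ainf$-categories, in the sense relevant here, means a fully faithful inclusion whose Drinfeld (Verdier) quotient is quasi-equivalent to the third term. Both pieces have already been constructed in this section, so the corollary follows by assembling them.

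First, the left-hand arrow $\T \hookrightarrow \ct$ is the functor $\kappa: \T \to \ct$ of \eqref{k functor}. Since $\T$ arises as a full subcategory of $\cc$ (being the first term of the short exact sequence \eqref{starting ses}), Lemma \ref{property of k functor} applies and tells us that $\kappa$ is fully faithful. This gives the embedding.

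Second, the right-hand arrow and the identification of the quotient are exactly the content of Theorem \ref{quotient of formal completion}. It produces a canonical functor $\hat{\pi}: \ct \to \binf$ whose restriction along $\kappa$ is null by construction: for $K \in \ob \T$, the intermediate functor $\tilde{\pi}: \ct \to \r{Prop}\T$ sends $K$ to the Yoneda module $i^{*}Y^{r}_{K} \in y(\T)$, which is killed upon passing to $\r{Prop}\T / \perf\T$, and $\binf$ is identified with (the essential image inside) this quotient via Lemma \ref{lem: binf as prop t}. The same theorem further asserts that the induced functor $\bar{\hat{\pi}}: \ct / \kappa(\T) \to \binf$ is a quasi-equivalence.

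Combining the fully faithful embedding $\kappa: \T \hookrightarrow \ct$ with the quasi-equivalence $\ct/\kappa(\T) \stackrel{\sim}\to \binf$ is, by definition, a short exact sequence of $\ainf$-categories, which is the assertion of the corollary. There is no substantive obstacle: the work has been done in Theorem \ref{quotient of formal completion} (which is where the smoothness and properness hypotheses on $\cc$ enter, via the identifications $\r{Prop}\cc = \perf\cc$ and the properness of $i^{*}Y^{r}_{X}$ over $\T$) and in Lemma \ref{property of k functor}; the corollary merely records their joint conclusion.
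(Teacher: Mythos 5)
Your proposal is correct and follows exactly the paper's intended route: the paper proves Corollary \ref{unique fit into ses} simply by introducing it as ``Stating the last conclusion in Theorem \ref{quotient of formal completion} in a slightly different way,'' and you have merely spelled out the details (that $\kappa$ is the fully faithful embedding via Lemma \ref{property of k functor}, that $\hat{\pi}$ kills $\kappa(\T)$, and that $\bar{\hat{\pi}}$ is a quasi-equivalence) which the paper's one-line derivation treats as immediate.
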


Note that, in Theorem \ref{quotient of formal completion} and Corollary \ref{unique fit into ses}, the assumption that $\cc$ is both smooth and proper is important.
The conclusions fail to hold if either assumption is dropped.

\subsection{The categorical formal completion as limits}\label{sec: formal completion as limits}

In algebraic geometry, the formal completion is defined via limits, which we mentioned in \S\ref{intro: hms}.
The goal of this subsection is to present a purely categorical analogue of that construction.
Let $\cc$ be an $\ainf$-category, and consider a diagram of $\ainf$-categories 
\[
\{\cc_{i}\}_{i \in I}
\]
indexed by some poset $I$ (or more generally any small category),
together with a strictly compatible system of functors 
\[
\{f_{i}: \cc \to \cc_{i}\}_{i \in I},
\]
where strict compatibility means that the composition of $\iota_{i}$ with any morphism $\cc_{i} \to \cc_{j}$ in the diagram corresponding to a morphism $i \to j$ in $I$ is exactly equal to (not just homotopic to) $f_{j}$.
We can define the {\it homotopy limit} of the diagram
\begin{equation}
\holim_{i \in I} \cc_{i},
\end{equation}
which comes with canonical functors
\begin{equation}
\pi_{i}: \holim_{i \in I} \cc_{i} \to \cc_{i}
\end{equation}
and
\begin{equation}
f: \cc \to \holim_{i \in I} \cc_{i},
\end{equation}
such $\pi_{i} \circ f$ agrees with $f_{i}$ up to homotopy. An explicit model of the homotopy limit is given in Appendix \ref{app: homotopy limits}.

For each object $X_{i} \in \ob \cc_{i}$, let $Y^{r}_{X_{i}}$ be the associated (right) Yoneda module.
The image of $\cc_{i}$ under the (right) Yoneda embedding $y_{\cc_{i}}: \cc_{i} \to \rmod \cc_{i}$, i.e. the subcategory whose objects are all Yoneda modules, is often referred to as the subcategory of {\it representable modules}.
The Yoneda embedding is compatible with pullback, in the sense that there is a canonical natural transformation
\begin{equation}\label{yoneda nat trans}
T_{F}: y_{\cc} \to F^{*} \circ y_{\dd} \circ F,
\end{equation}
for any functor $F: \cc \to \dd$.
Also recall that the category of {\it twisted complexes} $\mathrm{Tw} \cc_{i}$ over $\cc_{i}$ is a pre-triangulated closure of $\cc_{i}$, 
such that the Yoneda embedding extends over and maps $\mathrm{Tw} \cc_{i}$ to the smallest pre-triangulated subcategory of $\rmod \cc_{i}$ containing $y_{\cc_{i}}(\cc_{i})$.
The Yoneda lemma implies that $H^{0}(y_{\cc_{i}}(\cc_{i}))$ is a full subcategory of $D(\cc_{i})$,
whose triangulated closure is equivalent to the image of the homotopy category of twisted complexes over $\cc$
\[
H^{0}(\mathrm{Tw} \cc_{i})
\]
under the cohomology functor of the Yoneda embedding.
The pullback $f_{i}^{*} y_{\cc_{i}}(\cc_{i})$ gives a subcategory of $\rmod \cc$,
which is not in general contained in the subcategory $y_{\cc}(\cc)$ of representable $\cc$-modules,
but will be if $f_{i}$ is essentially surjective because of the natural transformation \eqref{yoneda nat trans}.
For the same reason, the categories $y_{\cc_{i}}(\cc_{i})$ do not in general form a a diagram unlike $\rmod \cc_{i}$, neither do their pullbacks $f_{i}^{*}y_{\cc_{i}}(\cc_{i})$.
However, for a single object $X \in \ob \cc$ we have a diagram of modules over $\cc$
\begin{equation}
\{f_{i}^{*}Y^{r}_{f_{i}(X)}\}_{i \in I},
\end{equation}
with morphisms of modules in the diagram given by components of the natural transformation \eqref{yoneda nat trans} associated to the functor $\cc_{i} \to \cc_{j}$ for a given morphism $i \to j$.
This is a well-defined diagram of $\cc$-modules exactly because of the compatibility of the functors $f_{i}$ with the functors $\cc_{i} \to \cc_{j}$ in the diagram.
In addition, there is a morphism $Y^{r}_{X} \to f_{i}^{*}Y^{r}_{f_{i}(X)}$ for every $i$, compatible with the diagram of $\cc$-modules above.
For each $\cc$-module $M$, we have a chain map
\begin{equation}
\begin{split}
&\lambda_{M}:  M(X) \to \hom_{\rmod \cc}(Y^{r}_{X}, M) \\
&\lambda_{M}(m)^{k}(u, c_{k-1}, \ldots, c_{1})  = \mu^{k+1}_{M}(m, u, c_{k-1}, \ldots, c_{1}),
\end{split}
\end{equation}
which is a generalization of the first order map of the Yoneda functor to an arbitrary module $M$.

Now consider the situation where all the categories $\cc_{i}$ have the same collection of objects and the functors $f_{ij}$ act as identity on objects.

\begin{defn}\label{def: subcat of constant sections}
In the situation as above, define the subcategory $\mathcal{S}(\{\cc_{i}\}_{i \in I})$ of the homotopy limit $\holim_{i \in I} \cc_{i}$ to be the subcategory consisting of objects being constant sections in the sense of Definition \ref{def: constant section}, i.e. concretely those sections with $s(i) = X$ for some object $X \in \ob \cc_{i}$ (for some and therefore any $i$) and $s_{ij} = 1_{X} \in \cc_{j}(X, X)$ (by assumption $f_{ij}X=X$).
\end{defn}

The following lemma is due to \cite{efimov1} in the case of dg algebras, 
but can be adapted to the case of $\ainf$-categories in a straightforward way,
as the proof only requires checking the statement on the level of quasi-isomorphisms of chain complexes (and will be omitted).
However, we do need a mild assumption on the diagram $\{\cc_{i}\}_{i \in I}$ following the setup of Appendix \S\ref{app: map from inverse limit}, stated as condition (i) in the following lemma.

\begin{lem}\label{formal completion as limits}
Assume $\T$ is a full subcategory of $\cc$. 
Let $\{\cc_{i}\}_{i \in I}$ be a diagram of $\ainf$-categories such that 
\begin{enumerate}[label=(\roman*)]

\item all the categories $\cc_{i}$ have the same collection of objects,
and that the functors $\cc_{i} \to \cc_{j}$ for $i \le j$ in the diagram act as identity on objects, and;

\item the pullback of the right Yoneda module $f_{i}^{*} Y^{r}_{f_{i}(X)}$ for every $X \in \ob \cc$ is representable by some object in $\perf \T$; 

\item For each $K \in \ob \T$ and each $X \in \ob \cc$, the natural map
\begin{equation}
\hocolim_{i \in I^{op}} \hom_{\rmod \cc}(f_{i}^{*} Y^{r}_{f_{i}(X)}, Y^{r}_{K}) \to \hom_{\rmod \cc}(Y^{r}_{X}, Y^{r}_{K})
\end{equation}
is a quasi-isomorphism of chain complexes.

\end{enumerate}
Then there is a natural quasi-equivalence
\begin{equation}
\hat{f}^{*}: \mathcal{S}(\{\cc_{i}\}_{i \in I}) \stackrel{\sim}\to \ct,
\end{equation}
where $\mathcal{S}(\{\cc_{i}\}_{i \in I})$ is defined in Definition \ref{def: subcat of constant sections}.
\end{lem}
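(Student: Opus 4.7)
The plan is to identify, between constant-section objects, the morphism complexes of $\mathcal{S}(\{\cc_{i}\}_{i \in I})$ and of $\ct$ up to canonical quasi-isomorphism, and to assemble these into an $\ainf$-functor $\hat{f}^{*}$. On objects, $\hat{f}^{*}$ sends a constant section $s_{X}$ to $X \in \ob \ct = \ob \cc$ (well-defined by condition (i)), so essential surjectivity is automatic.

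For morphisms, I would first apply the explicit model of Appendix \ref{app: homotopy limits} to obtain $\hom_{\mathcal{S}}(s_{X}, s_{Y}) \stackrel{\sim}\to \holim_{i \in I} \cc_{i}(X, Y)$. Yoneda in each $\cc_{i}$ rewrites $\cc_{i}(X, Y) = (f_{i}^{*}Y^{r}_{f_{i}(Y)})(X) = \hom_{\rmod \cc}(Y^{r}_{X}, f_{i}^{*}Y^{r}_{f_{i}(Y)})$, and since $\hom$ commutes with homotopy limits in the target slot,
\[
\hom_{\mathcal{S}}(s_{X}, s_{Y}) \cong \hom_{\rmod \cc}\bigl(Y^{r}_{X}, \hat{M}_{Y}\bigr), \qquad \hat{M}_{Y} := \holim_{i \in I} f_{i}^{*}Y^{r}_{f_{i}(Y)}.
\]
Condition (ii) identifies each $f_{i}^{*}Y^{r}_{f_{i}(Y)}$ with a Yoneda module $Y^{r}_{M_{i}^{Y}}$ for some $M_{i}^{Y} \in \perf \T$ (viewed in $\perf \cc$ via fullness of $\T \subset \cc$), so $\hat{M}_{Y}$ is a homotopy limit of representable modules coming from $\T$. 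On the other side, Proposition \ref{prop: morphism in t = in zt} gives $\ct(X, Y) \cong \hom_{\rmod \T}(i^{*}Y^{r}_{X}, i^{*}Y^{r}_{Y})$, and condition (iii) — rewritten via (ii) as $\hocolim_{i} \hom_{\perf \cc}(M_{i}^{X}, K) \stackrel{\sim}\to \cc(X, K)$ for all $K \in \ob \T$ — expresses that the pro-system $\{M_{i}^{X}\}$ ``resolves'' $X$ when tested against $\T$-representables. Combining via Yoneda over $\T$ and adjunction yields the required $\hom_{\rmod \cc}(Y^{r}_{X}, \hat{M}_{Y}) \cong \hom_{\rmod \T}(i^{*}Y^{r}_{X}, i^{*}Y^{r}_{Y})$, completing the chain-level quasi-isomorphism $\hom_{\mathcal{S}}(s_{X}, s_{Y}) \stackrel{\sim}\to \ct(X, Y)$.

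To upgrade this chain-level agreement to an $\ainf$-functor, I would use that Yoneda is strictly $\ainf$, that the model of Appendix \ref{app: homotopy limits} equips $\mathcal{S}$ with a compatible strict $\ainf$-structure, and that the bimodule map \eqref{equivalence of P to pullback diagonal} underlying Proposition \ref{prop: morphism in t = in zt} is $\ainf$; the higher-order components of $\hat{f}^{*}$ come by assembling these. The main obstacle is the last identification in the previous paragraph: condition (iii) is phrased as a $\hocolim$ of Homs \emph{into} representables, while what is needed is compatibility of $\hom_{\rmod \T}$ with a $\holim$ in the \emph{first} slot. Dualizing via left/right Yoneda, reconciling $I$ with $I^{op}$, and ensuring that the relevant homotopy limits commute with restriction along $i^{*}$ on the modules at hand — for which condition (ii) is essential — constitute the technical heart of the argument; the remaining $\ainf$-coherence then follows from standard naturality of the constructions involved.
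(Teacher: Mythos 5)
Your setup is reasonable: the object-level assignment $s_X \mapsto X$ is correct, and the identifications $\hom_{\mathcal{S}}(s_X, s_Y) \simeq \holim_i \cc_i(X, Y)$ on one side and $\ct(X, Y) \simeq \hom_{\rmod \T}(i^* Y^r_X, i^* Y^r_Y)$ (via Proposition \ref{prop: morphism in t = in zt}) on the other are the right two ends to connect. The paper itself omits the proof of this lemma, deferring to Efimov's argument for dg algebras and asserting the $\ainf$-adaptation is ``straightforward,'' so there is no paper proof to compare against line by line; the test is whether your middle is filled in, and it is not.

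Two concrete issues. First, the step you present as progress is vacuous: $\hom_{\rmod\cc}(Y^r_X, \hat{M}_Y)$ with $\hat{M}_Y := \holim_i f_i^* Y^r_{f_i(Y)}$ is, by Yoneda, precisely $\hat{M}_Y(X) = \holim_i \cc_i(X,Y)$ again, so the reformulation buys nothing. Second, and more seriously, you assert ``combining via Yoneda over $\T$ and adjunction yields the required $\hom_{\rmod\cc}(Y^r_X, \hat{M}_Y) \cong \hom_{\rmod\T}(i^*Y^r_X, i^*Y^r_Y)$'' without supplying the combination, and then in the final paragraph concede that reconciling condition (iii) (a $\hocolim$ statement about maps \emph{out of} the pro-resolution $\{M_i^X\}$ into $\T$-objects, i.e.\ about $X$ as a source) with the $\holim$ in the first slot of $\hom_{\rmod\T}$ (which sees $X, Y$ as \emph{targets} via the right $\T$-modules $i^*Y^r_X$, $i^*Y^r_Y$) is ``the technical heart.'' That is exactly where the content of the lemma lives, and declaring it to follow from ``standard naturality'' is not an argument. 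The mismatch is genuine: condition (iii) concerns the restriction of the left Yoneda module $Y^l_X$ to $\T$ being computed as a colimit, while $\ct(X,Y)$ is built from the restricted right Yoneda modules; bridging the two requires an adjunction of the kind appearing in the proof of Proposition \ref{prop: map from colimit of Floer complexes} (where smoothness and properness are used to upgrade pullback modules to perfect complexes and invoke extended Yoneda), and you should spell this out rather than gesture at it. Until that step is written down, the proof is incomplete.
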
 \qed

\section{Various versions of Fukaya categories}\label{section: Fukaya categories}

In this section, we briefly review the definition of the partially wrapped Fukaya category, 
and then introduce the main definition: the category $\n$ and discuss its properties.
Although the framework of \cite{GPS1} which uses the wrapping categories and categorical localizations to define the partially wrapped Fukaya category is nice and clean, 
for technical convenience we shall use concrete geometric perturbations by quadratic Hamiltonians as in \cite{sylvan1}.
The convenience is in two places: the definition of the category $\n$ and the construction of the functor in Theorem \ref{thm:main}.

\subsection{Liouville sectors and Liouville pairs} 

A Liouville sector of dimension $2n$ is a Liouville manifold-with-boundary $(X, \omega = d \lambda, Z)$,
where $\lambda$ is the Liouville form and $Z$ is the Liouville vector field,
on which there exists a function, called the {\it defining function} $I: \p X \to \R$ such that the Hamiltonian vector field $X_{I}$ points outward along $\p X$.
The defining function $I$ provides an identification of a {\it cylindrical neighborhood} $\nbd^{Z} \p X$ of $\p X$, i.e. a neighborhood that is invariant under the flow of $Z$), with $F \times \C_{\re \ge 0}$, 
where $F$ is a Liouville manifold, called the {\it symplectic boundary} of $X$, which is the completion of a Liouville domain $F_{0}$ sitting inside $\p_{\infty} X$ as a Liouville hypersurface with boundary.
The Liouville manifold $F$ can be obtained from $\p X$ by symplectic reduction as the quotient by the integrable characteristic foliation, which indeed carries two Liouville forms $\lambda_{\pm \infty}$ by embedding $F=I^{-1}(a)$ as $a \to \pm \infty$.
Every Liouville sector $X$ can be deformed such that $\lambda_{+\infty} = \lambda_{-\infty}$ (Proposition 2.28 of \cite{GPS1}),
in which case we say that $X$ has exact boundary.
There is an adequate supply of {\it cylindrical} almost complex structures $J$ on $X$ compatible with $\omega$ making the projection $\pi: \nbd^{Z} \p X \to \C_{\re \ge 0}$ holomorphic.

Given a Liouville sector $X$ as above, there is a Liouville manifold $\bar{X}$, called the {\it convex completion} of $X$, defined as gluing $F \times \C_{\re \le 0}$ to $X$ using the identification $\nbd^{Z} \p X \cong F \times \C_{\re \ge 0}$,
\begin{equation}
\bar{X} = X \bigcup_{F \times \C_{0 \le \re \le \e}} F \times \C_{\re \le \e}
\end{equation}
In this way we obtain a {\it sutured Liouville manifold}, meaning a pair $(\bar{X}, F_{0})$ where $\bar{X}$ is a Liouville manifold which is the completion of a Liouville domain, and $F_{0} \subset \p \bar{X}_{0}$ is a Liouville hypersurface with boundary.
Conversely, given a sutured Liouville manifold $(\bar{X}, F_{0})$, we may obtain a Liouville sector $X$ by removing a standard neighborhood of $F_{0}$, $X = \bar{X} \setminus (F_{0} \times \R_{s \ge 0} \times \R_{|t| \le \e})^{\circ}$.
In fact, the above correspondence is unique in the homotopical sense:

\begin{lem}[Lemma 2.32 of \cite{GPS1}]
Every Liouville sector $X$ comes from a unique (up to deformation) sutured Liouville manifold $(Y, F_{0})$.
In fact, $Y$ must agree with the convex completion $\bar{X}$ of $X$.
\end{lem}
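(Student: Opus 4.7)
The plan is to prove uniqueness by exhibiting any sutured Liouville manifold $(Y, F_{0})$ producing $X$ as Liouville-deformation equivalent to $\bar{X}$; existence of such a sutured Liouville manifold is already furnished by the explicit gluing formula preceding the lemma, so only uniqueness needs genuine argument.

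First I would reconstruct $F$ intrinsically from $X$. By the Liouville sector axioms, the Hamiltonian vector field of the defining function $I$ is everywhere transverse to $\partial X$, so $\partial X$ is coisotropic; symplectic reduction by the characteristic foliation produces a Liouville manifold which, by the discussion preceding the lemma, is canonically the Liouville completion of the hypersurface $F_{0} \subset \partial Y_{0}$. This step uses only $X$ itself, so $F$ must match for any $(Y, F_{0})$ realizing $X$, and in particular must agree with the $F$ appearing in the convex completion $\bar{X}$.

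Next I would show that the gluing region is essentially unique. After a preliminary deformation arranging $\lambda_{+\infty} = \lambda_{-\infty}$ on $F$ (Proposition 2.28 of \cite{GPS1}), the defining function yields a cylindrical-neighborhood identification $\nbd^{Z} \partial X \cong F \times \C_{\re \ge 0}$. In $Y$, the complement $Y \setminus X^{\circ}$ is by hypothesis a standard thickening of $F_{0}$; after extending across the collar of $X$ and completing $F_{0}$ to $F$, I would identify this complement with $F \times \C_{\re \le \e}$ for small $\e > 0$, matching the prescription that defines $\bar{X}$. The freedom in this identification corresponds to a symplectomorphism of $F \times \C_{\re \ge 0}$ preserving the characteristic foliation, which can be absorbed by a Liouville isotopy supported in the cylindrical collar.

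The main obstacle will be matching the Liouville primitives on the enlarged region $F \times \C_{\re \le 0}$: a priori the primitives on $Y$ and on $\bar{X}$ can differ by an exact one-form that is nontrivial at infinity, so a naive Moser argument will not directly apply. I would handle this with a cylindrical Moser-type interpolation that exploits the freedom to modify the Liouville form by $d(\text{compactly supported function})$ in the cylinder direction, together with the uniqueness of Liouville completions of Liouville domains up to deformation. Assembling the intrinsic identification of $F$, the collar identification of the gluing region, and the Moser interpolation produces a Liouville deformation from $Y$ to $\bar{X}$ which restricts to the identity on the bulk of $X$, establishing uniqueness in the claimed homotopical sense.
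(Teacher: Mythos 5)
The paper does not contain a proof of this lemma; it is quoted verbatim from \cite{GPS1} (Lemma~2.32 there), so there is no in-paper argument to compare your proposal against. That said, your sketch captures the key structure of the GPS argument: recover $F$ intrinsically from $\partial X$ by symplectic reduction, identify the cylindrical collar via the defining function, then deform the two Liouville structures on the glued region into one another.

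Two cautions on the execution. First, in the Moser step you propose to absorb the discrepancy between primitives by adding $d(\text{compactly supported function})$; but you yourself note the discrepancy can be nontrivial at infinity, so compactly supported corrections are insufficient. What is actually available is the freedom to add $df$ for $f$ of controlled growth (e.g.\ with $df(Z)=f$ outside a compact set), which preserves the Liouville completion; alternatively one can invoke directly that the space of cylindrical Liouville primitives for a fixed symplectic form on a completed Liouville manifold is convex, hence contractible, so no Moser flow is needed at all once the symplectic forms are matched. Second, when you say the freedom in the collar identification is a symplectomorphism of $F\times\C_{\re\ge 0}$ preserving the characteristic foliation ``absorbed by a Liouville isotopy supported in the cylindrical collar,'' that is the genuinely delicate point: one must check such an ambient symplectomorphism is Hamiltonian isotopic to one respecting the Liouville structure, which in GPS1 is where Proposition~2.28 (deforming to $\lambda_{+\infty}=\lambda_{-\infty}$) and the contractibility of defining-function choices are used. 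Your sketch flags these issues but does not close them; that is acceptable for a proof outline but those are exactly the places where the GPS1 argument does real work.
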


Note that $F_{0} \subset \p_{\infty} X$ is a closed subset, 
so the pair $(\bar{X}, F_{0})$ is also called a {\it stopped Liouville manifold} following \cite{GPS2}.
The {\it core} $\fk = \ck_{F}$ of $F$ is the set of all points which not escape to infinity under the Liouville flow. 
It is a closed subset of $F_{0}$ and therefore of $\p_{\infty} \bar{X}$,
and therefore also a stop for $\bar{X}$.

A Lagrangian submanifold $L$ of a Liouville sector $X$ is understood to be properly embedded in the interior of $X$. 
$L$ is {\it cylindrical} if away from a compact set, $L$ is invariant under the flow of $Z$. 
In particular, any closed Lagrangian submanifold is cylindrical.
From now on we shall only consider exact cylindrical Lagrangians. 

A Hamiltonian $H: X \to \R$ is {\it linear} (resp. {\it quadratic}) at infinity, if outside of a compact set it satisfies
\[
dH(Z) = H (\text{ resp. } 2H)
\]
When constructing the partially wrapped Fukaya category using Hamiltonian perturbations, we shall instead go to the convex completion $\bar{X}$ and consider quadratic Hamiltonians on $\bar{X}$ satisfying certain conditions specified by the stop.

\begin{defn}[Definition 2.14 of \cite{sylvan1}]\label{H compatible with stop}
A Hamiltonian $H: \bar{X} \to \R$ is said to be {\it compatible with the stop} $F_{0}$ if it satisfies
\begin{enumerate}[label=(\roman*)]

\item $H$ is positive;

\item The Hamiltonian vector field $X_{H}$ is tangent to $F_{0}$;

\item $d\theta(X_{H})$ is non-negative in a cylindrical neighborhood of $F_{0} \times \R_{s>0}$ in $\bar{X}$,
where $\theta$ is the angular coordinate on $\R_{s \ge 0} \times \R_{|t| \le \e}$ as a subset of the closed right half plane $\C_{\re \ge 0}$ with $\R_{s \ge 0}$ identified with the non-negative real axis.

\end{enumerate}
\end{defn}

The last condition (iii) implies that the intersection of any Hamiltonian chord of $H$ with the ray $F_{0} \times \R_{s > 0}$ is positive.
Quadratic Hamiltonians compatible with given stops are shown to always exist, by Lemma 2.14 of \cite{sylvan1}.
In fact, the same argument, with very small changes of the numbers, can be used to derive the following fact:

\begin{lem}
Linear and quadratic Hamiltonians compatible with the stop $F_{0}$ exist.
\end{lem}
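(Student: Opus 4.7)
My plan is to follow essentially the same construction as Lemma 2.14 of \cite{sylvan1}, which establishes the quadratic case, and to rescale the asymptotic ansatz to also produce a linear Hamiltonian. The strategy is to build a model Hamiltonian in a cylindrical neighborhood of the stop that satisfies conditions (i)--(iii), and then extend it globally while preserving the prescribed asymptotic growth.

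First I would work in a cylindrical neighborhood $U \subset \bar{X}$ of the ray $F_{0} \times \R_{s > 0}$, identified with $F \times \C$ so that the Liouville form reads $\lambda_{F} + \tfrac{1}{2}(s\,dt - t\,ds)$ and the stop sits along $F_{0} \times \R_{s > 0}$. In $U$, I would take the ansatz
\[
H_{\mathrm{loc}}(p, s+it) \;=\; \alpha(r_{F}(p)) + \beta(s^{2} + t^{2}),
\]
where $r_{F}$ denotes (a smooth extension of) the Liouville radial coordinate on $F$, and $\alpha, \beta \co [0, \infty) \to [0, \infty)$ are smooth, non-negative, and non-decreasing. The Hamiltonian vector field decomposes as the sum of a piece tangent to the $F$-factor (proportional to the Liouville/Reeb direction on $F$, hence compatible with the characteristic structure that cuts out $F_0$ inside $\p_{\infty}\bar{X}$) and a rotational piece in the $\C$-factor. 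A direct computation gives $d\theta(X_{\beta(s^{2}+t^{2})}) = 2\beta'(s^{2}+t^{2}) \ge 0$, which yields condition (iii); conditions (i) and (ii) follow from positivity and the split product form respectively.

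Next I would fix the asymptotic profiles of $\alpha$ and $\beta$ so as to realize the desired growth. For the quadratic case, choose $\alpha(r) \sim \tfrac{1}{2} r^{2}$ and $\beta(\sigma) \sim \tfrac{1}{2}\sigma$ for large arguments, so that $H_{\mathrm{loc}}$ is Liouville-homogeneous of degree two in $U$. For the linear case, choose $\alpha(r) \sim r$ and $\beta(\sigma) \sim \sqrt{\sigma}$ (smoothly extended near $\sigma = 0$), so that $H_{\mathrm{loc}}$ is Liouville-homogeneous of degree one. Then I would extend $H_{\mathrm{loc}}$ to a global Hamiltonian on $\bar{X}$ by using a smooth partition of unity to interpolate with a standard linear or quadratic Hamiltonian built from the Liouville radial coordinate of $\bar{X}$ away from the stop, adding a positive constant if needed to keep $H$ positive.

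The main obstacle is the gluing: one must verify that the interpolation preserves the homogeneity at infinity and does not violate conditions (ii) or (iii) in the transition region. This is arranged by choosing the cutoff function to be constant along the Liouville flow outside a compact set, so that $H$ remains Liouville-homogeneous of the correct degree at infinity, and by locating the transition region outside the fixed cylindrical neighborhood of the stop, where $H = H_{\mathrm{loc}}$; since (ii) and (iii) are purely local to that neighborhood, they remain valid. Compared to the quadratic case treated in \cite{sylvan1}, the linear case requires only very small changes of the exponents and the normalizing constants in $\alpha$ and $\beta$, as indicated in the text.
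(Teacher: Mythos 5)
Your overall plan — adapt Sylvan's Lemma~2.14 by working in a collar of the cone over $F_0$, take a split model Hamiltonian there, and glue — mirrors what the paper is silently invoking, and the idea that the linear case is obtained by only changing exponents is exactly the point. However, there is a concrete normalization error in the asymptotic ansatz that makes $H_{\mathrm{loc}}$ fail to be homogeneous of the stated degree, so the glued Hamiltonian would not actually be linear or quadratic at infinity.

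With the Liouville form you wrote on the $\C$-factor, $\lambda_{\C} = \tfrac{1}{2}(s\,dt - t\,ds)$, the Liouville vector field is $Z_{\C} = \tfrac{1}{2}(s\,\partial_s + t\,\partial_t)$, so $\sigma = s^2 + t^2$ satisfies $d\sigma(Z_{\C}) = \sigma$, i.e.\ $\sigma$ has \emph{Liouville weight $1$}, while $\sqrt{\sigma}$ has weight $\tfrac{1}{2}$. Meanwhile $r_F$ (the usual radial coordinate with $dr_F(Z_F) = r_F$) has weight $1$, so $r_F^2$ has weight $2$. Your quadratic ansatz $\alpha(r_F) \sim \tfrac{1}{2} r_F^2$, $\beta(\sigma) \sim \tfrac{1}{2}\sigma$ therefore has weight $2$ in the $F$-direction but only weight $1$ in the $\C$-direction; the sum is not Liouville-homogeneous of any degree, and in particular is not quadratic. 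Similarly your linear ansatz $\alpha \sim r_F$, $\beta \sim \sqrt{\sigma}$ has weights $1$ and $\tfrac{1}{2}$. The correct profiles are $\beta(\sigma) \sim \sigma$ for the linear case and $\beta(\sigma) \sim \sigma^2$ for the quadratic case (equivalently, drop the factor of $\tfrac{1}{2}$ in $\lambda_{\C}$ and keep your exponents). Once this is fixed, $dH_{\mathrm{loc}}(Z_F + Z_{\C}) = k\,H_{\mathrm{loc}}$ with $k=1$ or $2$ as desired, since the split form has no cross terms in $d(\cdot)(Z)$; the gluing argument with a Liouville-invariant cutoff then goes through as you describe. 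Finally, your verification of condition (ii) is asserted rather than checked — you should say concretely which hypersurface associated to $F_0$ the vector field $X_{H_{\mathrm{loc}}} = \alpha'(r_F) X_{r_F} + X_{\beta(\sigma)}$ preserves and why, since the $\C$-component is a rotation and is \emph{not} tangent to the ray $\{t=0,\, s>0\}$; the point is rather that $X_{H_{\mathrm{loc}}}$ preserves the $F$-slices and is tangent to $\partial F_0$ in the $F$-factor, so whatever form Sylvan's condition (ii) takes is satisfied by the split structure — this deserves to be spelled out.
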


\subsection{The partially wrapped Fukaya category}\label{sec: partially wrapped}

Let $X$ be a Liouville sector, which corresponds to the sutured Liouville manifold $(\bar{X}, F_{0})$.
Fix an at most countable collection $\bar{\mathbf{L}}$ of exact cylindrical Lagrangian submanifolds of $\bar{X}$, 
equipped with {\it gradings} and {\it Spin structures},
such that for every $L_{i}, L_{j} \in \bar{\mathbf{L}}$,
all Reeb chords on $\p_{\infty} \bar{X}$ from $\p_{\infty} L_{i}$ to $\p_{\infty} L_{j}$ are non-degenerate.

Fix a positive quadratic Hamiltonian $H: \bar{X} \to \R$ that is compatible with the stop $F_{0}$.
For a pair $(L_{0}, L_{1})$ of Lagrangians in $\mathbf{L}$, let $\chi(L_{0}, L_{1}) = \chi(L_{0}, L_{1}; H)$ be the set of all time-one chords for $H$ from $L_{0}$ to $L_{1}$.
The {\it wrapped Floer cochain space} is defined to be
\begin{equation}\label{Floer cochains}
CW^{i}(L_0, L_1; H) = \bigoplus_{x \in \chi(L_{0}, L_{1}), \deg(x) = i} |o_x|_{\K}.
\end{equation}

Let $L_{0}, \ldots, L_{k} \in \mathbf{L}$. 
Let $\bar{\mathcal{R}}^{k+1}$ denote the Deligne-Mumford compactification of the moduli space of disks with $k+1$ boundary points,
and $\bar{\mathcal{S}}^{k+1} \to \bar{\mathcal{R}}^{k+1}$ the universal family.
The fiber over $[\Sigma]$ is a nodal stable disk $\Sigma$. 
For any $i=0,1,2$, let $\Omega^{i} \to \bar{\mathcal{R}}^{k+1}$ be the fibration with fiber at $[\Sigma]$ being the space of $i$-forms $\Omega^{i}(\Sigma)$.
Make universal choices of {\it Floer data}, which consist of:
\begin{enumerate}[label=(\roman*)]

\item compatible families of universal strip-like coordinates 
\begin{equation}
\epsilon^{+}_{L_{0},\ldots,L_{k}; j}: [0, +\infty) \times [0, 1] \times \bar{\mathcal{R}}^{k+1} \to \bar{\mathcal{S}}^{k+1},
\end{equation}
\begin{equation}
\epsilon^{-}_{L_{0},\ldots,L_{k};0}: (-\infty, 0] \times [0, 1] \times \bar{\mathcal{R}}^{k+1} \to \bar{\mathcal{S}}^{k+1},
\end{equation}
equipped with weights $\nu_{j}: \bar{\mathcal{R}}^{k+1} \to [1, +\infty)$ satisfying
\begin{equation}\label{stokes}
\sum_{j=1}^{k} \nu_{j} \le \nu_{0}
\end{equation}

\item compatible families of rescaling functions
\begin{equation}
\rho \in \Gamma(\bar{\mathcal{R}}^{k+1}, \Omega^{0}_{\ge 1}),
\end{equation}
which equals $\nu_{j}$ in the strip-like coordinates.
Here $\Omega^{0}_{\ge 1}$ means the space of fiberwise $\R$-valued functions with values $\ge 1$.

\item compatible families of sub-closed one-forms, 
\begin{equation}
\beta \in \Gamma(\bar{\mathcal{R}}^{k+1}, \Omega^{1}),
\end{equation}
Here sub-closedness means that $d\b_{[\Sigma]} \le 0$ on $\Sigma$ and $\b_{[\Sigma]}|_{\p \Sigma} = 0$.

\item compatible families of Hamiltonians
\begin{equation}
H: \bar{\mathcal{S}}^{k+1} \to \mathcal{H}(\bar{X})
\end{equation}
which agrees with $H$ up to a conformal rescaling factor near each of the chosen universal strip-like coordinates.

\item compatible families of cylindrical almost complex structures
\begin{equation}
J_{L_{0},\ldots,L_{k}}: \bar{\mathcal{S}}^{k+1} \to \mathcal{J}(\bar{X})
\end{equation}
which makes the projection $\pi: F \times \C_{\re \le \e} \to \C_{\re \le \e}$ holomorphic.

\end{enumerate}
In all the above choices, compatibility is defined with respect to gluing of various boundary strata of $\bar{\mathcal{R}}^{k+1}$ and $\bar{\mathcal{S}}^{k+1}$.

Let $S$ be a disk with $k+1$ distinct boundary points $z_{0}, \ldots, z_{k}$ removed.
Denote by $\p_{j} S$ the boundary component between $z_{j}$ and $z_{j+1}$, where by convention $z_{k+1}=z_{0}$.
Consider $\mathcal{R}^{k+1}(\mathbf{x})$ the moduli space of maps $u: S \to X$ satisfying
\begin{equation}\label{CR equation}
\begin{cases}
&(du - X_{H_{S}} \otimes \beta_{S})^{0, 1} = 0, \\
&u(z) \in (\psi^{\rho_{S}(z)})^{*} L_{j} \text{ for } z \in \p_{j} S, j = 0, \ldots, k, \\
&\lim\limits_{s \to + \infty} u \circ \epsilon_{j}(s, \cdot) = (\psi^{\nu_{j}})^{*} x_{j}, j = 1, \ldots, k,\\
&\lim\limits_{s \to -\infty} u \circ \e_{0}(s, \cdot) = (\psi^{\nu_{0}})^{*} x_{0}.
\end{cases}
\end{equation}

Define a map of degree $2-k$
\begin{equation}\label{mu for fully wrapped}
\begin{split}
\mu^{k}_{\w(\bar{X})}:  &CW^{*}(L_{k-1}, L_{k}; H) \otimes \cdots \otimes CW^{*}(L_{0}, L_{1}; H) \to CW^{*}(L_{0}, L_{k}; H) [2-k]\\
\mu^{k}_{\w(\bar{X})}([x_{k}] \otimes \cdots \otimes [x_{1}]) & = \sum_{\substack{x_{0}\\\deg(x_{0}) = \deg(x_{1}) + \cdots + \deg(x_{k}) + 2 - k}} 
\sum_{u \in \mathcal{R}^{k+1}(\mathbf{x})} (-1)^{*_{k}} \mathcal{R}^{k+1}_{u}([x_{k}] \otimes \cdots \otimes [x_{1}]),
\end{split}\end{equation}
where
\begin{equation}
*_{k} = \sum_{j=1}^{k} j \deg(x_{j}).
\end{equation}
It is by now a standard argument to show that $\mu^{k}$ satisfy the $\ainf$-relations.

\begin{defn}
Fix an at most countable collection $\bar{\mathbf{L}}$ of exact cylindrical Lagrangians of $\bar{X}$,
equipped with gradings and spin structures.
The (fully) wrapped Fukaya category $\w(\mathbf{L})$ is the $\ainf$-category with objects in $\mathbf{L}$,
morphism spaces being wrapped Floer cochain spaces
\[
CW^{*}(L_{0}, L_{1}; H),
\]
and $\ainf$-structure maps given by $\mu^{k}_{\w(\bar{X})}$ \eqref{mu for fully wrapped}.
When no confusion may occur, we refer to this as the wrapped Fukaya category of $\bar{X}$, 
and denote it by $\w(\bar{X})$.
\end{defn}

To define the partially wrapped Fukaya category, we shall now assume that the Hamiltonian is compatible with the stop $F_{0}$.
Take a sub-collection $\mathbf{L} \subset \bar{\mathbf{L}}$ of Lagrangians that are disjoint from $F_{0}$ at infinity.
For each $L_{0}, L_{1} \in \mathbf{L}$, observe by the condition (iii) of Definition \ref{H compatible with stop} that every possible intersection of a time-one $H$-chord $x$ from $L_{0}$ to $L_{1}$ with $F_{0} \times \R_{s>0}$ in $\bar{X}$ is positive.
Write $x \cdot (F_{0} \times \R_{s>0})$ for the intersection number.
We can consider the following subspace:

\begin{defn}\label{defn: partially wrapped CW}
The partially wrapped Floer cochain space is defined to be the subspace of $CW^{*}(L_{0}, L_{1}; H)$ generated by time-one $H$-chords which do not intersect $F_{0} \times \R_{s>0}$, 
\begin{equation}\label{partially wrapped CW}
CW^{*}_{F}(L_{0}, L_{1}; H) := \bigoplus_{\substack{x \in \chi(L_{0}, L_{1}; H)\\ x \cdot (F_{0} \times \R_{s>0})=0}} |o_{x}|_{\K}.
\end{equation}
\end{defn}

The Floer differential, and more generally all the $\ainf$-structure maps $\mu^{k}$ do not increase the number of intersection points of chords with $F_{0} \times \R_{s>0}$ (see \cite{sylvan1}).
Therefore we get well-defined induced maps on these subspaces
\begin{equation}\label{mu for partially wrapped}
\mu^{k}_{\w(X)}: CW^{*}_{F}(L_{k-1}, L_{k}; H) \otimes \cdots \otimes CW^{*}_{F}(L_{0}, L_{1}; H) \to CW^{*}_{F}(L_{0}, L_{k}; H) [2-k].
\end{equation}

\begin{defn}
Fix an at most countable collection $\mathbf{L}$ of exact cylindrical Lagrangians of $\bar{X}$ disjoint from $F_{0}$ at infinity, 
equipped with gradings and spin structures.
The partially wrapped Fukaya category $\w(\mathbf{L})$ is the $\ainf$-category with objects being Lagrangians in $\mathbf{L}$, 
morphisms being
\[
\hom_{\w(X)}(L_{0}, L_{1}) = \hom_{\w(\bar{X}, F_{0})}(L_{0}, L_{1}) = CW^{*}_{F}(L_{0}, L_{1}; H),
\]
and $\ainf$-structure maps given by $\mu^{k}_{\w(X)}$ \eqref{mu for partially wrapped}.
When no confusion may occur, we refer to this as the partially wrapped Fukaya category $\w(X, \bar{F}_{0})$ of the sutured Liouville manifold $(X, F_{0})$,
or $\w(X)$ of the Liouville sector (if the Lagrangians are indeed contained in the interior of $X$).
\end{defn}

By construction, there is a canonical {\it acceleration functor}
\begin{equation}
i_{*, F}: \w(X) = \w(\bar{X}, F_{0}) \to \w(\bar{X}),
\end{equation}
which is identity on objects,
such that the linear term $i_{*, F}^{1}$ is the inclusion $CW^{*}_{F}(L_{0}, L_{1}; H) \subset CW^{*}(L_{0}, L_{1}; H)$ on morphism spaces,
and all the higher order terms $i_{*, F}^{k} = 0$ for $k \ge 2$.
This functor is a special case of the more general pushforward functors associated to inclusions of Liouville sectors introduced in \cite{GPS1}; see \S\ref{section: pushforward} for more discussion about pushforward functors.

\begin{rem}
In \cite{GPS2}, the category $\w(\bar{X}, F_{0})$ is slightly larger than $\w(X)$, 
as it possibly contains more Lagrangians.
However, there are natural pushforward functors 
\begin{equation}
\w(X) \stackrel{\sim}\to \w(\bar{X}, \p_{\infty}(F \times \C_{\re \ge 0})) \stackrel{\sim}\to \w(\bar{X}, F_{0}) \stackrel{\sim}\to \w(\bar{X}, \fk)
\end{equation}
associated to the inclusions $X \xhookrightarrow{} (\bar{X}, F_{0}) \xhookrightarrow{} (\bar{X}, \fk)$, 
which are all quasi-equivalences by deformation invariance.
\end{rem}

Thus, we may understand any of $\w(X), \w(\bar{X}, F_{0}), \w(\bar{X}, F_{0})$ and $\w(\bar{X}, \fk)$ as the same Fukaya category,
and refer to either $F_{0}$ or $\fk$ as the stop for the Liouville manifold $\bar{X}$ associated to the Liouville sector $X$.
For a Liouville sector, the definition of the partially wrapped Fukaya category is equivalent to the one presented in this paper (a reference for this equivalence can be found e.g. in \cite{sylvan2}).

\subsection{Some structural results}

This subsection contains a brief summary of several structural results about partially wrapped Fukaya categories that are needed for relevant discussions in this paper, mostly due to \cite{GPS1}, \cite{GPS2}.
The most fundamental result underlying many various structural results of the partially wrapped Fukaya category of a stopped Liouville sector is an exact triangle describing the effect of wrapping through the stop.

\begin{thm}[Theorem 1.10 of \cite{GPS2}]\label{thm: wrapping exact triangle}
Let $(X, \fk)$ be a stopped Liouville sector, and $p \in \fk$ be a point near which $\fk$ is a Legendrian submanifold.
Suppose $L \subset X$ is a Lagriangian submanifold and $L^{w}$ is obtained from $L$ by a positive isotopy by passing $\p_{\infty} L$ through $f$ transversely at $p$ once.
Let $D_{p}$ be the small Lagrangian disk linking $\fk$ at $p$.
Then there is an exact triangle 
\begin{equation}
L^{w} \to L \to D_{p} \stackrel{[1]}\to
\end{equation}
in $\w(X, \fk)$, where $L^{w} \to L$ is the continuation map induced by the isotopy $L \rightsquigarrow L^{w}$.
\end{thm}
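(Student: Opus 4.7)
The plan is to reduce the statement to a local model near $p$ and identify the cone of the continuation map with the linking disk, either by a direct chain-level computation or by appealing to stop removal. Using a Legendrian Weinstein neighborhood theorem, I would first choose a contact Darboux chart identifying a neighborhood of $p$ in $\p_{\infty}\bar{X}$ with a neighborhood of the origin in $J^{1}(\R^{n-1})$ sending $\fk$ near $p$ to the zero-section Legendrian. After a small Hamiltonian isotopy supported in this chart, we may arrange that $\p_{\infty}L$ and $\p_{\infty}L^{w}$ agree outside a small ball around $p$ and differ by a single transverse crossing of $\fk$, and that the linking disk $D_{p}$ is represented by a small Lagrangian cotangent-fiber disk whose Legendrian boundary is a small unknot linking $\fk$ once. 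Since all the relevant geometric data is concentrated near $p$, everything outside the chart will play the role of a spectator.

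Next, I would construct the map $L \to D_{p}$ geometrically: up to a small Hamiltonian perturbation, $L$ meets a representative of $D_{p}$ in a unique transverse point near $p$, producing a canonical degree-zero cocycle in $CW^{*}_{\fk}(L, D_{p})$, which is taken as the middle map of the triangle; the first map $L^{w} \to L$ is the continuation morphism built into $\w(X, \fk)$ from the given isotopy.

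To establish exactness I would test against an arbitrary object $K \in \w(X, \fk)$ and show that
\[
HW^{*}_{\fk}(K, L^{w}) \longrightarrow HW^{*}_{\fk}(K, L) \longrightarrow HW^{*}_{\fk}(K, D_{p}) \stackrel{[1]}\longrightarrow
\]
is an exact triangle. Using a quadratic Hamiltonian compatible with the stop, time-one chords from $\p_{\infty}K$ to $\p_{\infty}L$ are controlled by their intersection pattern with $F_{0} \times \R_{s>0}$; those not running near $p$ match chord-by-chord with the corresponding chords to $\p_{\infty}L^{w}$, while the new chords that appear as $\p_{\infty}L$ is pushed past $\fk$ at $p$ are in canonical bijection with chords from $\p_{\infty}K$ to the Legendrian boundary of $D_{p}$. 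One then matches the Floer differential with the mapping cone differential by a local moduli-space analysis in the Darboux chart.

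The main obstacle will be precisely this moduli-space identification and the verification that the connecting morphism agrees with the continuation map up to a controlled sign. A cleaner alternative, which I would pursue first, is via stop removal: replace $\fk$ by $\fk' := \fk \setminus N$ for a small open neighborhood $N$ of $p$ about which $\fk$ is smoothly Legendrian. In $\w(X, \fk')$ the Lagrangians $L$ and $L^{w}$ become canonically isomorphic (the obstructing crossing has disappeared), while the kernel of the acceleration functor $\w(X, \fk) \to \w(X, \fk')$ is generated by the linking disk $D_{p}$. The resulting stop-removal exact triangle, specialized to the pair $(L, L^{w})$, yields the claimed triangle once one checks that the connecting morphism is naturally the continuation map of the isotopy, which is again a purely local verification in the chosen Darboux chart.
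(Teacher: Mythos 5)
This statement is not proved in the paper at all: it is imported verbatim as Theorem~1.10 of \cite{GPS2}, and the surrounding text treats it as the fundamental input on which several other cited results (generation by cocores and linking disks, stop removal) rest. So there is nothing in the paper for your argument to be compared against, and your proposal has to be judged as a stand-alone attempt at the GPS2 theorem.

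Your first sketch --- local Darboux model at $p$, matching chords from a test object $K$ to $L$, $L^{w}$ and $\partial_{\infty}D_{p}$, and identifying the Floer differential with the mapping-cone differential by a local moduli-space analysis --- is in the right spirit and is essentially the shape of the actual GPS2 argument. But as you acknowledge, the entire technical content is concentrated in exactly the step you defer (the moduli-space identification and the sign check), and nothing in the sketch engages with the parts that make the GPS2 proof genuinely hard: controlling wrapping on the level of the localized category, the forward-stopped cobordism mechanism that lets one localize the count to the Darboux ball, and the compactness/transversality issues for the relevant one-parameter families. Without those, the first route is a plan, not a proof.

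The serious problem is your ``cleaner alternative, which I would pursue first.'' The stop-removal exact sequence (Theorem~1.20 of \cite{GPS2}, quoted in this paper as Theorem~\ref{thm: stop removal}) is \emph{itself proved using the wrapping exact triangle} --- the paper says this explicitly just after stating stop removal. Using stop removal to produce the triangle $L^{w}\to L\to D_{p}$ is therefore circular: you would be assuming the conclusion to establish the input. Beyond the circularity, the specific claim that removing a small neighborhood $N$ of $p$ from $\fk$ immediately makes $L$ and $L^{w}$ isomorphic, with kernel generated by $D_{p}$, is exactly the content of the wrapping exact triangle applied once, not an independent consequence; and the earlier localization results of Abouzaid--Seidel and Sylvan that do predate the wrapping exact triangle apply only in the Lefschetz/Weinstein-pair setting, not for a general stop that is locally Legendrian at $p$. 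If you want a non-circular argument you must commit to the first route and actually carry out the local Floer-theoretic computation, which is what GPS2 does.
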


The existence of smooth Legendrian points on the stop $\fk$ is important in many other results based on the wrapping exact triangle Theorem \ref{thm: wrapping exact triangle}.
Because of this, it is natural to make the following definition, following \cite{GPS2}.

\begin{defn}\label{defn: mostly legendrian}
A closed subset $\fk$ of a contact manifold $Y$ of dimension $2n-1$ is called mostly Legendrian if it admits a decomposition $\fk = \fk^{subcrit} \cup \fk^{crit}$,
where $\fk^{subcrit}$ is closed and contained in the image of a manifold of dimension $< n-1$ under a smooth map,
and $\fk^{crit} \subset Y \setminus \fk^{subcrit}$ is a Legendrian submanifold.
\end{defn}

Then there is a generation result for partially wrapped Fukaya categories of stopped Weinstein manifolds, whose proof crucially relies on Theorem \ref{thm: wrapping exact triangle}.

\begin{thm}[Theorem 1.10 of \cite{GPS2}]
Suppose $(\bar{X}, \fk)$ is a stopped Weinstein manifold with the stop mostly Legendrian, $\fk = \fk^{subcrit} \cup \fk^{crit}$.
Suppose the cocores of the critical handles of $X$ are properly embedded and disjoint from $\fk$ at infinity.
Then the partially wrapped Fukaya category $\w(\bar{X}, \fk)$ is generated by these cocores and the small Lagrangian disks linking $\fk^{crit}$.
\end{thm}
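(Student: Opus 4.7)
The plan is to reduce the generation problem in the stopped category $\w(\bar{X}, \fk)$ to the analogous statement for the unstopped wrapped Fukaya category $\w(\bar{X})$, where the Chantraine--Dimitroglou-Rizell--Ghiggini--Golovko cocore generation theorem (also reproved in \cite{GPS2}) asserts that $\w(\bar{X})$ is generated by cocores of critical handles. The bridge between these two categories is supplied by the wrapping exact triangle of Theorem \ref{thm: wrapping exact triangle}.

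The first step is to introduce the full subcategory $\dd \subset \w(\bar{X}, \fk)$ whose objects are the small Lagrangian disks $D_{p}$ linking $\fk^{crit}$ at smooth Legendrian points. The natural acceleration functor should factor through a composition of quasi-equivalences
\begin{equation}
\w(\bar{X}, \fk)/\dd \stackrel{\sim}\to \w(\bar{X}, \fk^{subcrit}) \stackrel{\sim}\to \w(\bar{X}).
\end{equation}
The first arrow is produced by Theorem \ref{thm: wrapping exact triangle} applied iteratively: any continuation map $L^{w} \to L$ associated to a wrapping that transversely crosses $\fk^{crit}$ at a single smooth point fits into an exact triangle with cone $D_{p} \in \dd$, so modulo $\dd$ each such continuation map becomes an isomorphism. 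Decomposing an arbitrary positive wrapping across $\fk^{crit}$ into a sequence of elementary crossings of this form exhibits the critical stratum of the stop as fully removed in the quotient, leaving only $\fk^{subcrit}$. The second arrow is a stop removal statement for subcritical isotropic stops: since $\fk^{subcrit}$ is contained in the image of a manifold of dimension strictly less than $n-1$ (by Definition \ref{defn: mostly legendrian}), generic cylindrical Lagrangians and generic moduli of Floer strips and higher disks avoid $\fk^{subcrit}$ by a dimension count, so wrapping past it has no Floer-theoretic effect.

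The second step is a short deduction. By hypothesis, the cocores of the critical handles are properly embedded and disjoint from $\fk$ at infinity, so they lift to well-defined objects of $\w(\bar{X}, \fk)$ represented by the same underlying Lagrangians. Given any $L \in \w(\bar{X}, \fk)$, its image in $\w(\bar{X}, \fk)/\dd \cong \w(\bar{X})$ is built out of cocores, and pulling back this expression exhibits $L$ as built out of cocores and objects of $\dd$ inside $\w(\bar{X}, \fk)$, which is the claim.

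The hard part will be the stop removal quasi-equivalence $\w(\bar{X}, \fk^{subcrit}) \stackrel{\sim}\to \w(\bar{X})$. Because $\fk^{subcrit}$ is allowed to be the singular image of a smooth map rather than an honest isotropic submanifold, the transversality statements needed to conclude that wrapping across it contributes nothing to $\mu^{k}$ require a delicate setup: stratify $\fk^{subcrit}$ into smooth pieces, choose cylindrical almost complex structures and quadratic Hamiltonians adapted to a neighborhood of each stratum in the sense of Definition \ref{H compatible with stop}, and then run a Sard-type argument on each family of moduli spaces (Reeb chords, Floer strips, and $(k{+}1)$-pointed disks) showing that the dimensional deficit forces generic elements to avoid $\fk^{subcrit}$. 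This is precisely the step in which the mostly Legendrian hypothesis on $\fk$—rather than allowing it to be an arbitrary closed subset of the contact boundary—is essential.
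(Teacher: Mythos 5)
This result is quoted in the paper as a structural input from \cite{GPS2} and carries no proof of its own (it is Theorem~1.13 of \cite{GPS2}; the ``1.10'' attribution in the source is a copy\textendash paste slip repeating the label of the wrapping exact triangle just above). So there is no internal proof to compare against, only the one in \cite{GPS2}.

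Your reduction is a coherent alternative route, but it is organized differently from \cite{GPS2}. You reduce the stopped case to the unstopped one via the quasi-equivalence $\w(\bar{X}, \fk)/\dd \xrightarrow{\sim} \w(\bar{X})$, and then invoke generation of $\w(\bar{X})$ by cocores. Two remarks. First, that quasi-equivalence is already available as a black box: Theorem~\ref{thm: stop removal} with $\mathfrak{g} = \varnothing$ gives $\w(\bar{X}, \fk)/\mathcal{D}(\fk) \xrightarrow{\sim} \w(\bar{X})$ in a single step, so you do not need to re-derive it by passing through $\w(\bar{X}, \fk^{subcrit})$ and separately arguing that subcritical stops are invisible; that argument (Sard-type avoidance of a stratified subset of dimension $< n-1$) is exactly the content of the cited stop removal theorem and is best not reproved inline. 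Second, your final ``pull back through the quotient'' step uses an unstated but necessary algebraic lemma: if $\dd$ is a full subcategory of $\cc$ and the images of a collection $\{G_{i}\}$ split-generate $\cc/\dd$, then $\{G_{i}\}\cup\ob\dd$ split-generates $\cc$; this follows from the localization sequence on perfect complexes, and should be stated since it carries the $K_0$/idempotent subtlety under the rug. Beyond that, note that what you have written is a genuine reduction, not a self-contained proof: the unstopped generation theorem you feed in is precisely the $\fk = \varnothing$ case of the statement, so it still requires its own argument (cotangent-local models and handle attachment as in \cite{GPS2}, or Chantraine\textendash Dimitroglou Rizell\textendash Ghiggini\textendash Golovko). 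In \cite{GPS2} generation and stop removal both come out of one uniform wrapping argument\textemdash wrap $L$ positively, collect exact triangles against cocores and linking disks in a single pass\textemdash rather than being deduced one from the other, so your factorization is cleaner to state but not materially more elementary, resting as it does on the same wrapping exact triangle and the same transversality against $\fk^{subcrit}$ you correctly flag as the hard point.
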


A special case where the stop is mostly Legendrian is given by a Lefschetz fibration $\pi: \bar{X} \to \C$ with Weinstein fibers $F$.
In that case, the Fukaya-Seidel category $\mathcal{FS}(\bar{X}, \pi)$,
or equivalently the partially wrapped Fukaya category $\w(\bar{X}, \fk)$, 
where $\fk$ is the core of the fiber $F$ at $-\infty$,
is generated by Lefschetz thimbles. 
This follows from another generation result (Theorem 1.16) of \cite{GPS2}, 
for a stopped Liouville sector of the form $(F \times \C_{\re \ge 0}, \Lambda)$,
where $F$ is a Liouville manifold, and $\Lambda \subset \p_{\infty}(F \times \C_{\re \ge 0})^{\circ}$ is mostly Legendrian.

Another key ingredient for us is the stop removal formula.

\begin{thm}[Theorem 1.20 of \cite{GPS2}]\label{thm: stop removal}
Suppose $\mathfrak{g} \subset \fk$ are two stops for $X$ such that $\fk \setminus \mathfrak{g}$ is mostly Legendrian.
Let $\mathcal{D}(\fk \setminus \mathfrak{g})$ denote the subcategory of $\w(X, \fk)$ whose objects are small Lagrangian disks linking $(\fk \setminus \mathfrak{g})^{crit}$.
The pushforward induces a quasi-equivalence
\begin{equation}
\w(X, \fk) / \mathcal{D}(\fk \setminus \mathfrak{g}) \to \w(X, \mathfrak{g}).
\end{equation}
\end{thm}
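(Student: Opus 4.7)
The plan is to interpret the stop removal functor as a categorical localization, and then use the wrapping exact triangle (Theorem \ref{thm: wrapping exact triangle}) to identify that localization with the quotient by linking disks. First, I would verify that the pushforward $\w(X, \fk) \to \w(X, \mathfrak{g})$ induced by the stop inclusion $\mathfrak{g} \subset \fk$ annihilates each linking disk $D_p$ for $p \in (\fk \setminus \mathfrak{g})^{crit}$: in $\w(X, \mathfrak{g})$ the small Legendrian linking sphere $\partial_\infty D_p$ may be positively isotoped across $p$ in $\partial_\infty X \setminus \mathfrak{g}$ to a position disjoint from $D_p$, displacing $D_p$ from itself and forcing its self-Floer cohomology to vanish. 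Consequently the pushforward factors as $\bar{F} \co \w(X, \fk) / \mathcal{D}(\fk \setminus \mathfrak{g}) \to \w(X, \mathfrak{g})$, and it remains to show $\bar{F}$ is a quasi-equivalence.

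For essential surjectivity, any $L \in \w(X, \mathfrak{g})$ has $\partial_\infty L$ disjoint from $\mathfrak{g}$. By the mostly Legendrian hypothesis, $\fk^{subcrit} \setminus \mathfrak{g}$ has dimension less than $n-1$ and can be avoided by a small generic perturbation of $\partial_\infty L$, so after this perturbation $\partial_\infty L$ meets $\fk \setminus \mathfrak{g}$ only at finitely many transverse points in the Legendrian stratum $(\fk \setminus \mathfrak{g})^{crit}$. Wrapping these points off inside $\partial_\infty X \setminus \mathfrak{g}$ produces an object $L'$ of $\w(X, \fk)$ connected to $L$ by a composition of continuation maps that become isomorphisms in the quotient by iterated application of Theorem \ref{thm: wrapping exact triangle}.

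The main step is full faithfulness. Morphisms in $\w(X, \mathfrak{g})(L_0, L_1)$ are computed as a homotopy colimit of wrapped Floer complexes $CF^*(L_0, L_1^w)$ over positive wrappings $L_1^w$ of $L_1$ in $\partial_\infty X \setminus \mathfrak{g}$, whereas $\w(X, \fk)(L_0, L_1)$ uses the cofinal subfamily of wrappings whose trace avoids $\fk$. The extra wrappings in the first system are obtained from those in the second by finitely many transverse passes through $(\fk \setminus \mathfrak{g})^{crit}$. Each such pass at a point $p$ yields, by Theorem \ref{thm: wrapping exact triangle}, a distinguished triangle whose cone is $D_p$, so inverting the corresponding continuation maps is equivalent to quotienting by the triangulated subcategory generated by the $D_p$, namely $\mathcal{D}(\fk \setminus \mathfrak{g})$.

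The main obstacle is upgrading this cohomological picture to a chain-level quasi-equivalence. One needs a cofinality argument ensuring that wrappings with a bounded number of transverse passes through $(\fk \setminus \mathfrak{g})^{crit}$ are cofinal in the wrapping category for $\mathfrak{g}$, combined with a telescope-type homological perturbation argument showing that iterated mapping cones on linking disks in the dg-quotient $\w(X, \fk) / \mathcal{D}(\fk \setminus \mathfrak{g})$ compute the colimit of Floer complexes defining $\w(X, \mathfrak{g})$. The mostly Legendrian assumption enters twice: once to ensure that the subcritical part can be avoided by generic perturbations, and once to license the application of the wrapping exact triangle at each crossing of the Legendrian stratum.
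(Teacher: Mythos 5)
This theorem is imported verbatim from \cite{GPS2} (it is stated there as Theorem 1.20); the present paper cites it without giving a proof, remarking only that the argument relies on the wrapping exact triangle (Theorem \ref{thm: wrapping exact triangle}). There is therefore no in-paper proof for your sketch to be compared against. That said, your outline is broadly faithful to the argument in \cite{GPS2}: the linking disks $D_p$ become zero objects in $\w(X,\mathfrak{g})$ because their small linking spheres may be wrapped past $p$ once $p\notin\mathfrak{g}$, so the pushforward factors through the quotient; and full faithfulness is ultimately a comparison of wrapping categories in which each transverse pass of a positive isotopy through $(\fk\setminus\mathfrak{g})^{crit}$ contributes a cone isomorphic to a linking disk via the wrapping exact triangle, while the subcritical stratum is avoided by genericity. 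You correctly flag the cofinality of bounded-crossing wrappings and the chain-level telescope argument as the real technical burden; these are indeed where the bulk of the work lies in \cite{GPS2}, and your sketch stops short of carrying them out, so it should be read as a plausible outline rather than a complete proof.
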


The proof of Theorem \ref{thm: stop removal} also relies on the wrapping exact triangle (Theorem \ref{thm: wrapping exact triangle}).
If $(X, \fk)$ is the stopped Liouville manifold for a Lefschetz fibration $\pi: \bar{X} \to \C$ with Weinstein fibers $F$,
the above result applied to $\mathfrak{g} = \varnothing$ implies that there is a quasi-equivalence
\[
\w(\bar{X}, \fk) / \mathcal{D}(\fk) \stackrel{\sim}\to \w(\bar{X}).
\]
The special case for a localization relation between the Fukaya-Seidel category and the fully wrapped Fukaya category of the total space was earlier proved by Abouzaid-Seidel \cite{abouzaidseidel2} and Sylvan \cite{sylvan1},
in somewhat different forms.

Lastly, we shall need the K\"{u}nneth formula for partially wrapped Fukaya categories.

\begin{thm}[Theorem 1.5 of \cite{GPS2}]
For Liouville sectors $X$ and $Y$, there is a fully faithful $\ainf$-bilinear-functor
\begin{equation}\label{kunneth}
\w(X) \times \w(Y) \to \w(X \times Y).
\end{equation}
\end{thm}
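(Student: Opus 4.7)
The plan is to construct the functor explicitly at the chain level using a product Hamiltonian and a product almost complex structure, and then to show that all the relevant pseudoholomorphic curves split as products of curves in the two factors. On objects, the functor sends a pair $(L_{X}, L_{Y})$ to the product $L_{X} \times L_{Y}$, while on morphism spaces it will be given by a bilinear extension of the K\"unneth map $x \otimes y \mapsto x \times y$ on generators.

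First I would set up the product sector. Given Liouville sectors $X$ and $Y$ with defining functions $I_{X}, I_{Y}$ on their boundaries, the product $X \times Y$ has codimension-two corners, so one must perform a standard corner-rounding to obtain an honest Liouville sector; the fiber of the rounded product is (symplectomorphic to) $F_{X} \times Y \cup_{F_{X} \times F_{Y} \times T^{*}[0,1]} X \times F_{Y}$, and the associated stop at infinity in the completion $\overline{X \times Y}$ decomposes compatibly with the stops of $X$ and $Y$. One checks that if $L_{X}$ and $L_{Y}$ are exact cylindrical and disjoint from their respective stops at infinity, then after a small isotopy near the corner the product $L_{X} \times L_{Y}$ becomes an exact cylindrical Lagrangian in the rounded product sector, still disjoint from the product stop at infinity. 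Gradings and Spin structures are defined by the obvious product construction.

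Next I would make coherent choices of Floer data. Choose quadratic Hamiltonians $H_{X}$ on $\bar{X}$ and $H_{Y}$ on $\bar{Y}$ compatible with the respective stops in the sense of Definition \ref{H compatible with stop}, and use as the Hamiltonian on $\overline{X \times Y}$ a function which equals $H_{X} \boxplus H_{Y} = H_{X} \circ \pi_{X} + H_{Y} \circ \pi_{Y}$ outside a compact set and is modified near the smoothed corner so as to remain compatible with the product stop; one checks that its chords from $L_{X} \times L_{Y}$ to $L_{X}' \times L_{Y}'$ are exactly pairs $(x, y)$ of chords in the two factors, and that the intersection condition in \eqref{partially wrapped CW} is multiplicative. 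On almost complex structures, take $J$ on $\overline{X \times Y}$ to split as $J_{X} \oplus J_{Y}$ where $J_{X}, J_{Y}$ are cylindrical and compatible with the respective projections to $\C$; the analogous splitting is imposed on the families of Floer data parametrized by $\bar{\mathcal{R}}^{k+1} \times \bar{\mathcal{R}}^{\ell+1}$ that enter the definition of an $\ainf$-bilinear functor. Similarly the sub-closed one-forms and rescaling functions are chosen as products/sums.

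With this setup, the central technical step is a product formula for moduli spaces: every solution of the Cauchy-Riemann equation \eqref{CR equation} in $\overline{X \times Y}$ for the split data is of the form $(u_{X}, u_{Y})$, where $u_{X}$ and $u_{Y}$ solve the corresponding equations in $\bar{X}$ and $\bar{Y}$ respectively. This follows because for a split $J$ the two projections $\pi_{X} \circ u$ and $\pi_{Y} \circ u$ separately satisfy Floer's equation; the split structure of the Hamiltonian and the one-form $\beta$ ensures that no interaction between the factors appears in the equation. Standard regularity can be achieved by perturbing $J_{X}$ and $J_{Y}$ within their cylindrical split classes, which is enough because products of generic regular moduli spaces are cut out transversely. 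Given this product decomposition, the induced map on chain complexes is literally the tensor product of the Floer complexes of the two factors, and so it is a quasi-isomorphism; the higher order terms of the $\ainf$-bilinear functor are constructed from moduli spaces parametrized by pairs of disks, and by the same product decomposition they assemble into the bilinear extension of the tensor product of $\ainf$-functors $\w(X)$ and $\w(Y)$.

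The hard part will be the product formula for moduli spaces together with the simultaneous achievement of transversality within the restricted class of split almost complex structures; once that is established, $\ainf$-bilinearity and cohomological full faithfulness follow from K\"unneth for tensor products of chain complexes applied to the finite-dimensional truncations generated by chords of bounded action. A secondary technical point is the careful treatment of corner rounding so that all of the compatibility conditions of \S\ref{sec: partially wrapped} are preserved, and so that the product of the cylindrical structures on $L_{X}$ and $L_{Y}$ agrees with a cylindrical structure on $L_{X} \times L_{Y}$ in the smoothed product sector; this is handled by interpolating the product Liouville form with the standard Liouville form on the corner model $F_{X} \times F_{Y} \times T^{*}[0,1]$.
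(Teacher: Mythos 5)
The theorem in question is cited from \cite{GPS2}; the paper under review does not supply its own proof, so there is nothing internal to compare against, and your sketch must be judged on its own terms. Its central weakness is the transversality claim. You assert that regularity can be achieved by perturbing $J_{X}$, $J_{Y}$ within the split class because ``products of generic regular moduli spaces are cut out transversely.'' That is correct when the two domains vary independently, as they do for the pairs-of-disks moduli feeding the higher $F^{k,l}$-terms of the bilinear functor. It fails, however, for the $\mu^{k}_{\w(X\times Y)}$-operations on product Lagrangians and product chords with split data, which necessarily arise as codimension-one output degenerations of your $F^{k,l}$-families. A split-data solution $u=(u_{X},u_{Y})$ there has both factors mapping from the \emph{same} domain $S$, so the moduli space is the fiber product $\mathcal{R}^{k+1}(\mathbf{x})\times_{\mathcal{R}^{k+1}}\mathcal{R}^{k+1}(\mathbf{y})$ over the abstract disk moduli $\mathcal{R}^{k+1}$, not a direct product. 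Transversality of such a fiber product requires that the images of the two forgetful maps to $\mathcal{R}^{k+1}$ jointly span its tangent space; this is neither a formal consequence of regularity of each factor nor generally achievable while staying split. Without one of the standard repairs --- a genuine joint-transversality argument for those forgetful maps, or (more commonly) a fixed generic non-split Floer datum on $\w(X\times Y)$ together with an interpolation from split data near the inputs to that datum near the output in the $F^{k,l}$-families --- the $\ainf$-bilinear functor equations will not close. Your sketch acknowledges neither.

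As a secondary remark, the proof in \cite{GPS2} is not a chain-level K\"{u}nneth computation with quadratic Hamiltonians at all: it is carried out in the localization/wrapping-category framework, builds the functor via covariance under sectorial inclusions and the stabilization $\w(X)\to\w(X\times T^{*}[0,1])$, and establishes fully faithfulness by geometric wrapping arguments rather than by a tensor-product identification of chain complexes. So even once your transversality issue is repaired, you would have a genuinely different, more hands-on route than the one the paper is citing.
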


As an immediate consequence, K\"{u}nneth formula shows that there is a stabilization operation in the partially wrapped content,
in the sense that the K\"{u}nneth bilinear functor induces a fully faithful embedding
\begin{equation}\label{kunneth stab}
\w(X) \xhookrightarrow{} \w(X \times T^{*}[0, 1]),
\end{equation}
called the K\"{u}nneth stabilization functor,
after fixing a choice of a functor $\K \to \w(T^{*}[0, 1])$
(where $\K$ stands for the $\ainf$-category with a single object whose morphism space is $\K$ with the obvious $\ainf$-structure,)
sending the object to one cotangent fiber $T^{*}_{p}$ for some fixed choice of $p \in (0, 1)$.
On objects, an exact cylindrical Lagrangian $L \subset X$ is sent to the canonical cylindrization $L \tilde{\times} T^{*}_{p}$ of the product $L \times T^{*}_{p}$ of $L$ with a cotangent fiber of $T^{*}[0, 1]$.

\subsection{The Rabinowitz Fukaya category}

The new variant of Fukaya category for a Liouville sector $X$  to be defined in the next subsection will be closely related to the {\it Rabinowitz Fukaya category} of its convex completion $\bar{X}$.
We briefly recall the relevant definitions here following \cite{GGV} in order to fix notations needed for various Floer-theoretic constructions that will appear throughout the remaining subsections \S\ref{section: negatively wrapped Fukaya category}-\S\ref{section: nonessential Lagrangians}.

Take a positive quadratic Hamiltonian $H: \bar{X} \to \R$.
Consider the negative Hamiltonian $-H$, and define the Floer cochain space for $(L_{0}, L_{1})$ with respect to $-H$ as
\begin{equation}\label{Floer chains}
CW^{i}(L_{0}, L_{1}; -H) = \prod_{\substack{x^{-} \in \chi(L_{0}, L_{1}; -H) \\ \deg(x^{-}) = i}} |o_{x^{-}}|_{\K} = \prod_{\substack{x \in \chi(L_{1}, L_{0}; H)\\ \deg(x) = n-i}} |o^{-}_{x}|_{\K}.
\end{equation}
The second equality follows from a natural bijective correspondence between time-one $(-H)$-chords $x^{-}$ from $L_{0}$ to $L_{1}$ and time-one $H$-chords $x$ from $L_{1}$ to $L_{0}$, by $x^{-}(t) = x(1-t)$,
which comes with an associated isomorphism of orientation lines $o_{x^{-}} \cong o_{x}^{-}$ as thus an isomorphism of normalized orientation spaces
\begin{align}
|o_{x^{-}}|_{\K} & \to |o_{x}^{-}|_{\K} \\
[x^{-}] & \mapsto [x]^{-},
\end{align}
where $[x]^{-}$ is the negative orientation for $x$.
The Floer differential can be defined by counting inhomogeneous pseudoholomorphic strips in a similar way to the case of defining the Floer differential on $CW^{*}(L_{0}, L_{1}; H)$,
except that we no longer get a finite sum; and that is why we need the direct product in \eqref{Floer chains}.
The Floer differential is still well-defined on the whole direct product by an action argument,
where the direct product can be interpreted as the completion with respect to a filtration on the direct sum given by putting upper bounds on the action.
Since our Lagrangians are oriented, we can make use of the Poincar\'{e} duality isomorphism
\begin{equation}\label{PD}
\bar{I}: CW^{*}(L_{0}, L_{1}; -H) \stackrel{\cong}\to \hom_{\K}(CW^{n-*}(L_{1}, L_{0}; H), \K),
\end{equation}
to give an alternative definition of the Floer complex for the negative quadratic Hamiltonian $-H$. 

There is a {\it continuation map}
\begin{equation}\label{continuation}
c: CW^{*}(L_{0}, L_{1}; -H) \to CW^{*}(L_{0}, L_{1}; H),
\end{equation}
which can be thought of as a continuation map induced by a monotone increasing homotopy from $-H$ to $H$.
The well-definedness of the continuation map \eqref{continuation} on the direct product \eqref{Floer chains} follows from an action filtration argument similar to that for the well-definedness of the differential. 
Alternatively, the map \eqref{continuation} can be defined as the dual of a {\it copairing} for the wrapped Floer cochain complexes with respect to the positive Hamiltonian $H$.
If the copairing is
\begin{equation}\label{copairing}
\hat{c}: \K \to CW^{n-*}(L_{1}, L_{0}; H) \otimes CW^{*}(L_{0}, L_{1}; H),
\end{equation}
then we set
\begin{equation}
c(\cdot) = (\bar{I}(\cdot) \otimes \id)(\hat{c}).
\end{equation}
The copairing $\hat{c}$ is defined by counting rigid inhomogeneous pseudoholomorphic disks with no input and two outputs,
one of which is a time-one $H$-chord from $L_{0}$ to $L_{1}$ and the other a time-one $H$-chord from $L_{1}$ to $L_{0}$;
for energy reasons such a count must be a finite, so a map of the form \eqref{copairing} is well-defined.
Details can be found in \cite{GGV}.

The {\it Rabinowitz Floer complex} is defined to be the mapping cone of the continuation map \eqref{continuation}, 
\begin{equation}\label{Rabinowitz complex}
\begin{split}
RC^{*}(L_{0}, L_{1}) = & \cone(c: CW^{*}(L_{0}, L_{1}; -H) \to CW^{*}(L_{0}, L_{1}; H)) \\
& = CW^{*}(L_{0}, L_{1}; -H)[1] \oplus CW^{*}(L_{0}, L_{1}; H).
\end{split}
\end{equation}
Furthermore, an $\ainf$-structure can be constructed on the Rabinowitz Floer complexes by counting certain kinds of popsicles, adapted to the quadratic Hamiltonian framework.
Below we give a brief summary of the construction of the $\ainf$-structure, and modify it to define an $\ainf$-structure on for a corresponding chain complex in the partially wrapped setting.

Let $S$ be a disk with $k+1$ boundary punctures $z_{0}, \ldots, z_{k}$. 
For $1 \le j \le k$, let $C_{j}$ be the unique hyperbolic geodesic connecting the points at infinity $z_{0}$ and $z_{j}$,
called {\it popsicle sticks}.
Recall from \cite{abouzaidseidel} that a popsicle structure on $S$ of {\it flavor} $\mathbf{p}: F \to \{1, \ldots, k\}$,
where $F$ is a finite index set,
consists of a choice of a preferred point $\sigma_{f} \in C_{\mathbf{p}(f)}$,
called a {\it sprinkle}.
Assign a non-positive integer weight $w_{i} \in \Z_{\le 0}$ to each puncture $z_{i}$ satisfying
\begin{equation}\label{weight sum condition}
w_{0} = \sum_{i=1}^{k} (w_{i} + |\mathbf{p}^{-1}(i)|),
\end{equation}
and
\begin{equation}\label{sprinkles bounded by weight}
|\mathbf{p}^{-1}(i)| \le -w_{i}, i = 0, 1, \ldots, k.
\end{equation}
We write
\begin{equation}
\mathbf{w} = (w_{0}, \ldots, w_{k})
\end{equation}
for the collection of weights.

Denote by $\mathcal{R}^{k+1, \mathbf{p}, \mathbf{w}}$ the moduli space of popsicles of flavor $\mathbf{p}$ carrying weights $\mathbf{w}$,
and $\mathcal{S}^{k+1, \mathbf{p}, \mathbf{w}} \to \mathcal{R}^{k+1, \mathbf{p}, \mathbf{w}}$ the universal family. 
Each $\mathcal{R}^{k+1, \mathbf{p}, \mathbf{w}}$ is a copy of $\mathcal{R}^{k+1, \mathbf{p}}$, i.e. independent of $\mathbf{w}$,
and is a fiber bundle over $\mathcal{R}^{k+1}$ with fiber $\R^{|F|}$.
The compactifications, by adding broken popsicles, are denoted by $\bar{\mathcal{S}}^{k+1, \mathbf{p}, \mathbf{w}} \to \bar{\mathcal{R}}^{k+1, \mathbf{p}, \mathbf{w}}$,
whose low dimensional boundary strata are products of moduli spaces of popsicles of other flavors.
For the purpose of counting popsicles to define the operations \eqref{structure maps for n}, we shall only restrict ourselves to injective popsicles, which are the ones with trivial $Aut(\mathbf{p})$,
or equivalently $|\mathbf{p}^{-1}(i)| \le 1$ for every $i = 1, \ldots, k$.
In addition, we making the following constraints on the weights
\begin{equation}\label{weights -1 and 0}
w_{j} \in \{-1, 0\}, \forall j = 0, 1, \ldots, k.
\end{equation}
Introduce a sign notation for every $j=0,\ldots,k$,
\begin{equation}\label{sign notation}
s_{j} = 
\begin{cases}
+, & \text{ if } w_{j} = 0, j>0 \text{ or } w_{j} = -1, j = 0\\
-, & \text{ if } w_{j} = -1, j>0 \text{ or } w_{j} = 0, j = 0.
\end{cases}
\end{equation}

Equip $S$ with strip-like ends $\{\epsilon_j\}_{j\in\{0, \dots, k\}}$ determined by the weights $\mathbf{w}$ in the following way. 
Let $Z^{\pm} = \R^{\pm} \times [0, 1]$,
 with standard coordinates $(s, t)$.
For $j=0,\ldots, k$, the $j$-th strip-like end near $z_{j}$ is a holomorphic embedding
\begin{equation}\label{strip end j}
\e_{j}: Z^{s_{j}} \to S.
\end{equation}
These should satisfy $\epsilon_{j}^{-1}(\partial S) = \{t = 0, 1\}$
and $\lim\limits_{s \to \pm \infty} u \circ \e_{j}= z_j$ for all $j = 0, 1, \ldots, k$, and have disjoint images.
A puncture $z_{j}$ is called a negative (resp. positive) puncture if the strip-like end has domain $Z^{-}$ (resp. $Z^{+}$).

Fix a $(k+1)$-tuple of Lagrangians $L_{0}, \ldots, L_{k} \in \mathbf{L}$, 
and assign $L_{j}$ to the $j$-th boundary component of $S$ between the punctures $z_{j}$ and $z_{j+1}$.
Denote by $\mathbf{x} = (x_{0}, \ldots, x_{k})$ a collection of time-one chords for $s_{j}H$,
where $x_{0} \in \chi(L_{0}, L_{d}; s_{0}H)$ and $x_{j} \in \chi(L_{j-1}, L_{j}; s_{j}H)$ for $j=1,\ldots,k$.

Because of the changes we have made to the choices to the strip-like ends for a popsicle compared to those for an ordinary boundary-pointed disk, 
we also need to change the conditions for a Floer datum.
In fact, the only condition we need to change is \eqref{stokes}, 
which now takes the form
\begin{equation}
\sum_{j=1}^{k} (-1)^{w_{j}} \nu_{j} \le (-1)^{w_{0}} \nu_{0}.
\end{equation}

Suppose we have made universal choices of Floer data for weighted popsicles.
Write down the inhomogeneous Cauchy-Riemann equation in the same form as \eqref{CR equation},
but keep in mind that Floer data and asymptotic conditions should be changed accordingly. 
The new equation is
\begin{equation}\label{CR equation for rw}
\begin{cases}
&(du - X_{H_{S}} \otimes \beta_{S})^{0, 1} = 0, \\
&u(z) \in (\psi^{\rho_{S}(z)})^{*} L_{j} \text{ for } z \in \p_{j} S, j = 0, \ldots, k, \\
&\lim\limits_{s \to s_{j} \infty} u \circ \epsilon_{j}(s, \cdot) = (\psi^{\nu_{j}})^{*} x_{j}^{s_{j}}, j = 1, \ldots, k, \\
&\lim\limits_{s \to -s_{0} \infty} u \circ \epsilon_{0}(s, \cdot) = (\psi^{\nu_{0}})^{*} x_{0}^{s_{0}}.
\end{cases}
\end{equation}
Let 
\begin{equation}\label{moduli space of popsicle maps}
\mathcal{R}^{k+1, \mathbf{p}, \mathbf{w}}(\mathbf{x})
\end{equation}
 the moduli space of such popsicle maps satisfying \eqref{CR equation} with the prescribed boundary conditions and asymptotic chords $x_{0}, \ldots, x_{k}$.
The moduli space has virtual dimension
\begin{equation}
\dim \mathcal{R}^{k+1, \mathbf{p}, \mathbf{w}}(\mathbf{x}) = k - 2 + |F| + n(-w_{0}+\sum_{j=1}^{k}w_{j}) + (-1)^{w_{0}} \deg(x_{0}) - \sum_{j=1}^{k} (-1)^{w_{j}} \deg(x_{j}) 
\end{equation}
Counting rigid elements in moduli spaces of dimension zero yields multilinear maps of degree $2-k+|F|$:
\begin{equation}\label{components of structure maps for rw}
\mu^{k, \mathbf{p}, \mathbf{w}}:  CW^{*}(L_{k-1}, L_{k}; s_{k}H) \otimes \cdots \otimes CW^{*}(L_{0}, L_{1}; s_{1}H) \to CW^{*}(L_{0}, L_{k}; s_{0}H) 
\end{equation}
which on basis elements take the form
\begin{equation}\label{muk on basis}
\begin{split}
&\mu^{k, \mathbf{p}, \mathbf{w}}  ([x_{k}^{s_{k}}] \otimes \cdots \otimes [x_{1}^{s_{1}}])  = \prod_{\substack{x_{0}\\ k - 2 + |F| + n(-w_{0}+\sum_{j=1}^{k}w_{j}) + (-1)^{w_{0}} \deg(x_{0}) - \sum_{j=1}^{k} (-1)^{w_{j}} \deg(x_{j})=0}} \\
& \sum_{u \in \mathcal{R}^{k+1, \mathbf{p}, \mathbf{w}}(\mathbf{x})} 
(-1)^{*_{k, \mathbf{p}, \mathbf{w}} + \Diamond_{k, \mathbf{p}, \mathbf{w}}} \mathcal{R}^{k+1, \mathbf{p}, \mathbf{w}}_{u} ([x_{k}^{s_{k}}] \otimes \cdots \otimes [x_{1}^{s_{1}}]),
\end{split}
\end{equation}
Here the signs $*_{k, \mathbf{p}, \mathbf{w}}$ are
\begin{equation}\label{sign formula 1 for popsicles}
*_{k, \mathbf{p}, \mathbf{w}} = \sum_{j=1}^{k}( j + w_{1} + \cdots + w_{j-1}) \deg(x_{j}^{s_{j}}) + \sum_{j=1}^{k} (k-j) w_{j},
\end{equation}
\begin{equation}\label{sign formula 2 for popsicles}
\Diamond_{k, \mathbf{p}, \mathbf{w}} = \sum_{j=1}^{k} | \mathbf{p}^{-1}(\{ j+1, \ldots, k\})| (w_{j} + |\mathbf{p}^{-1}(j)|).
\end{equation}
Whenever there are some $j$'s such that $w_{j} = -1$ so the wrapped Floer complex $CW^{*}(L_{j-1}, L_{j}; -H)$ are direct products \eqref{Floer chains}, 
which can be interpreted as the completion of the direct sum with respect to a filtration by putting upper bounds on the action,
the operations $\mu^{k, \mathbf{p}, \mathbf{w}}$ \eqref{muk on basis} can be extended to a well-defined map on the completion by an action filtration argument, 
because $\mu^{k, \mathbf{p}, \mathbf{w}}$ increases the action.
Then we combine all these maps \eqref{components of structure maps for rw} to define 
\begin{equation}\label{structure maps for rw}
\mu^{k}_{\rw}: RC^{*}(L_{k-1}, L_{k}) \otimes \cdots \otimes RC^{*}(L_{0}, L_{1}) \to RC^{*}(L_{0}, L_{k}),
\end{equation}
which on a given component takes the form \eqref{components of structure maps for rw}.
This has degree $2-k$ for the reason of the degree shift in \eqref{Rabinowitz complex}.

\begin{defn}
Fix an at most countable collection $\bar{\mathbb{L}}$ of exact cylindrical Lagrangians in the Liouville manifold $\bar{X}$ which contains all isotopy classes.
The Rabinowitz wrapped Fukaya category $\rw(\mathbb{L})$ is defined to be the $\ainf$-category with objects being exact Lagrangian submanifolds in $\mathbb{L}$,
morphism space being the Rabinowitz Floer complex
\[
\hom_{\rw}(L_{0}, L_{1}) = RC^{*}(L_{0}, L_{1})
\]
and $\ainf$-structure maps given by $\mu^{k}_{\rw}$ \eqref{structure maps for rw}.
When no confusion may occur, we refer to this as the Rabinowitz wrapped Fukaya category of $\bar{X}$, and denote it by $\rw(\bar{X})$.
\end{defn}

\begin{rem}\label{rem on chords}
When defining the complex $RC^{*}(L_{0}, L_{1})$ as in \eqref{Rabinowitz complex},
 we interpret the generators for the $CW^{*}(L_{0}, L_{1}; -H)[1]$ component as time-one $(-H)$-chords $x^{-}$ from $L_{0}$ to $L_{1}$.
 However, when defining the $\ainf$-structure maps $\mu^{k}_{\rw}$ using the various moduli spaces of popsicle maps, 
 we use \eqref{Floer chains} to identify them with time-one $H$-chords $x$ from $L_{1}$ to $L_{0}$.
 But the roles of inputs and outputs are interchanged, as \eqref{Floer chains} is a strict identification of the orientation line of $x$ with the negative orientation line of $x$.
 
 Later, when we discuss about popsicle maps, the actual asymptotic chords are all $H$-chords, 
 but keep in mind that an output $H$-chord at $j$-th puncture for $j \neq 0$ should be formally understood as an input $(-H)$-chord.
 But when we only talk about chain complexes, it is more convenient to keep the generators as $(-H)$-chords.
\end{rem}

\subsection{The negatively wrapped Fukaya category}\label{section: negatively wrapped Fukaya category}

Now let us turn our attention to a Liouville sector $X$, whose corresponding sutured Liouville manifold is $(\bar{X}, F_{0})$.
The goal of this subsection is to introduce a Floer-theoretic invariant for $X$ or equivalently $(\bar{X}, F_{0})$ that is analogous to the the Rabinowitz Fukaya category.
This construction does not seem to carry over to general stopped Liouville sectors, contrasting the partially wrapped Fukaya category (\cite{GPS2}).
The key observation leading to the well-definedness is the following:

\begin{lem}\label{image of continuation}
Suppose $H: \bar{X} \to \R$ is a positive quadratic Hamiltonian which is compatible with the stop $F$.
Then the image of the continuation map $c$ \eqref{continuation} is contained in the subspace $CW^{*}_{F}(L_{0}, F_{1}; H)$.
\end{lem}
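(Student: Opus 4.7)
The plan is to work directly with the monotone-homotopy interpretation of the continuation map: $c$ counts rigid Floer strips $u \colon \R \times [0,1] \to \bar{X}$ for a Floer equation governed by a homotopy $H_\tau$ monotonically increasing from $-H$ at one end to $H$ at the other, with boundary on $L_0, L_1$ and asymptotic to a $(-H)$-chord $x^-$ at $-\infty$ and an $H$-chord $y$ at $+\infty$. I will show that the existence of such a strip forces $y \cdot V = 0$, where $V := F_0 \times \R_{s>0}$ is the ``forbidden'' hypersurface appearing in Definition \ref{defn: partially wrapped CW}; the lemma follows.

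The heart of the argument is a Stokes' theorem computation with a Thom-type $1$-form for $V$. I would construct a closed smooth $1$-form $\tilde\beta$ on $\bar{X}$ by setting $\tilde\beta := \pi^{*}(\phi(\theta)\,d\theta)$, where $\pi \colon F \times C \to C$ is the projection from the cylindrical neighborhood of the stop, $\theta$ is the angular coordinate on $C$, and $\phi$ is a bump function supported in a small angular neighborhood of $\theta = 0$ with $\int \phi = 1$. This $\tilde\beta$ is smooth and closed globally on $\bar{X}$ (because its support avoids $\pi^{-1}(0)$), is compactly supported in a small neighborhood of $V$, and by construction satisfies $\int_\gamma \tilde\beta = \gamma \cdot V$ for any smooth curve $\gamma$ transverse to $V$; after a preliminary Hamiltonian isotopy bringing $L_0, L_1$ into fixed angular position away from $V$ at infinity, one can shrink the support of $\tilde\beta$ so that $\tilde\beta|_{L_0} = \tilde\beta|_{L_1} = 0$. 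Applying Stokes to $u^{*}\tilde\beta$ on the strip yields
\begin{equation*}
0 \;=\; \int_u d\tilde\beta \;=\; \int_{\partial u} \tilde\beta,
\end{equation*}
and the boundary integral decomposes into the Lagrangian arcs (zero by construction) and the two asymptotic chord pieces. Tracking orientations, the induced boundary orientation at $+\infty$ traverses $y$ in its natural direction, contributing $+\,y \cdot V$ with $y \cdot V \geq 0$ by condition $(\mathrm{iii})$ of Definition \ref{H compatible with stop}; while at $-\infty$ the reversed boundary orientation cancels against the reversed direction of the Hamiltonian flow for $-H$ (which makes $d\theta(X_{-H}) \leq 0$ near $V$), contributing $+\,x^- \cdot V$, where $x^- \cdot V$ denotes the absolute number of transverse crossings and is likewise non-negative.

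Hence Stokes reduces to the identity $y \cdot V + x^- \cdot V = 0$, and since both summands are non-negative they must both vanish. In particular $y \cdot V = 0$, so every output of $c$ lies in $CW^{*}_{F}(L_0, L_1; H)$, completing the proof. The principal technical obstacles I anticipate are (i) verifying the orientation signs of the two asymptotic boundary contributions, which requires a careful comparison of the induced strip-end orientation on a slice at large $|\tau|$ with the natural $t$-parameterization of the limiting chord, and (ii) arranging the Lagrangians to be disjoint from the support of $\tilde\beta$ throughout $\bar{X}$, not just near infinity -- the latter is unproblematic because $L_0, L_1$ are already disjoint from $V$ at infinity and the support of $\tilde\beta$ can be taken arbitrarily close to $V$, so a compactly-supported Hamiltonian isotopy pushes the Lagrangians off any such tube.
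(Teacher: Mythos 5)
Your approach is genuinely different from the paper's and is workable in spirit, but the key step as written has a gap.

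The paper's proof is an \emph{action} argument: it factors the continuation map $c$ (via the copairing and a monotone homotopy) through a small-slope linear Hamiltonian $h$, so that all output chords have such small action that they must be ``interior'' chords of $\bar{X}_{0}$, which never cross $F_{0}\times\R_{s>0}$. Your proposal instead attempts a \emph{topological intersection-counting} argument via a Thom form for $V=F_{0}\times\R_{s>0}$. That idea can be made to work — it is essentially the content of Lemma~\ref{lem: intersection number identity} and Proposition~\ref{prop: empty moduli space for positive intersection}, specialized to the two-output copairing disks — but your specific implementation contains a false claim.

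You assert that $\tilde\beta = \pi^{*}(\phi(\theta)\,d\theta)$, with $\phi$ a bump function supported near $\theta=0$ with $\int\phi = 1$, is ``smooth and closed globally on $\bar{X}$ because its support avoids $\pi^{-1}(0)$.'' This is not so. The support of $\phi(\theta)\,d\theta$ in polar coordinates is the full angular sector $\{\theta\in\operatorname{supp}\phi\}$, which contains the origin of $\C$ in its closure, so $\pi^{-1}(0)$ is \emph{not} avoided. Since $d\theta$ is singular there, and since any angular $1$-form with total integral $1$ represents a nontrivial class in $H^{1}(\C^{*})$, the form $\tilde\beta$ cannot extend smoothly across the divisor $F=\pi^{-1}(0)$. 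Consequently, the Stokes computation $\int_{\partial u}\tilde\beta = \int_{u} d\tilde\beta = 0$ is invalid on any strip $u$ that meets $F$: one must excise small disks about $u^{-1}(F)$ and pick up a residue equal to the local intersection multiplicity with $F$. The true identity (cf.\ Lemma~\ref{lem: intersection number identity}) is
\begin{equation*}
y\cdot V \;+\; z\cdot V \;+\; u\cdot F \;=\; 0,
\end{equation*}
where $z$ is the $H$-chord from $L_{1}$ to $L_{0}$ corresponding to your $(-H)$-chord $x^{-}$ (so $z\cdot V$ is your ``absolute crossing count $x^{-}\cdot V$''), not the identity $y\cdot V + x^{-}\cdot V = 0$ you claim. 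You are saved by the fact that $u\cdot F\geq 0$ by positivity of intersection of an (inhomogeneous) pseudoholomorphic curve with an almost-complex hypersurface for which the Hamiltonian is compatible — precisely the observation made in the proof of Proposition~\ref{prop: empty moduli space for positive intersection}. With that correction, all three terms are non-negative and hence vanish, and the conclusion follows; but as written, the proof asserts a false vanishing of $\int_{u}d\tilde\beta$ and silently drops the $u\cdot F$ contribution that the rest of the paper treats with care.

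Two smaller remarks: (i) the map $c$ in the paper is defined via the copairing moduli space (disks with two outputs), not a one-parameter family of monotone Floer data on a strip; these agree up to chain homotopy, but if you want a chain-level statement about the image of $c$ you should run the intersection computation on the moduli space that actually defines it. (ii) Your sign discussion at the $-\infty$ end conflates the signed crossing number of $x^{-}$ (which is $\leq 0$ because $d\theta(X_{-H})\leq 0$) with an ``absolute'' count; after converting $x^{-}$ to the $H$-chord $z$ via the correspondence $x^{-}(t)=z(1-t)$, both asymptotic contributions enter with the same sign, which is what makes the argument close. It would be cleaner to phrase the whole thing in terms of $z$ and invoke Lemma~\ref{lem: intersection number identity} directly.
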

\begin{proof}
Since the continuation map is defined as a copairing by counting certain disks with two outputs and no inputs,
we conclude by a standard homotopy argument that it factorizes through a map
\begin{equation}
c_{h}: CF^{*}(L_{0}, L_{1}; -h) \to CF^{*}(L_{0}, L_{1}; h)
\end{equation}
for a small positive Hamiltonian $h$ linear at infinity, in the sense that there is a commutative diagram:
\begin{equation}
\begin{tikzcd}
CW^{*}(L_{0}, L_{1}; -H) \arrow[r, "c"] \arrow[d] & CW^{*}(L_{0}, L_{1}; H)\\
CF^{*}(L_{0}, L_{1}; -h) \arrow[r, "c_{h}"] &CF^{*}(L_{0}, L_{1}; h) \arrow[u]
\end{tikzcd}
\end{equation}
where the vertical maps are continuation maps induced by an increasing homotopy from $h$ to $H$ and an increasing homotopy from $-H$ to $-h$.
For the homotopy from $-H$ to $-h$, we define the continuation map in a similar way as \eqref{continuation} from the copairing \eqref{copairing}.
We may choose $h$ carefully so that it is also compatible with the stop in the sense of Definition \ref{H compatible with stop}.

If the slope of the linear Hamiltonian $h$ is sufficiently small, say less than the length of the shortest Reeb chord from $\p_{\infty}L_{0}$ to $\p_{\infty}L_{1}$ on $\p_{\infty}X$,
the continuation images of any $h$-chord under the continuation map $CF^{*}(L_{0}, L_{1}; h) \to CW^{*}(L_{0}, L_{1}; H)$ do not correspond to any Reeb chord on $\p_{\infty} X$ from $\p_{\infty} L_{0}$ to $\p_{\infty} L_{1}$,
so must be some $H$-chords in the interior of the compact Liouville domain $\bar{X}_{0}$,
which never cross $F_{0} \times \R_{s>0}$.
It follows that the image of $CF^{*}(L_{0}, L_{1}; h) \to CW^{*}(L_{0}, L_{1}; H)$ is contained in the subspace $CW^{*}_{F}(L_{0}, L_{1}; H)$.
\end{proof}

The proof of Lemma \ref{image of continuation} in fact provides a more refined description of the image of the continuation map.
For this description, we introduce 
\begin{equation}\label{subcomplex of interior chords}
CW^{*}_{int}(L_{0}, L_{1}; H) \subset CW^{*}(L_{0}, L_{1}; H)
\end{equation}
the subspace generated by `interior chords', i.e. the ones contained in a compact set in the Liouville domain $\bar{X}_{0}$ which do not correspond to any Reeb chord from $\p_{\infty} L_{0}$ to $\p_{\infty} L_{1}$.
Without loss of generality, we may choose $H$ small enough in $C^{2}$-norm in the compact domain part, 
such that all these interior chords are trivial, i.e. intersection points, and have zero action.
Therefore, the subspace $CW^{*}_{int}(L_{0}, L_{1}; H)$ is a subcomplex.
By definition, this subcomplex is also contained in $CW^{*}_{F}(L_{0}, L_{1}; H)$, which is generated by all these interior chords,
 as well as those Hamiltonian chords in the cylindrical end corresponding to some Reeb chords but not crossing $F_{0} \times \R_{s>0}$.
Note that the wrapped Floer complex $CW^{*}(L_{0}, L_{1}; -H)$ also contains a subspace $CW^{*}_{int}(L_{0}, L_{1}; -H)$ generated by interior chords (which may adjusted to be intersection points). 
This is no longer a subcomplex, but is naturally identified with a quotient complex, 
together with a projection map $CW^{*}(L_{0}, L_{1}; -H) \to CW^{*}_{int}(L_{0}, L_{1}; -H)$ which is a chain map.
The more refined statement is

\begin{cor}\label{continuation factors through interior chords}
The continuation map $c: CW^{*}(L_{0}, L_{1}; -H) \to CW^{*}(L_{0}, L_{1}; H)$ \eqref{continuation} factors as 
\[
CW^{*}(L_{0}, L_{1}; -H) \to CW^{*}_{int}(L_{0}, L_{1}; -H) \to CW^{*}_{int}(L_{0}, L_{1}; H) \subset CW^{*}(L_{0}, L_{1}; H).
\]
In particular, the image of $c$ is contained in the subcomplex $CW^{*}_{int}(L_{0}, L_{1}; H)$ generated by interior chords.
\end{cor}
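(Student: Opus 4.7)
The plan is to essentially repackage the argument already given in the proof of Lemma \ref{image of continuation} and extract from it the sharper statement about the interior subcomplex. Recall that proof produced a commutative diagram factoring $c$ through continuation maps to and from the Floer complexes $CF^{*}(L_{0}, L_{1}; \pm h)$ for a small positive linear Hamiltonian $h$ compatible with the stop. My goal will be to identify the three arrows in that diagram with the three arrows in the asserted factorization, after making a suitable choice of $h$.

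First I would choose $h$ small enough in $C^{2}$-norm on $\bar{X}_{0}$ and of sufficiently small slope at infinity, specifically smaller than the length of the shortest Reeb chord from $\p_{\infty}L_{0}$ to $\p_{\infty}L_{1}$, so that every time-one $(\pm h)$-chord between $L_{0}$ and $L_{1}$ is a (nondegenerate) intersection point contained in the Liouville domain $\bar{X}_{0}$ with vanishing action. Under this choice $CF^{*}(L_{0}, L_{1}; \pm h)$ is generated purely by interior chords, and the standard bijection between these intersection points and the trivial $(\pm H)$-chords in $\bar{X}_{0}$ identifies $CF^{*}(L_{0}, L_{1}; \pm h)$ canonically (as a graded vector space) with $CW^{*}_{int}(L_{0}, L_{1}; \pm H)$. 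Since these interior chords all have trivial action, and neither differential can hit a chord on the cylindrical end for action reasons, these identifications in fact respect the Floer differentials.

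Next I would examine each of the three continuation arrows. The arrow $CF^{*}(L_{0}, L_{1}; h) \hookrightarrow CW^{*}(L_{0}, L_{1}; H)$ induced by an increasing homotopy from $h$ to $H$ lands in the interior subcomplex $CW^{*}_{int}(L_{0}, L_{1}; H)$, since any generator on the right corresponds to either a trivial intersection-type chord or to a Reeb chord, and the latter is excluded by the slope condition on $h$ together with an action argument; moreover, after the identification above this arrow is (homotopic to) the tautological inclusion. Dually, the arrow $CW^{*}(L_{0}, L_{1}; -H) \to CF^{*}(L_{0}, L_{1}; -h)$ from the increasing homotopy $-H \rightsquigarrow -h$ has its target already consisting entirely of interior chords, so it automatically factors through the quotient projection $CW^{*}(L_{0}, L_{1}; -H) \twoheadrightarrow CW^{*}_{int}(L_{0}, L_{1}; -H)$; a direct action comparison (the trivial chords being the action-minimizing ones) shows this factorization agrees under the identification with the canonical projection. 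The middle arrow $CF^{*}(L_{0}, L_{1}; -h) \to CF^{*}(L_{0}, L_{1}; h)$ then provides the desired chain map $CW^{*}_{int}(L_{0}, L_{1}; -H) \to CW^{*}_{int}(L_{0}, L_{1}; H)$, and the asserted factorization follows by stringing these together.

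The step I expect to require the most care is the verification that the continuation map $CW^{*}(L_{0}, L_{1}; -H) \to CF^{*}(L_{0}, L_{1}; -h)$ really does annihilate the non-interior generators of the source, so that it descends to the quotient $CW^{*}_{int}(L_{0}, L_{1}; -H)$ rather than just factoring through it up to homotopy; this is where the choice of homotopy from $-H$ to $-h$ and the compatibility of both Hamiltonians with the stop must be used to rule out continuation trajectories exiting the stop region. All the other assertions are formal consequences of the proof of Lemma \ref{image of continuation} combined with the canonical identifications above, so no genuinely new Floer-theoretic input is required.
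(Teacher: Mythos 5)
Your plan of reusing the proof of Lemma \ref{image of continuation} is exactly what the paper intends — the paper gives no separate proof of this corollary, only the remark that the lemma's proof already shows the sharper statement — so the overall strategy is right. However, two of the specific claims you make in carrying it out are problematic.

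First, the claimed chain-complex identification $CF^{*}(L_{0}, L_{1}; \pm h) \cong CW^{*}_{int}(L_{0}, L_{1}; \pm H)$ is both unjustified and unnecessary. Choosing $h$ and $H$ small on $\bar{X}_{0}$ certainly identifies the two generating sets with the intersection points $L_{0} \cap L_{1}$, and the maximum principle confines the differential-defining strips to $\bar{X}_{0}$, but these strips solve the Floer equation for $h$ in one complex and for $H$ in the other; ``action reasons'' only confine the trajectories, they do not equate the counts. Unless one arranges $h = H$ on $\bar{X}_{0}$ (which you did not stipulate), the two differentials are not literally equal. Fortunately the corollary makes no claim about what the middle arrow $CW^{*}_{int}(L_{0}, L_{1}; -H) \to CW^{*}_{int}(L_{0}, L_{1}; H)$ is; it only asserts that $c$ factors through the projection and lands in the interior subcomplex, so you can drop this identification entirely.

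Second — and this is the real gap — your argument for the step you yourself flag as the crux, namely that $c$ annihilates the non-interior generators of $CW^{*}(L_{0}, L_{1}; -H)$, is not the right one. You invoke ``compatibility of both Hamiltonians with the stop'' to ``rule out continuation trajectories exiting the stop region,'' but the stop plays no role here: the corollary concerns interior chords (chords in the compact part $\bar{X}_{0}$), and the statement holds verbatim for the empty stop, i.e. in the Rabinowitz setting. The relevant mechanism is the copairing/duality definition of the vertical arrow $p: CW^{*}(L_{0}, L_{1}; -H) \to CF^{*}(L_{0}, L_{1}; -h)$ — the paper explicitly says it is defined ``from the copairing \eqref{copairing}'', not as a naive count of continuation strips. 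Concretely, $p(\cdot) = (\bar{I}(\cdot) \otimes \mathrm{id})(\hat{c}_{p})$ for a copairing $\hat{c}_{p}$ whose first tensor factor lands in $CW^{n-*}(L_{1}, L_{0}; H)$; by the small-slope/energy argument already present in Lemma \ref{image of continuation}'s proof, that first factor is supported on interior chords. Since $\bar{I}([x^{-}])$ for a non-interior chord $x$ pairs to zero against every interior chord, $p$ kills the non-interior generators, and hence so does $c = \iota \circ c_{h} \circ p$. This, together with the observation (already proven in the lemma) that $\iota$ lands in $CW^{*}_{int}(L_{0}, L_{1}; H)$, gives the factorization. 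Your intuition that $p$ must annihilate non-interior generators is correct, but ``ruling out trajectories exiting the stop region'' does not establish it; the duality of $p$ against a map with image in interior chords does.
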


Lemma \ref{image of continuation} allows us to define the following cochain complex:
\begin{equation}\label{morphism in n}
\begin{split}
NC^{*}_{F}(L_{0}, L_{1}) &:= \cone(c: CW^{*}(L_{0}, L_{1}; -H) \to CW^{*}_{F}(L_{0}, L_{1}; H)) \\
&= CW^{*}(L_{0}, L_{1}; -H)[1] \oplus CW^{*}_{F}(L_{0}, L_{1}; H)),
\end{split}
\end{equation}
with the cone differential 
\begin{equation}
\mu^{1}_{NC} = 
\begin{pmatrix}
d_{-} & c[1] \\
0 & \mu^{1}_{\w(X)}
\end{pmatrix}
\end{equation}

Now we want to define an $\ainf$-structure on the complexes \eqref{morphism in n}.
Since the continuation map \eqref{continuation} has image in $CW^{*}_{F}(L_{0}, L_{1}; H)$, there is an obvious inclusion
\begin{equation}
NC^{*}_{F}(L_{0}, L_{1}) \subset RC^{*}(L_{0}, L_{1}),
\end{equation}
which is a chain map.
In order for the maps $\mu^{k}_{\rw}$ to descend to the subcomplexes $NC^{*}_{F}(L_{0}, L_{1})$, we will need the following identity about intersection points of chords with $F_{0} \times \R_{s>0}$ as well as those of inhomogeneous pseudoholomorphic curves with $F$.

\begin{lem}\label{lem: intersection number identity}
Consider any moduli space of popsicle maps $\mathcal{R}^{k+1, \mathbf{p}, \mathbf{w}}(\mathbf{x})$, 
which is among those defining the $\ainf$-structure maps $\mu^{k}_{\rw}$ \eqref{structure maps for rw}.
Then the intersection numbers of various $H$-chords satisfy the following identity
\begin{equation}\label{intersection number identity}
(-1)^{w_{0}} x_{0} \cdot (F_{0} \times \R_{s>0}) + u \cdot F = \sum_{j=1}^{k} (-1)^{w_{j}} x_{j} \cdot (F_{0} \times \R_{s>0}).
\end{equation}
where $u$ is any representative of an element in $\mathcal{R}^{k+1, \mathbf{p}, \mathbf{w}}(\mathbf{x})$.
\end{lem}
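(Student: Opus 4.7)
The plan is to prove \eqref{intersection number identity} as a topological conservation law governing the algebraic intersection of a popsicle map $u$ with the codimension-$2$ submanifold $F \subset \bar{X}$. First, by a generic choice of Floer data, both $u \pitchfork F$ in the interior of $S$ and $x_j \pitchfork (F_0 \times \R_{s>0})$ for each chord may be arranged. Positivity of intersections then gives $u \cdot F \ge 0$ (the almost complex structure is cylindrical near $F$, so $F$ is locally $J$-holomorphic) and $x_j \cdot (F_0 \times \R_{s>0}) \ge 0$ (by condition (iii) of Definition \ref{H compatible with stop}, which forces $X_H$ to rotate counterclockwise across the ray), so both sides of the asserted identity are honest integers.

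Next I would localize to a tubular neighborhood of $F$. Using the cylindrical identification $F \times \C \subset \bar{X}$ near $F$, pick a thin disk bundle $U = F \times D_\varepsilon$ and consider the holomorphic projection $\pi \colon U \to \C$, which sends $F$ to $0 \in \C$ and $F_0 \times \R_{s>0}$ to the positive real ray. Because the Lagrangian labels on $\p S$ are cylindrical and disjoint from $F_0$ at infinity, for $\varepsilon$ small enough the preimage $u^{-1}(U)$ meets $\p S$ only inside the strip-like neighborhoods of the punctures $z_j$. Since $X_H$ is tangent to $F$ by Definition \ref{H compatible with stop}(ii), the Hamiltonian term in the perturbed Cauchy--Riemann equation pushes forward trivially under $\pi$, so $\pi \circ u$ is holomorphic on $u^{-1}(U)$; consequently $u \cdot F$ equals the total order of zero of $\pi \circ u$ on that subset.

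The core step is an application of the argument principle. The boundary of $u^{-1}(U)$ inside $S$ is a union of arcs localized in the strip-like neighborhoods of the $z_j$, and the winding of $\pi \circ u$ around $0 \in \C$ along this boundary equals $u \cdot F$. At the puncture $z_j$ the asymptotic model is a rescaled $H$-chord $x_j$ (under the convention of Remark \ref{rem on chords}), and the angular winding of $\pi \circ x_j$ around $0$ coincides, up to a sign governed by whether $z_j$ is a positive or negative puncture (that is, by $s_j$, hence by $w_j$), with the algebraic intersection $x_j \cdot (F_0 \times \R_{s>0})$ with the positive real ray in $\C$. Summing the contributions over all punctures and moving the $j = 0$ term to the left side produces \eqref{intersection number identity}.

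The principal obstacle is orientation bookkeeping: the sign $(-1)^{w_j}$ in the identity must be traced through the interaction of (a) the strip-like-end sign $s_j$ of \eqref{sign notation}, (b) the $(-H)$-versus-$H$ identification \eqref{Floer chains}, and (c) the boundary orientation of $\p u^{-1}(U)$ inherited from $S$. Once these signs are matched, the remainder reduces to positivity of intersections together with the elementary fact that the winding of an arc in $\C \setminus \{0\}$ around the origin equals its algebraic intersection with any ray emanating from $0$; a parallel check that the Lagrangian arcs on $\p S$ contribute nothing to the winding is automatic since they are disjoint from $F$ for sufficiently small $\varepsilon$.
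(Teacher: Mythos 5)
Your argument is, at its core, the same one the paper gives: the signed count of intersections of the boundary data with the ray $F_0\times\R_{s>0}$ records the winding number of $\partial u$ around $F$, and that winding number is (topologically) $u\cdot F$. Your localization to the tubular neighborhood $U = F\times D_\varepsilon$ and the analysis of the boundary arcs of $u^{-1}(U)$ near the punctures are a more explicit rendering of that winding-number statement, and the sign bookkeeping via $(-1)^{w_j}$ versus $s_j$ is correctly identified as the point to be careful about. One small correction and one small simplification are worth flagging. The correction: you invoke Definition~\ref{H compatible with stop}(ii) to say $X_H$ is tangent to $F$, but (ii) asserts tangency to $F_0$ at infinity, not to the symplectic submanifold $F\subset\bar X$; and even tangency to $F$ would not force $\pi_*X_H=0$ away from $F$, so the claim that $\pi\circ u$ is globally holomorphic on $u^{-1}(U)$ does not follow from the stated hypotheses. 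The simplification: none of that is actually needed here. The identity~\eqref{intersection number identity} is a conservation law, not an inequality, so you need neither positivity of $u\cdot F$ nor holomorphicity of $\pi\circ u$; the degree-theoretic equality ``winding of $\pi\circ u$ along $\partial(u^{-1}(U))$ equals $u\cdot F$'' holds for any map transverse to $F$, which is all your first paragraph already arranges. (Positivity of $u\cdot F$, which does rely on $F$ being a $J$-complex hypersurface, is what drives the subsequent Proposition~\ref{prop: empty moduli space for positive intersection}, not this lemma.) With those two points amended, your proof matches the paper's.
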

\begin{proof}
This is just a variant of Sylvan's argument \cite{sylvan1} to show that the $\ainf$-maps $\mu^{k}_{\w(\bar{X})}$ \eqref{mu for fully wrapped} do not increase the intersection numbers of chords with $F_{0} \times \R_{s>0}$ so that $\mu^{k}_{\w(X)}$ \eqref{mu for partially wrapped} are well defined.

For convenience, we include the proof, which is quite straightforward.
The difference in the intersection numbers of all the outputs and those of all the inputs is precisely the winding number of $\p u$ around $F$, 
which by topology is equal to $u \cdot F$.
\end{proof}

\begin{prop}\label{prop: empty moduli space for positive intersection}
Suppose for $j = 1, \ldots, k$, $x_{j}$ is either a time-one $H$-chord from $L_{j}$ to $L_{j-1}$, in which case $w_{j} = -1$ and $x_{j}$ is an output,
or a time-one $H$-chord from $L_{j-1}$ to $L_{j}$ which does not intersect $F_{0} \times \R_{s>0}$, in which case $w_{j} = 0$ and $x_{j}$ is an input.
If $x_{0}$ is a time-one $H$-chord from $L_{0}$ to $L_{k}$ which intersects $F_{0} \times \R_{s>0}$, then the moduli space $\mathcal{R}^{k+1, \mathbf{p}, \mathbf{w}}(\mathbf{x})$ is empty.
\end{prop}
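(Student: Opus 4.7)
The plan is to deduce emptiness directly from the intersection number identity of Lemma \ref{lem: intersection number identity} together with two instances of positivity of intersection. First I would pin down the weight $w_{0}$. The hypothesis that $x_{0}$ is a time-one $H$-chord from $L_{0}$ to $L_{k}$, rather than from $L_{k}$ to $L_{0}$, forces $s_{0} = -$ under the convention \eqref{sign notation}, and hence $w_{0} = 0$. With this in hand, the identity \eqref{intersection number identity} specializes to
\begin{equation}
x_{0} \cdot (F_{0} \times \R_{s>0}) \,+\, u \cdot F \,=\, \sum_{j=1}^{k} (-1)^{w_{j}} \, x_{j} \cdot (F_{0} \times \R_{s>0})
\end{equation}
for any representative $u \in \mathcal{R}^{k+1, \mathbf{p}, \mathbf{w}}(\mathbf{x})$.

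Next I would estimate the two sides of this identity in opposite directions. For the right-hand side: each index $j$ with $w_{j} = 0$ has $x_{j} \cdot (F_{0} \times \R_{s>0}) = 0$ by hypothesis, while each index $j$ with $w_{j} = -1$ contributes $-\,x_{j} \cdot (F_{0} \times \R_{s>0})$. Condition (iii) of Definition \ref{H compatible with stop} implies $d\theta(X_{H}) \ge 0$ in a neighborhood of $F_{0} \times \R_{s>0}$, so every intersection number $x_{j} \cdot (F_{0} \times \R_{s>0})$ is automatically nonnegative. Hence the right-hand side is $\le 0$. For the left-hand side: the first term is strictly positive by the hypothesis on $x_{0}$, and the second term $u \cdot F$ is nonnegative by positivity of intersection, since the cylindrical almost complex structure $J$ is chosen so that the projection $\pi \co F \times \C_{\re \le \e} \to \C_{\re \le \e}$ is holomorphic, making $F$ a $J$-holomorphic submanifold that any pseudoholomorphic $u$ meets nonnegatively.

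Combining these, the left-hand side is strictly positive while the right-hand side is nonpositive, a contradiction. Therefore the moduli space $\mathcal{R}^{k+1, \mathbf{p}, \mathbf{w}}(\mathbf{x})$ must be empty.

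This is a short argument, and I do not foresee a serious obstacle; the only delicate point is bookkeeping for sign conventions to ensure that the identification of inputs, outputs, and the sign $(-1)^{w_{j}}$ in Lemma \ref{lem: intersection number identity} is applied consistently with Remark \ref{rem on chords}, so that negative-puncture (output) chords with $w_{j} = -1$ really do contribute the correct sign to the right-hand side. Once this is in place, positivity of intersection for $H$-chords with $F_{0} \times \R_{s>0}$ and for $J$-holomorphic curves with $F$ are both standard features of the setup of \cite{sylvan1}, and the contradiction is immediate.
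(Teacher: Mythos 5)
Your proof is correct and follows essentially the same route as the paper's: invoke the intersection number identity of Lemma \ref{lem: intersection number identity}, observe that every term on the right-hand side is nonpositive under the stated hypotheses on the $x_{j}$, note that $u \cdot F \ge 0$ by positivity of intersection with the $J$-holomorphic hypersurface $F$, and conclude $x_{0} \cdot (F_{0} \times \R_{s>0}) \le 0$, contradicting the positivity hypothesis on $x_{0}$. The only difference is that you spell out the determination of $w_{0} = 0$ from the sign conventions, whereas the paper compresses this into the phrase ``treated as an output.''
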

\begin{proof}
Here the $x_{j}$'s on the right-hand side of \eqref{intersection number identity} are all understood to be $H$-chords (see Remark \ref{rem on chords}), so they all have non-negative intersection numbers with $F_{0} \times \R_{s>0}$.
Given our assumption, the right-hand side of \eqref{intersection number identity} is always non-positive. 
On the left-hand side, the term $u \cdot F$ is always non-negative, as it is the intersection number of an inhomogeneous pseudoholomorphic curve with an almost complex submanifold $F$, where the Hamiltonian is further chosen to be compatible with $F$.
In particular, if $x_{0}$ is a time-one $H$-chord from $L_{0}$ to $L_{k}$, treated as an output, then \eqref{intersection number identity} implies that $x_{0} \cdot (F_{0} \times \R_{s>0}) \le 0$. 
Thus the moduli space is empty for positive intersection number.
\end{proof}

Proposition \ref{prop: empty moduli space for positive intersection}, or rather its proof, has the following immediate implication:

\begin{cor}
For any $k \ge 1$, the maps \eqref{structure maps for rw} induces maps
\begin{equation}\label{structure maps for n}
\mu^{k}_{\n}: NC^{*}_{F}(L_{k-1}, L_{k}) \otimes \cdots \otimes NC^{*}_{F}(L_{0}, L_{1}) \to NC^{*}_{F}(L_{0}, L_{k})
\end{equation}
which satisfy the $\ainf$-relations.
\end{cor}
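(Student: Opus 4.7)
The plan is to reduce the claim to an immediate consequence of Proposition \ref{prop: empty moduli space for positive intersection}, by viewing $NC^{*}_{F}(L_{0}, L_{1})$ as a subcomplex of $RC^{*}(L_{0}, L_{1})$ that is preserved by each component $\mu^{k, \mathbf{p}, \mathbf{w}}$ of $\mu^{k}_{\rw}$, and then inheriting the $\ainf$-relations.

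First I would verify the inclusion $NC^{*}_{F}(L_{0}, L_{1}) \subset RC^{*}(L_{0}, L_{1})$ is literally a subcomplex with respect to $\mu^{1}_{\rw}$. Both chain complexes have the same $(-H)$-summand $CW^{*}(L_{0}, L_{1}; -H)[1]$, and differ only in the $H$-summand, where $NC^{*}_{F}$ uses $CW^{*}_{F}(L_{0}, L_{1}; H)$. The three matrix entries of the cone differential all respect this subspace: $d_{-}$ acts on the unrestricted $(-H)$-summand, $c[1]$ lands in $CW^{*}_{F}$ by Lemma \ref{image of continuation}, and $\mu^{1}_{\w(\bar{X})}$ restricts to the partially wrapped differential $\mu^{1}_{\w(X)}$ on $CW^{*}_{F}$ by the no-escape property of \cite{sylvan1}.

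Next, for $k \ge 2$ I would check component by component that each $\mu^{k, \mathbf{p}, \mathbf{w}}$ sends tuples of elements of $NC^{*}_{F}$ to $NC^{*}_{F}$. Given the dichotomy on weights in \eqref{weights -1 and 0}, there are two cases for the output puncture: if $w_{0} = -1$, the output lies in $CW^{*}(L_{0}, L_{k}; -H)[1]$, which is unconstrained in $NC^{*}_{F}$, so there is nothing to check; if $w_{0} = 0$, the output $x_{0}$ is an $H$-chord and must be shown to satisfy $x_{0} \cdot (F_{0} \times \R_{s>0}) = 0$. Under our hypothesis each input with $w_{j} = 0$ already satisfies the same non-intersection condition (being in $CW^{*}_{F}$), while each input with $w_{j} = -1$ carries no such constraint. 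This is precisely the setup of Proposition \ref{prop: empty moduli space for positive intersection}, which forces the moduli space $\mathcal{R}^{k+1, \mathbf{p}, \mathbf{w}}(\mathbf{x})$ to be empty whenever $x_{0} \cdot (F_{0} \times \R_{s>0}) > 0$. Summing over components, the image of $\mu^{k}_{\rw}|_{NC^{*}_{F}^{\otimes k}}$ lies in $NC^{*}_{F}$, and we can define $\mu^{k}_{\n}$ to be this restriction.

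Finally, the $\ainf$-relations for $\{\mu^{k}_{\n}\}_{k \ge 1}$ follow automatically from those for $\{\mu^{k}_{\rw}\}_{k \ge 1}$: the quadratic $\ainf$-relation on $RC^{*}$ evaluated on inputs in $NC^{*}_{F}$ factors entirely through intermediate tensors that themselves lie in $NC^{*}_{F}$ by the preservation established above, hence the identity on $RC^{*}$ restricts to the identity on $NC^{*}_{F}$. The only genuinely substantive input is the geometric intersection identity \eqref{intersection number identity} underlying Proposition \ref{prop: empty moduli space for positive intersection}; this is the main obstacle, but since that proposition is already in hand, the remainder of this corollary is a formal check that restriction to a subcomplex preserves both the operations and their structural identities.
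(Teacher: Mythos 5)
Your proof is correct and takes essentially the same approach as the paper, which treats this corollary as an immediate consequence of Proposition \ref{prop: empty moduli space for positive intersection}: the filtration by intersection number with $F_{0} \times \R_{s>0}$ is preserved by all components $\mu^{k,\mathbf{p},\mathbf{w}}$ because the relevant moduli spaces are empty when the output would violate the constraint, and the $\ainf$-relations then descend to the subcomplex. Your explicit verification of the $k=1$ case via the cone differential and Lemma \ref{image of continuation}, and the case split on $w_{0}$ for $k \ge 2$, fills in the details the paper leaves implicit.
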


\begin{defn}
Fix an at most countable collection $\mathbf{L}$ of exact cylindrical Lagrangians in $\bar{X}$ disjoint from $F_{0}$ at infinity.
The negatively wrapped Fukaya category $\n(\mathbf{L})$ is defined to be the $\ainf$-category with objects being Lagrangians in $\mathbf{L}$,
morphism spaces given by the chain complexes
\[
NC^{*}_{F}(L_{0}, L_{1}),
\]
and $\ainf$-structure maps given by $\mu^{k}_{\n}$ \eqref{structure maps for n}.

When no confusion may occur, we shall also denote this category by $\n(\bar{X}, F_{0})$ for the sutured Liouville manifold $(\bar{X}, F_{0})$,
or $\n(X)$ for the corresponding Liouville sector $X$.
\end{defn}

\subsection{Deformation invariance}

The partially wrapped Fukaya category $\w(X) = \w(\bar{X}, F_{0})$ is invariant under deformation of Liouville sectors by \cite{GPS1}.
We shall see that the category $\n(X) = \n(\bar{X}, F_{0})$ also shares a similar property.

\begin{prop}
A deformation of Liouville sectors $\{X_{t}\}_{t \in [0, 1]}$ induces a quasi-equivalence $\n(X_{0}) \cong \n(X_{1})$.
\end{prop}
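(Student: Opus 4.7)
The plan is to construct a continuation $A_\infty$-functor $\Phi \co \n(X_0) \to \n(X_1)$ directly, using parametrized moduli spaces of popsicle maps over the deformation family $\{X_t\}$. Following the standard template for deformation invariance of wrapped Fukaya categories (as in \cite{GPS1} for $\w$), one first trivializes the family near infinity so that the cylindrical ends and the stops $F_{0,t}$ are identified, then chooses a compatible family of quadratic Hamiltonians $H_t$ compatible with $F_{0,t}$ in the sense of Definition~\ref{H compatible with stop}, together with families of Floer data on the popsicle moduli spaces interpolating between those fixed for $X_0$ and $X_1$.

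On objects, after compactly supported Hamiltonian isotopy one may arrange that $\Phi$ acts as the identity on the Lagrangian collection, since cylindrical Lagrangians disjoint from $F_0$ at infinity are determined up to isotopy by their Legendrian ends. The linear term of $\Phi$ is then a $2 \times 2$ upper-triangular matrix on the cone
\begin{equation*}
NC^*_F(L_0, L_1) = CW^*(L_0, L_1; -H)[1] \oplus CW^*_F(L_0, L_1; H),
\end{equation*}
whose diagonal entries are continuation maps induced by an $s$-dependent Hamiltonian $H_s$ interpolating from $H_0$ to $H_1$, and whose off-diagonal entry is a connecting chain homotopy between the cone-defining continuation $c$ of \eqref{continuation} for $X_0$ and the corresponding $c$ for $X_1$. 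Higher order terms $\Phi^k$ are defined by counting the obvious parametrized popsicle maps, with the same sign conventions as in \eqref{sign formula 1 for popsicles}--\eqref{sign formula 2 for popsicles}. Crucially, since $H_s$ is chosen compatible with the stop for every $s$, the intersection-number identity of Lemma~\ref{lem: intersection number identity} persists throughout the homotopy, so that the analogue of Proposition~\ref{prop: empty moduli space for positive intersection} still rules out popsicle maps whose positive output chord crosses $F_0 \times \R_{s>0}$; hence $\Phi$ descends to the subquotient complexes $NC^*_F$.

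To verify that $\Phi$ is a quasi-equivalence, I would apply the five lemma to the cone structure. The restriction of $\Phi^1$ to the summand $CW^*_F(L_0, L_1; H_0) \to CW^*_F(\Phi L_0, \Phi L_1; H_1)$ is the continuation map realizing the deformation equivalence $\w(\bar{X}_0, F_{0,0}) \simeq \w(\bar{X}_1, F_{0,1})$ of \cite{GPS1}, which is a quasi-isomorphism. The restriction to $CW^*(L_0, L_1; -H_0) \to CW^*(\Phi L_0, \Phi L_1; -H_1)$ is a standard Hamiltonian Floer continuation map, which is a quasi-isomorphism once the argument is carried out filtration-wise (recalling that these complexes are direct products / completions with respect to the action filtration). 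The connecting homotopy supplied by the off-diagonal entry of $\Phi^1$ ensures the two sides of the mapping cone fit into a commutative square up to chain homotopy, so the induced map on cones is a quasi-isomorphism by the usual argument.

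The main obstacle is controlling compactness and action \emph{uniformly} in the parameter $t$ for the parametrized popsicle moduli spaces, whose asymptotic ends carry a mixture of positive and negative weights as in \eqref{weights -1 and 0}. Unlike the purely positive wrapped setting, the maximum-principle control at the outputs coming from negative-weight ends is reversed, so one must rerun the combined energy-plus-stop-intersection argument of Lemma~\ref{lem: intersection number identity} and Proposition~\ref{prop: empty moduli space for positive intersection} in the parametrized context to guarantee that the parametrized moduli spaces are compact and that $\Phi$ preserves the subcomplexes defined by the partially wrapped cutoff. Verifying that the parametrized continuation maps genuinely factor through the subcomplex $CW^*_F$ is a parametrized analogue of Lemma~\ref{image of continuation} and of Corollary~\ref{continuation factors through interior chords}, which follows from the same factorization through $CW^*_{int}$ provided the interpolating Hamiltonians are kept uniformly small in $C^2$-norm on the compact Liouville domain and the interpolation is monotone in the slope.
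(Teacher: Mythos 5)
Your proof takes a genuinely different route from the paper's. The paper does not attempt a direct parametrized continuation argument at all: it instead invokes the fact (from \cite{GPS1}) that any deformation of Liouville sectors can be broken into small pieces, each of which is a zigzag of \emph{trivial inclusions} $X_{-a} = e^{-aX_I}(X) \hookrightarrow X$. On the level of sutured Liouville manifolds, such a trivial inclusion fixes the convex completion $\bar{X}$ and merely enlarges the width of the standard neighborhood $F_0 \times \R_{|t|\le\e}$ of the stop. Since $\n(X)$ is defined by Floer data on $\bar{X}$ with a Hamiltonian compatible with $F_0$, no new Hamiltonian chords link $F_0 \times \R_{s>0}$ when the width changes, so the morphism complexes $NC^*_F$ are \emph{literally unchanged}; the induced functor $\n(X_{-a}) \to \n(X)$ is the identity on objects and morphism spaces and hence trivially a quasi-equivalence. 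No continuation maps, parametrized moduli spaces, or five-lemma argument are needed.

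What your approach buys is a direct, self-contained continuation functor covering the deformation, in the spirit of the standard proof of deformation invariance of $\w$. It would in principle work, and has the advantage of not relying on the zigzag-of-trivial-inclusions decomposition. But it is substantially heavier: you must run the combined energy-plus-stop-intersection compactness argument in the parametrized setting, for popsicle moduli spaces with mixed positive and negative weights, and you must verify action estimates for continuation maps between the product-completed complexes $CW^*(L_0,L_1;-H_t)$. These are precisely the complications the paper's observation is designed to avoid. One further point your write-up elides: the cone map $\Phi^1 = \begin{pmatrix} a & b \\ 0 & d\end{pmatrix}$ is a chain map only if the off-diagonal entry $b$ satisfies the exact chain-homotopy relation $\mu^1 b + b\, d_- = c_1 a - d\, c_0$ between the two cone-defining continuation maps $c_0, c_1$ of \eqref{continuation}; you should make explicit that this identity is precisely what the boundary of the one-dimensional parametrized popsicle moduli space with one $(-H)$-input and one $H$-output delivers (after applying Lemma~\ref{lem: intersection number identity} parametrically to ensure the output stays in $CW^*_F$), rather than taking it for granted.
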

\begin{proof}
The proof basically follows that of \cite{GPS1}.
The technical issue here is that in our setup, $\n(X)$ is defined using Floer data on $\bar{X}$, so it is not automatic that an inclusion of sectors $X \xhookrightarrow{} X'$ always induces a functor $\n(X) \to \n(X')$.
However, when there is a deformation of Liouville sectors $\{X_{t}\}_{t \in [0, 1]}$, 
their convex completions are deformation equivalent, 
and the whole deformation can be broken up into small pieces such that every small piece is a zigzag of trivial inclusions.
Without loss of generality, we may assume that the deformation is a trivial inclusion, of the form $X_{-a} \xhookrightarrow{} X$ for $X_{-a} = e^{-aX_{I}}(X)$ where $X_{I}$ is the Hamiltonian vector field for the defining function $I$ of $X$.
Looking at the corresponding sutured Liouville manifolds,
such an inclusion can be described as an inclusion 
\[
\bar{X} \setminus (F_{0} \times \R_{|t| \le \e'} \times \R_{s \ge 0})^{\circ} \xhookrightarrow{} \bar{X} \setminus (F_{0} \times \R_{|t| \le \e} \times \R_{s \ge 0})^{\circ}
\]
by deforming the neighborhood $F_{0} \times \R_{|t| \le \e}$ of $F_{0} \subset \p \bar{X}_{0}$ to a bigger neighborhood for some $\e' > \e$, 
as long as there still exists a function $I_{F_{0} \times \R_{|t| \le \e'}}: F_{0} \times \R_{|t| \le \e'} \to \R$ whose derivative along the Reeb vector field is positive, and the contact vector field for this function is outward pointing along $\p (F_{0} \times \R_{|t| \le \e'})$.
Under this description, there is an obvious functor $\w(X_{-a}) \to \w(X)$ which send every Lagrangian $L$ to itself, 
and is the identity on morphism spaces, since there is no newly created Reeb chord linking $F_{0}$ when shrinking $F_{0} \times \R_{|t| \le \e'}$ to $F_{0} \times \R_{|t| \le \e}$.
This implies that the functor induces a functor $\n(X_{-a}) \to \n(X)$,
which by the above analysis is indeed an equivalence.
\end{proof}

\subsection{Other variants}\label{sec: variants}

The definition of $CW^{*}_{F}(L_{0}, L_{1}; H)$ as in \eqref{partially wrapped CW} indeed comes from an increasing exhausting filtration on $CW^{*}(L_{0}, L_{1}; H)$, 
defined by putting an upper bound on the signed number of intersection points $x \cdot (F_{0} \times \R_{s>0})$ of an $H$-chord $x$ with $F_{0} \times \R_{s>0}$.
Unlike $CW^{*}_{F}(L_{0}, L_{1}; H)$, the other subspaces with more intersection points are not closed under the $\ainf$-operations $\mu^{k}_{\w(\bar{X})}$.
However, passing to the complex $NC_{F}^{*}(L_{0}, L_{1})$, we shall see that there are other finite-dimensional subspaces that are closed under the $\ainf$-operations $\mu^{k}_{\n}$.

First, we note that for each $m \in \Z_{\le -1}$, the following direct product
\begin{equation}\label{negative less than m piece}
\prod_{\substack{y \in \chi(L_{0}, L_{1}; -H)\\y \cdot (F_{0} \times \R_{s>0}) \le m}} |o_{y}|_{\K}
\end{equation}
can be identified with a subspace of $CW^{*}(L_{0}, L_{1}; -H)$. 
One way of seeing this is to observe that the linear dual of the quotient of $CW^{n-*}(L_{1}, L_{0}; H)$ by the subspace generated by chords $x$ such that $x \cdot (F_{0} \times \R_{s>0}) \le -m-1$ is precisely \eqref{negative less than m piece} under the duality isomorphism \eqref{PD},
which is therefore identified with a subspace of the linear dual of $CW^{n-*}(L_{1}, L_{0}; H)$.

The filtration naturally extends to the Rabinowitz complex $RC^{*}(L_{0}, L_{1})$ as follows.
With respect to the direct sum \eqref{Rabinowitz complex}, for each $m \in \Z$ define the $m$-th filtered subspace to be 
\begin{equation}\label{filtered subspace}
C_{m} = C_{m}(L_{0}, L_{1}):= \prod_{\substack{y \in \chi(L_{0}, L_{1}; -H)\\y \cdot (F_{0} \times \R_{s>0}) \le m}} |o_{y}|_{\K}[1] \oplus \bigoplus_{\substack{x \in \chi(L_{0}, L_{1}; H)\\x \cdot (F_{0} \times \R_{s>0}) \le m}} |o_{x}|_{\K}.
\end{equation}
Note that, for $m=0$, $C_{0} = NC^{*}_{F}(L_{0}, L_{1})$, which is closed under the differential $\mu^{1}_{\n}$ and moreover under the $\ainf$-operations $\mu^{k}_{\n}$.
For arbitrary $m$, we have:

\begin{lem}
For any $m \in \Z$, $C_{m}$ is closed under the differential $\mu^{1}_{\rw}$.
\end{lem}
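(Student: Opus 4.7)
The plan is to decompose the differential $\mu^{1}_{\rw}$ on the mapping cone $RC^{*}(L_{0}, L_{1}) = CW^{*}(L_{0}, L_{1}; -H)[1] \oplus CW^{*}(L_{0}, L_{1}; H)$ into its three matrix entries---the Floer differential $d_{-}$ on the $(-H)$-complex, the positive Floer differential $\mu^{1}_{\w(\bar{X})}$ on the $H$-complex, and the continuation map $c$ in the off-diagonal corner---and to verify that each of these three pieces preserves the filtration level $m$ separately. In each case, the fundamental tool is Lemma \ref{lem: intersection number identity} specialized to $k = 1$, together with the non-negativity of $u \cdot F$ (which holds because $J$ is cylindrical and $H$ is compatible with the stop, so $F$ behaves as a pseudoholomorphic submanifold). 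Throughout I will use the popsicle-language convention of Remark \ref{rem on chords}, so that a generator of the $(-H)$-piece is written as an $H$-chord $x$ from $L_{1}$ to $L_{0}$ with weight $w = -1$, related to the geometric $(-H)$-chord $y$ by $y(t) = x(1-t)$ and accordingly $y \cdot (F_{0} \times \R_{s>0}) = -\, x \cdot (F_{0} \times \R_{s>0})$.

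First I would treat the purely positive part: for $w_{0} = w_{1} = 0$, the identity reads $x_{0} \cdot (F_{0} \times \R_{s>0}) + u \cdot F = x_{1} \cdot (F_{0} \times \R_{s>0})$, and since $u \cdot F \ge 0$ we get $x_{0} \cdot (F_{0} \times \R_{s>0}) \le x_{1} \cdot (F_{0} \times \R_{s>0}) \le m$. This is just Sylvan's original intersection-monotonicity argument. Next, for the purely negative part, with $w_{0} = w_{1} = -1$, the identity becomes $-x_{0} \cdot (F_{0} \times \R_{s>0}) + u \cdot F = -x_{1} \cdot (F_{0} \times \R_{s>0})$. Translating back to the geometric $(-H)$-chords, this is $y_{0} \cdot (F_{0} \times \R_{s>0}) + u \cdot F = y_{1} \cdot (F_{0} \times \R_{s>0})$, which again yields $y_{0} \cdot (F_{0} \times \R_{s>0}) \le y_{1} \cdot (F_{0} \times \R_{s>0}) \le m$.

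The interesting case is the continuation map $c$, corresponding to $w_{1} = -1$, $w_{0} = 0$. Here the identity forces $x_{0} \cdot (F_{0} \times \R_{s>0}) + u \cdot F = y_{1} \cdot (F_{0} \times \R_{s>0})$. For $m \ge 0$, both sides of this are bounded above by $m$, giving $x_{0} \cdot (F_{0} \times \R_{s>0}) \le m$ as required so that $c(y_{1}) \in C_{m}$. For $m < 0$, however, the filtration condition $y_{1} \cdot (F_{0} \times \R_{s>0}) \le m < 0$ would force $x_{0} \cdot (F_{0} \times \R_{s>0}) < 0$, which is impossible since $H$ is compatible with the stop and intersections with $F_{0} \times \R_{s>0}$ of positive $H$-chords are non-negative; hence the relevant moduli space is empty and $c$ vanishes identically on $C_{m}$, which is consistent with the fact that the positive summand of $C_{m}$ is itself zero for $m < 0$.

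Combining the three computations shows $\mu^{1}_{\rw}(C_{m}) \subset C_{m}$ for every $m \in \Z$. The main obstacle I anticipate is bookkeeping: keeping the sign convention $y \cdot (F_{0} \times \R_{s>0}) = -\, x \cdot (F_{0} \times \R_{s>0})$ straight when translating between the geometric $(-H)$-chord viewpoint used to define $C_{m}$ and the popsicle viewpoint (all chords interpreted as $H$-chords with weights $\pm 1$) used to state Lemma \ref{lem: intersection number identity}. Once that translation is fixed, everything reduces to the single inequality $u \cdot F \ge 0$.
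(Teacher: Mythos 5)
Your argument is correct, and on the diagonal entries it coincides with the paper's proof, which likewise rests on the fact that the Floer differentials on the $\pm H$ pieces do not increase intersection numbers with $F_{0} \times \R_{s>0}$ (i.e. Lemma \ref{lem: intersection number identity} with $k=1$ together with $u \cdot F \ge 0$); your sign bookkeeping $y \cdot (F_{0} \times \R_{s>0}) = -\,x \cdot (F_{0} \times \R_{s>0})$ is consistent with how the filtration \eqref{filtered subspace} is set up via the duality discussion around \eqref{negative less than m piece}. Where you diverge is the off-diagonal (continuation) entry: the paper handles it by citing Lemma \ref{image of continuation}, i.e. that the image of $c$ lies in $CW^{*}_{F}(L_{0}, L_{1}; H)$ (chords with intersection number $0$), which settles $m \ge 0$, and then for $m<0$ simply observes that the positive summand of $C_{m}$ vanishes; making that case fully explicit really requires knowing that $c$ \emph{vanishes} on the deeply filtered negative generators, which the paper obtains only through the factorization of $c$ through interior chords (Corollary \ref{continuation factors through interior chords}, itself proved by a homotopy argument through a small Hamiltonian $h$). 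You instead apply the intersection identity directly to the $k=1$, one-sprinkle popsicle moduli spaces (the copairing disks \eqref{copairing} that define this matrix entry of $\mu^{1}_{\rw}$): the identity $x_{0}\cdot(F_{0}\times\R_{s>0}) + u \cdot F = y_{1}\cdot(F_{0}\times\R_{s>0})$ gives both the needed bound for $m \ge 0$ and the chain-level emptiness of the moduli spaces when $y_{1}\cdot(F_{0}\times\R_{s>0}) < 0$. This is a more uniform treatment (one lemma serves all three entries) and it makes the $m<0$ case self-contained at the chain level, whereas the paper's route leans on the separately proved structure of the continuation map; the trade-off is that your argument is tied to the specific popsicle model of $\mu^{1}_{\rw}$, while the paper's phrasing only uses properties of $c$ as a map.
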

\begin{proof}
The Floer differential on each piece does not increase the number of intersection points of a chord with $F_{0} \times \R_{s>0}$.
By Lemma \ref{image of continuation}, the continuation map \eqref{continuation} has image in $CW^{*}_{F}(L_{0}, L_{1}; H)$,
which are generated by those chords $x \in \chi(L_{0}, L_{1}; H)$ with $x \cdot (F_{0} \times \R_{s>0}) = 0$.
For $m < 0$, the summand $\bigoplus_{\substack{x \in \chi(L_{0}, L_{1}; H)\\x \cdot (F_{0} \times \R_{s>0}) \le m}} |o_{x}|_{\K}$ in \eqref{filtered subspace} is zero, as the set of such chords is empty by positivity of intersection.
\end{proof}

Consider the sub-quotient $C_{m'}/C_{m}$ for $m'>m, m, m' \in \Z$, which as $\K$-modules is naturally isomorphic to
\begin{equation}
C_{m'}/C_{m} \cong \bigoplus_{\substack{y \in \chi(L_{0}, L_{1}; -H)\\m+1 \le y \cdot (F_{0} \times \R_{s>0}) \le m'}} |o_{y}|_{\K}[1] \oplus \bigoplus_{\substack{x \in \chi(L_{0}, L_{1}; H)\\m+1 \le x \cdot (F_{0} \times \R_{s>0}) \le m'}} |o_{x}|_{\K}.
\end{equation}
For $m'=0$, the space $C_{m'}$ is closed under $\ainf$-operations. 
Now we shall show that the quotients $C_{0}/C_{m}$ inherit an $\ainf$-structure from $\mu^{k}_{\n}$. 
Denote by 
\begin{equation}\label{morphism in nm}
NC^{*}_{>m}(L_{0}, L_{1}): = C_{0}/C_{m} = \bigoplus_{\substack{y \in \chi(L_{0}, L_{1}; -H)\\m+1 \le y \cdot (F_{0} \times \R_{s>0}) \le 0}} |o_{y}|_{\K}[1] \oplus \bigoplus_{\substack{x \in \chi(L_{0}, L_{1}; H)\\x \cdot (F_{0} \times \R_{s>0}) = 0}} |o_{x}|_{\K}
\end{equation}
for each $m \in \Z_{\le -1}$. 

\begin{prop}\label{prop: ainf structure on the quotient}
For each $m \in \Z_{\le -1}$, there exist maps
\begin{equation}\label{structure maps on nm}
\mu^{k}_{>m}: NC^{*}_{>m}(L_{k-1}, L_{k}) \otimes \cdots \otimes NC^{*}_{>m}(L_{0}, L_{1}) \to NC^{*}_{>m}(L_{0}, L_{k})
\end{equation}
satisfying the $\ainf$-relations.
\end{prop}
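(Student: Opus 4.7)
The plan is to exhibit $C_m$ as a strict $\ainf$-ideal inside $NC^*_F = C_0$ for the operations $\mu^k_{\n}$; then $\mu^k_{>m}$ arises automatically by descent to the quotient $C_0/C_m$, and the $\ainf$-relations are inherited by projection. Concretely, for every $k \ge 1$ I need to verify that $\mu^k_{\n}$ maps any tuple with at least one input from $C_m$ (and the remaining inputs from $C_0$) back into $C_m$.

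For $k = 1$, note that since $m \le -1$ the $H$-chord summand of $C_m$ is empty by positivity of intersection, so $C_m$ is spanned by $(-H)$-chords $y$ with $y \cdot (F_0 \times \R_{s>0}) \le m$. Corollary \ref{continuation factors through interior chords} says the continuation map $c$ factors through $CW^*_{int}(L_0, L_1; -H)$, killing every $(-H)$-chord that has nonzero intersection with $F_0 \times \R_{s>0}$; in particular $c(y) = 0$ for $y \in C_m$. Moreover, via the Poincar\'{e} duality \eqref{PD} the differential $d_-$ is dual to $d_+$ on $CW^{n-*}(L_1, L_0; H)$, and since $d_+$ does not increase intersection numbers with $F_0 \times \R_{s>0}$ (Sylvan's standard positivity argument), $d_-$ preserves the filtration $\{y \cdot (F_0 \times \R_{s>0}) \le m\}$. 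Combining, $\mu^1_{\n}(y) = d_-(y) \in C_m$.

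For $k \ge 2$, the main tool is Lemma \ref{lem: intersection number identity}. Suppose the input at some puncture $j_0$ lies in $C_m$: this forces $w_{j_0} = -1$ and $a_{j_0} := x_{j_0} \cdot (F_0 \times \R_{s>0}) \ge -m \ge 1$. With the remaining inputs in $C_0$ we have $a_j := x_j \cdot (F_0 \times \R_{s>0}) = 0$ whenever $w_j = 0$ and $a_j \ge 0$ otherwise, so the intersection identity specializes to
\[
(-1)^{w_0} a_0 + u \cdot F \;=\; -\sum_{j:\, w_j = -1} a_j \;\le\; -a_{j_0} \;\le\; m \;<\; 0.
\]
Together with $u \cdot F \ge 0$ this yields $(-1)^{w_0} a_0 < 0$. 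The case $w_0 = 0$ is ruled out by $a_0 \ge 0$. The case $w_0 = -1$ gives $a_0 > 0$, which combined with the vanishing of the intersection numbers of the $H$-chord inputs places us exactly in the hypothesis of Proposition \ref{prop: empty moduli space for positive intersection}, forcing the moduli space to be empty. Hence every popsicle flavor $(\mathbf{p}, \mathbf{w})$ contributes zero, so $\mu^k_{\n}$ is identically zero on such input tuples, a fortiori landing in $C_m$. The main technical obstacle is exactly this case analysis: the sign conventions of the weight formalism and the positivity constraints $a_j \ge 0$, $u \cdot F \ge 0$ must align precisely so that both subcases close off; once that is done, the descent of the operations and of the $\ainf$-relations to $NC^*_{>m}$ is purely formal.
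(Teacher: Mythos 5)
Your overall strategy is the paper's: show that $C_{m}$ is preserved by $\mu^{k}_{\n}$ whenever the other entries lie in $C_{0}$ (the paper's unnumbered lemma and Corollary \ref{ainf ideal}), then descend to the quotient $C_{0}/C_{m}$ by choosing representatives; your descent step and your use of the identity \eqref{intersection number identity} are fine up to the last move. That last move, however, is wrong. In the case $w_{0}=-1$ you invoke Proposition \ref{prop: empty moduli space for positive intersection} to conclude that the moduli space is empty, but the hypothesis of that proposition is that $x_{0}$ is a time-one $H$-chord from $L_{0}$ to $L_{k}$ treated as an \emph{output} of the popsicle map, i.e.\ the operation outputs into the positive part $CW^{*}_{F}$ (the case $w_{0}=0$), which is exactly the case you have already excluded. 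When $w_{0}=-1$ the chord $x_{0}$ is an \emph{input} of the map (the operation outputs into the $CW^{*}(-H)[1]$ summand), and nothing forces emptiness: the intersection identity only bounds $a_{0}$ from below, it produces no contradiction. Consequently your stronger claim that $\mu^{k}_{\n}$, $k\ge 2$, vanishes identically on tuples with one entry in $C_{m}$ is not justified and is false in general (already for $m=-1$ it would say every higher product involving a negative-wrapping generator that crosses $F_{0}\times\R_{s>0}$ vanishes, which positivity of intersection does not give); such products are generally nonzero, they merely land deeper in the filtration.

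The repair is contained in your own displayed inequality: in the case $w_{0}=-1$ it gives $a_{0}\ge -m+u\cdot F\ge -m$ (not merely $a_{0}>0$), i.e.\ the output chord represents a generator of $C_{m}(L_{0},L_{k})$. Combined with the emptiness you correctly established when $w_{0}=0$, this is precisely the ideal property $\mu^{k}_{\n}(C_{0},\ldots,C_{m},\ldots,C_{0})\subset C_{m}$, which is all the descent argument needs — and this is exactly how the paper argues. So delete the appeal to Proposition \ref{prop: empty moduli space for positive intersection} in the $w_{0}=-1$ case and replace "empty" by "output in $C_{m}$". As a minor point, your separate $k=1$ discussion via Corollary \ref{continuation factors through interior chords} and Poincar\'e duality is correct but unnecessary: the same moduli-space count covers $k=1$ uniformly (the continuation component corresponds to $w_{0}=0$ with a sprinkle and is empty by the same inequality, while the $d_{-}$ component is the case $w_{0}=-1$ just treated).
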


To prove this, we shall see that for $m \le -1$, $C_{m}$ is indeed an $\ainf$-ideal of $C_{0}$. 
This can be proved using the following fact about the relevant moduli spaces:

\begin{lem}
Let $m \in \Z_{\le -1}$.
For $j = 1, \ldots, k$, $x_{j}$ be either a time-one $H$-chord from $L_{j-1}$ to $L_{j}$ such that $x_{j} \cdot (F_{0} \times \R_{s>0}) = 0$, for which $w_{j} = 0$ so that $x_{j}$ is treated as an input for a popsicle map $u \in \mathcal{R}^{k+1, \mathbf{p}, \mathbf{w}}(\mathbf{x})$, 
or a time-one $H$-chord from $L_{j}$ to $L_{j-1}$, for which $w_{j}=-1$ so that $x_{j}$ is treated as an output for $u$.
Suppose there is some $1 \le i \le k$ such that $w_{i} = -1$ and $x_{i}$ is a time-one $H$-chord from $L_{i}$ to $L_{i-1}$ with $x_{i} \cdot (F_{0} \times \R_{s>0}) \ge -m$, treated as an output.
Then the moduli space $\mathcal{R}^{k+1, \mathbf{p}, \mathbf{w}}(\mathbf{x})$ is empty unless $x_{0}$ also satisfies $x_{0} \cdot (F_{0} \times \R_{s>0}) \ge -m$.
\end{lem}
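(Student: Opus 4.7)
The plan is to deduce this directly from the intersection number identity \eqref{intersection number identity} of Lemma \ref{lem: intersection number identity}, together with two nonnegativity facts: that the intersection of any $H$-chord (thought of as an output at a puncture with $w_j = -1$, cf.\ Remark \ref{rem on chords}) with $F_0 \times \R_{s>0}$ is nonnegative, by condition (iii) of Definition \ref{H compatible with stop}; and that the geometric intersection $u \cdot F$ is nonnegative, since the Hamiltonian is compatible with the stop and $J$ preserves $F$, so $F$ is cut out by a pseudoholomorphic section and standard positivity of intersection applies.

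First I would analyze the right-hand side of \eqref{intersection number identity}. For each $j$ with $w_j = 0$, the hypothesis gives $x_j \cdot (F_0 \times \R_{s>0}) = 0$, so the contribution vanishes. For each $j$ with $w_j = -1$, the contribution is $-x_j \cdot (F_0 \times \R_{s>0}) \le 0$. Isolating the term $j = i$ and using $x_i \cdot (F_0 \times \R_{s>0}) \ge -m$, we conclude
\begin{equation*}
\sum_{j=1}^{k} (-1)^{w_j} x_j \cdot (F_0 \times \R_{s>0}) \;\le\; -x_i \cdot (F_0 \times \R_{s>0}) \;\le\; m.
\end{equation*}

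Next I would move $u \cdot F \ge 0$ to the right of \eqref{intersection number identity} to obtain $(-1)^{w_0} x_0 \cdot (F_0 \times \R_{s>0}) \le m - u \cdot F \le m$, and then split on the value of $w_0$. If $w_0 = 0$, then $x_0 \cdot (F_0 \times \R_{s>0}) \le m \le -1$, which contradicts nonnegativity of the intersection of an $H$-chord with $F_0 \times \R_{s>0}$; hence the moduli space must be empty. If $w_0 = -1$, the inequality reads $-x_0 \cdot (F_0 \times \R_{s>0}) \le m$, i.e.\ $x_0 \cdot (F_0 \times \R_{s>0}) \ge -m$, which is the stated conclusion.

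There is no real obstacle here: the identity of Lemma \ref{lem: intersection number identity} is doing all the work, and the only thing to be careful about is the bookkeeping of signs encoded by the weights $w_j$ and the distinction between an $H$-chord and its image as a $(-H)$-chord (Remark \ref{rem on chords}). The one point worth emphasizing in the write-up is that the hypothesis that exactly one output $x_i$ has large intersection is enough because all other outputs contribute nonpositively, so they can only help the inequality; this is what allows the argument to upgrade from a single "bad" input to the conclusion that $C_m$ is an $\ainf$-ideal of $C_0$, which is the reason the lemma implies Proposition \ref{prop: ainf structure on the quotient}.
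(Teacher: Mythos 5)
Your proof is correct and takes essentially the same approach as the paper: both deduce the claim from the intersection number identity \eqref{intersection number identity}, positivity of $u \cdot F$, and nonnegativity of each $H$-chord's intersection with $F_0 \times \R_{s>0}$. Your version is just slightly more explicit about the $(-1)^{w_j}$ sign bookkeeping and the case split on $w_0$, whereas the paper phrases the same inequality in terms of "total input intersections $\ge$ total output intersections."
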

\begin{proof}
Again, when treating various chords as asymptotics of pseudoholomorphic maps, we should follow the principle declared in Remark \ref{rem on chords}.
By \eqref{intersection number identity}, the number of intersection points of input chords with $F_{0} \times \R_{s>0}$ must be greater than or equal to that of output chords with $F_{0} \times \R_{s>0}$.
Since $x_{i}$ is an output with $x_{i} \cdot (F_{0} \times \R_{s>0}) \ge -m$, 
and the other $x_{j}$ with $1 \le j \le k, j \neq i$ must be either an input with $x_{j} \cdot (F_{0} \times \R_{s>0}) = 0$, 
or an output with $x_{j} \cdot (F_{0} \times \R_{s>0}) \ge 0$,
it follows that $x_{0}$ must be an input, in which case $w_{0} = -1$, and $x_{0} \cdot (F_{0} \times \R_{s>0}) \ge -m$.
\end{proof}

\begin{cor}\label{ainf ideal}
For any $m \in \Z_{\le -1}$, the $\ainf$-operations $\mu^{k}_{\n}$ satisfy 
\begin{equation}
\mu^{k}_{\n}(C_{0}(L_{k-1}, L_{k}) \otimes \cdots \otimes C_{m}(L_{i-1}, L_{i}) \otimes \cdots \otimes C_{0}(L_{0}, L_{1})) \subset C_{m}(L_{0}, L_{k}).
\end{equation}
\end{cor}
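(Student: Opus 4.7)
The plan is to derive the corollary directly from the preceding lemma, after unpacking what the filtered subspaces $C_m$ look like for $m \le -1$. First I would observe that by positivity of intersection of $H$-chords with $F_0 \times \R_{s>0}$ (Definition \ref{H compatible with stop}), the second summand in the definition \eqref{filtered subspace} of $C_m$ is empty for $m \le -1$. Consequently $C_m(L_0, L_1)$ consists purely of $(-H)$-chords $y$ from $L_0$ to $L_1$ with $y \cdot (F_0 \times \R_{s>0}) \le m$; under the identification of Remark \ref{rem on chords}, these correspond to $H$-chords $x$ from $L_1$ to $L_0$ (so $w = -1$, treated as outputs in the popsicle framework) with $x \cdot (F_0 \times \R_{s>0}) \ge -m$.

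Next I would fix an output asymptotic $x_0$ of a rigid popsicle map $u \in \mathcal{R}^{k+1, \mathbf{p}, \mathbf{w}}(\mathbf{x})$ contributing to $\mu^k_{\n}$, where all inputs $j \neq i$ lie in $C_0(L_{j-1}, L_j)$ and the $i$-th input lies in $C_m(L_{i-1}, L_i)$. In particular the $i$-th input satisfies $w_i = -1$ with $x_i \cdot (F_0 \times \R_{s>0}) \ge -m$, while each other input either has $w_j = 0$ with $x_j \cdot (F_0 \times \R_{s>0}) = 0$, or $w_j = -1$ with $x_j \cdot (F_0 \times \R_{s>0}) \ge 0$. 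This is precisely the hypothesis of the preceding unnumbered lemma, so the lemma applies and forces $w_0 = -1$ with $x_0 \cdot (F_0 \times \R_{s>0}) \ge -m$, or else the moduli space is empty.

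Finally, translating back: the condition $w_0 = -1, \ x_0 \cdot (F_0 \times \R_{s>0}) \ge -m$ means the popsicle output, viewed as a $(-H)$-chord $y_0$ from $L_0$ to $L_k$, satisfies $y_0 \cdot (F_0 \times \R_{s>0}) \le m$, and hence represents a generator in $C_m(L_0, L_k)$. Summing over all contributing popsicle configurations (well-defined on the completed direct product by the standard action filtration argument already used to define $\mu^k_{\n}$), this shows
\[
\mu^k_{\n}(C_0 \otimes \cdots \otimes C_m \otimes \cdots \otimes C_0) \subset C_m(L_0, L_k).
\]
Since the entire argument is a bookkeeping exercise built on the intersection number identity \eqref{intersection number identity} and the already-proved lemma, there is no genuine obstacle; the only point requiring care is the sign convention relating intersection numbers of a $(-H)$-chord and its associated $H$-chord under the identification $y(t) = x(1-t)$, which accounts for the flip between the inequalities $y \cdot (F_0 \times \R_{s>0}) \le m$ and $x \cdot (F_0 \times \R_{s>0}) \ge -m$.
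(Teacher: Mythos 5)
Your proof is correct and follows exactly the intended route: the paper states this corollary as an immediate consequence of the preceding unnumbered lemma, and your proposal supplies the straightforward bookkeeping — noting that for $m\le -1$ the $H$-chord summand of $C_m$ vanishes by positivity, translating the $(-H)$-chord condition $y\cdot(F_0\times\R_{s>0})\le m$ into the $H$-chord condition $x\cdot(F_0\times\R_{s>0})\ge -m$, applying the lemma, and translating back. No gaps; this is exactly what the paper means by ``Corollary.''
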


\begin{proof}[Proof of Proposition \ref{prop: ainf structure on the quotient}]
We define $\mu^{k}_{>m}$ \eqref{structure maps on nm} as follows. 
For $[c_{j}] \in NC^{*}_{>m}(L_{j-1}, L_{j}) = C_{0}(L_{j-1}, L_{j})/C_{m}(L_{j-1}, L_{j})$, 
choose a representative $c_{j} \in C_{0}(L_{j-1}, L_{j}) = NC^{*}_{F}(L_{j-1}, L_{j})$, and define
\begin{equation}
\mu^{k}_{>m}([c_{k}], \ldots, [c_{1}]) = [\mu^{k}_{\n}(c_{k}, \ldots, c_{1})] \in C_{0}(L_{0}, L_{k})/C_{m}(L_{0}, L_{k}).
\end{equation}
If $\tilde{c}_{j}$ is another representative of $[c_{j}]$, so that $c_{j} - \tilde{c}_{j} \in C_{m}(L_{j-1}, L_{j})$.
By Corollary \ref{ainf ideal}, the difference 
\[
\mu^{k}(c_{k}, \ldots, c_{1}) - \mu^{k}(\tilde{c}_{k}, \ldots, \tilde{c}_{1}) 
\]
belongs to $C_{m}(L_{0}, L_{k})$, so $\mu^{k}_{>m}([c_{k}], \ldots, [c_{1}])$ is well-defined.
The $\ainf$-relations for $\mu^{k}_{>m}$ follow from the $\ainf$-relations for $\mu^{k}_{\n}$.
This completes the proof of Proposition \ref{prop: ainf structure on the quotient}.
\end{proof}

\begin{defn}
For each $m \in \Z_{\le -1}$ we define an $\ainf$-category 
\begin{equation}
\n_{>m}(\mathbf{L})
\end{equation}
 whose objects are Lagrangian submanifolds in $\mathbf{L}$,
morphism spaces are $NC^{*}_{>m}(L_{0}, L_{1})$,
and $\ainf$-structure maps are given by $\mu^{k}_{>m}$ \eqref{structure maps on nm}.

When no confusion may occur, 
we shall denote this category by$\n_{>m}(\bar{X}, F_{0})$ for the sutured Liouville manifold $(\bar{X}, F_{0})$,
or $\n_{>m}(X)$ for the corresponding Liouville sector $X$.
\end{defn}

\subsection{A diagram of categories}\label{sec: diagram of Fukaya categories}

Each of the $\ainf$-categories $\n_{>m}(\bar{X}, F_{0}), m \in \Z_{\le -1}$ constructed in the previous subsection has finite dimensional morphism spaces of the form $C_{0}/C_{m}$.
For different values of $m$, these quotient spaces are related in by the obvious quotient maps
\begin{equation}\label{map on quotients}
C_{0}/C_{m} \to C_{0}/C_{m'},
\end{equation}
which are induced by the inclusions $C_{m} \subset C_{m'}$ for all $m \le m'$.

\begin{lem}
The maps \eqref{map on quotients} can be extended to an $\ainf$-functor
\begin{equation}
\pi_{m, m'}: \n_{>m}(\bar{X}, F_{0}) \to \n_{>m'}(\bar{X}, F_{0})
\end{equation}
with vanishing higher order terms, where $\pi_{m, m} = \id$.
\end{lem}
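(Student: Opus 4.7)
The plan is to define $\pi_{m,m'}$ strictly: identity on objects, linear term $\pi^1_{m,m'}$ equal to the quotient map \eqref{map on quotients} on each morphism space $NC^*_{>m}(L_0,L_1) = C_0/C_m \to C_0/C_{m'} = NC^*_{>m'}(L_0,L_1)$, and higher-order terms set to zero. With $F^k = 0$ for all $k \ge 2$, the $A_\infty$-functor equations collapse to the requirement that $\pi^1_{m,m'}$ is a strict chain map that intertwines all higher operations $\mu^k_{>m}$ and $\mu^k_{>m'}$ on the nose.

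First, I would verify the chain map property. Since $m \le m' \le -1$, the filtered subspaces satisfy $C_m \subseteq C_{m'} \subseteq C_0$, all stable under $\mu^1_{\n}$, so the quotient map $C_0/C_m \to C_0/C_{m'}$ is well-defined and commutes with the induced differentials $\mu^1_{>m}$ and $\mu^1_{>m'}$. This settles the $k=1$ component of the functor equation.

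Next, I would verify strict compatibility with $\mu^k_{>m}$ and $\mu^k_{>m'}$ for $k \ge 2$. By the construction in Proposition \ref{prop: ainf structure on the quotient}, both operations are obtained by choosing representatives in $C_0 = NC^*_F$, applying $\mu^k_{\n}$, and projecting to the respective quotient; Corollary \ref{ainf ideal} applied to each of $C_m$ and $C_{m'}$ ensures both are well-defined. Given inputs $[c_j] \in C_0/C_m$ with chosen lifts $c_j \in C_0$, the class $\pi^1_{m,m'}([c_j])$ is represented by the very same $c_j$. Consequently both sides of
\[
\pi^1_{m,m'}\bigl(\mu^k_{>m}([c_k], \ldots, [c_1])\bigr) = \mu^k_{>m'}\bigl(\pi^1_{m,m'}[c_k], \ldots, \pi^1_{m,m'}[c_1]\bigr)
\]
evaluate to the class of $\mu^k_{\n}(c_k, \ldots, c_1)$ in $C_0/C_{m'}$, so the equation holds strictly. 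There are no higher $A_\infty$-functor relations to check since all $F^k$ with $k \ge 2$ are declared zero.

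The main obstacle is essentially nothing beyond what has already been done: the substantive input is the identification of $C_m$ and $C_{m'}$ as two-sided $A_\infty$-ideals of $C_0$ provided by Corollary \ref{ainf ideal}, from which the functor equations follow tautologically. As a final bookkeeping point I would note the transitivity $\pi_{m',m''} \circ \pi_{m,m'} = \pi_{m,m''}$ and $\pi_{m,m} = \mathrm{id}$, which are immediate from the fact that all three are identities on objects and compositions of set-theoretic quotient maps on morphisms, ensuring that the collection $\{\pi_{m,m'}\}$ forms a strictly compatible diagram of $A_\infty$-functors as required in \S\ref{sec: diagram of Fukaya categories}.
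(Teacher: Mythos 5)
Your proposal is correct and follows essentially the same route as the paper: defining $\pi_{m,m'}$ with identity on objects, the quotient map as the linear term, and vanishing higher-order terms, with well-definedness resting on Corollary \ref{ainf ideal} together with the inclusion $C_m \subseteq C_{m'}$. The paper packages this as an independence-of-representatives check while you phrase it as a direct verification of the $A_\infty$-functor equations, but the substance and the key lemma invoked are identical.
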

\begin{proof}
Let $[c_{j}^{(m)}] \in NC^{*}_{>m}(L_{j-1}, L_{j}), j = 1, \ldots, k$. Choose a representative $c_{j} \in C_{0}(L_{j-1}, L_{j}) = NC^{*}_{F}(L_{j-1}, L_{j})$. 
Then the image of $[c_{j}^{(m)}]$ under $\pi_{m, m'}$ is the equivalence class of $c_{j}$ in $C_{0}/C_{m'}$, 
denoted by $[c_{j}^{(m')}] \in NC^{*}_{>m}(L_{j-1}, L_{j})$.
We need to show that the equivalence class of the product
$\mu^{k}_{\n}(c_{k}, \ldots, c_{1})$ in $NC^{*}_{>m'}(L_{0}, L_{k})$ is independent of the choices of representatives.
Suppose $\tilde{c}_{j}$ is another representative of $[c_{j}^{(m)}]$, so that $c_{j} - \tilde{c}_{j} \in C_{m}(L_{j-1}, L_{j})$.
By Corollary \ref{ainf ideal}, the difference 
\[
\mu^{k}(c_{k}, \ldots, c_{1}) - \mu^{k}(\tilde{c}_{k}, \ldots, \tilde{c}_{1}) 
\]
belongs to $C_{m}(L_{0}, L_{k})$.
Since $C_{m} \subset C_{m'}$ for $m < m'$, this difference also belongs to $C_{m'}(L_{j-1}, L_{j})$.
It follows that 
\[
[\mu^{k}(c_{k}, \ldots, c_{1})^{(m')}] = [\mu^{k}(\tilde{c}_{k}, \ldots, \tilde{c}_{1})^{(m')}] \in NC^{*}_{>m'}(L_{0}, L_{k}).
\]
\end{proof}

The following proposition follows immediately from the construction and the lemma above.

\begin{prop}
The functors $\pi_{m, m'}$ form a strictly commuting diagram of $\ainf$-categories
\begin{equation}
\{\n_{>m}(\bar{X}, F_{0})\}_{m \in \Z_{\le -1}}.
\end{equation}
\end{prop}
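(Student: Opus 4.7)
The plan is to verify strict commutativity of the diagram by establishing two identities: $\pi_{m,m} = \id$ for every $m \in \Z_{\le -1}$, and $\pi_{m',m''} \circ \pi_{m,m'} = \pi_{m,m''}$ whenever $m \le m' \le m''$. Because each $\pi_{m,m'}$ is the identity on objects and, by the preceding lemma, carries only a linear term, the composition of such $\ainf$-functors reduces strictly to the composition of the underlying chain maps, so there is no homotopy-coherence data to manage.

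First I would unpack the definition: $\pi_{m,m'}^{1}\colon C_{0}/C_{m} \to C_{0}/C_{m'}$ is precisely the canonical projection induced by the inclusion $C_{m} \subset C_{m'}$. For a nested triple $C_{m} \subset C_{m'} \subset C_{m''}$, the universal property of quotients immediately yields $\pi_{m',m''}^{1} \circ \pi_{m,m'}^{1} = \pi_{m,m''}^{1}$ on the nose. Since all higher-order terms $\pi_{m,m'}^{k}$ with $k \ge 2$ vanish by construction, the $\ainf$-composition equals $\pi_{m,m''}$ strictly, and likewise $\pi_{m,m} = \id$ because the quotient map $C_{0}/C_{m} \to C_{0}/C_{m}$ induced by the trivial inclusion is the identity.

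The only bookkeeping point to confirm is that the recipe for $\pi_{m,m'}$ via choice of representatives in $C_{0}$ is independent of that choice, which is exactly what Corollary \ref{ainf ideal} ensures through $\mu^{k}_{\n}\bigl(\ldots \otimes C_{m}(L_{i-1},L_{i}) \otimes \ldots\bigr) \subset C_{m}$; the same lifts that witness well-definedness of $\pi_{m,m'}$ also witness well-definedness of $\pi_{m,m''}$, so there is no ambiguity when composing. Consequently, I do not expect any genuine obstacle: the substantive content was already carried out in constructing each $\pi_{m,m'}$, and the present proposition is a formal consequence of the linearity of these functors together with the associativity of quotient projections.
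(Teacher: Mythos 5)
Your argument is correct and is precisely the verification the paper leaves implicit: the published proof simply states that the claim ``follows immediately from the construction and the lemma above,'' and what you have written out — the linear terms are canonical quotient projections that compose strictly, all higher-order terms vanish, and well-definedness is already guaranteed by the $\ainf$-ideal property of $C_m$ — is exactly the content that justifies that assertion.
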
 \qed

\subsection{Relations between $\w, \n$ and $\n_{>m}$}\label{sec: various functors}

By construction, the category $\n(\bar{X}, F_{0})$ comes with a natural $\ainf$-functor 
\begin{equation}\label{negative wrapping functor}
j: \w(\bar{X}, F_{0}) \to \n(\bar{X}, F_{0}).
\end{equation}
This is defined as follows.
On objects, $j$ is the identity.
Define the multilinear maps $j^{k}$ to be such that $j^{1}$ is the inclusion $CW^{*}_{F}(L_{0}, L_{1}; H) \subset NC^{*}_{F}(L_{0}, L_{1}; H)$ with respect to the decomposition \eqref{morphism in n},
and $j^{k} = 0$ for all $k \ge 2$.

\begin{lem}
The multilinear maps constituting $j$ \eqref{negative wrapping functor} defined as above form an $\ainf$-functor.
\end{lem}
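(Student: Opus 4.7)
The plan is to verify the $\ainf$-functor relations directly. Because $j^k = 0$ for all $k \ge 2$ and $j^1$ is the strict inclusion of a direct summand, the $\ainf$-functor relations collapse to the single intertwining identity
\begin{equation*}
\mu^k_{\n} \circ (j^1)^{\otimes k} \;=\; j^1 \circ \mu^k_{\w(X)}
\end{equation*}
for every $k \ge 1$. Thus the task is to show that, on inputs from the $CW^*_F(H)$-summand of $NC^*_F$, the operation $\mu^k_{\n}$ agrees with $\mu^k_{\w(X)}$ and in particular outputs again into $CW^*_F(H)$.

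The main step will be a weight-balancing argument in the popsicle framework used to define $\mu^k_{\rw}$, and hence $\mu^k_{\n}$, in \eqref{structure maps for rw}. Under the decomposition $NC^*_F = CW^*(-H)[1] \oplus CW^*_F(H)$ of \eqref{morphism in n}, the $CW^*_F(H)$-summand corresponds to punctures carrying weight $w_j = 0$. Setting $w_j = 0$ for every input $j = 1,\dots,k$ in the weight-sum condition \eqref{weight sum condition} forces $w_0 = |F|$, which combined with the standing restriction \eqref{weights -1 and 0} to $w_0 \in \{-1, 0\}$ gives $w_0 = 0$ and $|F| = 0$. Consequently only the empty-flavor popsicle contributes, and $\mathcal{R}^{k+1, \mathbf{p}, \mathbf{w}}(\mathbf{x})$ degenerates to the plain disk moduli space $\mathcal{R}^{k+1}(\mathbf{x})$ of \eqref{CR equation} with every $x_j$ a positive $H$-chord.

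Next, I will invoke Proposition \ref{prop: empty moduli space for positive intersection} (equivalently Lemma \ref{lem: intersection number identity}) to show that when all input chords satisfy $x_j \cdot (F_0 \times \R_{s>0}) = 0$, the output $x_0$ must also satisfy $x_0 \cdot (F_0 \times \R_{s>0}) = 0$ and so lies in $CW^*_F(H)$. At that point the signed counts defining $\mu^k_{\n}$ on these inputs are precisely those defining $\mu^k_{\w(X)}$ via \eqref{mu for partially wrapped}, and the intertwining identity holds term by term.

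The only delicate item, which I expect to be the main (and mild) obstacle, is checking that the popsicle-theoretic signs $*_{k,\mathbf{p},\mathbf{w}}$ and $\Diamond_{k,\mathbf{p},\mathbf{w}}$ of \eqref{sign formula 1 for popsicles}-\eqref{sign formula 2 for popsicles} collapse to the standard Koszul sign $*_k = \sum_{j=1}^k j\deg(x_j)$ of \eqref{mu for fully wrapped} in this degenerate regime. With $w_j = 0$ for all $j$ and $|F| = 0$, the second summand of \eqref{sign formula 1 for popsicles} and all of \eqref{sign formula 2 for popsicles} vanish outright, leaving exactly $*_k$. Once this sign check is in hand, the $\ainf$-functor relations for $j$ follow.
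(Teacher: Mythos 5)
Your proposal is correct and takes essentially the same approach as the paper: setting all input weights $w_j=0$ and invoking the popsicle weight constraints to force trivial flavor $\mathbf{p}=\varnothing$ and $w_0=0$, so that the relevant moduli spaces degenerate to the ordinary disk moduli of $\mu^k_{\w(X)}$. (The paper derives $|F|=0$ from \eqref{sprinkles bounded by weight} rather than from \eqref{weight sum condition} together with \eqref{weights -1 and 0}, but the two routes are interchangeable; your explicit verification of the sign collapse via \eqref{sign formula 1 for popsicles}--\eqref{sign formula 2 for popsicles} is a welcome detail the paper leaves implicit.)
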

\begin{proof}
Suppose all the inputs are from $CW^{*}_{F}(L_{j-1}, L_{j}; H)$, and are represented by time-one $H$-chords from $L_{j-1}$ to $L_{j}$.
Then any popsicle map in $\mathcal{R}^{k+1, \mathbf{p}, \mathbf{w}}(\mathbf{x})$ with these inputs must have a trivial flavor $\mathbf{p} = \varnothing$ because of \eqref{weight sum condition} and \eqref{sprinkles bounded by weight}.
That is, such a popsicle map is an ordinary $\ainf$-disk, and has output in $CW^{*}(L_{0}, L_{k}; H)$ with zero intersection with $F_{0} \times \R_{s>0}$, i.e. in $CW^{*}_{F}(L_{0}, L_{k}; H)$.
\end{proof}

For every $m \in \Z_{\le -1}$, there are $\ainf$-functors
\begin{equation}
\pi_{m}: \n(\bar{X}, F_{0}) \to \n_{>m}(\bar{X}, F_{0}),
\end{equation}
which is identity on objects,
whose linear terms are induced by the projections $C_{0} \to C_{0}/C_{m}$,
and all the higher order terms are zero.
The fact that this is an $\ainf$-functor follows immediately from the construction of $\n_{>m}(\bar{X}, F_{0})$.
Also, define
\begin{equation}
j_{m}: \w(\bar{X}, F_{0}) \to \n_{>m}(\bar{X}, F_{0})
\end{equation}
by the composition $j_{m} = \pi_{m} \circ j$.

Compatibility between various functors above can be achieved by consistent choices of Floer data involved in the definitions of these categories, the existence of which follows from a standard induction argument.

\begin{prop}\label{compatible diagram of functors}
There exist generic choices of compatible families of Floer data,
such that the functors $\pi_{m, m'}$ form a diagram of $\ainf$-categories
\begin{equation}\label{diagram of s categories}
\{\n_{>m}(\bar{X}, F_{0})\}_{m \in \Z_{\le -1}},
\end{equation}
and $\pi_{m}, j_{m}$ form strictly compatible diagrams of functors
\begin{equation}\label{diagram of functors pi}
\{\pi_{m}: \n(\bar{X}, F_{0}) \to \n_{>m}(\bar{X}, F_{0})\}_{m \in \Z_{\le -1}}
\end{equation}
and
\begin{equation}\label{diagram of functors j}
\{j_{m}: \w(\bar{X}, F_{0}) \to \n_{>m}(\bar{X}, F_{0})\}_{m \in \Z_{\le -1}}.
\end{equation}
\end{prop}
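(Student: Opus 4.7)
The plan is to observe that all of the categories $\n(\bar{X}, F_{0})$, $\n_{>m}(\bar{X}, F_{0})$ and $\w(\bar{X}, F_{0})$ can be constructed from \emph{a single universal system of Floer data} on the moduli spaces of (weighted) popsicles, and that the functors $\pi_{m,m'}$, $\pi_{m}$, $j_{m}$ are all algebraic consequences of that single choice. Concretely, I would first fix one positive quadratic Hamiltonian $H$ compatible with the stop $F_{0}$, and then choose universal Floer data (strip-like ends, weights, rescaling functions, sub-closed one-forms, Hamiltonian perturbations and cylindrical almost complex structures) on the moduli spaces $\bar{\scrR}^{k+1,\mathbf{p},\mathbf{w}}$ for all $k$, all injective flavors $\mathbf{p}$, and all weight tuples $\mathbf{w}$ satisfying \eqref{weight sum condition}, \eqref{sprinkles bounded by weight} and \eqref{weights -1 and 0}. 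I would impose two consistency requirements on these choices: (a) compatibility with gluing along all codimension-one strata of the compactified popsicle moduli spaces (as in \S\ref{sec: partially wrapped}); and (b) the requirement that for the trivial flavor $\mathbf{p} = \varnothing$ with all weights $w_{j}=0$, the Floer data coincides with the universal Floer data chosen to define $\w(\bar{X}, F_{0})$.

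Once such a system is fixed, the operations $\mu^{k}_{\n}$ of \eqref{structure maps for n} are defined, and then by Proposition \ref{prop: ainf structure on the quotient} the operations $\mu^{k}_{>m}$ of \eqref{structure maps on nm} on the quotient complexes $NC^{*}_{>m}(L_{0}, L_{1}) = C_{0}/C_{m}$ are determined \emph{tautologically} by $\mu^{k}_{\n}$ under the quotient maps. Consequently the linear map \eqref{map on quotients} induced by $C_{m} \subset C_{m'}$ for $m \le m'$ is, literally at the chain level, a map of $\ainf$-categories with vanishing higher-order terms, yielding $\pi_{m,m'}$. Because quotienting by submodules in a fixed ambient complex is strictly associative, $\pi_{m',m''} \circ \pi_{m,m'} = \pi_{m,m''}$ on the nose for all $m \le m' \le m''$, so \eqref{diagram of s categories} is a strictly commuting diagram. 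The same identification of $\pi_{m}$ with the quotient $C_{0} \twoheadrightarrow C_{0}/C_{m}$ gives $\pi_{m',m'} \circ \pi_{m'} = \pi_{m}$ strictly, which is \eqref{diagram of functors pi}. Finally, by condition (b) the linear term of $j$ defined in \eqref{negative wrapping functor} is literally the inclusion $CW^{*}_{F}(L_{0},L_{1};H) \hookrightarrow NC^{*}_{F}(L_{0},L_{1})$ determined by the same Floer data used for $\w(\bar{X}, F_{0})$, and so $j_{m} := \pi_{m} \circ j$ gives \eqref{diagram of functors j} strictly.

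The only real content left is ensuring that such a universal family of Floer data actually exists with the requisite transversality. I would carry this out by the usual inductive Baire-category argument: order the relevant moduli spaces by the complexity $(k, |F|, \mathbf{w})$, and inductively extend generic transverse Floer data from the boundary strata (where they are prescribed by the inductive hypothesis via gluing) to the interior of each $\bar{\scrR}^{k+1,\mathbf{p},\mathbf{w}}$. The compatibility condition (b) for $\mathbf{p}=\varnothing$, $\mathbf{w}=0$ is an initial condition on the base case that the general induction step is free to respect, and Sylvan's argument (used in Lemma \ref{lem: intersection number identity} and Proposition \ref{prop: empty moduli space for positive intersection}) guarantees that the a priori intersection constraints with $F_{0}\times\R_{s>0}$ that are needed for well-definedness on the quotients persist throughout the induction.

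The main, though mild, obstacle is purely bookkeeping: one must arrange the universal Floer data so that the gluing axioms across \emph{all} the codimension-one faces of each $\bar{\scrR}^{k+1,\mathbf{p},\mathbf{w}}$—including those corresponding to sprinkles escaping to infinity and to broken popsicles of mixed weights—restrict to the chosen data on the two boundary components simultaneously, while keeping the initial condition (b) in force. This is a routine but genuinely non-automatic compatibility: it is analogous to, and no harder than, the consistency of Floer data used in \cite{abouzaidseidel} to define popsicle operations, combined with the straightforward observation that among the popsicle breakings, the ones whose flavors all have $\mathbf{p}=\varnothing$ and weights all zero reduce to the usual $\ainf$-gluing for $\w(\bar{X}, F_{0})$, so condition (b) is preserved under the induction.
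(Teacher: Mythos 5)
Your proposal is correct and follows the same approach as the paper's (deliberately brief) sketch: the key observation in both is that a single universal, consistent system of Floer data defining $\mu^{k}_{\n(\bar{X},F_{0})}$ automatically induces the structure maps on the quotients $C_{0}/C_{m}$, so that the $\pi_{m,m'}$ and $\pi_{m}$ commute strictly on the nose, and $j_{m} = \pi_{m}\circ j$ then inherits strict compatibility. Your condition (b), requiring the popsicle Floer data with trivial flavor and zero weights to restrict to the data defining $\w(\bar{X},F_{0})$, is exactly what is already implicit in the paper's construction of $j$ in \S\ref{sec: various functors}, and the inductive Baire-category argument you supply for the existence of the consistent universal Floer data is the standard filling-in of the step the paper elides with ``a standard induction argument.''
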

\begin{proof}[Sketch of proof]
By definition, the morphism spaces in $\n_{>m}(\bar{X}, F_{0})$ are quotients $C_{0}/C_{m}$ of morphism spaces in $\n(\bar{X}, F_{0})$,
and the $\ainf$-structure maps are exactly induced by those of $\n(\bar{X}, F_{0})$. 
Thus, universal and conformally consistent choices of Floer data defining the $\mu^{k}_{\n(\bar{X}, F_{0})}$ automatically determine Floer data defining $\mu^{k}$ on $\n_{>m}(\bar{X}, F_{0})$.
It follows that the functors $\pi_{m, m'}$ are strictly commuting,
and that $\pi_{m}$ are strictly compatible with them.
\end{proof}

The natural partial order $\le$ on $\Z_{\le -1}$ has a maximal element $-1$,
so the homotopy colimit is not interesting.
However, there is not a minimal element, so we will be interested in the homotopy limit of \eqref{diagram of s categories} 
\begin{equation}
\holim_{m \in \Z_{\le -1}} \n_{>m}(\bar{X}, F_{0}).
\end{equation}
A concrete construction of the homotopy limit is given in Appendix \ref{app: homotopy limits}.

This diagram is an inverse system indexed by $\Z_{\le -1}$.
Take a pair of Lagrangians $L_{0}, L_{1}$ as objects in $\n_{>m}(\bar{X}, F_{0})$ for any $m$, 
and consider the morphism space 
\[
\n_{>m}(L_{0}, L_{1}) := \hom_{\n_{>m}(\bar{X}, F_{0})}(L_{0}, L_{1}) = C_{0}(L_{0}, L_{1})/C_{m} (L_{0}, L_{1}).
\]
These form an inverse system of cochain complexes.
By our construction, the following observation about this inverse system of cochain complexes is immediate:

\begin{lem}\label{nm satisfies ml}
For every pair $(L_{0}, L_{1})$, inverse system of cochain complexes $\{\n_{>m}(L_{0}, L_{1})\}$ satisfies the degree-wise {\it Mittag-Leffler condition} in the sense of Definition \ref{def: Mittag-Leffler}.
\end{lem}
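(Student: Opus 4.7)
The plan is to verify the Mittag-Leffler condition by showing something strictly stronger: each transition map $\pi_{m,m'}\colon \n_{>m}(L_{0}, L_{1}) \to \n_{>m'}(L_{0}, L_{1})$ (for $m \le m' \le -1$) is surjective in every cohomological degree. Once this is in hand, the degree-wise Mittag-Leffler condition is automatic, since for any fixed $m'$ the descending family of images $\{\mathrm{im}(\pi_{m,m'})\}_{m \le m'}$ is then constantly equal to the full target and hence trivially stabilizes.

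To verify surjectivity I will simply unwind the construction in \S\ref{sec: variants}. By definition $\n_{>m}(L_{0}, L_{1}) = C_{0}(L_{0}, L_{1})/C_{m}(L_{0}, L_{1})$ with $C_{m}$ as in \eqref{filtered subspace}. For $m \le -1$, the $H$-chord summand of $C_{m}$ is empty by positivity of $x \cdot (F_{0} \times \R_{s>0})$ for $H$-chords, so $C_{m}$ reduces to the direct product $\prod_{y} |o_{y}|_{\K}[1]$ over $-H$-chords $y$ satisfying $y \cdot (F_{0} \times \R_{s>0}) \le m$. The indexing set of this product grows monotonically in $m$, which yields the inclusion $C_{m} \subseteq C_{m'}$ whenever $m \le m' \le -1$. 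The transition functor $\pi_{m,m'}$ was constructed in \S\ref{sec: diagram of Fukaya categories} precisely from the induced quotient map \eqref{map on quotients}, which is manifestly surjective in every graded component.

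Thus the content of the lemma amounts to an unpacking of the filtration and the definition of the transition maps; no moduli-space analysis or action estimate is required. The only conceptual point to keep straight is the direction of the filtration, which is unambiguous once one notes that $H$-chords contribute non-negative intersection numbers with $F_{0} \times \R_{s>0}$ while $-H$-chords contribute non-positive ones. I do not anticipate any genuine obstacle here; the statement is separated out as a lemma principally because it is what will allow one to compute the cohomology of the homotopy limit $\holim_{m} \n_{>m}(L_{0}, L_{1})$ without worrying about $\lim^{1}$ contributions, which is needed downstream when comparing it with the category $\n(\bar{X}, F_{0})$.
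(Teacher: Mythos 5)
Your proof is correct and coincides with the paper's argument: the transition maps are the projections $C_{0}/C_{m}\to C_{0}/C_{m'}$ induced by the inclusions $C_{m}\subset C_{m'}$, hence surjective in every degree, and the degree-wise Mittag-Leffler condition follows immediately. The extra unpacking of the filtration (vanishing of the $H$-chord summand of $C_{m}$ for $m\le -1$ by positivity of intersection) matches the paper's earlier observations and is not needed beyond surjectivity.
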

\begin{proof}
Every functor in the diagram is induced by a projection onto a quotient space, $C_{0}/C_{m} \to C_{0}/C_{m'}$, which is clearly surjective.
\end{proof}

Since $\{\n_{>m}(\bar{X}, F_{0})\}_{m \in \Z_{\le -1}}$ is an inverse system in which all the $\n_{>m}(\bar{X}, F_{0})$ have the same collection of objects and the functors $\pi_{m', m}$ act as identity on objects,
it follows that the constructions and results in Appendix \S\ref{app: map from inverse limit} apply. 
In particular, for each Lagrangian $L$, there is a corresponding constant section $s^{L}$ associated to $L$, as an object of the homotopy limit $\holim_{m \in \Z_{\le -1}} \n_{>m}(\bar{X}, F_{0})$.
By Lemmata \ref{nm satisfies ml} and \ref{lem: ml implies lim = holim}, there is a quasi-isomorphism from the ordinary inverse limit of cochain complexes to the morphism complex in the homotopy limit category:
\begin{equation}\label{iso of limit of chain complexes}
 \varprojlim_{m \in Z_{\le -1}} \n_{>m}(L_{0}, L_{1}) \stackrel{\sim}\to \holim_{m \in \Z_{\le -1}} \n_{>m}(\bar{X}, F_{0})(s^{L_{0}}, s^{L_{1}}) 
\end{equation}
for every pair of Lagrangians $(L_{0}, L_{1})$.
The latter morphism space agrees with the homotopy limit of the cochain complexes $\{\n_{>m}(L_{0}, L_{1})\}_{m \in \Z_{\le -1}}$,
\begin{equation}\label{hom in holim}
\holim_{m \in \Z_{\le -1}} \n_{>m}(\bar{X}, F_{0})(s^{L_{0}}, s^{L_{1}}) = \holim_{m \in \Z_{\le -1}} \{\n_{>m}(L_{0}, L_{1})\}_{m \in \Z_{\le -1}}.
\end{equation}
The existence of such a map \eqref{iso of limit of chain complexes} is given in \eqref{chain map from lim to holim} in Appendix \ref{app: map from inverse limit}. 

The diagrams of functors \eqref{diagram of functors pi} and \eqref{diagram of functors j} define natural functors
\begin{equation}\label{homotopy limit of pi}
\Pi: \n(\bar{X}, F_{0}) \to \holim_{m \in \Z_{\le -1}} \n_{>m}(\bar{X}, F_{0})
\end{equation}
and 
\begin{equation}\label{homotopy limit of j}
J: \w(\bar{X}, F_{0}) \to \holim_{m \in \Z_{\le -1}} \n_{>m}(\bar{X}, F_{0}),
\end{equation}
such that $J = \Pi \circ j$.
We shall show that $\Pi$ \eqref{homotopy limit of pi} is fully faithful in favorable circumstances.

\begin{prop}\label{n as homotopy limit of nm}
Suppose $\p_{\infty} \bar{X}$ has bounded Reeb dynamics relative to $F$ in the sense of Definition \ref{def: bounded Reeb dynamics}.
Then the natural functor $\Pi: \n(\bar{X}, F_{0}) \to \holim_{m \in \Z_{\le -1}} \n_{>m}(\bar{X}, F_{0})$ \eqref{homotopy limit of pi} is fully faithful.
\end{prop}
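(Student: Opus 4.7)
The plan is to reduce fully faithfulness of $\Pi$ to an elementary identification of cochain complexes, using the filtration by intersection number with $F_{0} \times \R_{s>0}$ together with the bounded Reeb dynamics hypothesis.

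First, by Lemma \ref{nm satisfies ml} and the quasi-isomorphism \eqref{iso of limit of chain complexes} combined with \eqref{hom in holim}, the morphism complex in the homotopy limit from $s^{L_{0}}$ to $s^{L_{1}}$ is quasi-isomorphic to the ordinary inverse limit $\varprojlim_{m \le -1} NC^{*}_{>m}(L_{0}, L_{1}) = \varprojlim_{m} C_{0}/C_{m}$. Since $\ainf$-fully-faithfulness only concerns the linear term on morphism complexes, it suffices to show that the canonical map
$$NC^{*}_{F}(L_{0}, L_{1}) = C_{0}(L_{0},L_{1}) \;\longrightarrow\; \varprojlim_{m \le -1} C_{0}(L_{0},L_{1})/C_{m}(L_{0},L_{1})$$
induced by the projections is a quasi-isomorphism of chain complexes. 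Higher-order terms of $\Pi$, which arise from the model of the homotopy limit in Appendix \ref{app: homotopy limits}, are irrelevant for this question.

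Next I would decompose $C_{0} = CW^{*}(L_{0}, L_{1}; -H)[1] \oplus CW^{*}_{F}(L_{0}, L_{1}; H)$ and observe using \eqref{filtered subspace} that for $m \le -1$ the subspace $C_{m}$ lies entirely in the first summand (the positive-Hamiltonian generators contributing to $C_{m}$ would need $x \cdot (F_{0} \times \R_{s>0}) \le m < 0$, which is impossible by positivity of intersection). Consequently the $CW^{*}_{F}$ summand is preserved identically by every projection and contributes $CW^{*}_{F}(L_{0}, L_{1}; H)$ constantly to the inverse limit. So the proof reduces to showing that the natural map
$$CW^{*}(L_{0}, L_{1}; -H) \;\longrightarrow\; \varprojlim_{m \le -1}\; \bigoplus_{\substack{y \in \chi(L_{0}, L_{1}; -H)\\ m+1 \le y \cdot (F_{0} \times \R_{s>0}) \le 0}} |o_{y}|_{\K}$$
is an isomorphism of cochain complexes, compatibly with differentials.

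Here is where bounded Reeb dynamics enters. By the identification in \eqref{Floer chains}, generators of $CW^{*}(L_{0}, L_{1}; -H)$ correspond to $H$-chords from $L_{1}$ to $L_{0}$, hence (after $H$-time-rescaling) to Reeb chords from $\p_{\infty} L_{1}$ to $\p_{\infty} L_{0}$. By Definition \ref{def: bounded Reeb dynamics}, for every $N \ge 0$ the set of such chords with at most $N$ transverse intersections with $F_{0}$ is finite, so the indexing set is exhausted by an increasing sequence of finite subsets. The direct product presentation of $CW^{*}(L_{0}, L_{1}; -H)$ is then by definition the inverse limit of its finite-rank truncations by intersection number; compatibility with the differential is automatic since the differential (like $\mu^{k}_{\n}$ generally) respects the filtration by $y \cdot (F_{0} \times \R_{s>0})$, by the same intersection-number estimate \eqref{intersection number identity} underpinning Proposition \ref{prop: empty moduli space for positive intersection} and Corollary \ref{ainf ideal}. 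The main obstacle is really just the bookkeeping to identify $C_{0}/C_{m}$ explicitly, to verify surjectivity of the connecting projections (already recorded in Lemma \ref{nm satisfies ml}), and to confirm that the direct product topology on the negatively wrapped Floer complex matches the categorical inverse limit under the finiteness furnished by bounded Reeb dynamics. Once these are assembled, fully faithfulness of $\Pi$ follows immediately.
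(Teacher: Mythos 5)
Your argument is correct, and it takes a genuinely different and in fact cleaner route than the paper's. The paper composes $H^{*}(\Pi^{1})$ with \eqref{iso of limit of chain complexes} and \eqref{h of limit to limit of h} so as to reduce to showing that the inverse limit $\varprojlim H^{*}(\pi_{m})\colon H^{*}(C_{0}) \to \varprojlim_{m} H^{*}(C_{0}/C_{m})$ is an isomorphism; crucially, the identification $H^{*}(\varprojlim) \cong \varprojlim H^{*}$ via \eqref{h of limit to limit of h} is where the $\varprojlim^{1}$-vanishing is needed, and that is exactly what Lemma \ref{ml on cohomology} (degree-wise Mittag--Leffler on \emph{cohomology}) supplies under the bounded-Reeb-dynamics hypothesis. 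You instead stay entirely at the cochain level: after invoking the same quasi-isomorphism \eqref{iso of limit of chain complexes} (which only needs the chain-level Mittag--Leffler of Lemma \ref{nm satisfies ml} and is thus available without any Reeb-dynamics hypothesis), you observe that $C_{0} \to \varprojlim_{m} C_{0}/C_{m}$ is an honest isomorphism of cochain complexes, since $C_{m}$ for $m \le -1$ is the sub-direct-product of $CW^{*}(L_{0},L_{1};-H)[1]$ over chords with $y\cdot(F_{0}\times\R_{s>0}) \le m$ and the quotient is exactly the complementary sub-product; $\bigcap_{m} C_{m} = 0$ gives injectivity, and completeness of the direct product with respect to this intersection-number filtration gives surjectivity, with differentials matching by Corollary \ref{ainf ideal} and Corollary \ref{continuation factors through interior chords}. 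What this buys: the paper's route is a standard $\varprojlim^{1}$/Mittag--Leffler argument on cohomology and requires bounded Reeb dynamics; your route proves the chain-level isomorphism directly and, as written, does not actually require bounded Reeb dynamics at all (you invoke it only to present each quotient as a \emph{finite} direct sum, matching \eqref{morphism in nm}, but the identification $\prod_{y} \cong \varprojlim_{m} \prod_{y\cdot F > m}$ holds for an arbitrary direct product filtered by sub-products). So your argument is both more elementary and, if pushed slightly, proves the proposition under a weaker hypothesis; the one small imprecision in your write-up is the hedging language about the map being merely a quasi-isomorphism, since your argument actually establishes it to be a cochain isomorphism.
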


Given the quasi-isomorphism \eqref{iso of limit of chain complexes} and the identification \eqref{hom in holim}, it will be sufficient to show that the inverse system formed by the cohomology groups of the chain complexes also satisfy the degree-wise Mittag-Leffler condition.
This is the place where we need the assumption on the Reeb dynamics on $\p_{\infty} \bar{X}$, in the sense of Definition \ref{def: bounded Reeb dynamics}.
Recall that the morphism space in $\n_{>m}(\bar{X}, F_{0})$ is a quotient complex \eqref{morphism in nm}.

\begin{lem}\label{ml on cohomology}
Suppose $\p_{\infty} \bar{X}$ has bounded Reeb dynamics relative to $F$.
Then for each pair of Lagrangians $L_{0}, L_{1}$, the inverse system of  the cohomology groups
\[
\{H^{*}(\n_{>m}(L_{0}, L_{1}))\}_{m \in \Z_{\le -1}}
\]
satisfies the degree-wise Mittag-Leffler condition. 
\end{lem}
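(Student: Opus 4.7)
The plan is to reduce the Mittag-Leffler condition on the cohomology tower to a degree-wise finiteness statement for the complexes $\n_{>m}(L_0, L_1)$ themselves, after which the conclusion will follow from elementary linear algebra on descending chains of subspaces.

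First, I would use the explicit description \eqref{morphism in nm} to count generators. For fixed $m \in \Z_{\le -1}$, the cochain complex $\n_{>m}(L_0, L_1)$ splits into two pieces: one generated by time-one $(-H)$-chords from $L_0$ to $L_1$ whose intersection with $F_0 \times \R_{s>0}$ lies in $[m+1, 0]$, and one generated by time-one $H$-chords from $L_0$ to $L_1$ with zero intersection. Under the standard correspondence between non-constant time-one chords of a quadratic Hamiltonian in the cylindrical end and Reeb chords on $\p_\infty \bar{X}$ between $\p_\infty L_0$ and $\p_\infty L_1$, the intersection count $x \cdot (F_0 \times \R_{s>0})$ corresponds to the number of transverse intersections of the associated Reeb chord with $F_0$. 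The bounded Reeb dynamics hypothesis then bounds each of the two summands by finitely many generators in each degree; the finitely many interior chords, after a small generic perturbation, contribute only finitely many further generators. I will thus conclude that $\n_{>m}(L_0, L_1)$, and therefore its cohomology $H^i(\n_{>m}(L_0, L_1))$, is degree-wise finite-dimensional over $\K$.

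Next, with this finiteness in hand for every $m' \in \Z_{\le -1}$ and every degree $i$, the Mittag-Leffler condition will follow by noting that the family
\begin{equation*}
\bigl\{\mathrm{im}\bigl(H^i(\pi_{m,m'}): H^i(\n_{>m}(L_0,L_1)) \to H^i(\n_{>m'}(L_0,L_1))\bigr)\bigr\}_{m \le m'}
\end{equation*}
is a descending chain of subspaces of the fixed finite-dimensional $\K$-module $H^i(\n_{>m'}(L_0, L_1))$, and any such chain must stabilize.

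The only real obstacle is the degree-wise finiteness claim itself, which hinges on identifying the integer-valued intersection count appearing in \eqref{morphism in nm} with the transverse intersection count appearing in Definition \ref{def: bounded Reeb dynamics}; modulo this identification, the remainder of the argument is routine.
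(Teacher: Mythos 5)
Your proposal is correct, and it takes a genuinely different (and cleaner) route from the paper's. Both arguments establish, as the key geometric input, that the complex $\n_{>m}(L_0, L_1) = C_0/C_m$ has finitely many generators under the bounded Reeb dynamics hypothesis --- the paper states this as total finite-dimensionality, while you only need the weaker degree-wise version, and you correctly flag the identification of the intersection count $x \cdot (F_0 \times \R_{s>0})$ with transverse intersections of Reeb chords with $F_0$ as the heart of this step (the interior chords being handled as you say). From there the two arguments diverge. You observe that the images $\{\mathrm{im}(H^i(\pi_{m,m'}))\}_{m \le m'}$ form a descending chain of subspaces of the fixed finite-dimensional $H^i(C_0/C_{m'})$, which must stabilize --- the standard soft argument for Mittag--Leffler. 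The paper instead runs a hands-on, element-by-element analysis at the chain level: for each $[c]_m \in C_0/C_m$, it examines whether a lift $c \in C_0$ can be chosen with $\mu^1_\n(c) = 0$, splits into two cases, and then takes a minimum over a basis. That argument is more elaborate than necessary, and its Case~2 conclusion (that the cohomology maps have ``zero image'') is phrased too strongly as written; what it actually shows is that this particular class eventually fails to lift. Your descending-chain argument sidesteps these issues entirely. One caveat applies to both: stabilization of descending chains of submodules requires $\K$ to be a field (or Artinian), which the paper implicitly assumes by calling $C_0/C_m$ a ``$\K$-vector space'' despite $\K$ being introduced only as a commutative ring; you inherit the same implicit assumption.
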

\begin{proof}
The key point is that for each $m$, the quotient complex $C_{0}/C_{m}$ is finite-dimensional by the assumption that $\p_{\infty} \bar{X}$ has bounded Reeb dynamics relative to $F$.
We would like to show that, for every $m \le -1$, there exists $m' \le m$ such that for all $m'' \le m'$, we have
\begin{equation}\label{mittag-leffler for nm}
\im(H^{*}(C_{0}/C_{m''}) \to H^{*}(C_{0}/C_{m})) = \im(H^{*}(C_{0}/C_{m'} \to H^{*}(C_{0}/C_{m})).
\end{equation}

Let $[c]_{m} \in C_{0}/C_{m}$ be any element, which we may assume to be a cocycle with respect to the induced quotient differential on $C_{0}/C_{m}$, hence representing a cohomology class.
Pick a representative $c \in C_{0}$ of $[c]_{m}$, i.e. any element $c \in C_{0}$ whose projection to $C_{0}/C_{m}$ is $[c]_{m}$.
If $c$ can be chosen such that $\mu^{1}_{\n}(c) = 0$, then the equivalence class $[c]_{m'}$ of $c$ in $C_{0}/C_{m'}$ for all $m' \le m$ is also a cocycle with respect to the induced quotient differential on $C_{0}/C_{m'}$.
By definition, the cohomology class of $[c]_{m'}$ is mapped to the cohomology class of $[c]_{m}$.
This implies that $H^{*}(C_{0}/C_{m'}) \to H^{*}(C_{0}/C_{m})$ is surjective.
If $\mu^{1}_{\n}(c) \neq 0$ for all representatives $c$, then there exists $m' < m$ such that the equivalence class $[c]_{m'}$ of $c$ in $C_{0}/C_{m'}$ is not closed under the induced quotient differential on $C_{0}/C_{m'}$.
Then, for any $m'' < m'$, the equivalence class $[c]_{m''}$ is not closed under the differential on $C_{0}/C_{m''}$, which implies that both maps on cohomology 
\[
H^{*}(C_{0}/C_{m'}) \to H^{*}(C_{0}/C_{m})
\]
and
\[
H^{*}(C_{0}/C_{m''}) \to H^{*}(C_{0}/C_{m})
\]
have zero image. 
Apply the same argument to all the vectors in a basis for the finite-dimensional $\K$-vector space $C_{0}/C_{m}$.
Since there are finitely many vectors, we may find the smallest $m'$ in the above argument, such that \eqref{mittag-leffler for nm} holds for all $m'' < m$.
\end{proof}

The degree-wise Mittag-Leffler condition for the inverse system formed by cohomology groups implies that $\varprojlim^{1}$ vanishes for this inverse system, and therefore:

\begin{cor}
The natural map 
\begin{equation}\label{h of limit to limit of h}
H^{*}(\varprojlim_{m \in Z_{\le -1}} \hom_{\n_{>m}}(L_{0}, L_{1})) \to \varprojlim_{m \in Z_{\le -1}} H^{*}(\hom_{\n_{>m}}(L_{0}, L_{1}))
\end{equation}
is an isomorphism.
\end{cor}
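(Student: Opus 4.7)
The plan is to deduce this from the Milnor-type short exact sequence for cohomology of the inverse limit of a tower of cochain complexes, combined with the vanishing of $\varprojlim{}^{1}$ for Mittag-Leffler systems. Concretely, for the inverse system $\{\n_{>m}(L_{0}, L_{1})\}_{m \in \Z_{\le -1}}$, Lemma \ref{nm satisfies ml} already furnishes the degree-wise Mittag-Leffler condition on chain complexes; in fact each transition is the quotient projection $C_{0}/C_{m''} \twoheadrightarrow C_{0}/C_{m'}$ for $m'' \le m'$, which is strictly surjective, so the hypothesis of the Milnor sequence is satisfied in the strongest possible form.

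First I would recall the standard Milnor short exact sequence for a countable tower of cochain complexes of $\K$-modules whose structure maps are degree-wise surjective:
\begin{equation*}
0 \to \varprojlim_{m}{}^{1} H^{*-1}(\n_{>m}(L_{0}, L_{1})) \to H^{*}\!\left(\varprojlim_{m} \n_{>m}(L_{0}, L_{1})\right) \to \varprojlim_{m} H^{*}(\n_{>m}(L_{0}, L_{1})) \to 0.
\end{equation*}
This is obtained either from the short exact sequence $0 \to \varprojlim_{m} C_{m} \to \prod_{m} C_{m} \to \prod_{m} C_{m} \to 0$ of cochain complexes computing the inverse limit, or equivalently from the Grothendieck spectral sequence, which collapses by $R^{i} \varprojlim = 0$ for $i \ge 2$ in the countable case.

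Then I would invoke Lemma \ref{ml on cohomology} directly: the inverse system $\{H^{*}(\n_{>m}(L_{0}, L_{1}))\}_{m \in \Z_{\le -1}}$ of cohomology groups also satisfies the degree-wise Mittag-Leffler condition. Since $\varprojlim{}^{1}$ vanishes on any Mittag-Leffler inverse system of abelian groups indexed by a countable cofiltered poset, the leftmost term of the Milnor sequence is zero, and the rightmost arrow becomes an isomorphism. This arrow is precisely the natural map \eqref{h of limit to limit of h}, completing the argument. The proof is essentially formal once Lemma \ref{ml on cohomology} is in hand; no serious obstacle remains, as the only nontrivial input, namely the finite dimensionality of the quotients $C_{0}/C_{m}$ under the bounded Reeb dynamics assumption, has already been absorbed into that lemma.
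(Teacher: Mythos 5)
Your proposal is correct and follows essentially the same route as the paper: the paper's own justification is precisely that the degree-wise Mittag-Leffler condition on cohomology (Lemma \ref{ml on cohomology}) forces $\varprojlim^{1}$ to vanish, which, via the standard Milnor sequence for a tower with surjective transition maps (supplied here by Lemma \ref{nm satisfies ml}, since the maps are the quotient projections $C_{0}/C_{m''} \to C_{0}/C_{m'}$), yields the isomorphism \eqref{h of limit to limit of h}. You have merely made explicit the Milnor short exact sequence that the paper leaves implicit, so no further work is needed.
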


We have gathered all the ingredients and are ready to prove Proposition \ref{n as homotopy limit of nm}.

\begin{proof}[Proof of Proposition \ref{n as homotopy limit of nm}]
It suffices to show that the linear map $\Pi^{1}$ for every pair $(L_{0}, L_{1})$ is a quasi-isomorphism.
We compose the map $H^{*}(\Pi^{1})$ with the cohomology map of the quasi-isomorphism \eqref{iso of limit of chain complexes} and further with \eqref{h of limit to limit of h} to get a map
\[
H^{*}(\n(L_{0}, L_{1})) \to H^{*}(\varprojlim_{m \in Z_{\le -1}} \n_{>m}(L_{0}, L_{1})) \to \varprojlim_{m \in Z_{\le -1}} H^{*}(
\n_{>m}(L_{0}, L_{1})).
\]
By the universal property, this is the inverse limit of the diagram of maps
\[
H^{*}(\pi_{m}): H^{*}(\n(L_{0}, L_{1})) \to H^{*}(\n_{>m}(L_{0}, L_{1})),
\]
i.e. the unique lift of these maps to the inverse limit $\varprojlim_{m \in Z_{\le -1}} H^{*}(\n_{>m}(L_{0}, L_{1}))$.
It is straightforward to check that this lift is an isomorphism, using the descending filtration 
\[
\cdots \subset C_{m} \subset \cdots \subset C_{0} = \n(L_{0}, L_{1}).
\]
\end{proof}

In the homotopy limit $\holim_{m \in \Z_{\le -1}} \n_{>m}(\bar{X}, F_{0})$, 
there is the subcategory $\mathcal{S}(\{\n_{>m}(\bar{X}, F_{0})\}_{m \in \Z_{\le -1}})$ of constant sections defined in Definition \ref{def: subcat of constant sections}.
Finally we make the following observation about the functor $\Pi$:

\begin{lem}\label{lem: image of pi}
The functor $\Pi: \n(\bar{X}, F_{0}) \to \holim_{m \in \Z_{\le -1}} \n_{>m}(\bar{X}, F_{0})$ \eqref{homotopy limit of pi} has image landing in the subcategory $\mathcal{S}(\{\n_{>m}(\bar{X}, F_{0})\}_{m \in \Z_{\le -1}})$.
\end{lem}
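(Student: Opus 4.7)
The plan is to track carefully what $\Pi$ does on an object $L \in \ob \n(\bar{X}, F_{0})$ in the explicit model for the homotopy limit from Appendix \ref{app: homotopy limits}, and verify directly that the resulting section satisfies the definition of a constant section (Definition \ref{def: subcat of constant sections}).

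First I will recall that, by construction, all the categories $\n_{>m}(\bar{X}, F_{0})$ for $m \in \Z_{\le -1}$ share the same collection of objects as $\n(\bar{X}, F_{0})$ (namely, Lagrangians in the fixed collection $\mathbf{L}$), and that both the connecting functors $\pi_{m, m'}$ and the projections $\pi_{m}$ act as the identity on objects: indeed, $\pi_{m}$ is induced by the quotient $C_{0} \to C_{0}/C_{m}$, and $\pi_{m, m'}$ is induced by the further quotient $C_{0}/C_{m} \to C_{0}/C_{m'}$, so neither alters the underlying Lagrangian.

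Next I will unpack the universal property of the homotopy limit (in the model of Appendix \ref{app: homotopy limits}) applied to the strictly compatible diagram $\{\pi_{m}: \n(\bar{X}, F_{0}) \to \n_{>m}(\bar{X}, F_{0})\}_{m \in \Z_{\le -1}}$ provided by Proposition \ref{compatible diagram of functors}. For each Lagrangian $L$, the functor $\Pi$ produces the section $s = \Pi(L)$ whose value on each index $m$ is $s(m) = \pi_{m}(L) = L$, and whose structural morphisms for every $m \le m'$ are the components of the (strict) compatibility natural transformation witnessing $\pi_{m, m'} \circ \pi_{m} = \pi_{m'}$. Since the diagram is strictly compatible and every functor involved is the identity on objects, these structural morphisms are exactly $1_{L} \in \n_{>m'}(L, L)$.

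Finally, comparing with Definition \ref{def: subcat of constant sections}, this is precisely a constant section: the underlying object is $L$ at every level and the structural maps are identity morphisms. Hence $\Pi(L) \in \ob \mathcal{S}(\{\n_{>m}(\bar{X}, F_{0})\}_{m \in \Z_{\le -1}})$, as required. No real obstacle arises: once the strict compatibility of Proposition \ref{compatible diagram of functors} is in place and one observes that both $\pi_{m}$ and $\pi_{m, m'}$ are identity on objects, the claim is immediate from the explicit construction of the homotopy limit.
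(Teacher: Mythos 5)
Your proof is correct and takes essentially the same route as the paper, which simply observes that the claim is a statement on the level of objects and follows immediately from the definitions of the diagram and of $\Pi$ (since the $\pi_{m}$ and $\pi_{m,m'}$ are identity on objects and strictly compatible, $\Pi(L)$ is the section $s$ with $s(m)=L$ and $s_{mm'}=1_{L}$, i.e.\ the constant section $s^{L}$). Your write-up just spells out this unpacking explicitly, which is fine.
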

\begin{proof}
This is a statement on the level of objects, which immediately follows from the definitions of the diagram $\{\n_{>m}(\bar{X}, F_{0})\}_{m \in \Z_{\le -1}}$ and the functor $\Pi$.
\end{proof}

\subsection{Pushforward functors}\label{section: pushforward}

An inclusion of Liouville sectors $\iota: X \xhookrightarrow{} X'$ is a proper exact embedding which is a diffeomorphism onto the image,
such that either $X = X'$ or $\iota(\p X) \cap X' = \varnothing$.
An inclusion of stopped Liouville sectors $\iota: (X, \fk) \xhookrightarrow{} (X', \fk')$ is an inclusion of Liouville sectors $\iota: X \xhookrightarrow{} X'$ such that $\fk' \cap (\p_{\infty} X)^{\circ} = \fk$.
The partially wrapped Fukaya category is covariantly functorial with respect to inclusions of stopped Liouville sectors,
in particular for a single inclusion $\iota: X \xhookrightarrow{} X'$, there is a natural pushforward functor
\begin{equation}\label{pushforward on w}
\iota_{*}: \w(X, \fk) \to \w(X', \fk').
\end{equation}
The functor \eqref{pushforward on w} is fully faithful if the inclusion is {\it forward stopped} (Corollary 8.17 of \cite{GPS2}).

The negatively wrapped Fukaya category $\n(X) = \n(\bar{X}, F_{0})$ has similar functoriality under inclusions, 
which is nonetheless more restricted due to the nature of our definition relying on the presence of a ribbon $F$.
We consider the situation where an inclusion of Liouville sectors is given by an inclusion of stops $\fk' \subset \fk$,
realized by an inclusion of the Liouville sub-domain $F'_{0} \subset F_{0}$,
while the convex completions are the same, $\bar{X} = \bar{X}'$.
Note that here we allow slightly general notions of Liouville domains and sub-domains: $F_{0}$ may be disconnected and have several connected components, 
and $F'_{0}$ is the union of several sub-domains in the connected components, some of which are allowed to be empty.

\begin{prop}\label{prop: pushforward on n}
In the case of a stop removal inclusion described as above, there is a pushforward functor
\begin{equation}\label{pushforward on n}
\iota_{\sharp}: \n(\bar{X}, F_{0}) \to \n(\bar{X}, F'_{0}),
\end{equation}
which is identity on objects.
\end{prop}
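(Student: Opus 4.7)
The plan is to define $\iota_{\sharp}$ as the identity on objects, with linear term a tautological inclusion of subcomplexes and vanishing higher-order terms, and to verify the $\ainf$-functor equations by realizing both $\n(\bar{X}, F_{0})$ and $\n(\bar{X}, F'_{0})$ as $\ainf$-subcategories of a single ambient Rabinowitz Fukaya category $\rw(\bar{X})$ defined with common Floer data.

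First, choose a positive quadratic Hamiltonian $H$ on $\bar{X}$ that is simultaneously compatible (in the sense of Definition \ref{H compatible with stop}) with both stops $F_{0}$ and $F'_{0}$. Condition (iii) for $F'_{0}$ follows from the same condition for $F_{0}$ because $F'_{0} \times \R_{s>0} \subset F_{0} \times \R_{s>0}$, while simultaneous tangency of $X_{H}$ to the nested pair can be arranged by a straightforward adaptation of the construction behind Lemma 2.14 of \cite{sylvan1} (modifying $H$ in a neighborhood of $\p F'_{0} \subset F_{0}^{\circ}$). Fix one universal choice of Floer data extending $H$ and use it to define all three categories $\n(\bar{X}, F_{0})$, $\n(\bar{X}, F'_{0})$, and $\rw(\bar{X})$ by the same popsicle moduli spaces. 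Since $F'_{0} \subset F_{0}$, any $H$-chord with $x \cdot (F_{0} \times \R_{s>0}) = 0$ automatically satisfies $x \cdot (F'_{0} \times \R_{s>0}) = 0$, so combined with the identification of the $CW^{*}(L_{0},L_{1};-H)[1]$ summands one obtains a degree-preserving chain-level inclusion
\[
\iota_{\sharp}^{1}: NC^{*}_{F}(L_{0},L_{1}) \longhookrightarrow NC^{*}_{F'}(L_{0},L_{1}) \subset RC^{*}(L_{0},L_{1}),
\]
and we set $\iota_{\sharp}$ to be the identity on objects with linear term $\iota_{\sharp}^{1}$ and higher-order terms $\iota_{\sharp}^{k} = 0$ for $k \ge 2$.

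For the $\ainf$-functor equations with vanishing higher terms it suffices to check that $\iota_{\sharp}^{1}$ strictly intertwines all $\mu^{k}$. This becomes automatic once we observe that, under the common Floer data, both $NC^{*}_{F}$ and $NC^{*}_{F'}$ are closed $\ainf$-subcategories of $\rw(\bar{X})$, and that by construction $\mu^{k}_{\n(\bar{X}, F_{0})}$ and $\mu^{k}_{\n(\bar{X}, F'_{0})}$ are simply the restrictions of the ambient $\mu^{k}_{\rw}$. Closedness is precisely Proposition \ref{prop: empty moduli space for positive intersection} applied separately to each of $F_{0}$ and $F'_{0}$: the intersection-number identity \eqref{intersection number identity}, together with positivity of intersection of popsicle maps with the almost complex submanifold $F$ and of $H$-chords with $F_{0} \times \R_{s>0}$, forces any rigid popsicle with inputs in $NC^{*}_{F}$ and an $H$-chord output $x_{0}$ to have $x_{0} \in CW^{*}_{F}$; $-H$-chord outputs impose no additional constraint because the $CW^{*}(L_{0},L_{1};-H)[1]$ summands of $NC^{*}_{F}$ and $NC^{*}_{F'}$ coincide. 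The genuine technical point is therefore the simultaneous compatibility of $H$ (and of the extending Floer data) with the nested pair of stops; once that is in place, the inclusion $NC^{*}_{F} \hookrightarrow NC^{*}_{F'}$ is an inclusion of $\ainf$-subcategories in the strict sense, which yields the desired pushforward $\iota_{\sharp}$.
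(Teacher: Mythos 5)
Your proof is correct and follows the paper's approach: define $\iota_{\sharp}$ as the identity on objects with linear term the tautological inclusion of subcomplexes and vanishing higher terms, and observe that both $\n(\bar X, F_0)$ and $\n(\bar X, F'_0)$ arise as closed $\ainf$-subcategories of $\rw(\bar X)$ defined with the same Floer data, so that the inclusion is strictly $\ainf$. The one place you over-engineer is the compatibility of $H$ with the nested stops: the paper notes that any quadratic $H$ compatible with $F_0$ is \emph{automatically} compatible with $F'_0$ --- condition (iii) holds as you say because $F'_0\times\R_{s>0}\subset F_0\times\R_{s>0}$, and condition (ii) also holds automatically since at every point $p\in F'_0\subset F_0$ one has $T_pF'_0=T_pF_0$ (they are codimension-zero inside one another), so $X_H(p)\in T_pF_0$ already gives $X_H(p)\in T_pF'_0$. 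Your proposed modification of $H$ near $\p F'_0$ is therefore unnecessary and would only introduce the extra burden of comparing the resulting categories across choices of Floer data; otherwise the argument matches the paper's.
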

\begin{proof}
The key point is that any quadratic Hamiltonian compatible with $F_{0}$ is automatically compatible with $F'_{0}$.
So we can define maps $\iota_{\sharp}^{k}$ by letting the linear term $\iota_{\sharp}^{1}$ be the obvious inclusion of subcomplexes,
and the higher order terms be zero.
It is straightforward to check that the maps defined this way satisfy the $\ainf$-functor equations.
\end{proof}

%is there a more general pushforward functor for any inclusion of sectors?

\subsection{Fully-faithfulness}\label{section: nonessential Lagrangians}

We conclude \S\ref{section: Fukaya categories} by studying the property of the functor $j$ on a certain class of Lagrangians.
These Lagrangians serve as candidates of zero objects in the partially wrapped Fukaya category.

\begin{defn}\label{defn: nonessential Lagrangian}
We say an (exact cylindrical) Lagrangian $L$ in $\bar{X}$ is {\it nonessential},
 if there is a Liouville hypersurface (with boundary) $S_{0} \subset \p_{\infty} \bar{X}$ as well as a sectorial inclusion $S \times \C_{\re \ge 0} \xhookrightarrow{} \bar{X}$ such that $L$ is can be isotoped into the interior of the image of the inclusion.
\end{defn}

In other words, a nonessential Lagrangian is one that can be isotoped into the complement of the sector $X'$ that is obtained by completing the Liouville domain $\bar{X}_{0}$ (whose completion is $\bar{X}$) away from $S_{0} \subset \p \bar{X}_{0}$.
The reason for the term `nonessential' is the following:

\begin{lem}[Corollary 9.2 of \cite{GPS2}]\label{nonessential Lagrangian zero in fully wrapped}
Let $L$ be a nonessential Lagrangian of the Liouville manifold $\bar{X}$.
Then $L$ is a zero object in $\w(\bar{X})$.
Furthermore, if there is such a Liouville hypersurface $S_{0}$ such that $F_{0} \cap \p_{\infty}(S \times \C_{\re \ge 0})^{\circ} = \varnothing$, 
then $L$ is a zero object in $\w(\bar{X}, F_{0})$.
\end{lem}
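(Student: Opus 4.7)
The plan is to realize $L$ as the pushforward of a Lagrangian from a product sector and then to exhibit that Lagrangian as a zero object by a wrapping argument internal to the sector.

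First, by Definition \ref{defn: nonessential Lagrangian} I may replace $L$ by a Hamiltonian-isotopic copy lying in the interior of the image of the sectorial inclusion $\iota\co S \times \C_{\re \ge 0} \hookrightarrow \bar{X}$; call this copy $L'$, and view it as an object of $\w(S \times \C_{\re \ge 0})$ via $\iota^{-1}$. The pushforward functor for sectorial inclusions from \cite{GPS1} then identifies $L$ with $\iota_{*}L'$ in $\w(\bar{X})$, reducing the problem to showing that $L'$ represents the zero object in $\w(S \times \C_{\re \ge 0})$.

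For this, I will exploit the product structure of the sector: a Hamiltonian translating in the positive-$\re$ direction of the $\C_{\re \ge 0}$-factor (while remaining cylindrical at infinity on the $S$-factor) generates a cofinal positive isotopy of $L'$ whose image eventually leaves any compact subset. Computing the wrapped Floer cohomology $HW^{*}(L', K)$ with any test Lagrangian $K$ as a colimit over such translations, the translated copy of $L'$ is disjoint from $K$ once the translation parameter is large enough, so the colimit vanishes; the symmetric argument applied to $K$ gives $HW^{*}(K, L') = 0$. An alternative route applies the K\"unneth embedding \eqref{kunneth} for $\w(S) \times \w(\C_{\re \ge 0}) \to \w(S \times \C_{\re \ge 0})$ and reduces to the corresponding vanishing statement for arcs in $\C_{\re \ge 0}$. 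Either route recovers the content of \cite{GPS2} Corollary 9.2.

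For the partially wrapped assertion, the additional hypothesis $F_{0} \cap \p_{\infty}(S \times \C_{\re \ge 0})^{\circ} = \varnothing$ guarantees that the sectorial inclusion refines to an inclusion of stopped Liouville sectors $(S \times \C_{\re \ge 0}, \varnothing) \hookrightarrow (\bar{X}, F_{0})$, so the pushforward functoriality \eqref{pushforward on w} supplies a functor $\iota_{*}\co \w(S \times \C_{\re \ge 0}) \to \w(\bar{X}, F_{0})$ sending $L'$ to (a copy of) $L$. Since $L'$ is already a zero object in the source category, $L$ is then a zero object in $\w(\bar{X}, F_{0})$. The main anticipated obstacle is the cofinality of the translating wrapping inside the sector: one must verify that such $\re$-translations really do compute the wrapped Floer cohomology as a colimit in the sense relevant to \S\ref{sec: partially wrapped}, which involves comparing the quadratic Hamiltonian setup used here against the wrapping-category formalism of \cite{GPS1} and producing a monotone homotopy witnessing the vanishing. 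Once that is in place, the stop-disjointness hypothesis makes the transition to the partially wrapped setting automatic, since the wrappings stay inside the sector and hence avoid $F_{0}$ at infinity.
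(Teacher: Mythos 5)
There is a genuine gap, and it appears at the very first reduction. The paper gives no proof of this lemma at all --- it is stated purely as a citation to Corollary 9.2 of \cite{GPS2} --- so there is no ``paper's own proof'' to compare against, but the argument you propose does not work as written.

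The problem is the claim that one can reduce to showing $L'$ is a zero object in $\w(S \times \C_{\re \ge 0})$, and then establish this via $\re$-translation. Both halves fail. First, the reduction is too strong: the wrapped Fukaya category of the sector $S \times \C_{\re \ge 0}$ is \emph{not} trivial on Lagrangians sitting in its interior. Indeed, the K\"unneth embedding $\w(S) \to \w(S \times T^{*}[0,1]) \cong \w(S \times \C_{\re \ge 0})$ --- which the paper uses precisely to define the cup functor \eqref{cup functor} and the subcategory $\tilde{\dd}(F)$ --- is cohomologically fully faithful, so any nontrivial object of $\w(S)$ stabilizes to a nontrivial object of $\w(S \times \C_{\re \ge 0})$, sitting entirely in the interior. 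Since the pushforward $\iota_{*}\colon \w(S \times \C_{\re \ge 0}) \to \w(\bar{X})$ is far from conservative, the vanishing of $\iota_{*}L'$ in $\w(\bar{X})$ cannot be read off from $L'$ alone inside the sector. Second, the proposed cofinal wrapping is not one: translation in the $+\re$ direction of the $\C_{\re \ge 0}$-factor does not move the ideal boundary $\p_{\infty}L'$ in the positive Reeb direction (at infinity it fixes the angular coordinate), so it is neither a positive isotopy nor cofinal among the wrappings that compute $HW^{*}$ in the sector. That $\w(\C_{\re \ge 0}) \cong \perf\,\K$ with a nonzero cotangent-fiber-type object already shows that no cofinal positive isotopy in this sector can displace an arc from itself, contrary to what the translation heuristic would suggest. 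The same observation shows the suggested K\"unneth alternative also fails, since arcs in the sector $\C_{\re \ge 0}$ are not zero objects.

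The geometry that actually forces $L$ to vanish in $\w(\bar{X})$ lives outside the sector: in the convex completion $\bar{X}$ the Reeb flow near $S_{0}$ approximately rotates around the hypersurface, and the full (unstopped) cofinal wrapping in $\bar{X}$ passes through the locus where $\partial_\infty(S\times\C_{\re\geq 0})$ used to obstruct, eventually separating $\partial_{\infty}L$ from itself. This wrapping is \emph{not} a positive isotopy inside the sector $S \times \C_{\re \ge 0}$ --- it crosses the boundary hypersurface --- which is exactly why $L'$ can be nontrivial in $\w(S \times \C_{\re \ge 0})$ yet $\iota_{*}L'$ is trivial in $\w(\bar{X})$. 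Your treatment of the partially wrapped statement correctly identifies that $F_{0} \cap \p_{\infty}(S \times \C_{\re \ge 0})^{\circ} = \varnothing$ yields an inclusion of stopped sectors, but the conclusion is drawn from the false claim about the source category; the correct point is rather that, under this hypothesis, the displacing wrapping past $S_{0}$ in $\bar{X}$ never touches $F_{0}$, so it is still legitimate in $\w(\bar{X}, F_{0})$.
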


Nonessential Lagrangians are not necessarily zero objects in $\w(X) = \w(\bar{X}, F_{0})$.
Of course the second half of Lemma \ref{nonessential Lagrangian zero in fully wrapped} gives a criterion for a nonessential Lagrangian $L$ for being a zero object in $\w(\bar{X}, F_{0})$,
but that criterion relies on the existence of some $S_{0}$,
which is not a priori guaranteed for a given Lagrangian $L$.
The general expectation is that any Lagrangian which does not intersect the {\it relative core} $\ck_{X, \fk} = \ck_{X} \cup (\fk \times \R)$ is a zero object; 
in the real analytic case where the relative core is subanalytic singular isotropic, this generality follows from \cite{GPS3}.
However, for our purpose of studying the category $\n(\bar{X}, F_{0})$ we just need the following result:

\begin{prop}\label{faithful on nonessential Lagrangians}
The functor $j: \w(\bar{X}, F_{0}) \to \n(\bar{X}, F_{0})$ is fully faithful on the subcategory whose objects are nonessential Lagrangians.
\end{prop}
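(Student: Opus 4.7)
The plan is to unwind the definition of $j$, reduce fully-faithfulness to an acyclicity statement for the negatively wrapped complex, and then deduce that statement from Lemma \ref{nonessential Lagrangian zero in fully wrapped} combined with Poincar\'{e} duality.

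Recall that $j$ acts as the identity on objects, its linear term $j^{1}$ is the inclusion of the summand $CW^{*}_{F}(L_{0}, L_{1}; H)$ into the mapping cone
\[
NC^{*}_{F}(L_{0}, L_{1}) = \cone\bigl(c: CW^{*}(L_{0}, L_{1}; -H) \to CW^{*}_{F}(L_{0}, L_{1}; H)\bigr)
\]
with respect to the splitting \eqref{morphism in n}, and all higher order terms $j^{k}$ for $k \ge 2$ vanish. Hence fully-faithfulness on a subcategory reduces to showing that $j^{1}$ is a quasi-isomorphism for every pair $(L_{0}, L_{1})$ of objects in that subcategory. From the tautological short exact sequence
\[
0 \to CW^{*}_{F}(L_{0}, L_{1}; H) \to NC^{*}_{F}(L_{0}, L_{1}) \to CW^{*}(L_{0}, L_{1}; -H)[1] \to 0
\]
and its associated long exact sequence in cohomology, this is in turn equivalent to the negatively wrapped complex $CW^{*}(L_{0}, L_{1}; -H)$ being acyclic.

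Next I would invoke the Poincar\'{e} duality isomorphism \eqref{PD} to rewrite
\[
CW^{*}(L_{0}, L_{1}; -H) \cong \hom_{\K}\bigl(CW^{n-*}(L_{1}, L_{0}; H), \K\bigr),
\]
so that (working over a field $\K$, or more generally exploiting that Floer complexes are degree-wise free) acyclicity of $CW^{*}(L_{0}, L_{1}; -H)$ follows from acyclicity of the positively wrapped complex $CW^{*}(L_{1}, L_{0}; H)$, which computes the fully wrapped Floer cohomology $HW^{*}(L_{1}, L_{0})$ in $\w(\bar{X})$.

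Finally, since $L_{0}$ and $L_{1}$ are both nonessential, Lemma \ref{nonessential Lagrangian zero in fully wrapped} guarantees that each represents a zero object in $\w(\bar{X})$, so in particular $HW^{*}(L_{1}, L_{0}) = 0$. Stitching the three reductions together yields the claim. The main obstacle is purely bookkeeping around the dualization step: ensuring that acyclicity is preserved under $\hom_{\K}(-, \K)$, which is automatic over a field but in general leans on the standard fact that wrapped Floer complexes are degree-wise free and suitably bounded; no new geometric input is required beyond the already-cited vanishing result for nonessential Lagrangians.
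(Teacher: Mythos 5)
Your reduction is the same as the paper's: since $j$ has vanishing higher order terms, fully-faithfulness amounts to $j^{1}$ being a quasi-isomorphism, which by the cone description \eqref{morphism in n} is equivalent to acyclicity of $CW^{*}(L_{0}, L_{1}; -H)$, and the geometric input is in both cases Lemma \ref{nonessential Lagrangian zero in fully wrapped}. Where you genuinely diverge is in how that acyclicity is obtained. The paper stays inside the Rabinowitz framework: a nonessential $L$ is a zero object in $\w(\bar{X})$, hence (units map to units under the natural functor) also a zero object in $\rw(\bar{X})$, so both $RC^{*}(K, L)$ from \eqref{Rabinowitz complex} and $CW^{*}(K, L; H)$ are acyclic, and the long exact sequence of the cone of the continuation map \eqref{continuation} forces $H^{*}(CW^{*}(K, L; -H)) = 0$. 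You instead dualize via \eqref{PD} and quote the vanishing of fully wrapped cohomology directly. Your route is shorter and never mentions $\rw(\bar{X})$; the paper's route avoids any dualization and therefore works verbatim over the general ground ring.

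The one step I would not accept as written is the dualization. The paper fixes an arbitrary commutative ring $\K$, and wrapped Floer complexes are in general unbounded in degree, so the appeal to ``degree-wise free and suitably bounded'' is not available: for unbounded complexes of free modules, acyclicity does not formally pass to the $\K$-linear dual, because a quasi-isomorphism between such complexes need not be a homotopy equivalence. Over a field your argument is complete as stated. Over general $\K$ it can be repaired without new geometry: since $L_{1}$ (say) is a zero object in $\w(\bar{X})$, its cohomological unit is a coboundary, and (strict or cohomological) unitality then produces an explicit contracting homotopy of $CW^{*}(L_{1}, L_{0}; H)$ for every $L_{0}$, namely $\mu^{2}$ with a primitive of the unit; contractibility, unlike bare acyclicity, does survive $\hom_{\K}(-, \K)$, after which your argument goes through. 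Alternatively, route the vanishing through $\rw(\bar{X})$ as the paper does, which sidesteps duality altogether.
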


The first half of Lemma \ref{nonessential Lagrangian zero in fully wrapped} immediately implies that Proposition \ref{faithful on nonessential Lagrangians} holds automatically for the functor $\w(\bar{X}) \to \rw(\bar{X})$, i.e. in the case where $F_{0} = \varnothing$.
In fact, we shall see that the general case follows by the same reasoning.

\begin{proof}[Proof of Proposition \ref{faithful on nonessential Lagrangians}]
It suffices to show that the induced map on cohomology is an isomorphism for each pair $(K, L)$ of nonessential Lagrangians.
Let $L$ be any nonessential Lagrangian.
By Lemma \ref{nonessential Lagrangian zero in fully wrapped},
 $L$ is a zero object in the fully wrapped Fukaya category $\w(\bar{X})$, 
and is also a zero object in the Rabinowitz Fukaya category $\rw(\bar{X})$.
By definition of the Rabinowitz complex \eqref{Rabinowitz complex},
it also follows that $H^{*}(CW^{*}(L, L; -H)) = 0$.
Therefore, the inclusion 
\[
CW^{*}_{F}(L, L; H) \subset NC^{*}_{F}(L, L) = \cone(c: CW^{*}(L, L; -H) \to CW^{*}_{F}(L, L; H))
\]
is quasi-isomorphism. 
The same argument can be applied to any pair $(K, L)$ of nonessential Lagrangians.
\end{proof}

Proposition \ref{faithful on nonessential Lagrangians} together with Lemma \ref{n as homotopy limit of nm} yields the following consequence.

\begin{cor}\label{faithful on nonessential Lagrangians for limit}
The functor $J: \w(\bar{X}, F_{0}) \to \holim_{m \in \Z_{\le -1}^{op}} \n_{>m}(\bar{X}, F_{0})$ \eqref{homotopy limit of j} is fully faithful on nonessential Lagrangians.
\end{cor}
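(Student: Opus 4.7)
The plan is to factor $J = \Pi \circ j$ and reduce the corollary to showing that each of $j^{1}$ and $\Pi^{1}$ induces a quasi-isomorphism on morphism complexes between each pair of nonessential Lagrangians $K$, $L$. Since $j$ acts as the identity on objects and $\Pi$ takes values in the subcategory of constant sections by Lemma \ref{lem: image of pi}, the composition $J$ also factors through constant sections, so by \eqref{hom in holim} the target morphism complex is $\holim_{m \in \Z_{\le -1}} NC^{*}_{>m}(K, L)$.

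For the $\Pi$-factor, chain-level Mittag-Leffler for the tower $\{NC^{*}_{>m}(K, L)\}$ is automatic (Lemma \ref{nm satisfies ml}), so \eqref{iso of limit of chain complexes} gives a quasi-isomorphism from $\varprojlim_{m} NC^{*}_{>m}(K, L)$ to the homotopy limit. The key observation, which requires no hypothesis on Reeb dynamics, is that the natural chain map $NC^{*}_{F}(K, L) \to \varprojlim_{m} NC^{*}_{>m}(K, L)$ is in fact an \emph{isomorphism}: writing $C_{0} = NC^{*}_{F}(K, L)$, the subcomplex $C_{m}$ is supported in the $CW^{*}(K, L; -H)[1]$ summand as the subproduct over chords $y$ with $y \cdot (F_{0} \times \R_{s>0}) \le m$ (see \eqref{filtered subspace}), and since by \eqref{Floer chains} the complex $CW^{*}(K, L; -H)$ is defined as the full direct product over all $(-H)$-chords, the filtration $\{C_{m}\}$ is separated and complete in the obvious way. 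Compatibility with the functor $\Pi$, which by Proposition \ref{compatible diagram of functors} is constructed from the strictly compatible quotient projections $C_{0} \to C_{0}/C_{m}$, identifies $\Pi^{1}$ with the composite
\[
NC^{*}_{F}(K, L) \stackrel{\cong}\to \varprojlim_{m} NC^{*}_{>m}(K, L) \stackrel{\sim}\to \holim_{m} NC^{*}_{>m}(K, L),
\]
which is therefore a quasi-isomorphism on any pair of Lagrangians, without restriction.

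For the $j$-factor, Proposition \ref{faithful on nonessential Lagrangians} gives that $j^{1} \colon CW^{*}_{F}(K, L; H) \to NC^{*}_{F}(K, L)$ is a quasi-isomorphism on nonessential $K, L$, the input being that $CW^{*}(K, L; -H)$ is acyclic since each of $K$ and $L$ is a zero object in $\w(\bar{X})$ and in $\rw(\bar{X})$ by Lemma \ref{nonessential Lagrangian zero in fully wrapped} and the argument in the proof of Proposition \ref{faithful on nonessential Lagrangians}. Composing, $J^{1} = \Pi^{1} \circ j^{1}$ is a quasi-isomorphism on nonessential pairs, proving the corollary.

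The only subtle point, a matter of bookkeeping rather than substance, is verifying that $\Pi^{1}$ coincides with the chain map induced by the inverse limit of the projections $C_{0} \to C_{0}/C_{m}$ under the identification \eqref{iso of limit of chain complexes}. This should follow from the explicit constant-section construction of the inverse-limit-to-homotopy-limit map recalled in Appendix \ref{app: map from inverse limit}, together with the strict compatibility enforced in Proposition \ref{compatible diagram of functors}.
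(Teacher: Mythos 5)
Your argument is correct and follows the same decomposition as the paper --- $J = \Pi \circ j$, with the $j$-factor handled by Proposition~\ref{faithful on nonessential Lagrangians} (more precisely, Proposition~\ref{quasi-iso on nonessential Lagrangians}). Where you differ from the paper is in the treatment of the $\Pi$-factor. The paper simply cites Proposition~\ref{n as homotopy limit of nm}, whose proof passes through the comparison map \eqref{h of limit to limit of h} from $H^{*}(\varprojlim)$ to $\varprojlim H^{*}$; making that map an isomorphism is exactly where the bounded-Reeb-dynamics hypothesis enters (it supplies degree-wise Mittag--Leffler on cohomology, hence vanishing of $\varprojlim^{1}$). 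Your observation short-circuits this: since the $CW^{*}(-H)$-summand of $NC^{*}_{F}(K,L)$ is a direct \emph{product} by \eqref{Floer chains}, the descending filtration $C_m$ from \eqref{filtered subspace} is separated ($\bigcap_m C_m = 0$ because each $(-H)$-chord has a finite, nonpositive intersection number) and complete, so $NC^{*}_{F}(K,L) \to \varprojlim_m NC^{*}_{>m}(K,L)$ is already an isomorphism of cochain complexes, not merely a quasi-isomorphism after taking $\varprojlim H^{*}$. Combined with Lemmas~\ref{nm satisfies ml} and~\ref{lem: ml implies lim = holim} (chain-level Mittag--Leffler, which holds because the structure maps $C_0/C_m \to C_0/C_{m'}$ are surjective), this shows $\Pi^1$ is a quasi-isomorphism on \emph{every} pair with no hypothesis on Reeb dynamics. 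Your bookkeeping concern is legitimate but easily settled: by the explicit construction of $\Pi$ from the strictly compatible family $\{\pi_m\}$ in Proposition~\ref{compatible diagram of functors}, one has $\Pi^1(c)(m) = \pi_m^1(c) = [c]_m$, which is exactly the image of $c$ under the composite of $C_0 \cong \varprojlim_m C_0/C_m$ with the map of Proposition~\ref{prop: lim to holim}. What your version buys is a mild strengthening: the corollary as you prove it does not need the bounded-Reeb-dynamics hypothesis that the paper's cited Proposition~\ref{n as homotopy limit of nm} requires, whereas the paper's one-line deduction implicitly inherits that hypothesis.
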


The proofs of Proposition \ref{faithful on nonessential Lagrangians} and its prerequisite Lemma \ref{nonessential Lagrangian zero in fully wrapped} indeed imply a slightly stronger statement:

\begin{prop}\label{quasi-iso on nonessential Lagrangians}
Let $L$ be a nonessential Lagrangian, and $K$ any exact cylindrical Lagrangian, both disjoint from $F_{0}$ at infinity.
Then the linear term of $j: \w(\bar{X}, F_{0}) \to \n(\bar{X}, F_{0})$ is a quasi-isomorphism for the pair $(L, K)$ or $(K, L)$:
\[
j^{1}: \hom_{\w(\bar{X}, F_{0})}(L, K) \stackrel{\sim}\to \hom_{\n(\bar{X}, F_{0})}(j(L), j(K)),
\]
\[
j^{1}: \hom_{\w(\bar{X}, F_{0})}(K, L) \stackrel{\sim}\to \hom_{\n(\bar{X}, F_{0})}(j(K), j(L)),
\]
\end{prop}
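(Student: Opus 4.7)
The plan is to generalize the proof of Proposition \ref{faithful on nonessential Lagrangians}, noting that the only property of the Lagrangians really used there was that $L$ is a zero object in both $\w(\bar{X})$ and $\rw(\bar{X})$; neither vanishing requires $K$ itself to be nonessential.

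First I would reduce the problem to a statement about the negatively wrapped complex. By the definition \eqref{morphism in n}, $j^1: CW^*_F(L, K; H) \to NC^*_F(L, K)$ is the inclusion into the second summand of a mapping cone, whose cokernel is $CW^*(L, K; -H)[1]$ with the standard Floer differential. The associated short exact sequence of complexes shows that $j^1$ is a quasi-isomorphism if and only if $CW^*(L, K; -H)$ is acyclic, and symmetrically for the pair $(K, L)$.

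Second I would establish that the nonessential Lagrangian $L$ is a zero object not only in $\w(\bar{X})$ (already given by Lemma \ref{nonessential Lagrangian zero in fully wrapped}) but also in $\rw(\bar{X})$. The displacement argument underlying [GPS2, Corollary 9.2] exploits the $\C_{\re \ge 0}$-factor of the sector $S \times \C_{\re \ge 0}$ into which $L$ can be isotoped: wrapping in this factor displaces $L$ from any fixed $K$ at infinity. This operation is symmetric in positive and negative wrapping directions, so the same reasoning applies to the negative Hamiltonian $-H$ and to the mixed popsicle moduli spaces defining $\mu^k_{\rw}$. Hence $HW^*(L, K; H) = HW^*(K, L; H) = 0$ and $RH^*(L, K) = RH^*(K, L) = 0$ for any cylindrical $K$.

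Third, the defining mapping cone \eqref{Rabinowitz complex} of the Rabinowitz complex yields a long exact sequence
\[
\cdots \to HW^*(L, K; -H) \to HW^*(L, K; H) \to RH^*(L, K) \to HW^{*+1}(L, K; -H) \to \cdots,
\]
and since the middle two terms vanish by the previous step, we conclude $HW^*(L, K; -H) = 0$. The identical argument for $(K, L)$ gives $HW^*(K, L; -H) = 0$. Combined with the first step, this yields the desired quasi-isomorphisms.

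The main obstacle is the second step: upgrading the $\w(\bar{X})$-vanishing to a $\rw(\bar{X})$-vanishing. Since the Rabinowitz Fukaya category mixes positive and negative wrappings via continuation maps and popsicle operations, one must verify that the contact-geometric disjunction responsible for displacing $L$ in the sector factor persists in both directions and is compatible with the action filtration used to define $CW^*(-; -H)$ as a direct product. Everything else in the argument is a formal manipulation of the cone definitions of $NC^*_F$ and $RC^*$.
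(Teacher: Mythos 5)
Your skeleton (reduce via the cone \eqref{morphism in n} to acyclicity of $CW^{*}(L,K;-H)$, then use the long exact sequence of the Rabinowitz cone \eqref{Rabinowitz complex}) is exactly the paper's route, but your second step --- the only substantive one --- is not carried out, and the geometric mechanism you propose for it is both unnecessary and unreliable. The claim that the displacement argument of [GPS2, Cor.\ 9.2] ``is symmetric in positive and negative wrapping directions'' cannot be a general principle: negative wrapping sweeps across everything (this is the whole point of the Rabinowitz/negatively wrapped theory), and for instance a closed exact Lagrangian has $CW^{*}(K,K;-H)\simeq CW^{*}(K,K;H)$ via the continuation map, so ``negative wrapping displaces'' is simply false as a moduli-space statement. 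Any attempt to push the GPS2 disjunction through the popsicle moduli spaces and the action-completed complexes $CW^{*}(\cdot;-H)$ would have to secretly re-use the positive-wrapping vanishing, and you have not indicated how; you explicitly flag this as the unresolved obstacle, which is precisely where the proof lives.

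The missing idea is that no new geometry is needed: the vanishing in the negative/Rabinowitz direction is a formal consequence of $L$ being a zero object of $\w(\bar{X})$ (Lemma \ref{nonessential Lagrangian zero in fully wrapped}). Most directly, Poincar\'{e} duality \eqref{PD} identifies $CW^{*}(L,K;-H)$ with $\hom_{\K}(CW^{n-*}(K,L;H),\K)$, and $HW^{*}(K,L;H)=0$ because the unit of $L$ vanishes in $HW^{*}(L,L;H)$ and acts on $HW^{*}(K,L;H)$; the same applies with the roles of $L$ and $K$ exchanged. Equivalently, as in the proof of Proposition \ref{faithful on nonessential Lagrangians}, the cohomologically unital inclusion functor $\w(\bar{X})\to\rw(\bar{X})$ sends the zero object $L$ to a zero object, so $H^{*}RC^{*}(L,K)=H^{*}RC^{*}(K,L)=0$ for \emph{every} $K$ by the module structure over $H^{*}RC^{*}(L,L)$, and then your long exact sequence gives $HW^{*}(L,K;-H)=HW^{*}(K,L;-H)=0$. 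With that substitution your steps 1 and 3 go through verbatim; without it, the proof is incomplete.
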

\begin{proof}
The vanishing of $H^{*}(CW^{*}(L, L; -H))$ implies the vanishing of $H^{*}(CW^{*}(L, K; -H))$ or $H^{*}(CW^{*}(K, L; -H))$ for any $K$.
\end{proof}

\section{The categorical equivalence}\label{section: equivalence}

\subsection{Overview of the construction}\label{section: functor overview}

Consider the cup functor
\[
i_{F}: \w(F) \to \w(\bar{X}, F_{0})
\]
defined in \eqref{cup functor}, which is the composition of the K\"{u}nneth stabilization functor $\w(F) \to \w(F \times T^{*}[0, 1])$ with the pushforward functor $\w(F \times T^{*}[0, 1]) \to \w(\bar{X}, F_{0})$ \eqref{pushforward on w}.
Let $\tilde{\dd}(F)$ be the essential image of $i_{F}$.
Then we can define the categorical completion of $\w(\bar{X}, F_{0})$ along the subcategory $\tilde{\dd}(F)$
\begin{equation}\label{formal completion along image of cup functor}
\wf:= \widehat{\w(\bar{X}, F_{0})}_{\tilde{\dd}(F)}
\end{equation}
as in Definition \ref{defn: formal completion}.

If $\fk = \ck_{F}$ is a mostly Legendrian stop, there is alternative replacement of $\tilde{\dd}(F)$ along which we can perform the categorical formal completion.
Let $\dd(\fk)$ the full subcategory of $\w(\bar{X}, F_{0})$ with objects being small Lagrangian disks linking $\fk^{crit}$.
We can also define 
\begin{equation}\label{formal completion along linking disks}
\wcm := \widehat{\w(\bar{X}, \fk)}_{\dd(\fk)}.
\end{equation}
If $F$ is Weinstein, the cocores of $F$ generate $\w(F)$, and these cocores are sent to these Lagrangian linking disks under the cup functor $i_{F}$,
the subcategories $\dd(\fk)$ and $\tilde{\dd}(F)$ are quasi-equivalent after passing to twisted complexes.
By Lemma \ref{invariance of formal completion}, in this case $\wcm$ \eqref{formal completion along linking disks} is quasi-equivalent to $\widehat{\w}_{F}$ \eqref{formal completion along image of cup functor}.

The main goal of this section is to construct a canonical $\ainf$-functor
\begin{equation}\label{functor to completion}
\Psi: \n(\bar{X}, F_{0}) \to \widehat{\w}_{F},
\end{equation}
and prove the following:

\begin{thm}\label{thm:equivalence}
Let $(\bar{X}, F)$ be a Liouville pair such that $\bar{X}$ and $F$ are both non-degenerate Liouville manifolds.
Suppose $F$ satisfies stop removal for $(\bar{X}, F_{0})$, i.e. the functor \eqref{stop removal functor} is a quasi-equivalence, 
and that $\fk = \ck_{F}$ is a full stop.
Then the functor $\Psi: \n(\bar{X}, F_{0}) \to \widehat{\w}_{F}$ \eqref{functor to completion} is a quasi-equivalence.
\end{thm}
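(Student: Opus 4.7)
The plan is to construct $\Psi$ canonically via module-theoretic means, and then to deduce the quasi-equivalence by placing both $\n(\bar{X}, F_{0})$ and $\wf$ as middle terms of compatible short exact sequences of $\ainf$-categories with coinciding outer terms. Specifically, I would exhibit both sides as extensions of the Rabinowitz Fukaya category $\rw(\bar{X}) = \n(\bar{X}, \varnothing)$ by the subcategory $\tilde{\dd}(F)$, identifying the relevant quotient of $\wf$ with the formal punctured neighborhood $\widehat{\w(\bar{X})}_{\infty}$ via the main result of \cite{GGV}.

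On objects, $\Psi$ sends each Lagrangian $L$ to the object of $\wf$ represented by the right $Z_{\tilde{\dd}(F)}^{op}$-module $P_{\tilde{\dd}(F)}(-, L)$, whose value at $D \in \tilde{\dd}(F)$ is $CW^{*}_{F}(D, L; H)$. On morphisms, using the decomposition $NC^{*}_{F}(K, L) = CW^{*}(K, L; -H)[1] \oplus CW^{*}_{F}(K, L; H)$, the positive summand is sent by the usual $\ainf$-multiplication coming from the inclusion $\tilde{\dd}(F) \hookrightarrow \w(\bar{X}, F_{0})$, whereas the negative summand is sent by a map constructed from counting popsicle disks whose boundary lies on a composable string of objects in $\tilde{\dd}(F)$ together with $K$ and $L$. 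That these contributions assemble into a well-defined pre-module homomorphism, chain-level closed under the module actions, is ensured by Lemma \ref{image of continuation} together with the intersection-number identity of Lemma \ref{lem: intersection number identity}. By construction, $\Psi \circ j$ is homotopic to the pushforward $\iota_{\tilde{\dd}(F)}: \w(\bar{X}, F_{0}) \to \wf$, and $\Psi$ restricts on $\tilde{\dd}(F)$ to the canonical embedding $\kappa$ of Lemma \ref{property of k functor}.

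Under the stated hypotheses, $\w(\bar{X}, F_{0})$ is both smooth and proper: properness is the full-stop hypothesis, and smoothness follows from the smoothness of $\tilde{\dd}(F) \simeq \w(F)$ and of $\w(\bar{X})$ (both by non-degeneracy, via Ganatra's generation criterion) combined with the stop-removal quasi-equivalence. Hence Corollary \ref{unique fit into ses} applied to the stop-removal sequence $\tilde{\dd}(F) \hookrightarrow \w(\bar{X}, F_{0}) \to \w(\bar{X})$ produces a short exact sequence $\tilde{\dd}(F) \hookrightarrow \wf \to \widehat{\w(\bar{X})}_{\infty}$, with the right-hand term quasi-equivalent to $\rw(\bar{X})$ by \cite{GGV}. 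On the negatively wrapped side, Theorem \ref{thm:pushforward is localization} yields a localization $\iota_{\sharp}: \n(\bar{X}, F_{0}) \to \n(\bar{X}, \varnothing) = \rw(\bar{X})$; its kernel is split-generated by the image of $\tilde{\dd}(F)$ under $j \circ i_{F}$, which embeds fully faithfully by Proposition \ref{faithful on nonessential Lagrangians} since K\"unneth stabilizations are nonessential Lagrangians of $(\bar{X}, F_{0})$. This produces the parallel short exact sequence $\tilde{\dd}(F) \hookrightarrow \n(\bar{X}, F_{0}) \to \rw(\bar{X})$.

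By the canonicity of the construction of $\Psi$, these two short exact sequences assemble into a strictly commuting diagram with $\Psi$ in the middle, the identity (up to split-generation) on the left, and the \cite{GGV} quasi-equivalence on the right. A standard five-lemma argument for Verdier quotients of triangulated $\ainf$-categories then forces $\Psi$ to be a quasi-equivalence. The main obstacle I expect is the construction step: defining the chain map on the negative summand $CW^{*}(K, L; -H)[1]$ together with all higher-order $\ainf$-functor components, and verifying the $\ainf$-functor relations through a careful analysis of boundary strata of popsicle moduli spaces with boundary conditions on a mixed collection of Lagrangians in $\tilde{\dd}(F)$ and those outside it. A second delicate point is identifying the kernel of $\iota_{\sharp}$ with $\tilde{\dd}(F)$ up to split-generation, since unlike the partially wrapped case $\n$ has restricted functoriality and the argument cannot be transplanted directly from $\w$.
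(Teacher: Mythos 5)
Your plan reproduces the paper's argument: construct $\Psi$ by counting popsicle disks with testing boundary on $\tilde{\dd}(F)$ to land in $\operatorname{Prop}\tilde{\dd}(F)$, then sandwich $\n(\bar{X}, F_{0})$ and $\wf$ as middle terms of two short exact sequences $\tilde{\dd}(F) \hookrightarrow (\cdot) \to \rw(\bar{X}) \simeq \widehat{\w(\bar{X})}_{\infty}$ and conclude by comparing, with smoothness and properness of $\w(\bar{X}, F_{0})$ (from non-degeneracy, stop removal, and the full-stop hypothesis) feeding Corollary \ref{unique fit into ses}. One caution about the geometric sequence: the paper does not deduce it from Theorem \ref{thm:pushforward is localization}, but instead proves exactness at the middle term directly — showing that any $L$ killed in $\rw(\bar{X})$ defines a proper module over $\w(\bar{X})$, hence (since connected non-compact Lagrangians with nonzero proper module are ruled out, and closed exact Lagrangians become zero objects in $\n(\bar{X}, F_{0})$ via the Poincar\'{e}-duality homotopy equivalence $c$) must lie in $\r{Tw}\tilde{\dd}(F)$ after invoking stop removal; that proper-module argument is the content you have flagged but not supplied, and it is the genuine technical core of the geometric side.
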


There are some special and interesting cases in which the hypotheses of Theorem \ref{thm:equivalence} can be easily checked.
For example, if $F$ is Weinstein, $\w(F)$ has a collection of strong generators by cocore disks,
and has the core $\fk = \ck_{F}$ being a mostly Legendrian stop, 
so by Theorem \ref{thm: stop removal} $F$ satisfies stop removal.
In addition, if $\bar{X}$ is the total space of a symplectic Landau-Ginzburg model with compact critical locus with a regular fiber-at-infinity $F$ being Weinstien,
such that $\bar{X}$ itself is a non-degenerate Liouville manifold, 
then $\w(\bar{X}, F_{0})$ is proper (Example 1.19 of \cite{GPS2}), 
so that all the assumptions of Theorem \ref{thm:equivalence} are satisfied.

We first give a construction of the functor for an arbitrary Liouville pair $(\bar{X}, F)$ independently of the hypotheses.
The components of the functor \eqref{functor to completion} are maps of the following form
\begin{equation}\label{components of psi}
\Psi^{k}: \hom_{\n(\bar{X}, F_{0})}(L_{k-1}, L_{k}) \otimes \cdots \otimes \hom_{\n(\bar{X}, F_{0})}(L_{0}, L_{1}) \to \hom_{\wf}(L_{0}, L_{k}).
\end{equation}
We find the following description of the morphism spaces in $\wf$ helpful in attempting to construct the functor $\Psi$:

\begin{lem}\label{lem: wf as prop d}
The category $\wf$ is quasi-equivalent to $ess-im(\w(\bar{X}, F_{0}) \to \r{Prop}\tilde{\dd}(F))$.
\end{lem}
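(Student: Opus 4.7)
The plan is to derive this lemma as an immediate corollary of Proposition \ref{prop: morphism in t = in zt} applied to $\cc = \w(\bar{X}, F_{0})$ and $\T = \tilde{\dd}(F)$; the latter is a full subcategory of $\w(\bar{X}, F_{0})$ by its construction as the essential image of the cup functor $i_{F}$, so the hypotheses of the proposition are satisfied.

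First I would unfold the two sides of the claimed equivalence. By Definition \ref{defn: formal completion}, $\wf$ is the full subcategory of $\rmod Z_{\tilde{\dd}(F)}^{op}$ with objects $\{P_{\tilde{\dd}(F)}(-, X) : X \in \ob \w(\bar{X}, F_{0})\}$ and morphism complexes given in \eqref{morphism in completion}. Writing $i\co \tilde{\dd}(F) \longhookrightarrow \w(\bar{X}, F_{0})$ for the inclusion, the essential image of $\w(\bar{X}, F_{0}) \to \r{Prop}\tilde{\dd}(F)$ is the full subcategory of $\rmod \tilde{\dd}(F)$ with objects $\{i^{*}Y^{r}_{X} : X \in \ob \w(\bar{X}, F_{0})\}$ and morphism complexes $\hom_{\rmod \tilde{\dd}(F)}(i^{*}Y^{r}_{X}, i^{*}Y^{r}_{Y})$. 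The object sets are in evident bijection under $X \mapsto P_{\tilde{\dd}(F)}(-, X) \leftrightarrow i^{*}Y^{r}_{X}$. Proposition \ref{prop: morphism in t = in zt} then produces for each pair $(X, Y)$ a quasi-isomorphism
\[
\hom_{\rmod \tilde{\dd}(F)}(i^{*}Y^{r}_{X}, i^{*}Y^{r}_{Y}) \stackrel{\sim}\to \hom_{\rmod Z_{\tilde{\dd}(F)}^{op}}(P_{\tilde{\dd}(F)}(-, X), P_{\tilde{\dd}(F)}(-, Y))
\]
compatible with Yoneda products, and since both sides are dg categories this data organizes into a dg quasi-equivalence between the two full subcategories.

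The only point requiring care is the compatibility of the pointwise quasi-isomorphisms with composition, so that they genuinely define a dg (hence $\ainf$-) functor rather than a mere chain-level identification on hom-spaces. This is precisely the product-compatibility clause in Proposition \ref{prop: morphism in t = in zt}. One may alternatively extract the quasi-equivalence directly from the proof of Theorem \ref{quotient of formal completion}: the dg functor $\tilde{\pi}\co \ct \to \r{Prop}\T$ in \eqref{completion to prop t} was constructed to send $X$ to $i^{*}Y^{r}_{X}$, has linear term \eqref{chain map of pi functor} equal to a composition of quasi-isomorphisms, and was shown to be fully faithful; its essential image therefore coincides with that of $\w(\bar{X}, F_{0}) \to \r{Prop}\tilde{\dd}(F)$, yielding the asserted quasi-equivalence.
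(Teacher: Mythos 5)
Your proposal is correct and takes essentially the same route as the paper: both identify the two categories via the quasi-isomorphism of morphism complexes supplied by Proposition~\ref{prop: morphism in t = in zt}, together with the observation that $\tilde{\dd}(F)$ is a full subcategory of $\w(\bar{X}, F_{0})$ so the hypotheses apply. Your alternative route through the functor $\tilde{\pi}$ of \eqref{completion to prop t} is a fine repackaging, but it rests on the same proposition and so is not genuinely different; note also that one should be slightly careful that $i^{*}Y^{r}_{X}$ actually lands in $\r{Prop}\,\tilde{\dd}(F)$, a point glossed over by both you and the paper (it is harmless in the intended setting where $\tilde{\dd}(F)$ is proper).
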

\begin{proof}
Recall from \eqref{morphism in completion} that in the category $\wf$, the morphism is
\[
\hom_{\wf}(L_{0}, L_{k}) = \hom_{\rmod Z_{\tilde{\dd}(F)}^{op}}(P_{\tilde{\dd}(F)}(-, L_{0}), P_{\tilde{\dd}(F)}(-, L_{k})).
\]
Moreover, Proposition \ref{prop: morphism in t = in zt} in fact implies that
the subcategory of $\rmod Z_{\tilde{\dd}(F)}^{op}$ whose objects are modules of the form $P_{\tilde{\dd}(F)}(-, L)$ 
is quasi-equivalent to the subcategory of $\rmod \tilde{\dd}(F)$ whose objects are the pullback right Yoneda modules $i^{*}Y^{r}_{L}$ for $L \in \ob \w(X) =  \ob \w(\bar{X}, F_{0})$,
which by definition is $ess-im(\w(\bar{X}, F_{0}) \to \r{Prop}\tilde{\dd}(F))$.
\end{proof}

Thus a prototype of the map \eqref{components of psi} can be constructed as follows.
We first define maps
\begin{equation}\label{maps to cc of d}
\Theta^{k}: \hom_{\n(\bar{X}, F_{0})}(L_{k-1}, L_{k}) \otimes \cdots \otimes \hom_{\n(\bar{X}, F_{0})}(L_{0}, L_{1}) \to \r{CC}^{*}(\tilde{\dd}(F)^{op}, \hom_{\K}(i^{*}Y^{r}_{L_{0}}, i^{*}Y^{r}_{L_{k}})),
\end{equation}
and compose them with the quasi-isomorphism
\begin{equation}\label{pullback of cc of d}
\r{CC}^{*}(\tilde{\dd}(F)^{op}, \hom_{\K}(i^{*}Y^{r}_{L_{0}}, i^{*}Y^{r}_{L_{k}})) \stackrel{\sim}\to \r{CC}^{*}(Z_{\tilde{\dd}(F)}, \hom_{\K}(P_{\tilde{\dd}(F)}(-, L_{0}), P_{\tilde{\dd}(F)}(-, L_{k})))
\end{equation}
as in \eqref{pullback of cc} to obtain maps \eqref{components of psi}.

\subsection{Moduli spaces of domains and maps}\label{section: moduli spaces of functor disks}

The maps \eqref{maps to cc of d} are constructed via the count of certain popsicles,
which are pseudoholomorphic maps from boundary-punctured disks (equipped with additional data) to the target stopped Liouville manifold subject to certain restricted types of Lagrangian boundary conditions.

We first describe the domains of these pseudoholomorphic maps.
Let 
\begin{equation}\label{domain of functor disk}
S = D^{2} \setminus \{z_{out} , z'_{l}, \ldots, z'_{1}, z_{in}, z_{1}, \ldots, z_{k} \}
\end{equation}
be a Riemann surface isomorphic to a disk with $k+l+2$ boundary points removed.
The punctures are in the counterclockwise order along $\p D^{2}$ as indicated in \eqref{domain of functor disk},
where we shall call $z_{out}$ the $0$-th puncture,
$z'_{j}$ the $(l-j+1)$-th puncture for $j=1, \ldots, l$, 
$z_{in}$ the $(l+1)$-th puncture, 
and $z_{i}$ the $(l+1+i)$-th puncture for $i = 1, \ldots, k$.

Let $C_{i}, C_{in}$ and $C'_{j}$ be the unique hyperbolic geodesic connecting $z_{i}, z_{in}$ and respectively $z'_{j}$ to $z_{out}$.

\begin{defn}
We say a popsicle structure $\sigma$ on $S$ of flavor $\bar{\mathbf{p}}$ is $\Psi$-{\it adapted}, 
if the sprinkles can only lie on the geodesics $C_{i}, i = 1, \ldots, k$.
\end{defn}

This flavor $\bar{\mathbf{p}}$ can be regarded as the extension of a flavor $\mathbf{p}: F \to \{1, \ldots, k\}$ by 
\begin{equation}\label{extended popsicle flavor}
\bar{\mathbf{p}}(f) = \mathbf{p}(f) + l + 1.
\end{equation}
Again, we shall only consider injective popsicles, meaning that $Aut(\bar{\mathbf{p}})$ is trivial.
In addition, we want to equip the punctures with non-positive integer weights
 $w_{out}$ for $z_{out}$, $w_{i}$ for $z_{i}$, $w_{in}$ for $z_{in}$ and $w'_{j}$ for $z'_{j}$, 
which should satisfy
\begin{equation}
w_{out} = w'_{j} = w_{in} = 0, \text{ and } w_{i} \in \{-1, 0\}, i = 1, \ldots, k.
\end{equation}
Given these weight conditions, the standard conditions \eqref{weight sum condition} and \eqref{sprinkles bounded by weight} on weighted popsicles become simplified to the following
\begin{equation}
|\bar{\mathbf{p}}^{-1}(i)| = -w_{i} \in \{0, 1\}, i = 1, \ldots, k.
\end{equation}
The strip-like ends for $S$ near the punctures are chosen compatibly with the weights according to the rules \eqref{strip end j}. 
In this case, the punctures $z'_{j}, j=1,\ldots,k$ and $z_{in}$ are positive punctures, equipped with positive strip-like ends
\[
\e_{in}/\e'_{j}: Z^{+} \to S,
\]
the puncture $z_{out}$ is a negative puncture, equipped with a negative strip-like end
\[
\e_{out}: Z^{-} \to S.
\]
For the other punctures $z_{i}$, the strip-like ends are
\[
\e_{i}: Z^{s_{i}} \to S,
\]
where the sign notation $s_{i}$ is defined in \eqref{sign notation}.

A description of degenerations of $\Psi$-adapted popsicle structures is given below, 
which is essentially part of Lemma 3.1 of \cite{GGV}, originally due to \cite{seidel6}.

\begin{lem}\label{boundary degeneration}
Consider a weighted $\Psi$-adapted popsicle structure $\sigma$ of flavor $\bar{\mathbf{p}}$ on $S$ as above. 
Then for any broken popsicle $((S_{1}, \sigma_{1}),(S_{2}, \sigma_{2}))$ of flavor $(\mathbf{p}_{1}, \mathbf{p}_{2})$ in a codimension one stratum of $\p \bar{\mathcal{R}}^{k+l+2, \bar{\mathbf{p}}}$, exactly one of the following holds:
\begin{enumerate}[label=(\roman*)]

\item both $\sigma_{1}$ and $\sigma_{2}$ are $\Psi$-adapted;

\item $\sigma_{2}$ is $\Psi$-adapted, and $\sigma_{1}$ is a trivial popsicle structure, i.e. one carrying no sprinkles;

\item $\sigma_{1}$ is $\Psi$-adapted, and $\sigma_{2}$ is a trivial popsicle structure;

\item neither $\sigma_{1}$ nor $\sigma_{2}$ is $\Psi$-adapted. In this case, there are two possible values for $i$ such that $\mathbf{p}_{2}^{-1}(i) $, while $\mathbf{p}_{1}^{-1}(k+l+2-i)$ consists of exactly one element.

\item $|\mathbf{p_{1}}^{-1}(i)| \ge 2$

\end{enumerate}
\end{lem}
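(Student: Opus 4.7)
The plan is to analyze the combinatorics of a broken popsicle $((S_1, \sigma_1), (S_2, \sigma_2))$ corresponding to a codimension-one boundary point of $\bar{\mathcal{R}}^{k+l+2, \bar{\mathbf{p}}}$ by tracking how the punctures and sprinkles of $(S, \sigma)$ distribute across the two components when $S$ degenerates at a boundary node $q$. The overall strategy parallels that of Lemma 3.1 of \cite{GGV} (originally due to \cite{seidel6}), but with additional bookkeeping to handle the distinguished punctures $z_{out}$, $z_{in}$, and the $z'_j$'s, none of which may carry a sprinkle in the $\Psi$-adapted setting.

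First I set up notation. The output puncture $z_{out}$ lies on exactly one component; without loss of generality say $z_{out} \in S_1$, so that $q$ becomes a positive puncture of $S_1$ and the negative (output) puncture of $S_2$. The remaining punctures of $S$ partition into index sets $I_1, I_2$ compatible with the cyclic order along $\partial S$. On $S_1$, being $\Psi$-adapted means that no sprinkle lies on the geodesic $q \to z_{out}$ (the node $q$ plays the same forbidden role as $z_{in}$ and the $z'_j$'s); on $S_2$, where $q$ now plays the role of output, $\Psi$-adaptedness means that every sprinkle lies on a geodesic from some original $z_i$-puncture of $I_2$ to $q$.

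Next I track how sprinkles distribute. Each sprinkle $\sigma_f$ of $(S, \sigma)$ lies on some $C_{i_f}$ with $i_f \in \{1, \dots, k\}$ by the $\Psi$-adapted hypothesis on $\sigma$. If $z_{i_f} \in I_1$, both endpoints of $C_{i_f}$ lie on $S_1$ and the sprinkle remains on the geodesic $z_{i_f} \to z_{out}$ of $S_1$, imposing no obstruction to $\Psi$-adaptedness. If $z_{i_f} \in I_2$, then $C_{i_f}$ stretches through $q$ and the sprinkle limits either onto the geodesic $q \to z_{out}$ of $S_1$ (which breaks $\Psi$-adaptedness of $S_1$) or onto the geodesic $z_{i_f} \to q$ of $S_2$ (which preserves $\Psi$-adaptedness of $S_2$, since $z_{i_f}$ is of $z$-type). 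The case split then proceeds on whether any sprinkle ends up on the forbidden geodesic $q \to z_{out}$ of $S_1$: if none does, the broken popsicle satisfies (i), or satisfies (ii) or (iii) in the special cases where one of the two components carries no sprinkles at all. Otherwise, at least one sprinkle lands on $q \to z_{out}$, and the codimension-one constraint, combined with the standard dimension formula for $\bar{\mathcal{R}}^{k+l+2, \bar{\mathbf{p}}}$, forces exactly one of cases (iv) or (v): either the failing sprinkles are distributed across the two pieces in a single matched pair with the index relation specified in the statement, or multiple sprinkles collapse onto a single geodesic of $S_1$.

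The hard part will be verifying the index-matching assertion in case (iv): identifying precisely which punctures on $S_2$ carry the failing sprinkles and how the pair of indices $i$ and $k+l+2-i$ arises under the cyclic reindexing specified by \eqref{extended popsicle flavor}. This reduces to a careful comparison of the cyclic orderings of punctures on $\partial S$, $\partial S_1$, $\partial S_2$, together with the compatibility of $\bar{\mathbf{p}}$ with its restrictions $\mathbf{p}_1, \mathbf{p}_2$ to the two broken components. Once this combinatorial check is in place, the usual dimension count for degenerations of weighted popsicles confirms both exhaustiveness and mutual exclusivity of the five listed cases.
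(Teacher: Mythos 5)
Your high-level approach — track how each sprinkle distributes across the two components of the break, and case-split on which sprinkles end up on the $q\to z_{out}$ geodesic of $S_1$ — is indeed the approach the paper is invoking when it reduces the statement to the sprinkle-tracking in Lemma 3.1 of GGV. However, the way you set up what ``$\Psi$-adapted'' means for the broken pieces creates an internal inconsistency that would make case (iv) of the statement empty, so there is a genuine gap.

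You declare that on $S_2$, $\Psi$-adaptedness means ``every sprinkle lies on a geodesic from some original $z_i$-puncture of $I_2$ to $q$.'' Since the original structure $\sigma$ is $\Psi$-adapted, every sprinkle already lives on a $C_i$, and the ones that remain on $S_2$ therefore automatically satisfy your condition. So with your definitions $\sigma_2$ is $\Psi$-adapted in \emph{every} degeneration, and case (iv), which asserts that \emph{neither} piece is $\Psi$-adapted, can never occur. Similarly, your blanket claim that the node $q$ ``plays the same forbidden role as $z_{in}$ and the $z'_j$'s'' on $S_1$ is a choice you must justify: whether $q$ should be treated as a $z_i$-type, $z_{in}$-type, or $z'_j$-type puncture of $S_1$ is determined by which of the original punctures lie on $S_2$ (equivalently, by the Lagrangian labels abutting $q$, or by the weight $w_q$ forced by the weight-sum condition on $S_2$). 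When $S_2$ carries only $z_i$-punctures so that it is contributing a $\mu^{k_2}_{\n}$ term, $q$ carries nonzero weight $w_q = -1$ when exactly one sprinkle migrates, and $q$ should be allowed to carry a sprinkle on $S_1$; your blanket ``forbidden'' assumption misclassifies these strata. The correct dichotomy has to be keyed to whether $S_2$ contains $z_{in}$ and/or $z'_j$-punctures versus only $z_i$-punctures, together with how the weights $w_q$ and the residual weights on the $z_i$'s in $I_2$ are forced to distribute; this is the bookkeeping the paper is delegating to GGV. Finally, you acknowledge that the index-matching in case (iv) and the mutual exclusivity of (iv) and (v) are unresolved — since the paper itself does not spell these out either, this is not decisive, but as it stands your plan would not reproduce case (iv) at all because of the $\Psi$-adaptedness issue above.
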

\begin{proof}
A $\Psi$-adapted popsicle structure is a special case considered in Lemma 3.1 of \cite{GGV}, 
so a detailed track of the sprinkles goes through.
\end{proof}

Now let us consider maps from such popsicles to our target stopped Liouville manifold $(\bar{X}, F_{0})$. 
These are maps $u: S \to \bar{X}$ which are smooth in the interior of $S$, 
satisfying the equation \eqref{CR equation for rw} but with some modification on Lagrangian boundary conditions.
To write down the new equation precisely, we shall introduce some more notations.
We denote by $\p'_{l} S$ the boundary component of $\p S$ between $z_{out}$ and $z'_{l}$, 
$\p'_{j} S$ the boundary component of $\p S$ between $z'_{j}$ and $z'_{j-1}$ for $j = 2, \ldots, l$,
$\p'_{0} S$ the boundary component of $\p S$ between $z'_{l}$ and $z_{in}$, 
$\p_{0} S$ the boundary component of $\p S$ between $z_{in}$ and $z_{1}$,
$\p_{i} S$ the boundary component of $\p S$ between $z_{i}$ and $z_{i+1}$ for $i = 1, \ldots, k-1$,
and $\p_{k} S$ the boundary component of $\p S$ between $z_{k}$ and $z_{out}$.

Choose a Lagrangian label
\[
 L'_{l}, \ldots, L'_{0}, L_{0}, \ldots, L_{k},
\]
where $L_{i} \in \ob \w(\bar{X}, F_{0})$, and
\begin{equation}\label{testing Lagrangians in image of cup functor}
L'_{j} \in \ob \tilde{\dd}(F), j = 0, \ldots, l.
\end{equation}
Assigning $L_{i}$ to $\p_{i} S$ and $L'_{j}$ to $\p'_{j} S$,
we write down the corresponding inhomogeneous Cauchy-Riemann equation in the following form:

\begin{equation}\label{CR equation for functor disk}
\begin{cases}
&(du - X_{H_{S}} \otimes \beta_{S})^{0, 1} = 0, \\
&u(z) \in (\psi^{\rho_{S}(z)})^{*} L_{i}, \text{ for } z \in \p_{i} S, i = 0, \ldots, k, \\
&u(z) \in (\psi^{\rho_{S}(z)})^{*} L'_{j}, \text{ for } z \in \p'_{j} S, j = 0, \ldots, l, \\
&\lim\limits_{s \to s_{i} \infty} u \circ \epsilon_{i}(s, \cdot) = (\psi^{\nu_{i}})^{*} x_{i},i = 1, \ldots, k, \\
&\lim\limits_{s \to +\infty} u \circ \e_{in}(s, \cdot) = (\psi^{\nu_{in}})^{*} y_{in}, \\
&\lim\limits_{s \to +\infty} u \circ \e'_{j}(s, \cdot) = (\psi^{\nu'_{j}})^{*} x'_{j}, j = 1, \ldots, l, \\
&\lim\limits_{s \to - \infty} u \circ \epsilon_{out}(s, \cdot) = (\psi^{\nu_{out}})^{*} y_{out}.
\end{cases}
\end{equation}
In addition, we impose the following constraints on the intersection numbers of the asymptotic Hamiltonian chords with $F_{0} \times \R_{s>0}$:
\begin{align}
\label{intersection xj}  x_{i} \cdot (F_{0} \times \R_{s>0}) & 
\begin{cases}
= 0, & w_{i} = 0, \\
 > 0, & w_{i} = -1,
 \end{cases} i = 1, \ldots, k, \\ 
\label{intersection in} y_{in} \cdot (F_{0} \times \R_{s>0}) &= 0, \\ 
\label{intersection xi} x'_{j} \cdot (F_{0} \times \R_{s>0}) & = 0, j = 1, \ldots, l,\\ 
\label{intersection out} y_{out} \cdot (F_{0} \times \R_{s>0}) &= 0. 
\end{align}

Write
\begin{align}
\mathbf{x} & = (x_{1}, \ldots, x_{k}),\\
\mathbf{x}' &= (x'_{l}, \ldots, x'_{1})
\end{align}
for the collections of the Hamiltonian chords.
Denote by 
\begin{equation}\label{moduli space of functor disks}
\mathcal{R}^{k, l, \bar{\mathbf{p}}, \bar{\mathbf{w}}}_{2}(y_{out}, \mathbf{x}', y_{in}, \mathbf{x})
\end{equation}
the moduli space of $(S, \sigma, u)$ satisfying all the conditions \eqref{CR equation for functor disk} and \eqref{intersection xj}, \eqref{intersection in}, \eqref{intersection xi}, \eqref{intersection out}.
By definition, this is just a copy of the popsicle moduli spaces $\mathcal{R}^{k+l+2, \bar{\mathbf{p}}, \bar{\mathbf{w}}}(\bar{\mathbf{x}})$ as in \eqref{moduli space of popsicle maps},
\[
\mathcal{R}^{k, l, \bar{\mathbf{p}}, \bar{\mathbf{w}}}_{2}(y_{out}, \mathbf{x}', y_{in}, \mathbf{x}) = \mathcal{R}^{k+l+2, \bar{\mathbf{p}}, \bar{\mathbf{w}}}(\bar{\mathbf{x}}),
\]
where the weights $\bar{\mathbf{w}}$ are defined by
\begin{equation}\label{extended weights}
\bar{\mathbf{w}} = (\underbrace{0, \ldots, 0}_{l + 1}, w_{1}, \ldots, w_{k}),
\end{equation}
and the collection of chords $\bar{\mathbf{x}}$ is 
\[
\bar{\mathbf{x}} = (y_{out}, \mathbf{x}', y_{in}, \mathbf{x}).
\]
But we choose the new notation \eqref{moduli space of functor disks} to address the following important conditions:
\begin{enumerate}[label=(\roman*)]

\item  the Lagrangian boundary conditions are chosen in a special way as described by \eqref{testing Lagrangians in image of cup functor} and the second and third lines in \eqref{CR equation for functor disk};

\item the popsicle flavor $\bar{\mathbf{p}}$ satisfies \eqref{extended popsicle flavor} and the weights take the form \eqref{extended weights};

\item the asymptotic Hamiltonian chords must satisfy the constraints on their intersection numbers with $F_{0} \times \R_{s>0}$ given by \eqref{intersection xj}, \eqref{intersection in}, \eqref{intersection xi}, \eqref{intersection out}.

\end{enumerate}

\subsection{Constructing the functor $\Psi$}\label{section: construction of psi}

Now we use the moduli spaces introduced in \S\ref{section: moduli spaces of functor disks} to construct the maps \eqref{maps to cc of d}.
Take a moduli space $\mathcal{R}^{k, l, \bar{\mathbf{p}}, \bar{\mathbf{w}}}_{2}(y_{out}, \mathbf{x}', y_{in}, \mathbf{x})$ as in \eqref{moduli space of functor disks}, which is zero-dimensional when the degree of the chords satisfy
\begin{equation}\label{degree condition for zero dim moduli space}
\deg(y_{out}) = \deg(y_{in}) + \sum_{i=1}^{k} (-1)^{w_{i}} \deg(x_{i}) + \sum_{j=1}^{l} \deg(x'_{j}) + 1 - k - l -(n+1) \sum_{i=1}^{k} (w_{1} + \cdots + w_{k}).
\end{equation}
Again, recall from Remark \ref{rem on chords} that some of the chords $x_{i}$ with $w_{i}=-1$ are defined as time-one $H$-chords,
and their corresponding $(-H)$-chords $x_{i}^{-}$ have degree $\deg(x_{i}^{-}) = n - \deg(x_{i})$.
Thus, for any $i = 1, \ldots, k$ with $w_{i} = -1$, the two terms $(-1)^{w_{i}} \deg(x_{i}) = - \deg(x_{i})$ and $-(n+1)w_{i} = n+1$ add up to $n+1-\deg(x_{i}) = \deg(x_{i}^{-}) + 1$,
which is the degree of the generator $[x_{i}^{-}]$ for the shifted complex $CW^{*}(L_{i-1}, L_{i}; -H)[1]$ in the mapping cone $NC^{*}_{F}(L_{i-1}, L_{i})$ as in \eqref{morphism in n}.

Each rigid element $u \in \mathcal{R}^{k, l, \bar{\mathbf{p}}, \bar{\mathbf{w}}}_{2}(y_{out}, \mathbf{x}', y_{in}, \mathbf{x})$ induces an isomorphism of orientation lines
\begin{equation}
\mathcal{R}^{k, l, \bar{\mathbf{p}}, \bar{\mathbf{w}}}_{2, u}: o_{\mathbf{x}}^{\mathbf{s}}   \otimes o_{y_{in}} \otimes o_{\mathbf{x}'} \stackrel{\cong}\to o_{y_{out}},
\end{equation}
where 
\begin{equation}
o_{\mathbf{x}'} = o_{x'_{1}} \otimes \cdots \otimes o_{x'_{l}},
\end{equation}
and
\begin{equation}
o_{\mathbf{x}}^{\mathbf{s}} = o_{x_{k}}^{s_{k}} \otimes \cdots \otimes o_{x_{1}}^{s_{1}}.
\end{equation}
For convenience, introduce the following notation
\begin{equation}
CW^{*}_{F, s_{i}}(L_{i-1}, L_{i}; s_{i}H) =
\begin{cases}
CW^{*}_{F}(L_{i-1}, L_{i}; H), & \text{ if } s_{i} = +, \text{ i.e. } w_{i} = 0, \\
CW^{*}(L_{i-1}, L_{i}; -H), & \text{ if } s_{i} = -, \text{ i.e. } w_{i} = -1.
\end{cases}
\end{equation}
We define a map
\begin{equation}\label{theta kl}
\begin{split}
\Theta^{k, l, \bar{\mathbf{p}}, \bar{\mathbf{w}}}: &  CW^{*}_{F, s_{k}}(L_{k-1}, L_{k}; s_{k}H) \otimes \cdots \otimes CW^{*}_{F, s_{k}}(L_{0}, L_{1}; s_{1}H)  \otimes CW^{*}_{F}(L'_{0}, L_{0}; H) \\
& \otimes CW^{*}_{F}(L'_{l}, L'_{l-1}; H) \otimes \cdots \otimes CW^{*}_{F}(L'_{1}, L'_{0}; H) \to CW^{*}_{F}(L'_{l}, L_{k}; H)
\end{split}
\end{equation}
by defining it on basis elements as
\begin{equation}\label{theta on basis}
\begin{split}
& \Theta^{k, l, \bar{\mathbf{p}}, \bar{\mathbf{w}}}([x_{k}^{s_{k}}] \otimes \cdots \otimes [x_{1}^{s_{1}}] \otimes [y_{in}] \otimes [x'_{1}] \otimes \cdots \otimes [x'_{l}])= \\
& \prod_{\substack{y_{out} \\ \eqref{degree condition for zero dim moduli space}}} 
\sum_{u \in \mathcal{R}^{k, l, \bar{\mathbf{p}}, \bar{\mathbf{w}}}_{2}(y_{out}, \mathbf{x}', y_{in}, \mathbf{x})} 
(-1)^{*_{k+l+2, \bar{\mathbf{p}}, \bar{\mathbf{w}}} + \Diamond_{k+l+2, \bar{\mathbf{p}}, \bar{\mathbf{w}}}}
 \mathcal{R}^{k, l, \bar{\mathbf{p}}, \bar{\mathbf{w}}}_{2, u}([x_{k}^{s_{k}}] \otimes \cdots \otimes [x_{1}^{s_{1}}] \otimes [y_{in}] \otimes [x'_{1}] \otimes \cdots \otimes [x'_{l}])
\end{split}
\end{equation}
where the signs $*_{k+l+2, \bar{\mathbf{p}}, \bar{\mathbf{w}}}$ and $\Diamond_{k+l+2, \bar{\mathbf{p}}, \bar{\mathbf{w}}}$ follow the general formulae \eqref{sign formula 1 for popsicles} and \eqref{sign formula 2 for popsicles}.

\begin{prop}
The operation \eqref{theta on basis} extends to a well-defined map \eqref{theta kl}.
\end{prop}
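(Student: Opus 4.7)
The plan is to verify two separate well-definedness properties: first, that for fixed basis asymptotic inputs the formula \eqref{theta on basis} gives a finite sum landing in the subcomplex $CW^{*}_{F}(L'_{l}, L_{k}; H)$; and second, that the resulting operation extends continuously to the direct product factors $CW^{*}(L_{i-1}, L_{i}; -H)$ appearing when some $s_{i} = -$.

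First I would verify that the output chord $y_{out}$ lies in $CW^{*}_{F}$. Apply Lemma \ref{lem: intersection number identity} to the popsicle $u$, viewed as an element of $\mathcal{R}^{k+l+2, \bar{\mathbf{p}}, \bar{\mathbf{w}}}$ with extended weights \eqref{extended weights}. Using $w_{out} = w_{in} = w'_{j} = 0$, the identity reads
\begin{equation*}
y_{out} \cdot (F_{0} \times \R_{s>0}) + u \cdot F = \sum_{j=1}^{l} x'_{j} \cdot (F_{0} \times \R_{s>0}) + y_{in} \cdot (F_{0} \times \R_{s>0}) + \sum_{i=1}^{k} (-1)^{w_{i}} x_{i} \cdot (F_{0} \times \R_{s>0}).
\end{equation*}
The constraints \eqref{intersection in}--\eqref{intersection xi} force the first two sums on the right to vanish, the $w_{i} = 0$ terms in the last sum vanish by \eqref{intersection xj}, and the $w_{i} = -1$ terms contribute non-positive amounts. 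Since $u \cdot F \ge 0$ (because $F$ is a $J$-holomorphic submanifold for the cylindrical compatible $J$) and $y_{out} \cdot (F_{0} \times \R_{s>0}) \ge 0$ (by compatibility of $H$ with the stop), both sides must vanish, so in particular $y_{out} \in CW^{*}_{F}(L'_{l}, L_{k}; H)$.

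Next, the finiteness of the sum over $y_{out}$ follows from a standard energy--action argument. Using $\mathcal{A}_{-H}(x_{i}^{-}) = -\mathcal{A}_{H}(x_{i}) \le 0$ coming from \eqref{PD}, non-negativity of the geometric energy of $u$ yields
\begin{equation*}
\mathcal{A}(y_{out}) \le \mathcal{A}(y_{in}) + \sum_{j=1}^{l} \mathcal{A}(x'_{j}) + \sum_{i: w_{i} = 0} \mathcal{A}(x_{i}) + \sum_{i: w_{i} = -1} \mathcal{A}_{-H}(x_{i}^{-}),
\end{equation*}
in which the last sum is non-positive. Therefore $\mathcal{A}(y_{out})$ is bounded above by a quantity depending only on the ``positive-strip'' inputs (those with $s_{i} = +$), and non-degeneracy restricts $y_{out}$ to a finite set for each fixed basis element, combined with Gromov compactness and transversality in each zero-dimensional moduli space.

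Finally, for the extension to the direct product factors I would run the same inequality in reverse: for a fixed $y_{out}$ and fixed positive-strip inputs, it rearranges to $\sum_{i: w_{i} = -1} \mathcal{A}_{-H}(x_{i}^{-}) \ge -C$ for some constant $C$. Since each summand is $\le 0$, no single summand can drop below $-C$, so each $\mathcal{A}_{-H}(x_{i}^{-}) \ge -C$ separately, and non-degeneracy restricts each $x_{i}^{-}$ to a finite set. Thus for any element of the direct product, only finitely many basis components contribute to the coefficient of each $y_{out}$, and the operation extends continuously from tensor products of direct sums to the full map \eqref{theta kl}. The hard part is arranging the signs coherently: the systematic asymmetry $(-1)^{w_{i}}$ in the intersection-number identity is what ensures $y_{out} \in CW^{*}_{F}$, and the parallel asymmetry $\mathcal{A}_{-H}(x_{i}^{-}) \le 0$ is what yields a uniform upper bound on $\mathcal{A}(y_{out})$ independent of the negative-strip inputs. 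Both asymmetries trace back to the same compatibility of $H$ with the stop $F_{0}$.
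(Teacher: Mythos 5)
Your argument is correct, but it takes a genuinely different route from the paper's. The paper's own proof is a one-liner: it invokes the lemma immediately following the proposition, which uses the intersection-number identity \eqref{intersection number identity} together with the positivity constraints \eqref{intersection xj}--\eqref{intersection out} to conclude that $\mathcal{R}^{k, l, \bar{\mathbf{p}}, \bar{\mathbf{w}}}_{2}(y_{out}, \mathbf{x}', y_{in}, \mathbf{x})$ is empty unless every $x_i$ has zero intersection with $F_0 \times \R_{s>0}$, and reads this as a vanishing statement that reduces the formal sum over the direct-product factors to a finite one. Your Step 1 re-derives exactly the content of that lemma (the observation that both sides of the identity are simultaneously non-negative and non-positive, hence zero, is the same positivity argument); but then, instead of stopping there, you supply the standard Rabinowitz-type energy--action estimate to establish finiteness directly. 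Concretely, your Steps 2 and 3 give (a) an upper bound on $\mathcal{A}(y_{out})$ depending only on the positive-strip inputs, hence finitely many $y_{out}$ contribute, and (b) a lower bound on $\mathcal{A}_{-H}(x_i^-)$ for each negative-strip input with $y_{out}$ and the positive inputs fixed, hence finitely many contributing basis elements of each direct-product factor. Your inequality direction is consistent with the paper's conventions: $\mathcal{A}_H \ge 0$ holds because the interior chords have zero action (by the smallness-of-$H$ arrangement at the start of \S\ref{section: negatively wrapped Fukaya category}) and the cylindrical chords have positive action, so $\mathcal{A}_{-H}(x^-) = -\mathcal{A}_H(x) \le 0$ as you claim. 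What each approach buys: the paper's intersection-number argument is shorter and ties the well-definedness directly to the compatibility of $H$ with the stop; your energy argument is more self-contained, does not hinge on how strictly one reads the constraint \eqref{intersection xj} for $w_i = -1$, and makes explicit exactly which two uniform bounds are needed for the extension to the direct-product completion. One small correction to your closing remark: the asymmetry $\mathcal{A}_{-H}(x_i^-) \le 0$ comes purely from $H$ being a positive (quadratic, small on the interior) Hamiltonian, not from its compatibility with $F_0$; only the intersection-number asymmetry depends on that compatibility.
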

\begin{proof}
We need to show that the map is well-defined on the infinite direct product \eqref{Floer chains}.
This will immediately follow from the next lemma, which asserts that $\Psi^{k, l}$ in fact vanishes except on finitely many basis elements.
\end{proof}

\begin{lem}
The moduli space $\mathcal{R}^{k, l, \bar{\mathbf{p}}, \bar{\mathbf{w}}}_{2}(y_{out}, \mathbf{x}', y_{in}, \mathbf{x})$ is empty unless every chord $x_{i}, i = 1, \ldots, k$ has zero intersection number with $F_{0} \times \R_{s>0}$.
\end{lem}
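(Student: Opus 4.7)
The plan is to apply the topological intersection identity of Lemma \ref{lem: intersection number identity} to this moduli space, specialised to the extended weight vector $\bar{\mathbf{w}}$ of \eqref{extended weights}, and then combine it with the asymptotic constraints \eqref{intersection out}, \eqref{intersection in}, \eqref{intersection xi}, \eqref{intersection xj}. Since $\mathcal{R}^{k, l, \bar{\mathbf{p}}, \bar{\mathbf{w}}}_{2}(y_{out}, \mathbf{x}', y_{in}, \mathbf{x})$ is by construction a copy of the standard popsicle moduli space $\mathcal{R}^{k+l+2, \bar{\mathbf{p}}, \bar{\mathbf{w}}}(\bar{\mathbf{x}})$, the winding-number argument that proves Lemma \ref{lem: intersection number identity} applies verbatim.

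First, I would write the identity in the present notation. Because the first $l+2$ entries of $\bar{\mathbf{w}}$ vanish, every sign $(-1)^{w_\bullet}$ attached to $y_{out}, y_{in}, x'_{j}$ equals $+1$, and Lemma \ref{lem: intersection number identity} becomes
\begin{equation*}
y_{out} \cdot (F_{0} \times \R_{s>0}) + u \cdot F = \sum_{j=1}^{l} x'_{j} \cdot (F_{0} \times \R_{s>0}) + y_{in} \cdot (F_{0} \times \R_{s>0}) + \sum_{i=1}^{k} (-1)^{w_{i}} x_{i} \cdot (F_{0} \times \R_{s>0}).
\end{equation*}
Substituting \eqref{intersection in}, \eqref{intersection xi}, \eqref{intersection out} kills every term involving $y_{out}, y_{in}, x'_{j}$ and collapses the identity to
\begin{equation*}
u \cdot F = \sum_{i=1}^{k} (-1)^{w_{i}} x_{i} \cdot (F_{0} \times \R_{s>0}).
\end{equation*}

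Next I would compare signs on the two sides. The left-hand side $u \cdot F$ is a non-negative intersection number, since $F$ is an almost complex submanifold for the chosen cylindrical $J$ and $H$ is compatible with the stop (so $X_{H}$ is tangent to $F_{0}$ by Definition \ref{H compatible with stop}); this is exactly the positivity input that Lemma \ref{lem: intersection number identity} relies on. On the right, each $w_{i}=0$ term vanishes by \eqref{intersection xj}, while each $w_{i}=-1$ term equals $-x_{i} \cdot (F_{0}\times\R_{s>0})$, which by the other branch of \eqref{intersection xj} is strictly negative. Hence the right-hand side is $\le 0$ with equality iff no $w_{i}$ equals $-1$; equivalently, iff $x_{i}\cdot(F_{0}\times\R_{s>0}) = 0$ for every $i$. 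In any other case the equation $u \cdot F = (\text{negative})$ has no solution, so the moduli space is empty.

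The only step that demands genuine care, and the most likely source of a mistake, is the sign bookkeeping when importing Lemma \ref{lem: intersection number identity} into the extended-weight setup: one must check that the input/output dichotomy recorded by the sign convention \eqref{sign notation} is consistent with assigning $y_{out}, y_{in}, x'_{j}$ (all with weight $0$) to the $+1$ factors and the $w_{i}=-1$ chords $x_{i}$ to the $-1$ factors, remembering the convention of Remark \ref{rem on chords} that $w_{i}=-1$ means $x_{i}$ is treated geometrically as an output $H$-chord even though its class in $NC^{*}_{F}$ is a $(-H)$-chord. Once this is settled, the non-negativity of $u\cdot F$ combined with the mandatory strict positivity of $x_{i}\cdot(F_{0}\times\R_{s>0})$ for $w_{i}=-1$ forces the claimed dichotomy immediately.
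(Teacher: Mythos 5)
Your proof is correct and follows the same approach as the paper. The paper's own argument is a one-line citation of the intersection-number identity together with the asymptotic constraints; you have simply spelled out the bookkeeping — specializing the identity to $\bar{\mathbf{w}}$, substituting the zero-intersection constraints on $y_{out}, y_{in}, x'_j$, and then comparing the sign of $u\cdot F$ against the sign of $\sum_i (-1)^{w_i}x_i\cdot(F_0\times\R_{s>0})$ — exactly as intended.
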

\begin{proof}
This follows from the constraints \eqref{intersection xj}, \eqref{intersection in}, \eqref{intersection xi}, \eqref{intersection out},
as well as \eqref{intersection number identity} in Lemma \ref{lem: intersection number identity}.
\end{proof}

Then we combine all the maps \eqref{theta kl} to get a map of the following form
\begin{equation}
\begin{split}
\Theta^{k, l}: & NC^{*}_{F}(L_{k-1}, L_{k}) \otimes \cdots \otimes NC^{*}_{F}(L_{0}, L_{1}) \otimes CW^{*}_{F}(L'_{0}, L_{0}; H) \\
& \otimes  CW^{*}_{F}(L'_{l}, L'_{l-1}; H) \otimes \cdots \otimes CW^{*}_{F}(L'_{1}, L'_{0}; H) \to CW^{*}_{F}(L'_{l}, L_{k}; H).
\end{split}
\end{equation}
Equivalently, we may write it as a map 
\begin{equation}
\begin{split}
\Theta^{k, l}: & NC^{*}_{F}(L_{k-1}, L_{k}) \otimes \cdots \otimes NC^{*}_{F}(L_{0}, L_{1}) \\
& \to \hom_{\K}(CW^{*}_{F}(L'_{l}, L'_{l-1}; H) \otimes \cdots \otimes CW^{*}_{F}(L'_{1}, L'_{0}; H), \\
&\hom_{\K}(CW^{*}_{F}(L'_{0}, L_{0}; H), CW^{*}_{F}(L'_{l}, L_{k}; H))).
\end{split}
\end{equation}
Now we take the direct product over all possible $l$-tuples of testing objects $L'_{0}, \ldots, L'_{l}$ and over $l$ to obtain the desired map $\Theta^{k}$ \eqref{maps to cc of d}.
Next, we compose the map $\Theta^{k}$ \eqref{maps to cc of d} with the quasi-isomorphism \eqref{pullback of cc of d} to get the map $\Psi^{k}$ \eqref{components of psi}.
It is by now a standard gluing-degeneration argument to check:

\begin{prop}
The maps $\Theta^{k}$ \eqref{components of psi} satisfy the $\ainf$-functor equations, so do $\Psi^{k}$.
\end{prop}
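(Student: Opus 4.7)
The plan is to establish the $\ainf$-functor equations for $\Theta^{k}$ by the standard compactness-and-gluing argument applied to the moduli spaces $\mathcal{R}^{k, l, \bar{\mathbf{p}}, \bar{\mathbf{w}}}_{2}(y_{out}, \mathbf{x}', y_{in}, \mathbf{x})$. Via the identification \eqref{fun as cc}, $\Theta$ sends each Lagrangian $L$ to the right $\tilde{\dd}(F)$-module $i^{*}Y^{r}_{L}$, and the collection $(\Theta^{k})$ is a Maurer--Cartan element of the total Hochschild complex precisely when it defines an $\ainf$-functor $\n(\bar{X}, F_{0}) \to \rmod \tilde{\dd}(F)$. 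I would verify this by analyzing the one-dimensional components of the moduli spaces, whose signed boundary count of the Gromov compactification must vanish; gluing identifies each broken configuration with an actual boundary point.

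Each codimension-one degeneration belongs to one of the five types listed in Lemma \ref{boundary degeneration}. Type (i), in which both components remain $\Psi$-adapted, yields the quadratic contributions in which two $\Theta$-outputs are combined by the dg-composition of $\rmod \tilde{\dd}(F)$; these are the terms $\mu^{2}_{\rmod \tilde{\dd}(F)}(\Theta^{k_{2}, l_{2}}(\ldots), \Theta^{k_{1}, l_{1}}(\ldots))$ on the right-hand side of the $\ainf$-functor equation. Types (ii) and (iii), in which exactly one side carries no sprinkles, split into sub-cases by the attaching puncture: a trivial bubble attaching at a puncture $z_{i}$ realizes pre-composition of $\Theta$ with a $\mu^{r}_{\n}$ operation --- including the mapping-cone continuation $c$ that enters the differential on $NC^{*}_{F}$ via \eqref{morphism in n} --- whereas attaching at $z_{in}$, $z_{out}$, or one of the $z'_{j}$ realizes, respectively, the right-module differential on $i^{*}Y^{r}_{L_{0}}$, on $i^{*}Y^{r}_{L_{k}}$, or a $\tilde{\dd}(F)^{op}$-insertion in the Hochschild differential; taken together, these assemble the Hochschild differential of $\Theta^{k}$. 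Type (v), sprinkle collisions on a common geodesic $C_{i}$, is ruled out by the injectivity constraint $|\bar{\mathbf{p}}^{-1}(i)| \le -w_{i} \le 1$.

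The main obstacle will be type (iv), in which neither half is $\Psi$-adapted because a sprinkle-bearing bubble escapes from one of the geodesics $C_{i}$ onto the new component. As in the popsicle framework of \cite{abouzaidseidel} and its adaptation in \cite{GGV}, these contributions come in canceling pairs indexed by which side of the node absorbs the freed sprinkle, with cancellation guaranteed by the sign conventions \eqref{sign formula 1 for popsicles}--\eqref{sign formula 2 for popsicles}. Because our weights are restricted to $w_{i} \in \{-1, 0\}$ and the free sprinkles carry labels only in $\{1, \ldots, k\}$ (the $z'_{j}$ side carries none by the $\Psi$-adapted condition), the combinatorics of the pairing and the sign bookkeeping reduces directly to the argument of \cite{GGV}, so the type (iv) sum vanishes.

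With the $\ainf$-functor equations for $\Theta^{k}$ in hand, those for $\Psi^{k}$ follow formally: $\Psi^{k}$ is obtained by post-composing $\Theta^{k}$ with the quasi-isomorphism \eqref{pullback of cc of d}, which by Proposition \ref{prop: morphism in t = in zt} is a quasi-isomorphism of Hochschild complexes compatible with the Yoneda products. Post-composition with such a map preserves all the $\ainf$-functor relations, yielding the desired $\ainf$-functor $\Psi : \n(\bar{X}, F_{0}) \to \widehat{\w}_{F}$.
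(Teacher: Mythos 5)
Your proposal is correct and follows essentially the same route as the paper's proof: it matches the codimension-one degeneration types (i)--(v) of Lemma \ref{boundary degeneration} to the same algebraic terms (dg-composition in $\rmod \tilde{\dd}(F)$ for type (i), Hochschild/module differential and $\mu^{k}_{\n}$-insertions for types (ii)--(iii), sign cancellation for type (iv), exclusion of type (v) by injectivity), and then deduces the equations for $\Psi^{k}$ by post-composing with the product-compatible quasi-isomorphism of Proposition \ref{prop: morphism in t = in zt}, exactly as the paper does.
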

\begin{proof}
Since the $\ainf$-structure maps as well as the operations $\Theta^{k, l}$ are defined by counting rigid elements in appropriate popsicle moduli spaces, 
we can prove the $\ainf$-functor equations by studying boundary strata of the compactifications $\bar{\mathcal{R}}^{k, l, \bar{\mathbf{p}}, \bar{\mathbf{w}}}_{2}(y_{out}, \mathbf{x}', y_{in}, \mathbf{x})$ of one-dimensional moduli spaces.
This type of argument is quite standard and follows a pattern similar to the proofs of Lemma 5.3 of \cite{GGV}, 
so we will only outline below the correspondence between the algebraic terms and the strata of the moduli spaces.
More precisely, the first type of boundary strata (i) described in Lemma \ref{boundary degeneration} contribute exactly to terms of the forms $\Theta^{k_{2}} \circ \Theta^{k_{1}}$.
This composition is understood as the composition of Hochschild cochains with values in $\hom_{\K}$ between pullback Yoneda modules $\r{CC}^{*}(\tilde{\dd}(F)^{op}, \hom_{\K}(i^{*}(-), i^{*}(-))$, 
which by definition is by evaluating the Hochschild cochains on testing objects,
 taking composition in the space of linear homomorphisms $\hom_{\K}$ in all possible ways and summing up.
 In other words, this is the composition in the module category $\rmod \tilde{\dd}(F)$, which is a dg category (which also explains why there are no higher $\ainf$-compositions acting on outputs of $\Theta^{k}$'s).
The second and the third types (ii) and (iii) contribute to terms of the form $\mu^{1} \circ \Theta^{k-1}(\ldots)$ and $\Theta^{k_{1}}(\ldots, \mu^{k_{2}}(\ldots), \ldots)$ respectively.
Contributions from strata of the fourth type (iv) algebraically cancel,
as those two different kinds subject to the two possible values for the index $i$ are counted in the opposite signs due to the sign conventions \eqref{sign formula 1 for popsicles} and \eqref{sign formula 2 for popsicles}.
The last type has no contribution because only injective popsicles are counted.
Thus these maps form a contravariant $\ainf$-functor $\Theta$ from $\n(\bar{X}, F_{0})$ to $\rmod \tilde{\dd}(F)$.

By Proposition \ref{prop: morphism in t = in zt}, the map \eqref{pullback of cc} is part of a (partially defined) functor with vanishing higher order terms;
in particular, it respects the differentials and products on $\rmod \tilde{\dd}(F)$ and $\rmod Z_{\tilde{\dd}(F)}^{op}$.
It follows that $\Psi^{k}$ also satisfy the $\ainf$-functor equations and thus form an $\ainf$-functor $\n(\bar{X}, F_{0}) \to \rmod Z_{\tilde{\dd}(F)}^{op}$.
\end{proof}

Finally, by construction the image of $\Psi$ in $\rmod Z_{\tilde{\dd}(F)}^{op}$ lands in the essential image of $F_{\tilde{\dd}(F)}: \w(\bar{X}, F_{0}) \to \wf$ as in \eqref{module-valued functor},
it follows that $\Psi$ defines a functor \eqref{functor to completion}.

\subsection{An algebraic short exact sequence}\label{section: algebraic ses}

We will first prove Theorem \ref{thm:equivalence} under the additional assumption that $\bar{X}$ is a non-degenerate Liouville manifold in \S\ref{section: algebraic ses}, \S\ref{section: geometric ses} and \S\ref{section: cd}.
This argument relies on an existing result by \cite{GGV} interpreting the Rabinowitz Fukaya category $\rw(\bar{X})$ as the categorical formal punctured neighborhood of infinity of the fully wrapped Fukaya category $\w(\bar{X})$.
We will also provide an alternative proof in \S\ref{section: n as limit} which is not only independent of that result, 
but also makes our conclusion stronger as non-degeneracy condition is not required therein.

\begin{thm}[Theorem 1.1 of \cite{GGV}]\label{rw=winf}
For any Liouville manifold $\bar{X}$, there is a canonical $\ainf$-functor
\begin{equation}\label{equivalence of rw}
\Phi: \rw(\bar{X}) \to \widehat{\w(\bar{X})}_{\infty},
\end{equation}
which is a quasi-equivalence whenever $\bar{X}$ is non-degenerate.
\end{thm}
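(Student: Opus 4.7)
I would construct the functor $\Phi$ geometrically, in close parallel with the construction of $\Psi$ in \S\ref{section: construction of psi} but in the limit case of empty stop. On objects, $\Phi$ sends each Lagrangian $L$ to its right Yoneda module $Y^{r}_{L}$, projected to $\fun(\w(\bar{X})^{op}, \calk_{\K})$; this lies in $\widehat{\w(\bar{X})}_{\infty}$ by definition. The chain-level maps $\Phi^{k}$ would be defined by counts of pseudoholomorphic popsicle maps with one distinguished input puncture and one distinguished output puncture carrying free testing Hamiltonian chords (playing the role of evaluation at a testing Lagrangian), together with $k$ additional punctures carrying weights $w_{i} \in \{-1,0\}$ matching the Rabinowitz complex decomposition $RC^{*} = CW^{*}(-H)[1] \oplus CW^{*}(H)$. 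The output is a Hochschild cochain in $\r{CC}^{*}(\w(\bar{X})^{op}, \hom_{\K}(Y^{r}_{L_{0}}, Y^{r}_{L_{k}}))$, which then maps canonically to the hom-space in $\widehat{\w(\bar{X})}_{\infty}$.

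Verification of the $\ainf$-functor equations follows from standard boundary analysis on $1$-dimensional popsicle moduli spaces. The four types of codimension-one degenerations described in Lemma \ref{boundary degeneration} correspond respectively to compositions $\Phi^{k_{2}} \circ \Phi^{k_{1}}$ in the module category, compositions with $\mu^{1}$ on the output side, compositions $\Phi^{k}(\ldots, \mu^{j}_{\rw}(\ldots), \ldots)$ from the $\ainf$-structure on $\rw(\bar{X})$, and the cancelling sprinkle-splitting strata whose contributions kill each other by the sign formulae \eqref{sign formula 1 for popsicles}--\eqref{sign formula 2 for popsicles}. Essential surjectivity on cohomology is then immediate, since the functor is identity on objects and the Yoneda image of $\w(\bar{X})$ in the Calkin category is exactly what defines $\binf$ for $\B = \w(\bar{X})$.

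For fully faithfulness under non-degeneracy, I would invoke Ganatra's results \cite{ganatra} which provide a finite collection of split-generators and imply smoothness of $\w(\bar{X})$, and then use Efimov's refined description \cite{efimov2} of $\binf$ for smooth $\B$ via the functor $F_{\B}$ from \eqref{functor to calkin modules}. Smoothness provides an algebraic Calabi--Yau copairing on $\w(\bar{X})$, which in turn yields a long exact sequence identifying $H^{*}\hom_{\widehat{\w(\bar{X})}_{\infty}}(L_{0}, L_{1})$ with a cone of a map between wrapped Floer cohomology (the positive part) and its Poincar\'{e} dual proper-module linear dual (the negative part), parallel to the cone defining $RC^{*}(L_{0}, L_{1})$. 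The main obstacle will be verifying that the geometric continuation copairing \eqref{copairing} matches the algebraic Calabi--Yau copairing at the cochain level; this is the delicate step, requiring a parametrized moduli space interpolating between the two copairings together with careful action-filtration bookkeeping to handle the infinite direct products appearing in $CW^{*}(L_{0}, L_{1}; -H)$.
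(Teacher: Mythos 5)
The paper does not give its own proof of this statement; it is imported verbatim as Theorem~1.1 of \cite{GGV}, and the theorem environment annotates it as such. So there is no internal argument to compare your attempt against. What the paper does say in passing about the [GGV] construction — notably in the proof of Proposition~\ref{commutative}, where $\Phi$ is described as ``essentially a Yoneda-type functor defined by counting disks of similar kinds'' to the popsicle maps defining $\Psi$ — is consistent with your plan: build $\Phi$ from popsicle counts with a distinguished evaluation input and output, land in a Hochschild cochain complex, then project to the Calkin category. Essential surjectivity being automatic from the definition of $\binf$ is also correct.

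One imprecision is worth flagging. You write ``Smoothness provides an algebraic Calabi--Yau copairing on $\w(\bar{X})$,'' but the causality is the other way around. Smoothness by itself does not furnish a Calabi--Yau structure; it is non-degeneracy in the sense of \cite{ganatra} that produces a geometrically-defined smooth Calabi--Yau structure on $\w(\bar{X})$, and smoothness is one consequence of that package. In particular the ``algebraic'' copairing you propose to compare against the geometric continuation copairing \eqref{copairing} is itself manufactured from geometry, so the two things being matched are not independent; the real content is showing that the chain-level operation realizing the smooth CY copairing agrees with the two-output-one-input disk count defining \eqref{copairing}, and then pushing the resulting cone identification of $RC^{*}(L_0,L_1)$ with $\hom_{\binf}$ through the infinite direct products $CW^{*}(L_0,L_1;-H)$ using action filtrations. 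You correctly identify that last step as the delicate one — and that is indeed where the analytical weight of the [GGV] argument lies — but the phrasing about where the CY copairing comes from obscures what exactly must be verified.
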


An equivalent statement of the stop removal formula, Theorem \ref{thm: stop removal}, is that there is a short exact sequence of $\ainf$-categories
\begin{equation}
\mathcal{D}(\fk) \xhookrightarrow{} \w(\bar{X}, \fk) \to \w(\bar{X}),
\end{equation}
whenever the core $\fk$ of $F$ is mostly Legendrian.
For a general hypersurface $F$, 
we need the existence of such a short exact sequence 
\begin{equation}\label{stop removal ses}
\tilde{\dd}(F) \xhookrightarrow{} \w(\bar{X}, F_{0}) \to \w(\bar{X})
\end{equation}
as a working assumption - this is exactly the stop removal condition.

Now suppose that the stop is also a full stop.
Non-degeneracy of $\bar{X}$ and $F$, together with stop removal \eqref{stop removal ses} in the hypotheses of Theorem \ref{thm:equivalence} implies that $\w(X) = \w(\bar{X}, F_{0})$ is (homologically) smooth.
By Corollary \ref{unique fit into ses}, this gives rise to a short exact sequence
\begin{equation}\label{algebraic ses}
\tilde{\dd}(F) \xhookrightarrow{} \wf \to \widehat{\w(\bar{X})}_{\infty}.
\end{equation}
The first functor $\tilde{\dd}(F) \xhookrightarrow{} \wf$ supplied by \eqref{k functor}, 
which is the restriction of the natural functor $\w(X) \to \wf$ to the subcategory $\tilde{\dd}(F)$, 
is fully faithful by Lemma \ref{property of k functor}.
The second functor $\wf \to \widehat{\w(\bar{X})}_{\infty}$ is a projection on objects,
 sending each object $L \in \ob \wf = \ob \w(\bar{X}, F_{0})$ to its image (i.e. isomorphism class) in $\widehat{\w(\bar{X})}_{\infty}$, 
 and acts on morphisms following the procedure described in Theorem \ref{quotient of formal completion} (or rather its proof) in \S\ref{section: relation to formal punctured neighborhood}.

In the case of a Lefschetz fibration $\pi: \bar{X} \to \C$ with Weinstein fibers $F$,
the Fukaya-Seidel category $\mathcal{FS}(\bar{X}, \pi) = \w(\bar{X}, F_{0})$ is generated by the Lefschetz thimbles by \cite{GPS2},
which implies that the wrapped Fukaya category $\w(\bar{X})$ of the total space is also generated by the Lefschetz thimbles,
and therefore that $\bar{X}$ is non-degenerate. 
Non-degeneracy of the fiber $F$ follows from the Weinstein condition: $\w(F)$ is generated by cocore disks.

\subsection{A geometric short exact sequence}\label{section: geometric ses}

The short exact sequence \eqref{algebraic ses} in the previous subsection has the following geometric counterpart:
\begin{equation}\label{geometric ses}
\tilde{\dd}(F) \xhookrightarrow{} \n(\bar{X}, F_{0}) \to \rw(\bar{X})
\end{equation}
The first functor $\tilde{\dd}(F) \to \n(\bar{X}, F_{0})$ is the composition of the inclusion of the subcategory $\tilde{\dd}(F) \xhookrightarrow{} \w(\bar{X}, F_{0})$ and the functor $j: \w(\bar{X}, F_{0}) \to \n(\bar{X}, F_{0})$ as in \eqref{negative wrapping functor}.
The second functor 
\begin{equation}\label{acceleration at infinity}
\iota_{\sharp}: \n(\bar{X}, F_{0}) \to \rw(\bar{X}) = \n(\bar{X}, \varnothing)
\end{equation}
is the pushforward functor \eqref{pushforward on n} induced by the inclusion of stopped Liouville manifolds $\iota: (\bar{X}, F_{0}) \xhookrightarrow{} (\bar{X}, \varnothing)$.

\begin{prop}
The sequence $\tilde{\dd}(F) \xhookrightarrow{} \n(\bar{X}, F_{0}) \to \rw(\bar{X})$ \eqref{geometric ses} is exact.
\end{prop}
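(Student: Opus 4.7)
The plan is to verify exactness by establishing three items: (a) the first functor is fully faithful; (b) the composition with $\iota_{\sharp}$ is null on $\tilde{\dd}(F)$; and (c) the induced functor $\bar{\iota}_{\sharp}: \n(\bar{X}, F_{0})/\tilde{\dd}(F) \to \rw(\bar{X})$ is a quasi-equivalence.

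The key observation for (a) and (b) is that every object in $\tilde{\dd}(F)$ is nonessential in $\bar{X}$ in the sense of Definition \ref{defn: nonessential Lagrangian}: being an image of the cup functor $i_{F}$, it can be isotoped into the image of the sectorial inclusion $F \times T^{*}[0, 1] \xhookrightarrow{} \bar{X}$, and further into a sectorial neighborhood of $F_{0}$ modeled on $F \times \C_{\re \ge 0}$. Since $\tilde{\dd}(F)$ is by definition a full subcategory of $\w(\bar{X}, F_{0})$, Proposition \ref{faithful on nonessential Lagrangians} then gives (a). For (b), Lemma \ref{nonessential Lagrangian zero in fully wrapped} makes these objects zero in $\w(\bar{X})$, and the acyclicity of the negative wrapped complexes $CW^{*}(-,-;-H)$ for nonessential arguments (the observation underlying the proof of Proposition \ref{faithful on nonessential Lagrangians}) forces the Rabinowitz complex \eqref{Rabinowitz complex} to collapse to $CW^{*}(-,-;H)$, which also has vanishing cohomology.

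For (c), essential surjectivity is immediate since $\iota_{\sharp}$ is identity on objects (Proposition \ref{prop: pushforward on n}). Fully faithfulness amounts to computing the Drinfeld quotient morphism complex and comparing it with $RC^{*}(L_{0}, L_{1})$. The crucial simplification is that for $K, K' \in \tilde{\dd}(F)$, nonessentiality yields $NC^{*}_{F}(K, K') \simeq CW^{*}_{F}(K, K'; H)$, and likewise for $NC^{*}_{F}(K, L_{1})$ and $NC^{*}_{F}(L_{0}, K)$. Consequently, the bar-type chains in $\n(\bar{X}, F_{0})/\tilde{\dd}(F)$ passing through objects of $\tilde{\dd}(F)$ reduce to chains with only positive-wrapping components, which by stop removal \eqref{stop removal ses} assemble to compute precisely $CW^{*}(L_{0}, L_{1}; H)$; reassembled with the unchanged negative summand $CW^{*}(L_{0}, L_{1}; -H)[1]$, this recovers $RC^{*}(L_{0}, L_{1})$. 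Alternatively, one may invoke Theorem \ref{thm:pushforward is localization} directly: the stop removal hypothesis makes $\iota_{\sharp}$ a localization, and (c) reduces to identifying the class being inverted with cones on morphisms from $\tilde{\dd}(F)$.

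The main obstacle will be verifying that the higher $\ainf$-operations behave correctly under the Drinfeld quotient in the first approach: one must ensure the popsicle-map operations $\mu^{k, \mathbf{p}, \mathbf{w}}$ with intermediate boundary conditions in $\tilde{\dd}(F)$ induce the bar-resolution multiplications on the quotient side, matching the $\ainf$-structure of $\rw(\bar{X})$ defined via popsicle moduli. The localization-based route via Theorem \ref{thm:pushforward is localization} sidesteps this bookkeeping but requires its own careful identification of inverted morphisms; I expect either approach to succeed, with the latter being cleaner.
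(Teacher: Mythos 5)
Your items (a) and (b) match the paper's proof essentially verbatim: both reduce to the observation that objects of $\tilde{\dd}(F)$ are nonessential Lagrangians in the sense of Definition \ref{defn: nonessential Lagrangian}, and then invoke Proposition \ref{faithful on nonessential Lagrangians} and Lemma \ref{nonessential Lagrangian zero in fully wrapped}. The divergence, and the gap, is in (c).

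The paper's (c) is a kernel characterization: it shows that any $L$ killed by $\iota_{\sharp}$ lies in $\r{Tw}\tilde{\dd}(F)$. The argument passes through wrapped Floer theory and module-theoretic input you do not touch: if $L$ is a zero object in $\rw(\bar{X})$, then its Rabinowitz endomorphism complex is acyclic, forcing the continuation map to be a quasi-isomorphism and hence $\w(\bar{X})$-endomorphisms of $L$ to be (degree-wise) finite, i.e.\ $L$ defines a proper module over $\w(\bar{X})$; a connected non-compact Lagrangian cannot do this unless it is zero (this is a result cited from \cite{GGV}, \cite{Gnonproper}), so $L$ decomposes (in $\w(\bar{X})$) into closed exact Lagrangians and nonessential Lagrangians; the stop removal sequence \eqref{stop removal ses} then converts this into an isomorphism in $\w(\bar{X}, F_{0})$ up to adding a twisted complex in $\tilde{\dd}(F)$; finally, closed exact Lagrangians are zero objects in $\n(\bar{X}, F_{0})$ because the continuation map \eqref{continuation} is a homotopy equivalence (Poincar\'{e} duality for a closed manifold), and the remaining nonessential pieces not already in $\r{Tw}\tilde{\dd}(F)$ are zero in $\w(\bar{X}, F_{0})$.

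Neither of your two proposed routes recovers this. Route 1 (direct Drinfeld-quotient computation) has a real problem you yourself flag but do not solve: the morphism complex of $\n(\bar{X}, F_{0})$ is a mapping cone whose $\ainf$-operations mix the $CW^{*}(-;-H)[1]$ and $CW^{*}_{F}(-;H)$ summands, so bar-type chains through $\tilde{\dd}(F)$ do not literally split into positive-wrapping factors; you would need to control the spectral sequence of the bar filtration together with the cone filtration simultaneously, and in particular explain why the acyclicity of $CW^{*}(K,K';-H)$ for nonessential $K,K'$ propagates through the higher $\mu^{k}_{\n}$ in the quotient differential. That is exactly the interaction you describe as ``the main obstacle'' and leave unresolved. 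Route 2 names the right tool (Theorem \ref{thm:pushforward is localization}, which says $\iota_{\sharp}$ is a localization) and the right target (identifying the kernel with $\r{Tw}\tilde{\dd}(F)$), but then stops. The paper's argument \emph{is} precisely the missing identification, and it depends on the stop-removal hypothesis and the proper-module input, none of which your sketch supplies. So the proposal captures the easy half and the correct strategy, but omits the geometric content that makes (c) go through.
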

\begin{proof}
First, we want to show that $\tilde{\dd}(F) \to \n(\bar{X}, F_{0})$ is fully faithful.
Since $\tilde{\dd}(F) \xhookrightarrow{} \w(\bar{X}, F_{0})$ is a full subcategory, 
it suffices to show that the functor $j: \w(\bar{X}, F_{0}) \to \n(\bar{X}, F_{0})$ is fully faithful on the subcategory $\tilde{\dd}(F)$.
This is based on the following observation:
any Lagrangian $L \in \ob \tilde{\dd}(F)$ is by definition in the image of the cup functor $i_{F}: \w(F) \to \w(\bar{X}, F_{0})$ \eqref{cup functor},
which is contained in arbitrary small neighborhood of the stop at infinity,
and therefore a nonessential Lagrangian in the sense of Definition \ref{defn: nonessential Lagrangian}.
So by Proposition \ref{faithful on nonessential Lagrangians},
the functor $j: \w(\bar{X}, F_{0}) \to \n(\bar{X}, F_{0})$ is fully faithful on objects in $\tilde{\dd}(F)$.

Second, it is easy to see that any object $L \in \ob \tilde{\dd}(F)$ is mapped to zero in $\rw(\bar{X})$.
Since $L$ is a nonessential Lagrangian,
by Lemma \ref{nonessential Lagrangian zero in fully wrapped}, it is a zero object in the fully wrapped Fukaya category $\w(\bar{X})$,
and therefore also a zero object in $\rw(\bar{X})$.

It remains to prove that if $L \in \ob \n(\bar{X}, F_{0})$ is mapped to a zero object in $\rw(\bar{X})$, 
then it is isomorphic to an object in $\r{Tw}\tilde{\dd}(F)$.
While we expect there should be a proof in the same spirit as that of Theorem \ref{thm: stop removal} as in \cite{GPS2},
that may not be directly applicable here due to the difference in the analytic setup of Fukaya categories.
On the other hand, we do not have the assumption that objects of $\tilde{\dd}(F)$ are Lagrangian linking disks,
so that we lack a simple description of `wrapping through the stop'.
For that reason, we shall give an alternatively proof, 
and the result is obtained only up to twisted complexes in $\tilde{\dd}(F)$ (and that is exactly the meaning of exactness of the sequence of categories).

Since $L$ is mapped to a zero object in $\rw(\bar{X})$, 
it defines a proper module over $\w(\bar{X})$.
If $\w(\bar{X}) = 0$ then the short exact sequence \eqref{stop removal ses} immediately implies that $L$ is an object of $\tilde{\dd}(F)$.
Now we suppose $\w(\bar{X}) \neq 0$.
Since a connected non-compact Lagrangian cannot define a proper module over $\w(\bar{X})$ unless it is a zero object (\cite{GGV}, \cite{Gnonproper}),
it follows that $L$ is isomorphic to the disjoint union of connected closed exact Lagrangians and connected non-compact nonessential Lagrangians (the latter of which are zero objects in $\w(\bar{X})$).
The above isomorphism holds in $\w(\bar{X})$, so by stop removal short exact sequence \eqref{stop removal ses}, passing to an isomorphism in $\w(X)$ one must further add a twisted complex in $\r{Tw}\tilde{\dd}(F)$.
Now observe that for any closed exact Lagrangian $K$ we have $CW^{*}(K, K; \pm H) = CW^{*}_{int}(K, K; \pm H)$, 
where the complexes of interior chords are defined in \eqref{subcomplex of interior chords}.
Furthermore, the continuation map $c: CW^{*}(K, K; -H) \to CW^{*}(K, K; H)$ \eqref{continuation} is a chain homotopy equivalence (corresponding to the Poincar\'{e} duality between chains and cochains for the closed manifold $K$).
It follows that $K$ is a zero object in $\n(\bar{X}, F_{0})$ as well.
And for any nonessential Lagrangian $L'$ which is not in $\r{Tw}\tilde{\dd}(F)$, it is zero in $\w(\bar{X}, F_{0})$ because of stop removal short exact sequence \eqref{stop removal ses}.
Combining these, we see that $L$ must be isomorphic to an object in $\r{Tw}\tilde{\dd}(F)$.

\end{proof}

\subsection{A commutative diagram}\label{section: cd}

Combining the two exact sequences \eqref{algebraic ses} and \eqref{geometric ses}, 
we obtain a diagram, for which we would like to show:

\begin{prop}\label{commutative}
Under the assumptions of Theorem \ref{thm:equivalence}, the following diagram
\begin{equation}\label{comm cd of ses}
\begin{tikzcd}
\tilde{\dd}(F) \arrow[r] \arrow[d, "\id"] & \n(\bar{X}, F_{0}) \arrow{r}{\iota_{\sharp}} \arrow[d, "\Psi"] & \rw(\bar{X}) \arrow[d, "\Phi"] \\
\tilde{\dd}(F) \arrow[r] & \wf \arrow{r}{\hat{\pi}} & \widehat{\w(\bar{X})}_{\infty}.
\end{tikzcd}
\end{equation}
commutes up to homotopy.
\end{prop}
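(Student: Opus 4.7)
The plan is to split the diagram \eqref{comm cd of ses} into its two squares and handle them separately. The left square will commute strictly at the chain level, while the right square requires a genuine $\ainf$-homotopy obtained by comparing popsicle moduli spaces whose difference lands in $\perf \tilde{\dd}(F)$.

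For the left square, the key observation concerns the construction of $\Psi$ in \S\ref{section: construction of psi}. When all inputs $[x_i]$ lie in the image of $j: \w(\bar{X}, F_0) \to \n(\bar{X}, F_0)$, the associated weights satisfy $w_i = 0$ for all $i$, so the combinatorial constraints \eqref{weight sum condition}--\eqref{sprinkles bounded by weight} force the popsicle flavor $\bar{\mathbf{p}}$ to be trivial. The operations $\Theta^{k, l, \bar{\mathbf{p}}, \bar{\mathbf{w}}}$ then reduce to the ordinary $\ainf$-structure maps counting boundary-punctured disks with testing Lagrangians in $\tilde{\dd}(F)$, which is exactly what defines the natural functor $\iota_{\tilde{\dd}(F)}: \w(\bar{X}, F_0) \to \wf$ of \eqref{pushforward to formal completion}. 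Hence $\Psi \circ j = \iota_{\tilde{\dd}(F)}$ on the nose, and restricting both sides to $\tilde{\dd}(F) \subset \w(\bar{X}, F_0)$ yields the strict commutativity of the left square, with bottom-left arrow $\kappa = \iota_{\tilde{\dd}(F)}|_{\tilde{\dd}(F)}$ of Lemma \ref{property of k functor}.

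For the right square, I would represent both compositions $\hat{\pi} \circ \Psi$ and $\Phi \circ \iota_{\sharp}$ as chain-level functors valued in $\r{Prop}\tilde{\dd}(F) / \perf \tilde{\dd}(F)$, using the identifications of $\wf$ and $\widehat{\w(\bar{X})}_{\infty}$ supplied by Lemma \ref{lem: wf as prop d} and Lemma \ref{lem: binf as prop t}. Under these identifications, $\hat{\pi}$ becomes the canonical projection to the quotient, so $\hat{\pi} \circ \Psi$ is computed by post-composing the functor $\Theta$ of \S\ref{section: construction of psi} with the projection. The functor $\Phi$ of Theorem \ref{rw=winf} admits an analogous chain-level description, via popsicle counts with Lagrangian labels now ranging over all of $\w(\bar{X})$ but \emph{without} the intersection constraints \eqref{intersection in}--\eqref{intersection out} on the testing and output chords; precomposing with the identity-on-objects inclusion $\iota_{\sharp}$ and restricting the testing Lagrangians to $\tilde{\dd}(F)$ produces a chain-level lift of $\Phi \circ \iota_{\sharp}$ to $\r{Prop}\tilde{\dd}(F)$. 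The difference between the two lifts is supported precisely on popsicle configurations whose output $y_{out}$ or testing chords $y_{in}, x'_j$ violate \eqref{intersection in}--\eqref{intersection out}; by the intersection-number identity \eqref{intersection number identity} these configurations carry nontrivial winding around $F$, and the corresponding output modules factor through objects in the image of the cup functor \eqref{cup functor}, hence lie in $\perf\tilde{\dd}(F)$ and vanish in the quotient.

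The final step is to promote this pointwise vanishing modulo $\perf \tilde{\dd}(F)$ to an explicit $\ainf$-natural homotopy. The plan is to construct an interpolating one-parameter family of popsicle moduli spaces, carrying a real parameter that relaxes the output constraint \eqref{intersection out} (and symmetrically \eqref{intersection in}, \eqref{intersection xi}), whose codimension-one boundary produces the homotopy relation between the chain-level representatives of $\hat{\pi}\circ\Psi$ and $\Phi\circ\iota_{\sharp}$ up to terms that factor through $\tilde{\dd}(F)$. The main obstacle will be the careful sign bookkeeping and the verification that every boundary contribution outside of the two target terms genuinely represents a morphism in $\perf \tilde{\dd}(F)$, so that the homotopy descends to a homotopy in $\widehat{\w(\bar{X})}_{\infty}$; I expect this to follow the same general pattern as the comparison arguments used to establish the functoriality of $\Phi$ in \cite{GGV}.
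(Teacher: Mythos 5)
Your treatment of the left square is fine and closely parallels the paper's (the paper invokes Proposition \ref{prop: morphism in t = in zt} and Lemma \ref{property of k functor}, while you rederive the same conclusion from the observation that trivial weights force trivial popsicle flavors); be aware that ``$\Psi\circ j = \iota_{\tilde{\dd}(F)}$ on the nose'' is slightly overstated, since $\Psi$ is defined by postcomposing $\Theta$ with the quasi-isomorphism \eqref{pullback of cc of d}, so the equality is up to that identification.

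Your right square argument, however, has a genuine gap and takes a fundamentally harder and, as it turns out, unnecessary route. You posit a discrepancy between the $\Theta$-disks (with the intersection constraints \eqref{intersection in}--\eqref{intersection out}) and the $\Phi$-disks (without them), and you propose to bridge it with an explicit $\ainf$-homotopy built from a one-parameter family of popsicle moduli spaces relaxing the intersection constraints. But the discrepancy is an artifact of the lift you chose for $\Phi\circ\iota_{\sharp}$: restricting the testing Lagrangians of the \cite{GGV}-model of $\Phi$ to $\tilde{\dd}(F)$ while keeping the testing chords in $CW^*(L'_j, L'_{j-1})$ does not actually land in $\r{Prop}\tilde{\dd}(F)$, because $\tilde{\dd}(F)$ as a subcategory of $\w(\bar{X}, F_0)$ has morphism spaces $CW^*_F$, not $CW^*$; the testing-chord constraints are therefore forced by the module category, not an additional geometric restriction. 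The paper's proof instead uses Lemma \ref{lem: binf as prop t} to realize $\widehat{\w(\bar{X})}_{\infty}$ as $ess\text{-}im(\bar{r}_{\tilde{\dd}(F)}: \w(\bar{X}) \to \r{Prop}\tilde{\dd}(F)/\perf\tilde{\dd}(F))$, and proves (Lemma \ref{linear identification winf=propd}) that the identification between this model and the Calkin-module model \eqref{winf as calk} is implemented by an $\ainf$-functor with only nonzero first-order terms. After passing to this model, the constraints on $y_{out}$, $y_{in}$, and $x'_j$ hold automatically because all relevant modules, module values, and module structure maps are computed in $\r{Prop}\tilde{\dd}(F)$ over the $CW^*_F$ category, so the moduli spaces defining $\hat{\pi}^1\circ\Theta^k$ and those defining $\Phi^k$ (after $\iota_{\sharp}^1$, and after the model change) literally coincide, giving strict chain-level commutativity of the right square.

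Beyond choosing the wrong lift, the remaining steps of your plan are unsubstantiated: you assert that the excess configurations ``carry nontrivial winding around $F$'' and that ``the corresponding output modules factor through objects in the image of the cup functor, hence lie in $\perf\tilde{\dd}(F)$,'' but this factorization would require a real argument rather than the bare citation of the intersection-number identity \eqref{intersection number identity}; that identity controls the winding of the map $u$, not the module-theoretic content of $y_{out}$. And the proposed one-parameter family of moduli spaces is left as a project rather than a construction. The lesson is that the nontrivial content of the right square is captured by Lemmas \ref{lem: binf as prop t} and \ref{linear identification winf=propd}; once those are in place, no new moduli-theoretic homotopy is needed.
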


The main technical difficulty to prove Proposition \ref{commutative} is to show that the right square commutes up to homotopy,
i.e. that the two compositions $\Phi \circ \iota_{\sharp}$ and $\hat{\pi} \circ \Psi$ are homotopic $\ainf$-functors.
On the level of objects, the functor $\Phi$ sends every $L \in \ob \rw(\bar{X})$ it its equivalence class in 
\begin{equation}\label{winf as calk}
\widehat{\w(\bar{X})}_{\infty} = ess-im(\w(\bar{X}) \to \fun(\w(\bar{X})^{op}, \calk_{\K})),
\end{equation}
which under the identification \eqref{winf as prop d} is the module $i^{*}Y^{r}_{L}$. 
It follows that $\Phi \circ \iota_{\sharp}(L) = \hat{\pi} \circ \Psi(L)$.

%check commutative on morphisms
First we would like to conceptually understand why such a diagram should commute up to homotopy.
We begin by recalling that by by \eqref{binf as prop t} in Lemma \ref{lem: binf as prop t}, the formal punctured neighborhood of infinity can be alternatively identified with
\begin{equation}\label{winf as prop d}
\widehat{\w(\bar{X})}_{\infty} \cong ess-im(\bar{r}_{\tilde{\dd}(F)}: \w(\bar{X}) \to \r{Prop} \tilde{\dd}(F) / \perf \tilde{\dd}(F))
\end{equation}
in the presence of the Liouville pair $(\bar{X}, F)$.
Such an identification is more convenient for the purpose of studying the diagram \eqref{comm cd of ses},
compared to the general case \eqref{winf as calk} used in \cite{GGV}.
Under this identification, the morphisms in $\widehat{\w(\bar{X})}_{\infty}$ can be explicitly computed in the quotient category of $\r{Prop} \tilde{\dd}(F)$.
Note that in $\r{Prop} \tilde{\dd}(F)$, the morphism spaces between pullbacks of Yoneda modules of Lagrangians (which are objects of $\w(\bar{X})$) are given by Hochschild cochain complexes:
\begin{equation}\label{hom in prop}
\hom_{\r{Prop} \tilde{\dd}(F)}(i^{*}Y^{r}_{K}, i^{*}Y^{r}_{L}) = \hom_{\rmod \tilde{\dd}(F)}(i^{*}Y^{r}_{K}, i^{*}Y^{r}_{L}) = \r{CC}^{*}(\tilde{\dd}(F)^{op}, \hom_{\K}(i^{*}Y^{r}_{K}, i^{*}Y^{r}_{L})).
\end{equation}
These are precisely the domains of the quasi-isomorphism \eqref{pullback of cc of d},
and also the target of the maps $\Theta^{k}$ \eqref{maps to cc of d} via which the maps $\Psi^{k}$ \eqref{components of psi} constituting the functor $\Psi$ are defined.
On the other hand, by Lemma \ref{lem: wf as prop d}, there is a quasi-equivalence
\[
\wf \cong ess-im(\w(\bar{X}, F_{0}) \to \r{Prop} \tilde{\dd}(F)).
\]
From this point of view, the functor $\hat{\pi}$ is simply induced by the projection to the quotient category $\r{Prop} \tilde{\dd}(F) \to \r{Prop} \tilde{\dd}(F) / \perf \tilde{\dd}(F)$ having only a first-order term on each morphism space,
and the composition $\hat{\pi} \circ \Psi$ is a Yoneda-type functor since by the construction in \S\ref{section: moduli spaces of functor disks}, \ref{section: construction of psi}, $\Psi$ is.
On the other hand, the functor $\Phi$ constructed in \cite{GGV} is also essentially a Yoneda-type functor defined by counting disks of similar kinds, and $\iota_{\sharp}$ is just the acceleration functor which by construction also has only nonzero first-order terms.
Thus it is a straightforward observation that these two composite functors should agree up to homotopy.

%maybe proof environment? 
The only matter that remains to be resolved is that the functor $\Phi$ constructed in \cite{GGV} goes to \eqref{winf as calk} while the functor $\Psi$ has target \eqref{winf as prop d}, 
although both of these are quasi-equivalent models for the same category $\widehat{\w({\bar{X})}}_{\infty}$.
Recall that the identification of these two models of $\widehat{\w({\bar{X})}}_{\infty}$ is supplied by Lemma \ref{lem: binf as prop t}.

\begin{lem}\label{linear identification winf=propd}
The identification \eqref{winf as prop d} is induced by an $\ainf$-functor with only nonzero first-order terms.
\end{lem}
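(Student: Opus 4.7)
The plan is to produce the identification as an explicit linear $\ainf$-functor by observing that both sides are essential images of natural functors from $\w(\bar{X})$ that tautologically agree on objects. Both functors send an object $L$ to (an avatar of) the pullback right Yoneda module $i^{*}Y^{r}_{L}$, viewed modulo representable/perfect modules. So the desired comparison is the identity on objects, and only the behavior on morphism complexes has to be verified.

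For morphism complexes, I will use Proposition \ref{prop: morphism in t = in zt} to compute morphisms in $\r{Prop}\tilde{\dd}(F)/\perf\tilde{\dd}(F)$ as Hochschild-type cochain complexes $\r{CC}^{*}(\tilde{\dd}(F)^{op}, \hom_{\K}(i^{*}Y^{r}_{L_{0}}, i^{*}Y^{r}_{L_{1}}))$, and observe that Efimov's Theorem 3.2 of \cite{efimov2} (the main input to Lemma \ref{lem: binf as prop t}) identifies morphisms in $\widehat{\w(\bar{X})}_{\infty}$ with the same Hochschild complexes. Both identifications arise from considering the pullback of the appropriate diagonal bimodule of the ambient category along the inclusion $i: \tilde{\dd}(F) \xhookrightarrow{} \w(\bar{X}, F_{0})$; matching these two Hochschild descriptions yields a strict chain-level quasi-isomorphism on each morphism complex. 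I will then verify that these chain-level quasi-isomorphisms assemble into an $\ainf$-functor whose only nonzero terms are the first-order ones, with strict compatibility with compositions coming from the fact that the Yoneda product on Hochschild cochain complexes is the same operation on both sides, induced by the chain-level product of module endomorphisms in the dg category $\rmod \tilde{\dd}(F)$.

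The main obstacle will be verifying strict compatibility of the comparison with the $\ainf$-structures on both sides: a priori Efimov's quasi-equivalence $\binf \stackrel{\sim}\to ess$-$im(\bar{r}_{\tilde{\dd}(F)})$ is only an $\ainf$-equivalence with possibly nonzero higher terms, and we must produce a strictly multiplicative representative of its homotopy class. This reduces to choosing strict quasi-inverses to the quasi-isomorphisms appearing in Proposition \ref{prop: morphism in t = in zt}, and then straightening the induced $\ainf$-natural transformations to genuine ones—which is possible because both target module categories are dg, so the obstructions to strictification live in Hochschild cohomology groups that can be killed by a direct homotopy-theoretic argument. The upshot is that after these choices the resulting comparison is a first-order $\ainf$-functor representing the identification \eqref{winf as prop d}.
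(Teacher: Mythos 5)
Your proposal correctly identifies the subtle point—that Efimov's quasi-equivalence from Theorem 3.2 of \cite{efimov2} is a priori only an $\ainf$-equivalence with possibly nonzero higher terms—but the resolution you propose is not justified. The claim that obstructions to strictification ``live in Hochschild cohomology groups that can be killed by a direct homotopy-theoretic argument'' is not true in general: $\ainf$-functors between dg categories cannot always be replaced by functors with vanishing higher-order terms, and you give no argument for why this particular obstruction should vanish. Even if \emph{some} strictly linear quasi-equivalence existed, you would still need to show that it represents the specific identification constructed in Lemma \ref{lem: binf as prop t}, which is what \eqref{winf as prop d} refers to, not merely that one exists in its isomorphism class.

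The paper's argument sidesteps strictification entirely by tracing through the construction of Lemma \ref{lem: binf as prop t}: applying $\overline{\calk}$ and $\fun(-,\calk_{\K})$ to the short exact sequence $\tilde{\dd}(F) \hookrightarrow \w(\bar{X},F_{0}) \to \w(\bar{X})$ produces a commutative diagram of functors, and because the inclusion $\tilde{\dd}(F) \hookrightarrow \w(\bar{X},F_{0})$ is a strict full-subcategory inclusion (identity first-order terms on morphisms, vanishing higher-order terms), each functor arising in the diagram inherits this linearity. Thus the identification is \emph{already} given by a first-order functor by construction; no after-the-fact straightening is needed. Your Hochschild-cochain reformulation via Proposition \ref{prop: morphism in t = in zt} is consistent with this conclusion, but the final step of your argument should be replaced by the observation that linearity is preserved through the functorial operations appearing in Efimov's construction when the input inclusion is strict.
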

\begin{proof}
Note that in the proof of Lemma \ref{lem: binf as prop t}, the identification is obtained by tracing a commutative diagram of short exact sequences of categories obtained by applying $\bar{\calk}$ and $\fun(-, \calk_{\K})$ to the short exact sequence $\tilde{\dd}(F) \to \w(\bar{X}, F_{0}) \to \w(\bar{X})$.
Since in our situation the functor $\tilde{\dd}(F) \to \w(\bar{X}, F_{0})$ is simply the inclusion of a full subcategory, 
whose first-order terms on morphism spaces are identity maps and higher-order terms all zero,
it follows that the identification \eqref{winf as prop d} is also given by a functor with the same property.
\end{proof}

We have gathered all the necessary ingredients for proving Proposition \ref{commutative}.

\begin{proof}[Proof of Proposition \ref{commutative}]
The left square is easily seen to be commutative up to homotopy, 
since the restriction of $\Psi$ \eqref{functor to completion} to $\tilde{\dd}(F)$ is homotopic to the identity by Proposition \ref{prop: morphism in t = in zt} and Lemma \ref{property of k functor}.

Now let us look at the right square. 
Recall that the acceleration functor $\iota_{\sharp}$ and the projection to quotient functor $\hat{\pi}$ have only nonzero first-order terms and vanishing higher-order terms.
On each morphism space, $\iota_{\sharp}$ is an inclusion map of a subcomplex by Proposition \ref{prop: pushforward on n} (and also by its proof).
This implies that in the composition $\Psi \circ \iota_{\sharp}$, 
the only nontrivial terms are $\Psi^{k}(\iota_{\sharp}^{1}(\cdot), \ldots, \iota_{\sharp}^{1}(\cdot))$.
On the chain level, $\iota_{\sharp}^{1}$ sends each generator in $\hom_{\n(\bar{X}, F_{0})} = NC^{*}_{F}$ to the corresponding generator in $\hom_{\rw(\bar{X})} = RC^{*}$ that is represented by the same chain.

To see what $\hat{\pi}$ does on the chain level, note that its first-order term $\hat{\pi}^{1}$ has source  $\hom_{\r{Prop} \tilde{\dd}(F)}(Y^{r}_{L_{0}}, Y^{r}_{L_{k}})$.
To have a simple description of the target morphism space in the quotient category $\r{Prop} \tilde{\dd}(F)/\perf \tilde{\dd}(F)$, 
recall that $\tilde{\dd}(F)$, being a full subcategory of the proper category $\w(\bar{X}, F_{0})$, is proper.
In particular, all Yoneda modules of $\tilde{\dd}(F)$ are proper modules.
Since the Yoneda image of $\tilde{\dd}(F)$ split generates $\perf \tilde{\dd}(F)$ in the category $\rmod \tilde{\dd}(F)$, in particular in proper modules $\r{Prop} \tilde{\dd}(F)$, 
the natural map 
\[
\r{Prop} \tilde{\dd}(F)/ y(\tilde{\dd}(F)) \to \r{Prop} \tilde{\dd}(F)/\perf \tilde{\dd}(F)
\]
 is a quasi-isomorphism. 
So we may identify the target of $\hat{\pi}^{1}$ with
\begin{equation}\label{hom in propd/perfd}
\begin{split}
& \hom_{\r{Prop} \tilde{\dd}(F)/ y(\tilde{\dd}(F))}(Y^{r}_{L_{0}}, Y^{r}_{L_{k}}) \\
=  \bigoplus_{\substack{p \ge 0\\ L'_{1}, \ldots, L'_{p} \in \tilde{\dd}(F)}} & \hom_{\r{Prop} \tilde{\dd}(F)}(Y^{r}_{L'_{p}}, Y^{r}_{L_{k}}) \otimes \hom_{\r{Prop} \tilde{\dd}(F)}(Y^{r}_{L'_{p-1}}, Y^{r}_{L'_{p}}) \otimes \cdots \\
& \otimes \hom_{\r{Prop} \tilde{\dd}(F)}(Y^{r}_{L'_{1}}, Y^{r}_{L'_{2}}) \otimes \hom_{\r{Prop} \tilde{\dd}(F)}(Y^{r}_{L_{0}}, Y^{r}_{L'_{1}}),
\end{split}
\end{equation}
so that $\hat{\pi}^{1}$ is an inclusion map that sends $\hom_{\r{Prop} \tilde{\dd}(F)}(Y^{r}_{L_{0}}, Y^{r}_{L_{k}})$ to the $p=0$ piece of \eqref{hom in propd/perfd},
where it is the identity map.
Lemma \ref{linear identification winf=propd} then implies that it suffices to compare the maps constituting $\Psi$ and $\Phi$ if we use the identification \eqref{winf as prop d} to identify the two models \eqref{winf as calk}, \eqref{winf as prop d} of $\widehat{\w({\bar{X})}}_{\infty}$.

Recall from \eqref{maps to cc of d}, \eqref{pullback of cc of d} that the maps $\Psi^{k}$ are defined via the maps $\Theta^{k}$, 
which are constructed by counting rigid disks in the moduli spaces introduced in \S\ref{section: moduli spaces of functor disks}.
On the level of moduli spaces, the effect of applying the map $\iota_{\sharp}^{1}$ to the inputs $x_{i}$ is that these inputs are naturally considered as inputs from the morphism spaces in the category $\rw(\bar{X})$,
so that under the model \eqref{winf as prop d} for $\widehat{\w({\bar{X})}}_{\infty}$,
these disks are the same as those for defining the maps $\Phi^{k}$.
It follows that the right square strictly commutes on the chain level.
\end{proof}

Now we are ready to finish the proof of Theorem \ref{thm:equivalence}.

\begin{proof}[Proof of Theorem \ref{thm:equivalence}]
The conclusion immediately follows from the two short exact sequences \eqref{algebraic ses} and \eqref{geometric ses}, Theorem \ref{rw=winf} and the commutative diagram \eqref{comm cd of ses} in Proposition \ref{commutative}.
\end{proof}

\subsection{Computing the functor via limits}\label{section: n as limit}

This subsection is devoted to proving Theorem \ref{thm:main2}.
Importantly, it drops the non-degeneracy assumption compared to Theorem \ref{thm:equivalence},
which suggests that this type of quasi-equivalence between the category $\n(\bar{X}, F_{0})$ and the categorical formal completion has its intrinsic meaning,
 and is in particular independent of generation of wrapped Fukaya categories.
The approach will be very different from the one throughout \S\ref{section: functor overview}-\S\ref{section: cd}.
The assumptions under which we are going to work are:
\begin{enumerate}[label=(\roman*)]

\item $F$ satisfies stop removal for $(\bar{X}, F)$,

\item $\p_{\infty} \bar{X}$ has bounded Reeb dynamics relative to $F$, and

\item $\w(\bar{X}, F_{0})$ is smooth.

\end{enumerate}

Recall from \S\ref{sec: various functors} that there are natural $\ainf$-functors $j_{m}: \w(\bar{X}, F_{0}) \to \n_{>m}(\bar{X}, F_{0})$, $\pi_{m}: \n(\bar{X}, F_{0}) \to \n_{>m}(\bar{X}, F_{0})$  for all $m \le -1$,
 and $\pi_{m', m}: \n_{>m'}(\bar{X}, F_{0}) \to \n_{>m}(\bar{X}, F_{0})$
 for all $m'<m$. 
By Lemma \ref{compatible diagram of functors}, we have a diagram of $\ainf$-categories
\[
\{\n_{>m}(\bar{X}, F_{0})\}_{m \in \Z_{\le -1}}
\]
as well as a strictly compatible diagram of functors 
\[
\{j_{m}: \w(\bar{X}, F_{0}) \to \n_{>m}(\bar{X}, F_{0})\}_{m \in \Z_{\le -1}}.
\]
Our first task is to show that these functors satisfy the conditions of Lemma \ref{formal completion as limits}.

\begin{prop}\label{prop: map from colimit of Floer complexes}
Suppose $\p_{\infty} \bar{X}$ has bounded Reeb dynamics relative to $F$.
Also suppose $\w(\bar{X}, F_{0})$ is smooth.
The for any nonessential Lagrangian $K$ and any arbitrary exact cylindrical Lagrangian $L$,
both of which are disjoint from $F_{0}$ at infinity, 
the following natural map
\begin{equation}\label{map from colimit of Floer complexes}
\hocolim_{m \in \Z_{\le -1}} \hom_{\rmod \w(\bar{X}, F_{0})}(j_{m}^{*} Y^{r}_{j_{m}(L)}, Y^{r}_{K}) \to \hom_{\rmod \w(\bar{X}, F_{0})}(Y^{r}_{L}, Y^{r}_{K}).
\end{equation}
is a quasi-isomorphism of chain complexes.
\end{prop}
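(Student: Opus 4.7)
The plan is to verify condition (iii) of Lemma \ref{formal completion as limits} for the diagram $\{\n_{>m}(\bar{X}, F_0)\}_{m\in\Z_{\le-1}}$ by first reducing to a vanishing of a homotopy colimit, and then exploiting the three standing hypotheses. The Yoneda natural transformation \eqref{yoneda nat trans} applied to $j_m$ supplies a canonical morphism of right $\w(\bar{X}, F_0)$-modules $\tau_m \colon Y^r_L \to j_m^* Y^r_{j_m(L)}$, which at a test Lagrangian $L'$ is the termwise split inclusion $CW^*_F(L', L; H) \hookrightarrow NC^*_{>m}(L', L)$. Its cone $C_m := \cone(\tau_m)$ has
\[
C_m(L') \simeq \bigoplus_{\substack{y\in \chi(L', L; -H)\\ m+1 \le y\cdot(F_0\times\R_{s>0})\le 0}} |o_y|_{\K}[1]
\]
with differential induced from $\mu^1_{\n}$, and the transition maps $C_m \to C_{m'}$ for $m \le m' \le -1$ are termwise split surjections. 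Applying $\hom_{\rmod\w(\bar{X}, F_0)}(-, Y^r_K)$ to the triangle $Y^r_L \to j_m^*Y^r_{j_m(L)} \to C_m$ and passing to hocolim over $m$, the proposition reduces to the vanishing
\[
\hocolim_{m} \hom_{\rmod \w(\bar{X}, F_0)}(C_m, Y^r_K) \simeq 0.
\]

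The next step will be to compute this hom using smoothness. Smoothness of $\w(\bar{X}, F_0)$ means that the diagonal bimodule $\Delta$ admits a finite perfect resolution by bimodules of the form $Y^l_{X_i} \otimes_{\K} Y^r_{X_i}$, so that any right module $M$ is expressed via $M \simeq M \otimes_{\w(\bar{X}, F_0)} \Delta$ as a finite colimit involving such pieces. Applied to $C_m$ together with tensor-hom adjunction, this will reduce $\hom_{\rmod\w(\bar{X}, F_0)}(C_m, Y^r_K)$ to a finite combination of expressions of the form $C_m(X_i)^{\vee}\otimes Y^r_K(X_i) = C_m(X_i)^{\vee}\otimes CW^*_F(X_i, K; H)$. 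The bounded Reeb dynamics hypothesis makes each factor degreewise finite dimensional, and moreover guarantees that the inverse system $\{C_m(X_i)\}$ satisfies a degreewise Mittag--Leffler property (in the spirit of Lemma \ref{ml on cohomology}), so that $\hocolim_m$ may be interchanged with the finite colimit inherited from the smoothness resolution.

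The conclusion will come from Proposition \ref{quasi-iso on nonessential Lagrangians}: since $K$ is nonessential, $CW^*(X_i, K; -H)$ is acyclic for every $X_i$, which, after identifying $C_m(X_i)$ with a quotient of $CW^*(X_i, L; -H)$ via the definition of $NC^*_{>m}$ and Poincar\'{e} duality, forces the Floer-theoretic contribution of each generating chord to become exact once sufficiently many negative chords are available, i.e.\ in the limit $m \to -\infty$. The main technical obstacle is the middle step: making the identification of $\hom_{\rmod\w(\bar{X}, F_0)}(C_m, Y^r_K)$ with an explicit Floer-theoretic complex genuine at the chain level, including handling the higher $\ainf$-structure maps and signs coming from the smoothness resolution. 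Smoothness is essential here because without a perfect resolution of the diagonal one cannot reduce the module-theoretic hom to Yoneda evaluations; bounded Reeb dynamics is what keeps the relevant inverse systems well-behaved termwise; and nonessentiality of $K$ supplies the ultimate Floer-theoretic vanishing.
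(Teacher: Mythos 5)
Your reduction is correct and the opening move is well-conceived: the Yoneda natural transformation does give a termwise-split map $\tau_m\colon Y^r_L \to j_m^* Y^r_{j_m(L)}$ whose cone $C_m$ has the description you give, and since $\hocolim$ is exact, establishing the proposition is indeed equivalent to showing $\hocolim_m \hom_{\rmod \w(\bar{X},F_0)}(C_m, Y^r_K) \simeq 0$. This is a genuinely different strategy from the paper's. The paper instead passes to perfect complexes via the identity $\r{Prop}\,\w = \perf\w$ (smooth $+$ proper), uses the adjunction-type identification $\hom_{\n_{>m}}(j_m L, j_m K) \simeq \hom_{\perf\w}(j_m^{*,\r{Prop}} Y^r_{j_m(L)}, K) \simeq \hom_{\rmod\w}(j_m^* Y^r_{j_m(L)}, Y^r_K)$, and then appeals directly to Proposition~\ref{quasi-iso on nonessential Lagrangians} together with Proposition~\ref{n as homotopy limit of nm} to identify the homotopy (co)limit of the right-hand sides with $\hom_\w(L,K) = Y^r_K(L)$, i.e.\ the target of the Yoneda map. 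That route never needs to open up the diagonal bimodule.

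The problem is in your closing step. After the smoothness resolution, the terms you obtain are of the form $C_m(X_i)^\vee \otimes Y^r_K(X_i) = C_m(X_i)^\vee \otimes CW^*_F(X_i,K;H)$: the $L$-dependence and the $K$-dependence land in separate tensor factors, and the $K$-factor is the \emph{positive} partially wrapped complex $CW^*_F(X_i,K;H)$. Nonessentiality of $K$ (Proposition~\ref{quasi-iso on nonessential Lagrangians}, via Lemma~\ref{nonessential Lagrangian zero in fully wrapped}) gives acyclicity only of the \emph{negative} complexes $CW^*(X_i,K;-H)$, which simply do not appear among these terms. So the claimed ``Floer-theoretic vanishing'' is not available for the individual pieces; in fact $\hocolim_m C_m(X_i)^\vee$ is, up to Poincar\'e duality and a shift, essentially all of $CW^{n-*}(L,X_i;H)$, which has no reason to be acyclic, and $CW^*_F(X_i,K;H)$ is not acyclic either. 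Any vanishing of $\hocolim_m\hom(C_m,Y^r_K)$ must therefore come from cancellation across the iterated cone built from the diagonal resolution, not from termwise acyclicity, and that is exactly the part your argument does not supply. (The Mittag--Leffler interchange you invoke is also unnecessary: $\hocolim$ always commutes with finite limits/colimits in this stable setting; the genuine use of bounded Reeb dynamics and Mittag--Leffler is elsewhere, namely in making $\n_{>m}$ proper and in Proposition~\ref{n as homotopy limit of nm}.) To repair the argument you would need to replace the final paragraph with something that re-couples $L$ and $K$ — for instance by routing through $\hom_{\n_{>m}}(j_m L, j_m K)$ as the paper does — rather than relying on vanishing of the separated tensor factors.
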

\begin{proof}
The formal left adjoint $j_{m}^{*}: \rmod \n_{>m}(\bar{X}, F_{0}) \to \rmod \w(\bar{X}, F_{0})$ of the functor $j_{m}: \w(\bar{X}, F_{0}) \to \n_{>m}(\bar{X}, F_{0})$ sends proper modules to proper modules.
Since $\p_{\infty} \bar{X}$ has bounded Reeb dynamics relative to $F$, the category $\w(\bar{X}, F_{0})$ is proper. 
By our assumption, $\w(\bar{X}, F_{0})$ is also smooth.
It follows that $\r{Prop} \w(\bar{X}, F_{0}) = \perf \w(\bar{X}, F_{0})$, and that the formal adjoint has image in the category of perfect complexes:
\begin{equation}
j_{m}^{*, \r{Prop}}: \r{Prop} \n_{>m}(\bar{X}, F_{0}) \to \r{Prop} \w(\bar{X}, F_{0}) = \perf \w(\bar{X}, F_{0}).
\end{equation}
Since a Yoneda module $Y^{r}_{j_{m}(L)}$ for $L \in \ob \w(\bar{X}, F_{0})$ is a perfect module over $\n_{>m}(\bar{X}, F_{0})$, 
and $\n_{>m}(\bar{X}, F_{0})$ is proper on the chain level,
it follows that $Y^{r}_{j_{m}(L)}$ is a proper module over $\n_{>m}(\bar{X}, F_{0})$,
 so that $j_{m}^{*, \r{Prop}} Y^{r}_{j_{m}(L)}$ is a well-defined perfect module over $\w(\bar{X}, F_{0})$ whose Yoneda module is $j_{m}^{*} Y^{r}_{j_{m}(L)}$.
By Yoneda lemma extended to perfect complexes 
\[
\perf \w(\bar{X}, F_{0}) \to \rmod \perf \w(\bar{X}, F_{0}) = \rmod \w(\bar{X}, F_{0}),
\]
we obtain a quasi-isomorphism of chain complexes
\begin{equation}\label{yoneda with jm}
\hom_{\perf \w(\bar{X}, F_{0})}(j_{m}^{*, \r{Prop}} Y^{r}_{j_{m}(L)}, K) \stackrel{\sim}\to  \hom_{\rmod \w(\bar{X}, F_{0})}(j_{m}^{*} Y^{r}_{j_{m}(L)}, Y^{r}_{K}).
\end{equation}
Note that the Yoneda natural transformation (with respect to each $\pi_{m', m}$) is contravariant, so taking the homotopy limit of the left-hand side yields the homotopy colimit of the right-hand side:
\begin{equation}\label{lim to colim}
 \holim_{m \in \Z_{\le -1}^{op}} \hom_{\perf \w(\bar{X}, F_{0})}(j_{m}^{*, \r{Prop}} Y^{r}_{j_{m}(L)}, K) \stackrel{\sim}\to \hocolim_{m \in \Z_{\le -1}} \hom_{\rmod \w(\bar{X}, F_{0})}(j_{m}^{*} Y^{r}_{j_{m}(L)}, Y^{r}_{K}).
\end{equation}
We claim that the former homotopy limit is quasi-isomorphic to $\hom_{\w(X)}(L, K) = Y^{r}_{K}(L)$.

To prove the claim, observe that by adjunction we have
\begin{equation}\label{adjunction}
\begin{split}
& \hom_{\n_{>m}(X)}(j_{m}(L), j_{m}(K)) \stackrel{\sim}\to \hom_{\rmod \n_{>m}(X)}(Y^{r}_{j_{m}(L)}, Y^{r}_{j_{m}(K)})\\
 \cong & \hom_{\rmod \w(X)}(j_{m}^{*} Y^{r}_{j_{m}(L)}, Y^{r}_{K}) \cong \hom_{\perf \w(X)}(j_{m}^{*, \r{Prop}} Y^{r}_{j_{m}(L)}, K) 
\end{split} 
\end{equation}
By Proposition \ref{quasi-iso on nonessential Lagrangians} and Lemma \ref{n as homotopy limit of nm}, since $K$ is a nonessential Lagrangian, 
we have the following quasi-isomorphism given by the functor $J: \w(\bar{X}, F_{0}) \to \holim_{m \in \Z_{\le -1}^{op}} \n_{>m}(\bar{X}, F_{0})$:
\[
\hom_{\w(\bar{X}, F_{0})}(L, K) \stackrel{\sim}\to  \holim_{m \in \Z_{\le -1}^{op}} \hom_{\n_{>m}(\bar{X}, F_{0})}(j_{m}(L), j_{m}(K))
\]
Composing this with the homotopy limit of the maps \eqref{adjunction}, then \eqref{lim to colim} and \eqref{map from colimit of Floer complexes}, we get the Yoneda map $\hom_{\w(\bar{X}, F_{0})}(L, K) \to \hom_{\rmod \w(\bar{X}, F_{0})}(Y^{r}_{L}, Y^{r}_{K})$,
so \eqref{map from colimit of Floer complexes} is a quasi-isomorphism.
\end{proof}

In particular, since any object $K$ of $\w(\bar{X}, F_{0})$ that is in the image of the cup functor $i_{F}: \w(F) \to \w(\bar{X}, F_{0})$ \eqref{cup functor} is a nonessential Lagrangian,
Proposition \ref{prop: map from colimit of Floer complexes} holds true in this case.
Now we wish to use Lemma \ref{formal completion as limits} to prove that the homotopy limit $\holim_{m \in \Z_{\le -1}^{op}} \n_{>m}(\bar{X}, F_{0})$ is quasi-equivalent to the categorical formal completion of $\w(\bar{X}, F_{0})$ along $\tilde{\dd}(F)$.
The last step is to verify the other assumption of Lemma \ref{formal completion as limits}, 
which is the statement that all the pullback Yoneda modules $j_{m}^{*}Y^{r}_{L}$ actually come from $\tilde{\dd}(F)$.

\begin{thm}\label{representable of nm module by d}
Suppose $F$ satisfies stop removal for $X$, that $\p_{\infty} \bar{X}$ has bounded Reeb dynamics relative to $F$,
and that $\w(\bar{X}, F_{0})$ is smooth.
Then for any $L \in \ob \w(\bar{X}, F_{0})$, the pullback Yoneda module $j_{m}^{*}Y^{r}_{j_{m}(L)} \in \ob \rmod \w(\bar{X}, F_{0})$ is representable by an object in $\perf \tilde{\dd}(F)$.
That is, the perfect complex $j_{m}^{*, \r{Prop}}Y^{r}_{j_{m}(L)} \in \perf \w(\bar{X}, F_{0})$ is quasi-isomorphic to some object in $\perf \tilde{\dd}(F)$.
\end{thm}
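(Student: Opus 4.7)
My plan is to reduce to a homotopy-colimit vanishing statement via stop removal and then exhibit the vanishing by combining the finite-dimensionality from bounded Reeb dynamics with an analysis of how iterated positive wrapping shifts the $F$-intersection count in the Floer complexes.

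First, by the stop removal hypothesis, the short exact sequence $\tilde{\dd}(F) \hookrightarrow \w(\bar{X}, F_0) \stackrel{\iota_*}\to \w(\bar{X})$ extends to a short exact sequence $\perf \tilde{\dd}(F) \to \perf \w(\bar{X}, F_0) \to \perf \w(\bar{X})$ on perfect complexes. Writing $P := j_m^{*, \r{Prop}} Y^r_{j_m(L)}$, the smoothness hypothesis ensures $P \in \perf \w(\bar{X}, F_0)$ as already established in the proof of Proposition \ref{prop: map from colimit of Floer complexes}, so the theorem amounts to showing that $\iota_*^\perf(P) \simeq 0$ in $\perf \w(\bar{X})$. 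Testing against an arbitrary object $K \in \ob \w(\bar{X}, F_0)$, this is equivalent to the vanishing of $\hom_{\w(\bar{X})}(K, \iota_*^\perf(P))$ for every such $K$.

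Second, I use the wrapping presentation of hom in the stop-removal localization. Iterated positive wrapping of $K$ through the stop yields a sequence $\{K^{w^k}\}_{k \ge 0}$ in $\w(\bar{X}, F_0)$, and by the wrapping exact triangle (Theorem \ref{thm: wrapping exact triangle}) the connecting morphisms $K^{w^{k+1}} \to K^{w^k}$ have cones in $\tilde{\dd}(F)$, making this directed system cofinal among morphisms into $K$ with cones in $\tilde{\dd}(F)$ by the standard wrapping framework of \cite{GPS1, GPS2}. By the calculus-of-fractions formula and Yoneda adjunction, this yields
\begin{equation*}
\hom_{\w(\bar{X})}(K, \iota_*^\perf(P)) \simeq \hocolim_k \hom_{\w(\bar{X}, F_0)}(K^{w^k}, P) \simeq \hocolim_k NC^*_{>m}(K^{w^k}, L).
\end{equation*}

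Third, I show this directed colimit vanishes by a chain-level nilpotency argument. Each complex $NC^*_{>m}(K^{w^k}, L)$ is finite-dimensional by the bounded Reeb dynamics hypothesis; its generators are $H$-chords from $K^{w^k}$ to $L$ with $F$-intersection zero together with $(-H)$-chords whose $F$-intersection lies in $[m+1, 0]$. The connecting continuation maps in the directed system, induced by positive isotopies crossing $F$, shift each chord's $F$-intersection count by one (a topological computation: the moving Lagrangian boundary of the continuation strip crosses $F$ exactly once). Iterating enough times pushes every $H$-chord generator and every $(-H)$-chord generator outside its admissible range, so the structure maps in the sequence become zero on every finite-dimensional piece, forcing the colimit to vanish.

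The main obstacle will be to make the shift-by-one behavior of continuation maps on the $F$-intersection filtration fully rigorous, since this requires a careful geometric analysis of how Floer continuation strips interact with the hypersurface $F$ under a boundary isotopy transverse to $F$, along with generic perturbation choices that prevent additional interior intersections. Once this is in place, the cofinality of the wrapping system and the calculus-of-fractions presentation of hom in the $\ainf$-localization are standard, available from the framework of \cite{GPS1, GPS2}.
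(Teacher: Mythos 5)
Your overall route is genuinely different from the paper's, but two of its load-bearing steps have real gaps. First, the reduction to $\iota_*^{\perf}(P)\simeq 0$ (with $P=j_m^{*,\r{Prop}}Y^r_{j_m(L)}$) is fine in principle, since by Thomason--Neeman the kernel of $\perf\w(\bar{X},F_0)\to\perf\w(\bar{X})$ is the thick closure of $\tilde{\dd}(F)$, i.e.\ $\perf\tilde{\dd}(F)$; but your second step does not go through in the paper's generality. The wrapping exact triangle (Theorem \ref{thm: wrapping exact triangle}) requires the stop to be mostly Legendrian near the crossing point and produces linking disks, whereas here $F$ is an arbitrary ribbon and $\tilde{\dd}(F)$ is defined via the cup functor precisely because linking disks are unavailable. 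More seriously, the cofinality of the single sequence $\{K^{w^k}\}$ among morphisms into $K$ with cone in $\perf\tilde{\dd}(F)$ -- and hence the identification $\hom_{\w(\bar{X})}(K,\iota_*^{\perf}P)\simeq\hocolim_k NC^*_{>m}(K^{w^k},L)$ -- is not available off the shelf in the quadratic-Hamiltonian model with only the abstract stop removal hypothesis; in \cite{GPS1,GPS2} such statements are tied to the mostly Legendrian setting and their localization framework. Without cofinality you only obtain a map from your colimit into the group you want to kill, so its vanishing proves nothing. The paper avoids this entirely: it wraps $L$ once by a sufficiently large positive isotopy, uses stop removal (the kernel description of the localization) to place $C=\cone(L^w\to L)$ in $\r{Tw}\,\tilde{\dd}(F)$, and compares modules through the resulting exact triangle, with no cofinality input.

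Second, the nilpotency mechanism in your third step is incorrect as stated. Generators of $NC^*_{>m}(K^{w^k},L)$ coming from interior chords (intersection points in the compact part, which exist for any pair and are all there is when $K,L$ are closed) have $F$-intersection number zero and are unaffected by wrapping near infinity; they are never ``pushed outside the admissible range,'' so the connecting maps do not become zero on them. On that part the expected vanishing happens for a different reason -- the cone of the continuation map on interior chords is acyclic (Poincar\'e duality, as in the discussion around Proposition \ref{faithful on nonessential Lagrangians}) -- and your argument never engages with this, nor with how the interior and at-infinity parts interact under the connecting maps. Compare with the paper's key observation: the map it must kill is multiplication by a single continuation element attached to a negative wrapping of the test object, and once that wrapping crosses the stop more than $-m-1$ times the element itself vanishes in the quotient $C_0/C_m$ of \eqref{morphism in nm}, so no chord-by-chord analysis is needed. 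As written, your window-shifting claim would also ``prove too much,'' since it makes no visible use of the stop removal hypothesis at the point where the vanishing is asserted. So the proposal needs both the colimit presentation and the vanishing step to be substantially reworked; the paper's single-wrap exact-triangle argument is the more economical fix.
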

\begin{proof}
For any Lagrangian $L$ of $X$, let $L \rightsquigarrow L^{w}$ be a positive isotopy in $\bar{X}$, 
such that $L^{w}$ is still disjoint from the stop at infinity. 
By definition, $L$ and $L^{w}$ are isomorphic in the fully wrapped Fukaya category $\w(\bar{X})$.
By general position, we may assume that $L^{w}$ is still in $X$.
When the entire isotopy is within $X$, $L^{w}$ is isomorphic to $L$ in $\w(X) = \w(\bar{X}, F_{0})$. 
Otherwise, since $F$ satisfies stop removal for $X$, the cone (taken in $\r{Tw}\w(\bar{X}, F_{0})$) of the continuation morphism $L^{w} \to L$ is in $\r{Tw} \tilde{\dd}(F)$; write $C = \cone(L^{w} \to L)$ for this cone.
For a testing object $K \in \ob \w(\bar{X}, F_{0})$, the module $j_{m}^{*}Y^{r}_{j_{m}(L)}$ takes values 
\begin{equation}
j_{m}^{*}Y^{r}_{j_{m}(L)}(K) = Y^{r}_{j_{m}(L)}(j_{m}(K)) = \hom_{\n_{>m}}(j_{m}(K), j_{m}(L)),
\end{equation}
where $j_{m}(K) = K$ geometrically but we want to remember that we take the module structure to be over $\n_{>m}(\bar{X}, F_{0})$.
Map the exact triangle in $\r{Tw} \w(\bar{X}, F_{0})$
\[
L^{w} \to L \to C \stackrel{[1]}\to
\] 
by $j_{m}$ to an exact triangle in $\r{Tw} \n_{>m}(\bar{X}, F_{0})$, 
and apply the resulting triangle to the testing object $j_{m}(K)$ to obtain a long exact sequence
\begin{equation}\label{testing les}
\begin{split}
\cdots \to &H^{*}(\hom_{\n_{>m}}(j_{m}(K), j_{m}(L^{w}))) \to H^{*}(\hom_{\n_{>m}}(j_{m}(K), j_{m}(L)))\\
 \to &H^{*}(\hom_{\r{Tw}\n_{>m}}(j_{m}(K), j_{m}(C))) \to H^{*+1}(\hom_{\n_{>m}}(j_{m}(K), j_{m}(L^{w})))
 \end{split}
\end{equation}
Consider $H^{*}(\hom_{\n_{>m}}(j_{m}(K), j_{m}(L^{w})))$. 
Applying the simultaneous negative isotopies to both objects, we have an isomorphism
\[
H^{*}(\hom_{\n_{>m}}(j_{m}(K), j_{m}(L^{w}))) \cong H^{*}(\hom_{\n_{>m}}(j_{m}(K)^{-w}, j_{m}(L))).
\]
The key observation is that, in the category $\n_{>m}(\bar{X}, F_{0})$, for sufficiently negative isotopy $j_{m}(K)^{-w}$ of $j_{m}(K)$,
the induced map
\[
H^{*}(\hom_{\n_{>m}}(j_{m}(K)^{-w}, j_{m}(L))) \to H^{*}(\hom_{\n_{>m}}(j_{m}(K), j_{m}(L)))
\]
given by multiplication with the continuation element in $HF^{*}(j_{m}(K), j_{m}(K)^{-w})$ is zero.
This can be checked using the definition of the morphism space in $\n_{>m}(X)$ given as \eqref{morphism in nm}.
It follows that the long exact sequence \eqref{testing les} breaks up to give an isomorphism
\[
H^{*}(\hom_{\n_{>m}}(j_{m}(K), j_{m}(L))) \cong H^{*}(\hom_{\r{Tw}\n_{>m}}(j_{m}(K), j_{m}(C))).
\]
Thus, the perfect complex $j_{m}^{*, \r{Prop}}Y^{r}_{j_{m}(L)}$ is quasi-isomorphic to some object of the form $j_{m}^{*, \r{Prop}} Y^{r}_{j_{m}(C)}$ for $C \in \ob \r{Tw} \tilde{\dd}(F)$.
\end{proof}

Proposition \ref{representable of nm module by d} together with Lemma \ref{formal completion as limits} (which is applicable here because all the categories $\n_{>m}(\bar{X}, F_{0})$  have the same collection of objects and the functors between them act as identity on objects) implies the following result.

\begin{cor}\label{cor: holim is formal completion}
Suppose $F$ satisfies stop removal and that $\p_{\infty} \bar{X}$ has bounded Reeb dynamics relative to $F$.
Also suppose $\w(\bar{X}, F_{0})$ is smooth.
Then there is a natural quasi-equivalence of $\ainf$-categories
\begin{equation}\label{quasi-equivalence to wf from limit}
\hat{\pi}^{*}: \mathcal{S}(\{\n_{>m}(\bar{X}, F_{0})\}_{m \in \Z_{\le -1}}) \stackrel{\sim}\to \wf,
\end{equation}
where $\mathcal{S}(\{\n_{>m}(\bar{X}, F_{0})\}_{m \in \Z_{\le -1}})$ is the subcategory of $\holim_{m \in \Z_{\le -1}^{op}} \n_{>m}(\bar{X}, F_{0})$ defined in Definition \ref{def: subcat of constant sections}.
\end{cor}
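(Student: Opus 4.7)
The plan is to directly apply Lemma \ref{formal completion as limits} to the diagram $\{\n_{>m}(\bar{X}, F_{0})\}_{m \in \Z_{\le -1}}$ of $\ainf$-categories, with the subcategory $\T = \tilde{\dd}(F) \subset \w(\bar{X}, F_{0})$ playing the role of $\T \subset \cc$ and the functors $j_{m}: \w(\bar{X}, F_{0}) \to \n_{>m}(\bar{X}, F_{0})$ playing the role of $\{f_{i}: \cc \to \cc_{i}\}$. The conclusion is then the quasi-equivalence $\hat{\pi}^{*}: \mathcal{S}(\{\n_{>m}(\bar{X}, F_{0})\}_{m \in \Z_{\le -1}}) \stackrel{\sim}\to \wf$ claimed in the corollary.

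To apply Lemma \ref{formal completion as limits}, I would check its three hypotheses in turn. First, hypothesis (i) is immediate from our setup: by construction each $\n_{>m}(\bar{X}, F_{0})$ shares the same object set as $\w(\bar{X}, F_{0})$, the functors $\pi_{m, m'}$ act as the identity on objects, and by Proposition \ref{compatible diagram of functors} the $j_{m}$ form a strictly compatible system with this diagram. Second, hypothesis (ii) — the representability of each pullback Yoneda module $j_{m}^{*} Y^{r}_{j_{m}(X)}$ by an object of $\perf \tilde{\dd}(F)$ — is exactly the content of Theorem \ref{representable of nm module by d}, whose proof has already been given above.

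Third, hypothesis (iii) asks that for every $K \in \ob \tilde{\dd}(F)$ and every $X \in \ob \w(\bar{X}, F_{0})$, the natural map
\[
\hocolim_{m \in \Z^{op}_{\le -1}} \hom_{\rmod \w(\bar{X}, F_{0})}(j_{m}^{*} Y^{r}_{j_{m}(X)}, Y^{r}_{K}) \to \hom_{\rmod \w(\bar{X}, F_{0})}(Y^{r}_{X}, Y^{r}_{K})
\]
be a quasi-isomorphism. This is Proposition \ref{prop: map from colimit of Floer complexes}, provided $K$ is nonessential. Since any object of $\tilde{\dd}(F)$ is, by definition, in the image of the cup functor $i_{F}: \w(F) \to \w(F \times T^{*}[0,1]) \to \w(\bar{X}, F_{0})$ \eqref{cup functor}, it can be isotoped into an arbitrarily small neighborhood of the stop at infinity, hence is a nonessential Lagrangian in the sense of Definition \ref{defn: nonessential Lagrangian}. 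Thus Proposition \ref{prop: map from colimit of Floer complexes} applies verbatim.

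Having verified all three hypotheses, the conclusion of Lemma \ref{formal completion as limits} yields the desired natural quasi-equivalence $\hat{\pi}^{*}$. The only subtlety is that the target of Lemma \ref{formal completion as limits} is $\ct$ in the sense of Definition \ref{defn: formal completion} with $\T$ a full subcategory of $\cc$; here this target is precisely $\widehat{\w(\bar{X}, F_{0})}_{\tilde{\dd}(F)} = \wf$. Since the construction of $\hat{f}^{*}$ in Lemma \ref{formal completion as limits} is natural with respect to the data, no extra work is required to identify the resulting functor with the one denoted $\hat{\pi}^{*}$. The main substantive input — and thus the main potential obstacle throughout the overall argument — has already been absorbed into Theorem \ref{representable of nm module by d}, whose proof used the wrapping exact triangle together with the stop removal hypothesis to express pullback Yoneda modules in terms of $\perf \tilde{\dd}(F)$; the present corollary is then a formal consequence.
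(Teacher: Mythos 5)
Your proof is correct and takes essentially the same approach as the paper: both invoke Lemma \ref{formal completion as limits} applied to the diagram $\{\n_{>m}(\bar{X}, F_{0})\}$ with $\T = \tilde{\dd}(F)$, using Theorem \ref{representable of nm module by d} for hypothesis (ii) and Proposition \ref{prop: map from colimit of Floer complexes} (together with the observation that objects of $\tilde{\dd}(F)$ are nonessential) for hypothesis (iii). The paper's proof of the corollary additionally spells out an explicit chain-level description of $\hat{\pi}^{*}$, but that is supplementary commentary rather than a different route to existence.
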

\begin{proof}
While the existence follows from the general theory, i.e. Lemma \ref{formal completion as limits}, we can give an explicit description of the functor $\hat{\pi}^{*}$ as follows.
On the level of objects, $\hat{\pi}^{*}$ sends a constant section $s=s^{K}$ associated to some object $K \in \ob \n_{>m}(\bar{X}, F_{0})$ to the object $K \in \ob \wf$ (recall $\wf$ has the same collection of objects as $\w(\bar{X}, F_{0})$, and therefore also as $\n(\bar{X}, F_{0})$ and $\n_{>m}(\bar{X}, F_{0})$).
Let $s, s'$ be two constant sections associated to Lagrangians $K, L$. 
On the level of morphisms, $\hat{\pi}^{*}$ sends a natural transformation $T$ from $s$ to $s'$ to a pre-$Z_{\tilde{\dd}(F)}^{op}$-module morphism from $P_{\tilde{\dd}(F)}(-, K)$ to $P_{\tilde{\dd}(F)}(-, L)$ induced by $\ainf$-multiplication with $T(m)$ for every $m$.
This is well-defined because 
\begin{enumerate}[label=(\roman*)]

\item for every $m$, $T(m) \in \hom_{\n_{>m}}(s(m), s'(m)) = \hom_{\n_{>m}}(K, L)$ is a representative Floer cochain in the quotient complex $C_{0}/C_{m}$;

\item and by Proposition \ref{prop: morphism in t = in zt} the right $Z_{\tilde{\dd}(F)}^{op}$-module $P_{\tilde{\dd}(F)}(-, K)$ has the same underlying cochain complex as the pullback Yoneda module $i^{*}Y^{r}_{K}$ as a module over $\tilde{\dd}(F)$.

\end{enumerate}
Thus we can define a pre-module morphism from $P_{\tilde{\dd}(F)}(-, K)$ to $P_{\tilde{\dd}(F)}(-, L)$ by $\ainf$-multiplication with a cochain representative of $T(m)$,
which clearly induces a pre-module morphism on the Yoneda modules.
A priori, this depends on choices of representatives and also on $m$.
However, this induces a well-defined map from the homotopy limit by our construction of the diagram $\{\n_{>m}(\bar{X}, F_{0})\}_{m \in \Z_{\le -1}}$, 
particularly because the functors $\pi_{m, m'}$ have vanishing higher order terms
(see \S\ref{sec: variants} for the notations of these Floer complexes and Appendix \ref{app: map from inverse limit} for the definitions of sections and natural transformations appearing in the homotopy limit).

It is a consequence of Yoneda lemma that the functor $\hat{\pi}^{*}$ defined above is fully faithful.
\end{proof}

We are ready to prove Theorem \ref{thm:main2}.

\begin{proof}[Proof of Theorem \ref{thm:main2}]
By Lemma \ref{lem: image of pi}, the fully-faithful functor 
\[
\Pi: \n(\bar{X}, F_{0}) \stackrel{\sim}\to \holim_{m \in \Z_{\le -1}^{op}} \n_{>m}(\bar{X}, F_{0})
\]
supplied by Proposition \ref{n as homotopy limit of nm} has image in the subcategory $\mathcal{S}(\{\n_{>m}(\bar{X}, F_{0})\}_{m \in \Z_{\le -1}})$ of the homotopy limit $\holim_{m \in \Z_{\le -1}^{op}} \n_{>m}(\bar{X}, F_{0})$.
Thus we may compose it with the quasi-equivalence $\hat{\pi}^{*}$ \eqref{quasi-equivalence to wf from limit} in Corollary \ref{cor: holim is formal completion} to get a quasi-equivalence
\begin{equation}\label{alternative equivalence}
\tilde{\Psi}:= \hat{\pi}^{*} \circ \Pi: \n(\bar{X}, F_{0}) \stackrel{\sim}\to \wf.
\end{equation}
This concludes the proof.
\end{proof}

Given the naturality of its construction, this quasi-equivalence is expected to be homotopic to the functor $\Psi$ \eqref{functor to completion},
which would thus improve the statement of Theorem \ref{thm:main2} to a conclusion similar to that of Theorem \ref{thm:equivalence},
and allow us to conclude that under the hypotheses of Theorem \ref{thm:main2},
the Yoneda-type functor $\Psi$ is still a quasi-equivalence.
Proving this, however, requires further analysis of the quasi-equivalence $\hat{\pi}^{*}$, 
which would be purely algebraic and of independent interest.

\begin{conj}
The quasi-equivalence $\tilde{\Psi}$ \eqref{alternative equivalence} is homotopic to the functor $\Psi$ \eqref{functor to completion}.
\end{conj}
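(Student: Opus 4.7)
The plan is to construct an explicit $\ainf$-homotopy between $\Psi$ and $\tilde{\Psi}$ by factoring both through the intermediate model of $\wf$ provided by Lemma \ref{lem: wf as prop d}, namely the essential image of $\w(\bar{X}, F_{0}) \to \r{Prop}\tilde{\dd}(F)$. Under this identification, on objects both functors send $L$ to the pullback Yoneda module $i^{*}Y^{r}_{L}$, so the comparison is at the level of morphism and higher-order terms. The functor $\Psi$ is then controlled by a functor $\Theta: \n(\bar{X}, F_{0}) \to \r{Prop}\tilde{\dd}(F)$ defined in \S\ref{section: construction of psi} via popsicle counts with testing Lagrangians in $\tilde{\dd}(F)$, while $\tilde{\Psi}$ corresponds to a functor $\tilde{\Theta}$ whose higher-order terms are determined purely algebraically by the homotopy-limit construction.

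The first key step is to give a geometric, chain-level description of $\hat{\pi}^{*}$. Concretely, for constant sections $s^{K}, s^{L}$ and a natural transformation $T$ between them represented by a compatible family $T(m) \in C_{0}/C_{m}$, one should check that the induced pre-module morphism $P_{\tilde{\dd}(F)}(-, K) \to P_{\tilde{\dd}(F)}(-, L)$ can be computed by Floer-theoretic counts of genuine (non-popsicle) $\ainf$-disks in the larger category $\w(\bar{X}, F_{0})$ with boundary inputs from $\tilde{\dd}(F)$, together with a single input that is a representative cocycle of $T(m)$. This step crucially uses Proposition \ref{representable of nm module by d}: the adjunction $j_{m}^{*,\r{Prop}}Y^{r}_{j_{m}(L)}$ is representable by a twisted complex $C_{m} \in \r{Tw}\tilde{\dd}(F)$, so the chain-level module action through $\hat{\pi}^{*}$ reduces to the honest $\ainf$-multiplication in $\r{Tw}\w(\bar{X}, F_{0})$ restricted to $\tilde{\dd}(F)$.

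The second step is to build a one-parameter family of moduli spaces interpolating between the $\Psi$-adapted popsicle moduli spaces $\mathcal{R}^{k, l, \bar{\mathbf{p}}, \bar{\mathbf{w}}}_{2}$ and the popsicle-free disks governing $\tilde{\Theta}$. A natural candidate is a parameterized family in which the sprinkles are allowed to migrate to $z_{out}$ (forcing degenerations of type (ii)-(iii) in Lemma \ref{boundary degeneration}, yielding $\tilde{\Theta}$) at one end, and are constrained as in \S\ref{section: moduli spaces of functor disks} at the other. Counting rigid elements of zero-dimensional fibers yields multilinear maps $H^{k}$ satisfying the $\ainf$-homotopy equations by the standard boundary analysis of the one-parameter family, combined with the intersection-number bookkeeping of Lemma \ref{lem: intersection number identity} to ensure the maps restrict to the subcomplexes $NC^{*}_{F}$ of the Rabinowitz complexes.

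The main obstacle is the first step: reconciling the purely algebraic definition of $\hat{\pi}^{*}$ from Corollary \ref{cor: holim is formal completion} with a concrete Floer-theoretic description, and in particular controlling the compatibility of the inverse-limit representatives $T(m)$ with respect to varying $m$. The technical heart of the argument is to show that the module action obtained through the pushforward of adjoint functors $j_{m}^{*,\r{Prop}}$ — which a priori depends on the choice of representing object $C_{m}$ in $\r{Tw}\tilde{\dd}(F)$ via Theorem \ref{representable of nm module by d} — induces, in the limit $m \to -\infty$, a module structure on $i^{*}Y^{r}_{L}$ that coincides chain-homotopically with the one arising from the Floer complex $\hom_{\w(\bar{X}, F_{0})}(K', L)$ for $K' \in \tilde{\dd}(F)$. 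Once this coincidence is established, the homotopy between the two descriptions of $\Theta$ and $\tilde{\Theta}$ reduces to a comparison of $\ainf$-operations built from essentially the same moduli spaces (with and without sprinkles), and the interpolation in step two completes the argument.
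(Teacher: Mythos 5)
This statement is stated in the paper as a \emph{conjecture}, not a theorem: the paper explicitly declines to prove it, remarking just before the conjecture that ``Proving this, however, requires further analysis of the quasi-equivalence $\hat{\pi}^{*}$, which would be purely algebraic and of independent interest.'' There is therefore no paper proof to compare against; your proposal should be judged on its own as a candidate proof.

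As such, your proposal is best described as a research program rather than a proof, and you candidly say as much (``the main obstacle is the first step,'' ``the technical heart of the argument is to show\ldots''). The two substantive steps both contain genuine gaps. In step one, you ask for a geometric, chain-level description of $\hat{\pi}^{*}$ that matches the popsicle-count description of $\Psi$, but this is precisely the content of the conjecture: $\hat{\pi}^{*}$ is defined in Corollary \ref{cor: holim is formal completion} via $\ainf$-multiplication with the compatible family $T(m) \in C_{0}/C_{m}$, and the multiplication there takes place in $\n_{>m}(\bar{X}, F_{0})$, not directly in $\w(\bar{X}, F_{0})$. Reconciling the inverse-limit of these quotient-level multiplications with the Floer-theoretic operations counting popsicles on $\n(\bar{X}, F_{0})$, and in particular verifying that the representatives $T(m)$ can be chosen $m$-uniformly to produce a pre-module morphism on $P_{\tilde{\dd}(F)}(-,K)$ that is chain-homotopic to $\Theta^{1}$, is exactly what is missing; invoking Proposition \ref{representable of nm module by d} identifies the object $j_{m}^{*,\r{Prop}}Y^{r}_{j_{m}(L)}$ up to quasi-isomorphism, but does not pin down the module structure maps on the nose, which is what an $\ainf$-homotopy requires. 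In step two, the interpolation by ``migrating sprinkles to $z_{out}$'' is a reasonable template for building an $\ainf$-homotopy $H^{k}$, but it is asserted rather than constructed: one would need a compactified one-parameter family of popsicle moduli spaces whose boundary contributions decompose exactly into the terms $\Psi^{k} - \tilde{\Psi}^{k} + \mu^{1}\circ H^{k} + \sum H^{\cdot}\circ\mu^{\cdot}$ with the correct signs, and the intersection-number bookkeeping of Lemma \ref{lem: intersection number identity} must be checked along the entire degeneration, not only at the endpoints. Neither step produces an actual homotopy, and the two steps as stated depend on one another, so the argument is incomplete. It is also worth noting that the author anticipates the missing analysis to be purely algebraic (a finer study of $\hat{\pi}^{*}$), whereas your step two is geometric; a fully algebraic comparison of the two Yoneda-type functors, bypassing the parametrized moduli spaces, is an alternative that your sketch does not explore.
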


%
%appendix
%write the definition/construction of homotopy limit

\appendix 

\section{Homotopy limits}\label{app: homotopy limits}

The notion of homotopy limit is dual to homotopy colimit, where an explicit construction for the homotopy colimit of a diagram of $\ainf$-categories is given in \cite{GPS2} using the Grothendieck construction of the diagram, adapted from \cite{Lurie}.
Although we work with $\ainf$-categories whose properties are often referred to as `up to homotopy', 
in this construction we assume that diagrams are indexed by posets $I$, 
and require that all commuting diagrams be strictly commuting.
For the homotopy limit (the lax limit), we can give an explicit model, via the category of sections of the Grothendieck construction.
For simplicity, all $\ainf$-categories in this section are assumed to be strictly unital.
The reader should be warned that this Appendix consist of facts well-known to experts in category theory;
but we decide to spell them out here for the purpose of fixing conventions in this paper that exclusively fit into the framework of $\ainf$-categories.

\subsection{Grothendieck construction}\label{section: Grothendieck construction}

Let $I$ be a poset, so that for every $i, j \in I$ with $i \le j$, there is a unique morphism $i \to j$, with the morphism $i \to i$ being the identity morphism.
By a diagram $\{\cc_{i}\}_{i \in I}$ of $\ainf$-categories indexed by a poset $I$, we mean a functor $I \to \ainf-\r{Cat}$ which is strictly commuting, i.e. with identity natural transformation for compositions,
and satisfies $\cc_{i} \to \cc_{i}$ is the identity functor.
Concretely, this consists the following data:
\begin{itemize}

\item for each $i \in I$, an $\ainf$-category $\cc_{i}$,

\item for each morphism $i \to j$ in $I$ (i.e. $i \le j$ with a unique morphism), an $\ainf$-functor $f_{ij}: \cc_{i} \to \cc_{j}$,

\end{itemize}
such that for every chain of morphisms $i \to j \to k$ in $I$, we have $f_{jk} \circ f_{ij} = f_{ik}$.

First let us recall the Grothendieck construction $\r{Gr}(\{\cc_{i}\}_{i \in I})$ of a diagram $\{\cc_{i}\}_{i \in I}$, following \cite{Lurie}, \cite{GPS2}.
Given a $(\cc, \dd)$-bimodule $\B$, the {\it semi-orthogonal gluing} $\langle \dd, \cc \rangle_{\B}$ of $\dd$ and $\cc$ along $\B$ is the $\ainf$-category whose objects are $\ob \cc \sqcup \ob \dd$,
and morphisms are
\begin{equation}
\langle \dd, \cc \rangle_{\B}(X, Y) = 
\begin{cases}
\dd(X, Y), & X, Y \in \ob \dd, \\
\cc(X, Y), & X, Y \in \ob \cc, \\
\B(Y, X), & X \in \ob \dd, Y \in \ob \cc,\\
0, & X \in \ob \cc, Y \in \ob \dd.
\end{cases}
\end{equation}
The Grothendieck construction of a finite directed system $\cc_{0} \to \cdots \to \cc_{k}$ is defined inductively by $\r{Gr}(\cc) = \cc$, and
\begin{equation}
\r{Gr}(\cc_{0} \stackrel{f_{01}}\to \cdots \stackrel{f_{k-1,k}}\to \cc_{k}) = \r{Gr}(\cc_{0} \stackrel{f_{01}}\to \cdots \stackrel{f_{k-2,k-1}}\to \langle \cc_{k-1}, \cc_{k} \rangle_{ (\id_{\cc_{k}}, f_{k-1,k})^{*} (\cc_{k})_{\D} }).
\end{equation}

For any diagram $\{\cc_{i}\}_{i \in I}$ indexed by a poset $I$, 
define the Grothendieck construction $\r{Gr}(\{\cc_{i}\}_{i \in I})$ to be the $\ainf$-category with objects $\sqcup_{i \in I} \ob \cc_{i}$, with zero morphisms from objects in $\cc_{i}$ to those in $\cc_{j}$ unless there is a morphism $i \to j$ in $I$ (i.e. $i \le j$),
such that for every chain of composable morphisms $i_{0} \to \cdots \to i_{p}$ in $I$, 
the full subcategory with objects $\sqcup_{j=0}^{p} \ob \cc_{i_{j}}$ is given by $\r{Gr}(\cc_{i_{0}} \to \cdots \to \cc_{i_{p}})$.

Define the class $A_{I}$ of adjacent morphisms in $H^{0}(\r{Gr}(\{\cc_{i}\}_{i \in I}))$ to be those morphisms $X_{i} \to f_{ij}X_{i}$ which correspond to the unit in $H^{0}\cc_{j}(f_{ij}X_{i}, f_{ij}X_{i}) = H^{0}(\r{Gr}(\{\cc_{i}\}_{i \in I})(X_{i}, f_{ij}X_{i})$ for $i \le j$.

\subsection{The category of sections}

Next, we define a new $\ainf$-category $\Gamma\r{Gr}(\{\cc_{i}\}_{i \in I})$.
The objects of $\Gamma\r{Gr}(\{\cc_{i}\}_{i \in I})$ are called {\it sections} of the Grothendieck construction $\r{Gr}(\{\cc_{i}\}_{i \in I})$,
which are functors $I \to \r{Gr}(\{\cc_{i}\}_{i \in I})$ (not $\ainf$-functors)
for which the composition $I \to \r{Gr}(\{\cc_{i}\}_{i \in I}) \to I$ is the identity. 
Concretely, an object $s$ of $\Gamma\r{Gr}(\{\cc_{i}\}_{i \in I})$ consists of the following data:
\begin{itemize}

\item for every object $i \in I$, an object $s(i) \in \cc_{i}$,

\item for each morphism $i \to j$ in $I$ (unique for every $i \le j$), a morphism $s_{ij} \in \cc_{j}(f_{ij}(s(i)), s(j))$,

\end{itemize}
which are required to satisfy the following compatibility conditions
\begin{enumerate}[label=(\roman*)]

\item $s_{ii} = 1_{s(i)} \in \cc_{i}(s(i), s(i))$ (this makes sense as we assume $f_{ii} = \id_{\cc_{i}}$ for all the diagrams we consider here),

\item for morphism $i \to j \to k$ (which exist when $i \le j \le k$), the associated morphisms $s_{ij}, s_{jk}, s_{ik}$ should satisfy
\begin{equation}\label{section composition}
\mu^{2}_{\cc_{k}}(s_{jk}, f_{jk}(s_{ij})) = s_{ik}.
\end{equation}

\end{enumerate}
Notice that the second condition is quite restrictive, but it is reasonable in our setting as we will only consider strictly commuting diagrams.
A morphism $T \in \Gamma\r{Gr}(\{\cc_{i}\}_{i \in I})(s, s')$ is a natural transformation from $s$ to $s'$ lying above the identity of $I$, which concretely consists of a morphism $T(i) \in \cc_{i}^{g}(s(i), s'(i))$ for every $i \in I$,
satisfying compatibility conditions for natural transformations between functors $s, s': I \to \r{Gr}(\{\cc_{i}\}_{i \in I})$, i.e.
\begin{equation}\label{nattrans compatible}
\mu^{2}_{\cc_{j}}(s'_{ij}, f_{ij}(T(i))) = \mu^{2}_{\cc_{j}}(T(j), s_{ij}),
\end{equation}
for every $i \to j$.
The $\ainf$-structure on $\Gamma\r{Gr}(\{\cc_{i}\}_{i \in I})$ is induced by $\ainf$-structures on $\cc_{i}$ for all $i$.
That is, for $T^{(j)} \in \Gamma\r{Gr}(\{\cc_{i}\}_{i \in I})(s^{(j-1)}, s^{(j)}), j = 1, \ldots, k$, we let
\begin{equation}\label{muk of nattrans}
T: = \mu^{k}_{\Gamma\r{Gr}(\{\cc_{i}\}_{i \in I})}(T^{(k)}, \ldots, T^{(1)})
\end{equation}
be the natural transformation $T$ such that 
\begin{equation}\label{muk of nattrans component}
T(i) = \mu^{k}_{\cc_{i}}(T^{(k)}(i), \ldots, T^{(1)}(i)).
\end{equation}
This is well-defined because of \eqref{section composition} and \eqref{nattrans compatible}.
The $\ainf$-equations of $\Gamma\r{Gr}(\{\cc_{i}\}_{i \in I})$ immediately follow from $\ainf$-equations of $\cc_{i}$ and the strictly commuting condition of the diagram $\{\cc_{i}\}_{i \in I}$.

\subsection{The homotopy limit}

To define the homotopy limit, we first need to define the class of adjacent morphisms $B_{I} \subset H^{0}(\Gamma\r{Gr}(\{\cc_{i}\}_{i \in I}))$.
These are induced by adjacent morphisms in $A_{I} \subset H^{0}(\r{Gr}(\{\cc_{i}\}_{i \in I}))$. 
That is to say, morphisms in $B_{I}$ are defined to be those whose components are in $A_{I}$. 

\begin{defn}
We define the homotopy limit 
\begin{equation}
\holim_{I} \{\cc_{i}\}_{i \in I} = \Gamma\r{Gr}(\{\cc_{i}\}_{i \in I})[B_{I}^{-1}]
\end{equation}
 to be the localization of $\Gamma\r{Gr}(\{\cc_{i}\}_{i \in I})$ at the collection $B_{I}$ of adjacent morphisms in $H^{0}(\Gamma\r{Gr}(\{\cc_{i}\}_{i \in I}))$.
 \end{defn}
 
 \begin{rem}
 At the beginning of \S\ref{section: Grothendieck construction}, we assume the diagram is indexed by a poset $I$,
 in which there is a unique morphism $i \to j$ iff $i \le j$.
 All the above constructions also make sense for the opposite poset $I^{op}$, where there is a unique morphism $i \to j$ iff $j \le i$.
 \end{rem}
 
 By construction, the homotopy limit comes with natural projections 
 \begin{equation}
 \pi_{i}: \holim_{i \in I}\cc_{i} \to \cc_{i}
 \end{equation}
  for every $i$.
 This is induced from a functor $\Gamma\r{Gr}(\{\cc_{i}\}_{i \in I}) \to \cc_{i}$,
 which on objects sends a section $s$ to its evaluation $s(i) \in \ob \cc_{i}$,
 and on morphisms sends a natural transformation $T$ to the morphism $T(i)$.

If $I$ has a maximal element $\bar{i}$, then the homotopy colimit $\hocolim_{i \in I} \cc_{i}$ is quasi-equivalent to $\cc_{\bar{i}}$.
As for the homotopy limit, we have
 
 \begin{lem}
 If $I$ has a minimal element $\underline{i}$, then the projection functor $\holim_{i \in I} \cc_{i} \to \cc_{\underline{i}}$ is a quasi-equivalence.
 \end{lem}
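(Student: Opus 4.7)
The strategy is to exhibit an explicit inverse $\iota: \cc_{\underline{i}} \to \holim_{i \in I} \cc_i$ to the projection $\pi_{\underline{i}}$ via the assignment sending an object $X \in \ob \cc_{\underline{i}}$ to its \emph{constant section} $s^X$, defined by $s^X(i) = f_{\underline{i}, i}(X)$ with connecting morphisms $(s^X)_{ij} = 1_{f_{\underline{i}, j}(X)}$; the strict compatibility $f_{ij} \circ f_{\underline{i}, i} = f_{\underline{i}, j}$ of the diagram makes the composition rule \eqref{section composition} hold trivially. On morphisms, $\iota$ is the $\ainf$-functor whose higher-order terms on a chain $c_k \otimes \cdots \otimes c_1$ are the natural transformations with $i$-th component given by $f_{\underline{i}, i}^k(c_k, \ldots, c_1)$; the $\ainf$-functor equations for $f_{\underline{i}, i}$ together with strict commutativity of the diagram imply compatibility \eqref{nattrans compatible} and compatibility with the componentwise composition law \eqref{muk of nattrans component}, so $\iota$ is a well-defined $\ainf$-functor which we then post-compose with the localization $\Gamma\r{Gr}(\{\cc_i\}_{i \in I}) \to \holim_{i \in I} \cc_i$.

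Next I would verify that $\pi_{\underline{i}} \circ \iota$ is strictly the identity on $\cc_{\underline{i}}$: evaluation of $s^X$ at $\underline{i}$ yields $f_{\underline{i}, \underline{i}}(X) = X$ since by convention $f_{\underline{i}, \underline{i}} = \id_{\cc_{\underline{i}}}$, and on morphisms $\pi_{\underline{i}}$ just reads off the $\underline{i}$-component, which is $f_{\underline{i}, \underline{i}}^1 = \id$. Thus one composite is the identity on the nose, reducing the problem to exhibiting a natural quasi-isomorphism $\iota \circ \pi_{\underline{i}} \simeq \id_{\holim}$.

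For the other composite, I would build, for each section $s$, a canonical natural transformation $T_s: s^{s(\underline{i})} \to s$ in $\Gamma\r{Gr}(\{\cc_i\}_{i \in I})$ whose $i$-th component is precisely the structure map $s_{\underline{i}, i} \in \cc_i(f_{\underline{i}, i}(s(\underline{i})), s(i))$. The natural-transformation compatibility \eqref{nattrans compatible} for $i \le j$ is exactly \eqref{section composition} applied to the chain $\underline{i} \le i \le j$: $\mu^2_{\cc_j}(s_{ij}, f_{ij}(s_{\underline{i}, i})) = s_{\underline{i}, j} = \mu^2_{\cc_j}(s_{\underline{i}, j}, 1)$. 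Naturality in $s$ (with respect to morphisms of sections) follows by another application of \eqref{nattrans compatible} to the source transformation. It then remains to show $T_s$ becomes invertible in the localization at $B_I$.

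The main obstacle is this last step, namely verifying that $T_s$ is a quasi-isomorphism in $\Gamma\r{Gr}(\{\cc_i\}_{i \in I})[B_I^{-1}]$. The approach is to observe that under the identification $\r{Gr}(\{\cc_i\}_{i \in I})(s(\underline{i}), s(i)) = \cc_i(f_{\underline{i}, i}(s(\underline{i})), s(i))$, the component $s_{\underline{i}, i}$ factors in the Grothendieck construction as the adjacent morphism $s(\underline{i}) \to f_{\underline{i}, i}(s(\underline{i}))$ in $A_I$ followed by the internal morphism $f_{\underline{i}, i}(s(\underline{i})) \to s(i)$ in $\cc_i$, which itself is the $i$-component of a natural transformation between constant sections lifted from $\cc_i$. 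Assembling these componentwise factorizations into a diagram in $\Gamma\r{Gr}$ and inverting the resulting adjacent natural transformations in $B_I$ presents $T_s$ as an isomorphism in the localized category; combined with $\pi_{\underline{i}} \circ \iota = \id$, this yields the desired quasi-equivalence. The bookkeeping for this factorization, and checking that the resulting natural isomorphism $\iota \circ \pi_{\underline{i}} \simeq \id$ is functorial in $s$ at the level of the localized homotopy category, is where all the technical weight of the argument lies.
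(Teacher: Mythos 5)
Your plan follows the same route as the paper (the candidate inverse sends $X\in\ob\cc_{\underline{i}}$ to the section $s^X$ with $s^X(i)=f_{\underline{i},i}(X)$ and unit connecting morphisms, and $\pi_{\underline{i}}\circ\iota=\id$ on the nose), but as written it has a concrete error and leaves the decisive step unproved. The error is in your higher-order terms for $\iota$: you set $\iota^k(c_k,\ldots,c_1)(i)=f^k_{\underline{i},i}(c_k,\ldots,c_1)$ and claim \eqref{nattrans compatible} follows from strict commutativity. With unit connecting morphisms, \eqref{nattrans compatible} reduces to $f^1_{ij}\bigl(f^k_{\underline{i},i}(c_k,\ldots,c_1)\bigr)=f^k_{\underline{i},j}(c_k,\ldots,c_1)$, whereas strict commutativity only gives $f_{\underline{i},j}=f_{ij}\circ f_{\underline{i},i}$ as $\ainf$-functors, whose $k$-th term is the full sum $\sum_r f^r_{ij}\bigl(f^{k_r}_{\underline{i},i}(\cdots),\ldots,f^{k_1}_{\underline{i},i}(\cdots)\bigr)$ over decompositions, not just the $r=1$ term. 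So for $k\ge 2$ your componentwise tuples are not elements of $\hom_{\Gamma\r{Gr}}$ whenever some transition functor $f_{ij}$ has nonvanishing higher-order terms, and $\iota$ is not a well-defined $\ainf$-functor as stated. (For $k=1$ there is no problem, since $(f_{ij}\circ f_{\underline{i},i})^1=f^1_{ij}\circ f^1_{\underline{i},i}$; the paper's argument is phrased so that only the first-order map and its effect on cohomology after localization are used.)

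The second and more serious gap is the one you yourself flag: the invertibility of $T_s$ in $\Gamma\r{Gr}(\{\cc_i\}_{i\in I})[B_I^{-1}]$, which is where the entire content of the lemma sits, is asserted rather than proved, and the mechanism you sketch does not produce morphisms of $B_I$. The adjacent morphisms of $A_I$ are morphisms $X_i\to f_{ij}X_i$ of the Grothendieck construction connecting objects in \emph{different} fibers $\cc_i$ and $\cc_j$; they are not components of natural transformations between sections, and the componentwise factorizations of $s_{\underline{i},i}$ through them do not assemble into a zigzag in $\Gamma\r{Gr}$ whose backward arrows lie in $B_I$ (the intermediate data do not form sections, and nothing forces the comparison you need at different indices to be compatible). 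What is actually required — and what the paper's proof records, however tersely, by saying that the first-order map induces an isomorphism on cohomology after passing to the quotient by adjacent morphisms, and that on cohomology it is inverse to the projection — is a computation of the morphism complexes in the localized category: that every section becomes isomorphic there to $s^{s(\underline{i})}$, and that $\hom(s^X,s^Y)$ localizes to $\cc_{\underline{i}}(X,Y)$ compatibly with composition. Until you supply that analysis (or an equivalent computation of the localization), the claim that $T_s$ becomes an isomorphism, and hence that $\iota\circ\pi_{\underline{i}}\simeq\id$, remains unestablished.
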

 \begin{proof}
It is not hard to construct an inverse to the projection, which is induced by a functor 
 \[
\cc_{\underline{i}} \to \Gamma\r{Gr}(\{\cc_{i}\}_{i \in I}) 
 \]
 sending every $X_{\underline{i}} \in \ob \cc_{\underline{i}}$ to the section $s$ defined by
 \begin{equation}
 s(i) = f_{\underline{i}, i} (X_{\underline{i}})
 \end{equation}
 and for $i \to j$, the morphism $s_{ij}$
is uniquely inductively defined by the insertion of $\underline{i} \to i \to j$ and using \eqref{section composition},
with $s_{\underline{i}, i} = 1_{f_{\underline{i}, i}(X_{\underline{i}})} \in \hom_{\cc_{i}}(f_{\underline{i}, i}(X_{\underline{i}}), f_{\underline{i}, i}(X_{\underline{i}}))$.
By definition, such an $s$ is a well-defined functor $I \to \r{Gr}(\{\cc_{i}\}_{i \in I})$ whose composition with the natural functor $\r{Gr}(\{\cc_{i}\}_{i \in I}) \to I$ is the identity. 

The maps on morphism spaces
\begin{equation}
\cc_{\underline{i}}(X_{\underline{i}, k-1}, X_{\underline{i}, k}) \otimes \cdots \otimes \cc_{\underline{i}}(X_{\underline{i}, 0}, X_{\underline{i}, 1}) \to \hom_{\Gamma\r{Gr}(\{\cc_{i}\}_{i \in I})}(s_{0}, s_{k})
\end{equation}
are induced by $\ainf$-multiplications with the elements from $\cc_{\underline{i}}(X_{\underline{i}, k-1}, X_{\underline{i}, k}) \otimes \cdots \otimes \cc_{\underline{i}}(X_{\underline{i}, 0}, X_{\underline{i}, 1})$ under the images of $f_{\underline{i}, i_{k}}$.
In particular, the first order map induces an isomorphism on cohomology after passing to the quotient by adjacent morphisms.
And it is a direct computation to check the induced map on cohomology is inverse to the projection.
 \end{proof}
 
 Let $\{\cc_{i}\}_{i \in I}, \{\dd_{i}\}_{i \in I}$ be two diagrams indexed by the same poset $I$.
 Suppose we have a diagram of strictly compatible $\ainf$-functors $\{F_{i}: \cc_{i} \to \dd_{i}\}_{i \in I}$.
 Then, the diagram of functors induces a functor on the homotopy limit
 \begin{equation}
 F = \holim_{i \in I} F_{i}: \holim_{i \in I} \cc_{i} \to \holim_{i \in I} \dd_{i},
 \end{equation}
 which is induced by the functor on the section categories $\Gamma\r{Gr}(\{\cc_{i}\}_{i \in I}) \to \Gamma\r{Gr}(\{\dd_{i}\}_{i \in I})$ sending each $s$ to the section $F \circ s$ defined on the level of objects by $F \circ s(i) = F_{i}(s(i))$.
In particular, the usual {\it universal property} is a special consequence of the above construction:

\begin{lem}\label{lem: induced functors on limit}
If $\{F_{i}: \cc \to \dd_{i}\}_{i \in I}$ is a diagram of strictly compatible $\ainf$-functors, 
then there is an induced functor
\begin{equation}
F = \holim_{i \in I} F_{i}: \cc \to \holim_{i \in I} \dd_{i}.
\end{equation}
Moreover, if any other functor $G: \cc \to \holim_{i \in I} \dd_{i}$ satisfies $\pi_{i} \circ G = F_{i}$, then $G$ is homotopic to $F$.
\end{lem}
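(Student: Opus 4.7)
\textbf{Proof proposal for Lemma \ref{lem: induced functors on limit}.}

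The plan is to build $F$ explicitly at the level of the section category $\Gamma\r{Gr}(\{\dd_{i}\}_{i \in I})$ and then post-compose with the localization functor. On objects, given $X \in \ob \cc$, define a section $F(X)$ by setting $F(X)(i) = F_{i}(X)$ and taking every transition morphism $F(X)_{ij} \in \dd_{j}(f_{ij}^{\dd}(F_{i}(X)), F_{j}(X))$ to be the unit $1_{F_{j}(X)}$; this is a legitimate section because strict compatibility of the diagram $\{F_{i}\}$ gives $f_{ij}^{\dd}(F_{i}(X)) = F_{j}(X)$, and the composition condition \eqref{section composition} is then trivially satisfied as $\mu^{2}$ of units is a unit. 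On morphisms, set $F^{k}(c_{k}, \ldots, c_{1})$ to be the natural transformation whose component at $i$ is $F_{i}^{k}(c_{k}, \ldots, c_{1})$. The naturality condition \eqref{nattrans compatible} reduces to $f_{ij}^{\dd}(F_{i}^{k}(c_{k}, \ldots, c_{1})) = F_{j}^{k}(c_{k}, \ldots, c_{1})$, which is exactly the strict compatibility condition. The $\ainf$-functor equations for $F$ follow componentwise from the $\ainf$-functor equations for each $F_{i}$, using the definition \eqref{muk of nattrans component} of multilinear operations on natural transformations.

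Post-composing with the localization $\Gamma\r{Gr}(\{\dd_{i}\}) \to \holim_{i \in I}\dd_{i}$ then yields the claimed functor $F: \cc \to \holim_{i \in I}\dd_{i}$. The identity $\pi_{i} \circ F = F_{i}$ is immediate from the construction, since $\pi_{i}$ is induced by the section category functor that reads off the $i$-th component.

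For uniqueness, suppose $G: \cc \to \holim_{i \in I} \dd_{i}$ satisfies $\pi_{i} \circ G = F_{i}$ for each $i$. Without loss of generality we may lift $G$ to a functor $\tilde{G}: \cc \to \Gamma\r{Gr}(\{\dd_{i}\})$ (this uses that $\holim$ is obtained by localizing at $B_{I}$, and any object of $\holim$ is represented by a section). On objects, $\tilde{G}(X)$ and $\tilde{F}(X)$ have identical components $F_{i}(X)$ at each $i$, but the transition morphisms $\tilde{G}(X)_{ij} \in \dd_{j}(F_{j}(X), F_{j}(X))$ may differ from the units. Build a natural transformation $T(X): \tilde{F}(X) \to \tilde{G}(X)$ in $\Gamma\r{Gr}(\{\dd_{i}\})$ whose components $T(X)(i) = 1_{F_{i}(X)}$ are units, and verify that the naturality equation \eqref{nattrans compatible} reduces to the assertion that $\tilde{G}(X)_{ij}$ is cohomologous to $1_{F_{j}(X)}$. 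The obstructions to this live in $H^{*}(\dd_{j}(F_{j}(X), F_{j}(X)))$ and are killed in the localization $\holim_{i \in I}\dd_{i}$ because every adjacent morphism in $B_{I}$ (in particular, every component $\tilde{G}(X)_{ij}$, which is an image of a unit under an adjacent morphism in $A_{I}$) becomes invertible there; assembling these identifications into a coherent $\ainf$-natural transformation gives the homotopy $F \simeq G$.

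The main technical obstacle is this last step: verifying that the naïve choice of componentwise units can be promoted to a coherent $\ainf$-natural transformation in the localization, with all higher order coherences in place. This amounts to an inductive argument constructing higher components of $T$ that trivialize the successive discrepancies between $\tilde{G}(X)_{ij}$ and units, using invertibility in $H^{0}(\holim)$ of adjacent morphisms and strict unitality of all categories involved. The strict commutativity of the diagram and strict compatibility of $\{F_{i}\}$ are what make this inductive construction go through without further obstructions.
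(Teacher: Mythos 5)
Your existence construction is correct and is essentially what the paper intends: you build the section $\tilde F(X)$ with components $F_i(X)$ and unit transition morphisms, and define $F^k$ componentwise, then post-compose with the localization to $\holim$. The paper's own treatment is even terser — it asserts this lemma is ``a special consequence of the above construction,'' meaning: regard $\cc$ as a constant diagram, compose the map $\cc \cong \lim_I \cc \to \holim_I \cc$ supplied by Proposition \ref{prop: lim to holim} with the induced functor $\holim_I \cc \to \holim_I \dd_i$; unwinding gives exactly your section with unit transitions. One imprecision to flag in your check of \eqref{nattrans compatible}: reducing it to ``$f_{ij}^{\dd}(F_i^k) = F_j^k$'' and calling that ``exactly the strict compatibility condition'' is only correct when the transition functors $f_{ij}^{\dd}$ have vanishing higher-order terms, since strict compatibility $f_{ij}\circ F_i = F_j$ unpacks to $\sum_r f_{ij}^r(F_i^{s_r},\ldots,F_i^{s_1}) = F_j^k$, not $f_{ij}^1(F_i^k)=F_j^k$. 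The condition \eqref{nattrans compatible} as written in the paper (applying $f_{ij}$ to a single morphism) already presupposes strict $f_{ij}$, which holds in all the paper's applications, so this is a gap in the paper's framing more than in your proof — but it is worth noticing.

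The uniqueness half has a genuine gap, which you partly acknowledge. The step ``without loss of generality we may lift $G$ to $\tilde G:\cc\to\Gamma\r{Gr}(\{\dd_i\})$'' is not justified: morphism spaces of $\holim = \Gamma\r{Gr}[B_I^{-1}]$ are built from zigzags through the adjacent morphisms, so a functor to the localization need not factor through $\Gamma\r{Gr}$, even given $\pi_i\circ G = F_i$ on the nose. Moreover, even granted a lift, the condition \eqref{nattrans compatible} for $T$ with $T(i)=1_{F_i(X)}$ forces $\tilde G(X)_{ij}=1_{F_j(X)}$ \emph{strictly} (by strict unitality), not just cohomologously; so $T$ as written is not even a well-defined morphism of $\Gamma\r{Gr}$ unless the transition morphisms literally agree, and the ``obstructions are killed in the localization'' step is doing real work that is not carried out. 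You do flag this as ``the main technical obstacle,'' which is the right instinct, but the proof proposal does not close it. For comparison, the paper also does not prove uniqueness — it merely asserts the universal property follows — so this is an omission shared by the source. A cleaner route would be to invoke the universal property of the dg/$\ainf$-localization to characterize functors out of $\Gamma\r{Gr}[B_I^{-1}]$, rather than characterizing functors into it, but that requires reformulating the statement.
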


However, unlike the homotopy colimit, the homotopy limit is not functorial with respect to a diagram of functors covering an arbitrary functor between posets.
Nonetheless, we have the following special case where the diagram of functors is exactly induced by pullback of a functor of diagrams:

\begin{lem}
Let $\{\dd_{j}\}_{j \in J}$ be a diagram of $\ainf$-categories, and $\alpha: I \to J$ a functor of posets.
Let $\{\cc_{i}\}_{i \in I}$ be the pullback diagram defined by $\cc_{i} = \dd_{\alpha_{i}}$. 
Then there is an induced functor
\begin{equation}
\alpha^{*}: \holim_{j \in J} \dd_{j} \to \holim_{i \in I} \cc_{i}.
\end{equation}
\end{lem}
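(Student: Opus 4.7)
The plan is to construct the functor $\alpha^*$ at the level of section categories and then verify that it descends to the localizations. Given a section $s \in \ob \Gamma\r{Gr}(\{\dd_j\}_{j \in J})$, which consists of objects $s(j) \in \dd_j$ and morphisms $s_{jj'} \in \dd_{j'}(f_{jj'}s(j), s(j'))$ for every $j \le j'$ satisfying \eqref{section composition}, I will define the pullback section $\alpha^* s \in \ob \Gamma\r{Gr}(\{\cc_i\}_{i \in I})$ by
\[
(\alpha^* s)(i) = s(\alpha(i)) \in \dd_{\alpha(i)} = \cc_i, \quad (\alpha^* s)_{ii'} = s_{\alpha(i)\alpha(i')},
\]
using that $i \le i'$ implies $\alpha(i) \le \alpha(i')$ as $\alpha$ is a functor of posets. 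The compatibility condition \eqref{section composition} for $\alpha^* s$ under a chain $i \le i' \le i''$ follows directly from the corresponding condition for $s$ along $\alpha(i) \le \alpha(i') \le \alpha(i'')$, together with the strict compatibility of the pullback diagram $\{\cc_i\}_{i \in I}$ with $\{\dd_j\}_{j \in J}$ (so that $f^{\cc}_{ii'} = f^{\dd}_{\alpha(i)\alpha(i')}$). Similarly, for a natural transformation $T: s \to s'$, set $(\alpha^* T)(i) = T(\alpha(i))$; the compatibility \eqref{nattrans compatible} for $\alpha^* T$ reduces to that for $T$.

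Next, I will promote this to an $\ainf$-functor
\[
\tilde\alpha^*: \Gamma\r{Gr}(\{\dd_j\}_{j \in J}) \to \Gamma\r{Gr}(\{\cc_i\}_{i \in I}),
\]
with vanishing higher-order terms, whose linear component is the assignment $T \mapsto \alpha^* T$ above. Because by \eqref{muk of nattrans} and \eqref{muk of nattrans component} the $\ainf$-structure on the section categories is defined component-wise, and $(\alpha^* T)(i) = T(\alpha(i))$ simply restricts components, the map is strictly compatible with $\mu^k$: one has $(\alpha^* \mu^k(T^{(k)},\dots,T^{(1)}))(i) = \mu^k_{\cc_i}(T^{(k)}(\alpha(i)),\dots,T^{(1)}(\alpha(i)))$, which agrees with $\mu^k$ applied to $\alpha^* T^{(j)}$. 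No higher-order terms are needed, and the $\ainf$-functor equations are trivially satisfied.

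Finally, to descend $\tilde\alpha^*$ to the localized homotopy limits, I will check that it sends the class $B_J$ of adjacent morphisms in $H^0(\Gamma\r{Gr}(\{\dd_j\}_{j \in J}))$ into $B_I$. By definition, $B_J$ is generated by natural transformations whose components at each $j$ lie in the class $A_J \subset H^0(\r{Gr}(\{\dd_j\}_{j \in J}))$ of unit-type morphisms $X_j \to f_{jj'}X_j$. Since $(\alpha^* T)(i) = T(\alpha(i))$, and by definition of the pullback diagram the unit-type morphisms in $\cc_i$ coincide with those in $\dd_{\alpha(i)}$, the image of an element of $A_J$ under pullback lies in $A_I$, hence $\tilde\alpha^*$ sends $B_J$ to $B_I$. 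By the universal property of localization, $\tilde\alpha^*$ descends to the desired functor
\[
\alpha^*: \holim_{j \in J} \dd_j \to \holim_{i \in I} \cc_i.
\]

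The main point that requires a little care is the strict compatibility between the two diagrams: it is essential that $\cc_i = \dd_{\alpha(i)}$ as $\ainf$-categories (not just up to quasi-equivalence) and $f^{\cc}_{ii'} = f^{\dd}_{\alpha(i)\alpha(i')}$ strictly, so that pullback of sections is well-defined on the nose and not only up to homotopy. This is guaranteed by the definition of the pullback diagram and the convention adopted throughout this appendix that all diagrams are strictly commuting.
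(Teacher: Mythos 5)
Your proof is correct and follows essentially the same route as the paper: define the pullback on the section categories $\Gamma\r{Gr}(\cdot)$ componentwise via $\alpha$, note the $\ainf$-functor has vanishing higher terms because the structure maps on sections are componentwise, and descend through the localization by checking adjacent morphisms are preserved. You have simply written out the details the paper dismisses as ``clearly'' and ``straightforward,'' and your containment $\tilde\alpha^*(B_J)\subset B_I$ is exactly what the localization argument requires.
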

\begin{proof}
On the section categories $\Gamma\r{Gr}(\cdot)$, clearly there is a functor by pullback by $\alpha$.
It is straightforward to check that the adjacent morphisms in $H^{0}\Gamma\r{Gr}(\{\cc_{i}\}_{i \in I})$ are exactly the pullbacks of adjacent morphisms in $H^{0}\Gamma\r{Gr}(\{\dd_{j}\}_{j \in J})$ under this pullback.
\end{proof}

\subsection{The natural transformation from the limit}\label{app: map from inverse limit}

Since the diagram $\{\cc_{i}\}_{i \in I}$ is strictly commuting and all the $\ainf$-categories $\cc_{i}$ are small, we can also consider the ordinary limit
\begin{equation}
\lim_{i \in I} \cc_{i}
\end{equation}
which is an $\ainf$-category with set of objects being the following subset of $\prod_{I} \ob \cc_{i}$
\begin{equation}
\ob \lim_{i \in I} \cc_{i} = \{(X_{i})_{i \in I}: X_{i} \in \ob \cc_{i}, f_{ij}(X_{i}) = X_{j} \text{ if there is a morphism } i \to j \}.
\end{equation}
The morphism space $\lim_{i \in I} \cc_{i}((X_{i})_{i \in I}, (Y_{i})_{i \in I})$ is the following subspace of $\prod_{I} \cc_{i}(X_{i}, Y_{i})$
\begin{equation}
\lim_{i \in I} \cc_{i}((X_{i})_{i \in I}, (Y_{i})_{i \in I}) = \{ (c_{i})_{i \in I}: c_{i} \in \cc_{i}(X_{i}, Y_{i}), f_{ij}(c_{i}) = c_{j} \}.
\end{equation}
The $\ainf$-structure maps are defined component-wise, namely
\begin{equation}
\mu^{k}_{\lim_{i \in I} \cc_{i}}((c^{(k)}_{i})_{i \in I}, \ldots, (c^{(1)}_{i})_{i \in I}) = (\mu^{k}_{\cc_{i}}(c^{(k)}_{i}, \ldots, c^{(1)}_{i})_{i \in I}.
\end{equation}
All of these are well-defined exactly because the diagram is strictly commuting.

\begin{prop}\label{prop: lim to holim}
There exists a natural transformation from $\lim$ to $\holim$.
That is, for every such diagram there is a functor 
\begin{equation}
H: \lim_{i \in I} \cc_{i} \to \holim_{i \in I} \cc_{i},
\end{equation}
which is natural with respect to functors of diagrams.
\end{prop}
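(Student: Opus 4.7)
The plan is to construct the functor $H$ by first producing a strict $\ainf$-functor $\tilde{H} \co \lim_{i \in I} \cc_{i} \to \Gamma\r{Gr}(\{\cc_{i}\}_{i \in I})$ into the section category, then composing with the localization $\Gamma\r{Gr}(\{\cc_{i}\}_{i \in I}) \to \holim_{i \in I} \cc_{i}$. On objects, I would define $\tilde{H}((X_{i})_{i \in I})$ to be the section $s$ with $s(i) = X_{i}$ and $s_{ij} = 1_{X_{j}} \in \cc_{j}(X_{j}, X_{j})$, viewed as an element of $\cc_{j}(f_{ij}(X_{i}), X_{j})$ using the defining identity $f_{ij}(X_{i}) = X_{j}$ of the strict limit. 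The unit axiom $s_{ii} = 1_{s(i)}$ is tautological, and for $i \le j \le k$ the composition law \eqref{section composition} reduces to $\mu^{2}_{\cc_{k}}(1_{X_{k}}, f_{jk}(1_{X_{j}})) = 1_{X_{k}}$, which holds because $f_{jk}$ is strictly unital and $\cc_{k}$ is strictly unital.

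On a tuple of composable morphisms $(c^{(j)}_{i})_{i \in I} \in \lim_{i \in I} \cc_{i}$, for $j = 1, \ldots, k$, I would set $\tilde{H}^{k}((c^{(k)}), \ldots, (c^{(1)}))$ to be the natural transformation $T$ with components $T(i) = \mu^{k}_{\cc_{i}}(c^{(k)}_{i}, \ldots, c^{(1)}_{i})$ (with $T(i) = c^{(1)}_{i}$ for $k = 1$). Compatibility \eqref{nattrans compatible} for $T$, where both source and target sections have connecting morphisms given by identities, collapses via strict unitality to the identity
\[
f_{ij}\bigl(\mu^{k}_{\cc_{i}}(c^{(k)}_{i}, \ldots, c^{(1)}_{i})\bigr) = \mu^{k}_{\cc_{j}}(c^{(k)}_{j}, \ldots, c^{(1)}_{j}),
\]
which holds because each $f_{ij}$ is a strict $\ainf$-functor (no higher-order terms) and $(c^{(\cdot)}_{i})_{i \in I}$ lies in the limit. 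The $\ainf$-equations for $\tilde{H}$ then reduce, at each index $i$, to the $\ainf$-equations for $\cc_{i}$ applied to $(c^{(\cdot)}_{i})$, which hold by assumption; this uses the explicit componentwise formula \eqref{muk of nattrans component}. Composing $\tilde{H}$ with the localization yields the desired $H$.

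Naturality with respect to a strictly compatible diagram of $\ainf$-functors $\{F_{i} \co \cc_{i} \to \dd_{i}\}_{i \in I}$ is essentially automatic: the induced strict functor $\lim F_{i}$ applies each $F_{i}$ componentwise, and $\holim F_{i}$ is induced by applying each $F_{i}$ componentwise to sections and natural transformations. Since $\tilde{H}$ is itself defined componentwise and sends the limit-identities to section-identities, and since each $F_{i}$ is strictly unital and hence preserves identities, the diagram
\[
\begin{tikzcd}
\lim_{i \in I} \cc_{i} \arrow[r, "\tilde{H}_{\cc}"] \arrow[d, "\lim F_{i}"'] & \Gamma\r{Gr}(\{\cc_{i}\}) \arrow[d, "\Gamma\r{Gr}(\{F_{i}\})"] \\
\lim_{i \in I} \dd_{i} \arrow[r, "\tilde{H}_{\dd}"'] & \Gamma\r{Gr}(\{\dd_{i}\})
\end{tikzcd}
\]
commutes strictly. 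Passing to the localizations, which are functorial, gives the commuting square for $H$. I expect the only real place where care is needed is keeping track of strict unitality and the sign conventions in $\mu^{2}(1, -)$ and $\mu^{2}(-, 1)$; beyond that the construction is purely formal, so there is no substantial obstacle.
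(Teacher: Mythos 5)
Your object-level construction (the section with $s(i)=X_i$ and $s_{ij}=1_{X_j}$), your first-order term on morphisms ($T(i)=c_i$), and the overall strategy of producing a functor into $\Gamma\mathrm{Gr}(\{\cc_i\})$ and then post-composing with the localization are all the same as the paper's. The gap lies in your definition of the higher-order terms. You set $\tilde{H}^{k}(c^{(k)},\ldots,c^{(1)})(i)=\mu^{k}_{\cc_{i}}(c^{(k)}_{i},\ldots,c^{(1)}_{i})$ for $k\ge 2$; the paper instead declares $H^{k}=0$ for $k\ge 2$. With your choice, the $\ainf$-functor equations do \emph{not} reduce to the $\ainf$-relations in each $\cc_i$. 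Already for $k=2$, with $\tilde{H}^{1}=\mathrm{id}$ and $\tilde{H}^{2}=\mu^{2}$, the functor equation
\[
\mu^{1}\tilde{H}^{2}(c_2,c_1)+\mu^{2}(\tilde{H}^{1}c_2,\tilde{H}^{1}c_1)=\tilde{H}^{2}(\mu^{1}c_2,c_1)\pm\tilde{H}^{2}(c_2,\mu^{1}c_1)+\tilde{H}^{1}\mu^{2}(c_2,c_1)
\]
becomes $\mu^{1}\mu^{2}(c_2,c_1)=\mu^{2}(\mu^{1}c_2,c_1)\pm\mu^{2}(c_2,\mu^{1}c_1)$ after cancelling the common $\mu^{2}(c_2,c_1)$, and this is the $\ainf$-relation for $\cc_i$ \emph{with the opposite overall sign}; the two agree only when $2=0$ in $\K$. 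So $\tilde{H}$ as you defined it is not an $\ainf$-functor.

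The error also forces you into an unjustified hypothesis. To show that your $T(i)=\mu^{k}_{\cc_{i}}(c^{(\cdot)}_{i})$ satisfies the naturality constraint \eqref{nattrans compatible}, you invoke that each $f_{ij}$ is a strict $\ainf$-functor, so that $f_{ij}(\mu^{k}_{\cc_{i}}(\cdots))=\mu^{k}_{\cc_{j}}(f_{ij}(\cdots))$. But the setup in \S\ref{section: Grothendieck construction} only requires the diagram to be \emph{strictly commuting} ($f_{jk}\circ f_{ij}=f_{ik}$, $f_{ii}=\mathrm{id}$); it does not require the individual $f_{ij}$ to have vanishing higher-order terms. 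So this step does not hold in the stated generality. By contrast, with $H^{k}=0$ for $k\ge 2$ the naturality check only uses $f_{ij}^{1}(c_i)=c_j$, which is literally part of the definition of $\lim_{i\in I}\cc_{i}$, together with strict unitality, and the $\ainf$-functor equations reduce (because $H$ is strict) to the single identity $H^{1}(\mu^{k}_{\lim}(c^{(k)},\ldots,c^{(1)}))=\mu^{k}_{\Gamma\mathrm{Gr}}(H^{1}c^{(k)},\ldots,H^{1}c^{(1)})$, which is exactly \eqref{muk of nattrans component}. Your naturality square is fine once the definition is corrected.
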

\begin{proof}
We will give the construction for a fixed diagram $\{\cc_{i}\}_{i \in I}$, and naturality will easily follow from the construction.
On the level of objects, the functor $H$ sends $(X_{i})_{i \in I} \in \ob \lim_{i \in I} \cc_{i}$ to a section $s$ such that $s(i) = X_{i}$ on objects,
and for any morphism $i \to j$, $s_{ij} \in \cc_{j}(f_{ij}(X_{i}), X_{j}) = \cc_{j}(X_{j}, X_{j})$ is given by the unit of $X_{j}$ in $\cc_{j}$.
On the level of morphisms, $H$ sends $(c_{i})_{i \in I} \in \lim_{i \in I} \cc_{i}((X_{i})_{i \in I}, (Y_{i})_{i \in I})$ to the natural transformation $T$ defined by $T(i) = c_{i}$.
With this definition, it is a straightforward computation to check the compatibility conditions \eqref{section composition} and \eqref{nattrans compatible}.
Define higher order terms of $H$ to be zero, $H^{k} = 0$ for $k \ge 2$.

It remains to check that $H$ defined above is an $\ainf$-functor.
Since all the higher order terms are zero, the only $\ainf$-equation to verify is
\begin{equation}
\mu^{k}_{\holim_{i \in I} \cc_{i}}(H((c^{(k)}_{i})_{i \in I}), \ldots, H((c^{(1)}_{i})_{i \in I})) = H(\mu^{k}_{\lim_{i \in I} \cc_{i}}((c^{(k)}_{i})_{i \in I}, \ldots, (c^{(1)}_{i})_{i \in I})).
\end{equation}
Let $T^{(j)} = H((c^{(j)}_{i})_{i \in I}), j = 1, \ldots, k$, which is the natural transformation such that $T^{(j)}(i) = c^{(j)}_{i}$.
By the definition of the $\ainf$-structure of $\holim_{i \in I} \cc_{i}$, $\mu^{k}_{\holim_{i \in I} \cc_{i}}(T^{(k)}, \ldots, T^{(1)})$ is induced by $\ainf$-structures of $\cc_{i}$, by \eqref{muk of nattrans} and \eqref{muk of nattrans component}.
Thus we have
\begin{equation}
\mu^{k}_{\holim_{i \in I} \cc_{i}}(T^{(k)}, \ldots, T^{(1)})(i) = \mu^{k}_{\cc_{i}}(c^{(k)}_{i}, \ldots, c^{(1)}_{i})
\end{equation}
for every $i$, 
verifying the $\ainf$-functor equation.
\end{proof}

A special case of interest to us is where all the $\cc_{i}$'s have the same collection of objects such that $f_{i, i+1}$ acts as identity on objects.

\begin{defn}\label{def: constant section}
In the above situation, every object $X \in \ob \cc_{i}$ determines a unique object $(X_{i})_{i \in I} \in \ob \lim_{i \in I} \cc_{i}$ where $X_{i} = X$ for all $i$, which we call the {\it constant sequence} associated to $X$.

Under the functor $H: \lim_{i \in I} \cc_{i} \to \holim_{i \in I} \cc_{i}$, the corresponding section in $\holim_{i \in I} \cc_{i}$, denoted by $s^{X}$, is called the {\it constant section} associated to $X$.
\end{defn}

Even more restrictively, suppose $\{\cc_{i}\}_{i \in I}$ is an inverse system (where $I = \Z_{\le -1}$).
The usual limit, a.k.a. the inverse limit in this case, is the $\ainf$-category whose objects are constant sequences.
And the morphism space is by definition the inverse limit of cochain complexes
\begin{equation}
\lim_{i \in I} \cc_{i}((X)_{i \in I}, (Y)_{i \in I}) = \varprojlim_{i} \cc_{i}(X, Y).
\end{equation}
Thus we get for each pair of objects $X, Y \in \ob \cc_{i}$, a cochain map
\begin{equation}\label{chain map from lim to holim}
H: \varprojlim_{i} \cc_{i}(X, Y) \to \holim_{i \in I} \cc_{i}(s^{X}, s^{Y}) = \holim_{i \in I} \{\cc_{i}(X, Y)\}_{i \in I},
\end{equation}
where the latter cochain complex is precisely the homotopy limit of the inverse system of cochain complexes $\{\cc_{i}(X, Y)\}_{i \in I}$.
A natural question is whether this map is a quasi-isomorphism.
The answer depends on a condition on the inverse system, called the degree-wise {\it Mittag-Leffler} condition.

\begin{defn}\label{def: Mittag-Leffler}
An inverse system of cochain complexes $\{C_{i}\}_{i \in I}$ with maps $f_{ij}: C_{i} \to C_{j}, i \le j$ is said to satisfy the degree-wise Mittag-Leffler condition, 
if for every $i$ there exists $i' \le i$ such that for all $i'' \le i'$, $f_{i'', i}(C_{i''}) = f_{i', i}(C_{i'})$ in each degree.
\end{defn}

The following fact in homological algebra provides an answer to the question concerning the limits.
A standard reference is \cite{weibel}.

\begin{lem}\label{lem: ml implies lim = holim}
Suppose an inverse system of cochain complexes $\{C_{i}\}_{i \in I}$ satisfies the degree-wise Mittag-Leffler condition.
Then the natural map $H: \varprojlim_{i} C_{i} \to \holim_{i} C_{i}$ is a quasi-isomorphism.
\end{lem}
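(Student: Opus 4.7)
The plan is to reduce the claim to the classical vanishing of the derived limit $\varprojlim^{1}$ under Mittag-Leffler. As a preliminary step, I would fix the standard ``telescope'' chain-level model of the homotopy limit of a tower of cochain complexes,
\[
\holim_{i} C_{i} \; \simeq \; \r{fib}\!\left( \prod_{i} C_{i} \xrightarrow{\ 1 - s\ } \prod_{i} C_{i} \right),
\]
where $s$ is built out of the transition maps $f_{ij}$, and verify that the section-category model from Appendix \ref{app: homotopy limits}, restricted to the pair of constant sections $(s^{X}, s^{Y})$, computes a morphism complex canonically quasi-isomorphic to this telescope applied to $C_{i} = \cc_{i}(X,Y)$. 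This comparison is purely formal and follows by unravelling Definition \ref{def: constant section} together with the localisation of $\Gamma\r{Gr}$ at the adjacent morphisms $B_{I}$; under it, the map $H$ from \eqref{chain map from lim to holim} becomes the canonical inclusion of the strict kernel of $1-s$ into its homotopy fiber.

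Next, I would write down, degreewise in $n$, the standard four-term exact sequence
\[
0 \to \varprojlim_{i} C_{i}^{n} \to \prod_{i} C_{i}^{n} \xrightarrow{\ 1 - s\ } \prod_{i} C_{i}^{n} \to \varprojlim^{1}_{i} C_{i}^{n} \to 0,
\]
which is taken as the definition of $\varprojlim^{1}$. Combined with the previous paragraph, the map $H$ is a quasi-isomorphism precisely when $1-s$ is degreewise surjective, equivalently when $\varprojlim^{1}_{i} C_{i}^{n} = 0$ for every $n$; in that case the strict kernel and the homotopy fiber of $1-s$ agree.

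The only remaining input is the classical fact that the degreewise Mittag-Leffler condition of Definition \ref{def: Mittag-Leffler} forces this vanishing. I would recall the standard inductive argument: given $(a_{i}) \in \prod_{i} C_{i}^{n}$, the stabilised images of $C_{i'}^{n}$ in $C_{i}^{n}$ supplied by Mittag-Leffler allow one to choose $b_{i}$'s satisfying the telescope relation $b_{i} - s(b)_{i} = a_{i}$ by a straightforward step-by-step construction along the tower.

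The main obstacle is not this homological algebra, which is standard and may be cited directly from \cite{weibel}, but rather the preliminary comparison between the explicit section-category presentation of $\holim$ used throughout the paper and the usual telescope model. Concretely, one must construct and verify a pair of quasi-inverse chain maps between the morphism complex of the homotopy limit (after formally inverting the adjacent morphisms in $B_{I}$) and the fiber of $1-s$, compatibly with the map $H$. Once that formal identification is in place, the Mittag-Leffler vanishing gives the result immediately.
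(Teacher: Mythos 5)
The paper gives no argument for this lemma at all: it is stated as a standard homological-algebra fact with a pointer to \cite{weibel}, and your telescope/$\varprojlim^{1}$ argument (Mittag-Leffler gives degreewise vanishing of $\varprojlim^{1}$, hence $1-s$ is degreewise surjective and the strict kernel $\varprojlim_i C_i$ maps quasi-isomorphically to the homotopy fiber) is precisely the standard proof that citation stands in for, so the approaches coincide. Only the sufficiency direction is needed, so your claim that $H$ is a quasi-isomorphism ``precisely when'' $1-s$ is degreewise surjective is a harmless over-statement, and the comparison with the section-category model that you flag as the main obstacle is handled separately in the body of the paper (in the identification \eqref{hom in holim}), not in this lemma.
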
 \qed

%bibliography
\bibliography{completion}
 
\end{document}